\newtheorem{teo}{Theorem}[section]
\newtheorem{lem}[teo]{Lemma} 
\newtheorem{cor}[teo]{Corollary}
\newtheorem{prop}[teo]{Proposition} 
\newtheorem{defn}[teo]{Definition} 
\newtheorem{ex}[teo]{Example}
\newtheorem*{claim*}{Claim}
\newtheorem{rem}[teo]{Remark}
\begin{document}

\title{Inductive graded rings, hyperfields and quadratic forms}

\author{Kaique Matias de Andrade Roberto\thanks{Institute of Mathematics and Statistics, University of São Paulo, Brazil.  Emails:   kaique.roberto@usp.br, hugomar@ime.usp.br} \ Hugo Luiz Mariano\thanks{The authors wants to express their gratitude to
Coordena\c c \~ao de Aperfei\c coamento de Pessoal de N\' ivel Superior (Capes -Brazil) by the financial support to develop this work. The second author: program Capes-Print  number 88887.694866/2022-00.}}	

\date{}
\maketitle

\begin{abstract}
 The goal of this work is twofold: (i) to provide a detailed analysis of some categories of inductive graded ring - a concept introduced in \cite{dickmann1998quadratic} in order to provide a solution of Marshall's signature conjecture in the algebraic theory of quadratic forms; (ii) apply this analysis to deepen the connections between the category of special hyperfields (\cite{roberto2021ktheory}) - equivalent to the category of special groups (\cite{dickmann2000special})  and the categories of inductive graded rings.
\end{abstract}

\section*{Introduction}

It can be said that the Algebraic Theory of Quadratic Forms (ATQF) was founded in 1937 by E. Witt, with the introduction of the concept of the Witt ring of a given field, constructed from the quadratic forms with coefficients in the field: given $F$, an arbitrary field of characteristic $\neq 2$, $W(F)$, the Witt ring of $F$, classifies the  quadratic forms over $F$ that are regular and anisotropic, being in one-to-one correspondence with them; thus the focus of the theory is the quadratic forms defined on the ground field where all their coefficients are invertible. In this way, the set of orders in $F$ is in one-to-one correspondence with the set of
minimal prime ideals of the Witt ring of $F$, and more, the set of orders in $F$ provided with the Harrison's topology is a Boolean topological space that, by the bijection above, is identified with a subspace of the Zariski spectrum of the Witt ring of $F$.

Questions about the structure of Witt rings $W(F)$ could only be solved about three decades  after Witt's original idea, through the introduction and analysis of  concept of Pfister form.
The Pfister forms of degree $n \in \mathbb{N}$, in turn, are generators of the power $I^n(F)$ of the fundamental ideal $I(F)\subseteq W(F)$ (the ideal determined by the anisotropic forms of even dimension).

Other finer questions about the powers of the fundamental ideal arose in the early 1970s: J. Milnor, in a seminal article from 1970 (\cite{milnor1970algebraick}), determines a graduated ring $k_\ast(F)$ (from K-theory, reduced mod 2) associated with the field $F$, which interpolates, through graded ring morphisms
$$h_{\ast}(F) : k_{\ast}(F)\longrightarrow\ H^{\ast}(F)\mbox{ and }s_\ast(F) :
k_{\ast}(F)\longrightarrow W_{\ast}(F),$$
the graded Witt ring
$$W_\ast(F) := \bigoplus_{n \in\mathbb{N}} I^n(F)/I^{n+1}(F)$$
and the graded cohomology ring
$$H^\ast(F) := \bigoplus_{n \in \mathbb{N}} H^n(Gal(F^s|F), \{\pm 1\}).$$

From Voevodski's proof of Milnor's conjectures, and the development of special groups theory
(SG) -- an abstract (and first-order) theory of ATQF, introduced by M. Dickmann, and developed by him in partnership with F. Miraglia
since the 1990s -- it has been possible to demonstrate conjectures about signatures put by M. Marshall by T. Lam in the mid-1970s (\cite{dickmann2000special}, \cite{dickmann1998quadratic},\cite{dickmann2003lam}).

The SG theory, which faithfully codifies both the classical theory of quadratic forms over fields and the reduced theory of quadratic forms developed from the 1980s  (\cite{lam1983orderings}), allows us to naturally extend the construction of graded ring functors to all the special  groups $G$: $W(G)$, $W_*(G)$, $k_*(G)$ (\cite{dickmann2000special}, \cite{dickmann2006algebraic}).

The key points in the demonstration of these conjectures for (pre-ordered) fields was a combination of methods: (i) the introduction of Boolean methods in the theory of quadratic forms through the SG theory -especially the Boolean hull functor (\cite{dickmann2000special}, \cite{dickmann2003elementary}); (ii) the encoding of the original problems posed on signatures in questions on graded Witt rings; (iii) the use of Milnor's isomorphisms to transpose these questions to the graded ring of k-theory and the graded ring of cohomology; (iv) the use of Galois cohomology methods to finalize the resolution of the encoded problem.

In \cite{roberto2021ktheory} we developed a k-theory for the category of hyperbolic hyperfields (a category that contains a copy of the category of (pre)special groups): this construction extends, simultaneously, Milnor's k-theory (\cite{milnor1970algebraick}) and Dickmann-Miraglia's k-theory (\cite{dickmann2006algebraic}). An abstract environment that encapsulate all them, and of course, provide an axiomatic approach to guide new extensions of the concept of K-theory in the context of the algebraic and abstract theories of quadratic forms is given by the concept of inductive graded rings  a concept introduced in \cite{dickmann1998quadratic} in order to provide a solution of Marshall's signature conjecture in realm the algebraic theory of quadratic forms for Pythagorean fields.

 The goal of this work is twofold: (i) to provide a detailed analysis of some categories of inductive graded ring; (ii) apply this analysis to deepen the connections between the category of special hyperfields (\cite{roberto2021ktheory}) - equivalent t groups (\cite{dickmann2000special})  and the categories of inductive graded rings.

{\bf Outline of the work:}...

We assume that the reader is familiar with some categorical results concerning adjunctions: mostly are based on \cite{borceux1994handbook1}, but the reader could also consult \cite{mac2013categories}.


\section{Preliminaries: special groups, hyperbolic hyperfields and k-theory}

\subsection{Special Groups}

  
  Firstly, we make a brief summary on special groups. Let $A$ be a set and $\equiv$ a binary relation on $A\times A$. We extend $\equiv$ to a binary relation $\equiv_n$ on $A^n$, by induction on $n\ge1$, as follows:
\begin{enumerate}[i -]
\item $\equiv_1$ is the diagonal relation $\Delta_A \subseteq A \times A$.

 \item $\equiv_2=\equiv$.
 \item If $n \geq 3$, $\langle a_1,...,a_n\rangle\equiv_n\langle b_1,...,b_n\rangle$ if and only there are  $x,y,z_3,...,z_n\in A$ such that 
 $$\langle a_1,x\rangle\equiv\langle b_1,y\rangle,\,
 \langle a_2,...,a_n\rangle\equiv_{n-1}\langle x,z_3,...,z_n\rangle\mbox{ and }\langle b_2,...,b_n\rangle\equiv_{n-1}\langle y,z_3,...,z_n\rangle.$$
\end{enumerate}

Whenever clear from the context, we frequently abuse notation and indicate the afore-described extension $\equiv$ by the same symbol.  

\begin{defn}[Special Group, 1.2 of \cite{dickmann2000special}]\label{defn:sg}
 A \textbf{special group} is a tuple $(G,-1,\equiv)$, where $G$ is a group of exponent 2, 
i.e, $g^2=1$ for all $g\in G$; $-1$ is a distinguished element of $G$, and $\equiv\subseteq G\times 
G\times G\times G$ is a relation (the special relation), satisfying the following axioms for all 
$a,b,c,d,x\in G$:
\begin{description}
 \item [SG 0] $\equiv$ is an equivalence relation on $G^2$;
 \item [SG 1] $\langle a,b\rangle\equiv \langle b,a\rangle$;
 \item [SG 2] $\langle a,-a\rangle\equiv\langle1,-1\rangle$;
 \item [SG 3] $\langle a,b\rangle\equiv\langle c,d\rangle\Rightarrow ab=cd$;
 \item [SG 4] $\langle a,b\rangle\equiv\langle c,d\rangle\Rightarrow\langle 
a,-c\rangle\equiv\langle-b,d\rangle$;
 \item [SG 5] 
$\langle a,b\rangle\equiv\langle c,d\rangle\Rightarrow\langle ga,gb\rangle\equiv\langle 
gc,gd\rangle,\,\mbox{for all }g\in G$.
 \item [SG 6 (3-transitivity)] the extension of $\equiv$ for a binary relation on $G^3$ is a 
transitive relation.
\end{description}
\end{defn}

A group of exponent 2, with a distinguished element $-1$, satisfying the axioms SG0-SG3 and SG5 is called a {\bf  proto special group}; a \textbf{pre special group} is a proto special group that also satisfy SG4. Thus a \textbf{special group} is a pre-special group that satisfies SG6 (or, equivalently, for each $n \geq 1$, $\equiv_n$ is an equivalence relation on $G^n$).

A \textbf{$n$-form} (or form of dimension $n\ge1$) is an $n$-tuple of elements of a pre-SG $G$. An element $b\in G$ is \textbf{represented} on $G$ by the form $\varphi=\langle a_1,...,a_n\rangle$, in symbols $b\in D_G(\varphi)$, if there exists $b_2,...,b_n\in G$ such that $\langle b,b_2,...,b_n\rangle\equiv\varphi$. 

A pre-special group (or special group) 
$(G,-1,\equiv)$ is:\\
$\bullet$ \ \textbf{formally real} if $-1 \notin \bigcup_{n \in \mathbb{N}} D_G( n\langle 1 \rangle)$\footnote{Here the notation $n\langle 1 \rangle$ means the form $\langle a_1,...,a_n\rangle$ where $a_j=1$ for all $j=1,...,n$. In other words, $n\langle 1 \rangle$ is the form $\langle 1 ,...,1\rangle$ with $n$ entries equal to 1.} ;\\
$\bullet$ \ \textbf{reduced} if it is formally real and, for each $a \in G$, $a \in D_G(\langle 1, 1 \rangle)$ iff $a =1$.

\begin{defn}[1.1 of \cite{dickmann2000special}]\label{defnmorph}
 A map $\xymatrix{(G,\equiv_G,-1)\ar[r]^f & (H,\equiv_H,-1)}$ between pre-special groups is a \textbf{morphism 
of pre-special groups or PSG-morphism} if $f:G\rightarrow H$ is a homomorphism of groups, $f(-1)=-1$ and for all 
$a,b,c,d\in G$
$$\langle a,b\rangle\equiv_G\langle c,d\rangle\Rightarrow
\langle f(a),f(b)\rangle\equiv_H\langle f(c),f(d)\rangle$$
A \textbf{morphism of special groups or SG-morphism} is a pSG-morphism between the corresponding pre-special groups. $f$ 
will be an isomorphism if is bijective and $f,f^{-1}$ are PSG-morphisms. 
\end{defn}

It can be verified that a special group  $G$ is formally real iff it admits some SG-morphism $f : G \to 2$. The category of special groups (respectively reduced special groups) and their morphisms will be denoted by $\mathcal{SG}$ 
(respectively $\mathcal{RSG}$).
  
¨

\begin{defn}[2.4 \cite{dickmann2006algebraic}]\label{2.4kt}
$ $
 \begin{enumerate}[a -]
  \item A reduced special group is [MC] if for all $n\ge1$ and all forms $\varphi$ over $G$,
  $$\mbox{For all }\sigma\in X_G,\mbox{ if }\sigma(\varphi)\equiv0\,\mbox{mod }2^n\mbox{ then } \varphi\in I^nG.$$
  \item A reduced special group is [SMC] if for all $n\ge1$,  the multiplication by $\lambda(-1)$ is an injection of $k_nG$ in $k_{n+1}G$.
 \end{enumerate}
\end{defn}


    
    

\subsection{Multifields/Hyperfields}

Roughly speaking, a multiring is a ``ring'' with a multivalued addition, a notion introduced in the 1950s by Krasner's works. The notion of 
multiring was joined to the quadratic forms tools by the hands of M. Marshall in the last decade (\cite{marshall2006real}). We gather the basic information about multirings/hyperfields and expand some details that we use in the context of K-theories. For more detailed calculations involving multirings/hyperfields and quadratic forms we indicate to the reader the reference \cite{ribeiro2016functorial} (or even \cite{worytkiewiczwitt2020witt} and \cite{roberto2021quadratic}). Of course, multi-structures is an entire subject of research (which escapes from the "quadratic context"), and in this sense, we indicate the references \cite{pelea2006multialgebras}, \cite{viro2010hyperfields}, \cite{ameri2019superring}.

\begin{defn}[Adapted from Definition 2.1 in \cite{marshall2006real}]\label{defn:multiring}
 A multiring is a sextuple $(R,+,\cdot,-,0,1)$ where $R$ is a non-empty set, $+:R\times R\rightarrow\mathcal P(R)\setminus\{\emptyset\}$, $\cdot:R\times R\rightarrow R$ and $-:R\rightarrow R$ are functions, $0$ and $1$ are elements of $R$ satisfying:
 \begin{enumerate}[i -]
  \item $(R,+,-,0)$ is a commutative multigroup;
  \item $(R,\cdot,1)$ is a monoid;
  \item $a0=0$ for all $a\in R$;
  \item If $c\in a+b$, then $cd\in ad+bd$ and $dc\in da+db$. Or equivalently, $(a+b)d\subseteq ab+bd$ and $d(a+b)\subseteq da+db$.
  \item If the equalities holds, i.e, $(a+b)d=ab+bd$ and $d(a+b)=da+db$, we said that $R$ is a \textbf{hyperring}\index{hyperring}.
 \end{enumerate}
 
 A multiring is commutative if $(R,\cdot,1)$ is a commutative monoid. A zero-divisor of a multiring $R$ is a non-zero element $a\in R$ such that $ab=0$ for another non-zero element $b\in R$. The multiring $R$ is said to be a multidomain if do not have zero divisors, and $R$ will be a multifield if $1\ne0$ and every non-zero element of $R$ has multiplicative inverse.
\end{defn}
 
 \begin{ex}\label{ex:1.3}
$ $
 \begin{enumerate}[a -]
  \item Suppose that $(G,+,0)$ is an abelian group. Defining $a + b = \{a + b\}$ and $r(g)=-g$, 
we have that $(G,+,r,0)$ is an abelian multigroup. In this way, every ring, domain and field is a multiring, 
multidomain and hyperfield, respectively.
  
  \item $Q_2=\{-1,0,1\}$ is hyperfield with the usual product (in $\mathbb Z$) and the multivalued sum defined by 
relations
  $$\begin{cases}
     0+x=x+0=x,\,\mbox{for every }x\in Q_2 \\
     1+1=1,\,(-1)+(-1)=-1 \\
     1+(-1)=(-1)+1=\{-1,0,1\}
    \end{cases}
  $$
  
  \item Let $K=\{0,1\}$ with the usual product and the sum defined by relations $x+0=0+x=x$, $x\in K$ and 
$1+1=\{0,1\}$. This is a hyperfield  called Krasner's hyperfield \cite{jun2015algebraic}.
  \end{enumerate}
\end{ex}

 Now, another example that generalizes $Q_2=\{-1,0,1\}$. Since this is a new one, we will provide the entire verification that it is a 
multiring:

\begin{ex}[Kaleidoscope, Example 2.7 in \cite{ribeiro2016functorial}]\label{kaleid}
 Let $n\in\mathbb{N}$ and define 
 $$X_n=\{-n,...,0,...,n\} \subseteq \mathbb{Z}.$$ 
 We define the \textbf{$n$-kaleidoscope multiring} by 
$(X_n,+,\cdot,-, 0,1)$, where $- : X_n \to X_n$ is restriction of the  opposite map in $\mathbb{Z}$,  $+:X_n\times 
X_n\rightarrow\mathcal{P}(X_n)\setminus\{\emptyset\}$ is given by the rules:
 $$a+b=\begin{cases}
    \{a\},\,\mbox{ if }\,b\ne-a\mbox{ and }|b|\le|a| \\
    \{b\},\,\mbox{ if }\,b\ne-a\mbox{ and }|a|\le|b| \\
    \{-a,...,0,...,a\}\mbox{ if }b=-a
   \end{cases},$$
and $\cdot:X_n\times X_n\rightarrow X_n$ is given by the rules:
 $$a\cdot b=\begin{cases}
    \mbox{sgn}(ab)\max\{|a|,|b|\}\mbox{ if }a,b\ne0 \\
    0\mbox{ if }a=0\mbox{ or }b=0
   \end{cases}.$$
  With the above rules we have that $(X_n,+,\cdot, -, 0,1)$ is a multiring.
\end{ex}

 Now, another example that generalizes $K=\{0,1\}$.

\begin{ex}[H-hyperfield, Example 2.8 in \cite{ribeiro2016functorial}]\label{H-multi}
Let $p\ge1$ be a prime integer and $H_p:=\{0,1,...,p-1\} \subseteq \mathbb{N}$. Now, define the binary multioperation and operation in $H_p$ as
follows:
\begin{align*}
 a+b&=
 \begin{cases}H_p\mbox{ if }a=b,\,a,b\ne0 \\ \{a,b\} \mbox{ if }a\ne b,\,a,b\ne0 \\ \{a\} \mbox{ if }b=0 \\ \{b\}\mbox{ if }a=0 \end{cases} \\
 a\cdot b&=k\mbox{ where }0\le k<p\mbox{ and }k\equiv ab\mbox{ mod p}.
\end{align*}
$(H_p,+,\cdot,-, 0,1)$ is a hyperfield such that for all $a\in H_p$, $-a=a$. In fact, these $H_p$ are a kind of generalization of $K$, in the sense that $H_2=K$.
\end{ex}
 
 There are many natural constructions on the category of multrings as: products, directed inductive limits, quotients by an ideal,  
localizations by multiplicative subsets and quotients by ideals.  Now, we present some constructions that will be used further. For the first one, we need to restrict our category:

\begin{defn}[Definition 3.1 of \cite{roberto2021quadratic}]\label{hiperb}
An \textbf{hyperbolic multiring} is a multiring $R$ such that $1-1=R$. The category of hyperbolic multirings and hyperbolic hyperfields will be denoted by $\mathcal{HMR}$ and $\mathcal{HMF}$ respectively.
\end{defn}

Let $F_1$ and $F_2$ be two hyperbolic hyperfields. We define a new hyperbolic hyperfield $(F_1\times_h F_2,+,-,\cdot,(0,0),(1,1))$ by the following: the underlying set of this structure is 
$$F_1\times_h F_2:=(\dot F_1\times \dot F_2)\cup\{(0,0)\}.$$
For $(a,b),(c,d)\in F_1\times_h F_2$ we define
\begin{align}\label{prodmultiop}
    -(a,b)&=(-a,-b),\nonumber\\
    (a,b)\cdot(c,d)&=(a\cdot c,b\cdot d),\nonumber \\
    (a,b)+(c,d)&=\{(e,f)\in F_1\times F_2:e\in a+c\mbox{ and }f\in b+d\}\cap(F_1\times_h F_2).
\end{align}
In other words, $(a,b)+(c,d)$ is defined in order to avoid elements of $F_1\times F_2$ of type $(x,0),(0,y)$, $x,y\ne0$.

\begin{teo}[Product of Hyperbolic Hyperfields]\label{hfproduct}
Let $F_1,F_2$ be hyperbolic hyperfields and $F_1\times_h F_2$ as above. Then $F_1\times_h F_2$ is a hyperbolic hyperfield and satisfy the Universal Property of product for $F_1$ and $F_2$.
\end{teo}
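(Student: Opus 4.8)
The plan is to check, in order, that $F_1\times_h F_2$ is a multiring (in fact a hyperring), that it is a hyperfield and is hyperbolic, and that the projections $\pi_i\colon F_1\times_h F_2\to F_i$, $(a_1,a_2)\mapsto a_i$, make it the categorical product in $\mathcal{HMF}$. Write $H:=F_1\times_h F_2$ and regard $H\subseteq F_1\times F_2$, the latter with its usual coordinatewise product multiring structure; then for $(a,b),(c,d)\in H$ the definition \eqref{prodmultiop} says exactly $(a,b)+(c,d)=\big((a+c)\times(b+d)\big)\cap H$. Commutativity of $+$, the monoid axioms for $\cdot$ with unit $(1,1)$, and $(0,0)$ being absorbing are immediate from the corresponding facts in $F_1$ and $F_2$. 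I will repeatedly use that, by hyperbolicity, $a-a=F_i$ for every $a\ne 0$ in $F_i$ (multiply $1-1=F_i$ by the unit $a$, via axiom (iv) of Definition~\ref{defn:multiring}).

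Two points in the multiring verification are not purely formal. First, $(a,b)+(c,d)$ is never empty: if one summand is $(0,0)$ the sum is the singleton consisting of the other (which is in $H$); otherwise both summands lie in $\dot F_1\times\dot F_2$, and for nonzero $a,c\in F_1$ either $c\ne -a$, so $0\notin a+c$ and the nonempty set $a+c$ consists of nonzero elements, or $c=-a$, so $a+c=a-a=F_1$ — in either case $a+c$ (and likewise $b+d$) has a nonzero element, so $(a+c)\times(b+d)$ meets $\dot F_1\times\dot F_2\subseteq H$. The multigroup reversal axiom transfers coordinatewise: from $(r,s)\in(a,b)+(c,d)$ one gets $a\in r-c$, $b\in s-d$ in $F_1,F_2$, hence $(a,b)\in\big((r-c)\times(s-d)\big)\cap H=(r,s)-(c,d)$ since $(a,b)\in H$ (and symmetrically $(c,d)\in -(a,b)+(r,s)$); neutrality of $(0,0)$ is the singleton computation. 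Distributivity transfers because multiplication by a nonzero $(e,f)\in H$ is a bijection of $H$ onto itself, hence commutes with $(-)\cap H$, while for $(e,f)=(0,0)$ both sides collapse to $\{(0,0)\}$; as $F_1,F_2$ are hyperrings we get the full distributive equality, so $H$ is a hyperring.

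The one genuinely delicate point is associativity of $+$. One compares the left-associated triple sum $\bigcup\big\{\,\big((p+e)\times(q+f)\big)\cap H:(p,q)\in\big((a+c)\times(b+d)\big)\cap H\,\big\}$ with $\big((a+c+e)\times(b+d+f)\big)\cap H$. The inclusion $\subseteq$ is clear. For $\supseteq$, given $(r,s)$ on the right, pick $p\in a+c$ with $r\in p+e$ and $q\in b+d$ with $s\in q+f$; if $(p,q)\in H$ we are done, otherwise exactly one coordinate is $0$, say $q=0\ne p$ (the case $p=0\ne q$ is symmetric). If $f=0$ then $(e,f)=(0,0)$ and $(r,s)=(p,0)\notin H$, against $(r,s)\in H$; so $f\ne 0$, hence $s=f$, and since $q'\in f-f=F_2$ holds for every $q'$, we may replace $q$ by any nonzero element of $b+d$ — one exists, since $b+d=\{0\}$ would force $b=d=0$, hence $a=c=0$, contradicting $p\ne 0$ — landing the pair in $H$ while keeping $r\in p+e$, $s\in q'+f$. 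By the symmetric computation the right-associated triple sum is the same set, so $+$ is associative. This finishes the hyperring claim; $H$ is a hyperfield since $(1,1)\ne(0,0)$ and any $(a,b)\in H$ with $a,b\ne 0$ has inverse $(a^{-1},b^{-1})\in H$; and it is hyperbolic since $(1,1)-(1,1)=\big((1-1)\times(1-1)\big)\cap H=(F_1\times F_2)\cap H=H$.

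For the universal property, the $\pi_i$ are multiring morphisms (the sum clause holds because $(a,b)+(c,d)$ is by definition contained in $(a+c)\times(b+d)$). Given $E\in\mathcal{HMF}$ with morphisms $g_i\colon E\to F_i$, set $g(x):=(g_1(x),g_2(x))$; this lands in $H$, since $g(0)=(0,0)$ and, for $x\ne 0$, $g_i(x)g_i(x^{-1})=g_i(1)=1\ne 0$ forces $g_i(x)\ne 0$. Then $g$ respects $-,0,1,\cdot$ coordinatewise, and $z\in x+y$ in $E$ gives $g_i(z)\in g_i(x)+g_i(y)$, so $g(z)\in\big((g_1(x)+g_1(y))\times(g_2(x)+g_2(y))\big)\cap H=g(x)+g(y)$; thus $g$ is a morphism with $\pi_i\circ g=g_i$, and it is unique since $\pi_i\circ g'=g_i$ forces $g'=g$. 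The main obstacle is the associativity step: along with non-emptiness of sums, it is precisely where the hypothesis $1-1=F_i$ is indispensable, since without it the re-intersection with $H$ can destroy associativity or even make a sum empty.
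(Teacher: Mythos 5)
Your proposal is correct, and its overall route coincides with the paper's: a direct verification of the multiring axioms of Definition~\ref{defn:multiring} for $F_1\times_h F_2$, then invertibility and hyperbolicity, then the universal property via the coordinatewise projections and the pairing $x\mapsto(g_1(x),g_2(x))$. The one step you genuinely reorganize is associativity: the paper establishes the single inclusion $[(a,b)+(c,d)]+(e,f)\subseteq(a,b)+[(c,d)+(e,f)]$ (using that for commutative multigroups one inclusion suffices) and treats the special configurations $(c,d)=-(e,f)$ and $-(e,f)\in(a,b)+(c,d)$ separately before the generic case, whereas you identify each associated triple sum with $\bigl((a+c+e)\times(b+d+f)\bigr)\cap(F_1\times_h F_2)$ and conclude by symmetry. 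The essential repair is the same in both arguments --- when the intermediate pair has exactly one zero coordinate, hyperbolicity ($a-a=F_i$ for $a\neq 0$) lets you replace that coordinate by a nonzero element of the same coordinatewise sum --- but your formulation avoids the ``one inclusion suffices'' reduction and the extra case splits, at no real cost. You also spell out two points the paper's proof merely asserts: non-emptiness of the multivalued sum, and that the pairing lands in $F_1\times_h F_2$ because a hyperfield morphism sends nonzero elements to nonzero elements. Your additional claim that $F_1\times_h F_2$ satisfies the full hyperring (distributive equality) law is more than the theorem requires and is justified, though the clean way to see it is that any multifield satisfies the hyperring law automatically (multiply the inclusion by $d^{-1}$), rather than citing $F_1,F_2$ as hyperrings by definition.
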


In order to avoid confusion and mistakes, we denote the binary product in $\mathcal{HMF}$ by $F_1\times_hF_2$. For hyperfields $\{F_i\}_{i\in I}$, we denote the product of this family by
$$\prod^h_{i\in I}F_i,$$
with underlying set defined by
$$\prod^h_{i\in I}F_i:=\left(\prod_{i\in I}\dot F_i\right)\cup\{(0_i)_{i\in I}\}$$
and operations defined by rules similar to the ones defined in \ref{prodmultiop}. If $I=\{1,...n\}$, we denote
$$\prod^h_{i\in I}F_i=\prod^n_{\substack{i=1 \\ [h]}}F_i.$$

\begin{ex}
Note that if $F_1$ (or $F_2$) is not hyperbolic, then $F_1\times_h F_2$ is not a hyperfield in general. Let $F_1$ be a field (considered as a hyperfield), for example $F_1=\mathbb R$ and $F_2$ be another hyperfield. Then if $a,b\in F_1$, we have
$1-1=\{0\}$, so $(1,a)+(-1,b)=\{0\}\times(a-b)$, and $$[\{0\}\times(a-b)]\cap(F_1\times_h F_2)=\emptyset.$$
\end{ex}


 

\begin{prop}[3.13 of \cite{ribeiro2016functorial}]\label{sg.to.mf}
 Let $(G,\equiv,-1)$ be a special group and define $M(G)=G\cup\{0\}$ where $0:=\{G\}$\footnote{Here, 
the choice of the zero element was ad hoc. Indeed, we can define $0:=\{x\}$ for any $x\notin G$.}. Then 
$(M(G),+,-,\cdot,0,1)$ is a hyperfield, where 
\begin{itemize}
   \item $a\cdot b=\begin{cases}0\,\mbox{if }a=0\mbox{ or }b=0 \\ a\cdot 
b\,\mbox{otherwise}\end{cases}$
   \item $-(a)=(-1)\cdot a$
   \item $a+b=\begin{cases}\{b\}\,\mbox{if }a=0 \\ \{a\}\,\mbox{if }b=0\\ M(G)\,\mbox{if 
}a=-b,\,\mbox{and }a\ne0 
\\ 
D_G(a,b)\,\mbox{otherwise}\end{cases}$
  \end{itemize}
\end{prop}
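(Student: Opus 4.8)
The plan is to verify directly that $(M(G),+,-,\cdot,0,1)$ satisfies the axioms of Definition~\ref{defn:multiring} for a multifield (equivalently, a hyperfield, in the terminology adopted here). Since $G$ is an abelian group of exponent $2$ and $M(G) = G \cup \{0\}$ with multiplication extended so that $0$ is absorbing, the monoid axioms (ii), (iii) and commutativity of $\cdot$ are immediate, as is the fact that every nonzero element is invertible and $1 \neq 0$. The content of the proof is therefore the multigroup axioms (i) for $(M(G),+,-,0)$ together with the weak distributivity law (iv). I would treat $0$ as a neutral element by fiat (the first two cases in the definition of $+$), so the only substantive verifications concern sums $a+b$ with $a,b \in G = \dot M(G)$, split into the case $a = -b$ (where $a+b = M(G)$) and the generic case $a \neq -b$ (where $a+b = D_G(a,b)$).

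First I would check the neutrality and inverse axiom: $x \in 1 \cdot a + 1 \cdot b$... wait, rather, that $0 \in a + b$ iff $b = -a$. For $a \neq 0$ this holds because $-a \in D_G(\langle a, a\rangle)$ would be needed — more precisely, by SG2 we have $\langle a, -a\rangle \equiv \langle 1, -1\rangle$, and $0 \in M(G) = a + (-a)$ by definition; conversely if $b \neq -a$ and both are nonzero then $a + b = D_G(a,b) \subseteq G$ does not contain $0$. Next comes the ``reversibility'' axiom: if $z \in x + y$ then $x \in z + (-y)$ and $y \in (-x) + z$. The interesting case is $x,y \neq 0$, $x \neq -y$, so $z \in D_G(x,y)$, i.e. there is $w \in G$ with $\langle z, w\rangle \equiv \langle x, y\rangle$. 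Using SG1 and SG4 (the "flip" axiom $\langle a,b\rangle \equiv \langle c,d\rangle \Rightarrow \langle a, -c\rangle \equiv \langle -b, d\rangle$) one rearranges this to $\langle x, -w\rangle \equiv \langle z, -y\rangle$, whence $x \in D_G(z, -y) = z + (-y)$, taking care of the sub-case $z \neq y$; the sub-case $z = y$ (forcing, via SG3, $xy = z^2 = 1$, i.e. $x = y = z$... one must check $x \in z - y$ which is $z + (-z) = M(G) \ni x$) is handled separately. Symmetric reasoning gives $y \in (-x) + z$.

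The main work is associativity: $(a+b)+c = a+(b+c)$ for all $a,b,c \in M(G)$. Here I would reduce to the case where all three are nonzero (the cases with a zero entry collapse to the two-element identities already established), and further observe that if any of the relevant ``hyperbolic'' collisions occur — e.g. $c = -b$, or $-c \in a+b$ — then one side, hence by the reversibility axiom the other side, equals all of $M(G)$; so the genuinely new case is the one where no such collision happens and every intermediate sum stays inside $\dot G$. In that regime $(a+b)+c$ unwinds to $\bigcup_{w \in D_G(a,b)} D_G(w,c)$, which is precisely the set of $v$ such that $\langle v, v_2, v_3\rangle \equiv_3 \langle a, b, c\rangle$ for suitable $v_2, v_3$, i.e. the representation set $D_G(\langle a,b,c\rangle)$ of the ternary form — and this is where the inductive definition of $\equiv_n$ given at the start of the section does the bookkeeping for us, together with SG6 (3-transitivity) to guarantee the middle relation can be pivoted. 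By the same token $a+(b+c) = D_G(\langle a,b,c\rangle)$, so the two agree. In fact, as the reference \cite{ribeiro2016functorial} does, it suffices to prove one inclusion $(a+b)+c \subseteq a+(b+c)$ and invoke commutativity.

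Finally, the distributive inequality (iv): if $e \in a+b$ then $ce \in ca + cb$ (and the right-multiplication version, which is the same by commutativity). For $a = -b$ or a zero entry this is trivial; otherwise $e \in D_G(a,b)$ gives $w$ with $\langle e,w\rangle \equiv \langle a,b\rangle$, and applying SG5 with $g = c$ yields $\langle ce, cw\rangle \equiv \langle ca, cb\rangle$, so $ce \in D_G(ca,cb) = ca+cb$, using that $c \neq 0 \Rightarrow ca \neq -cb$ exactly when $a \neq -b$. I expect the associativity step to be the main obstacle: the delicate part is matching the multi-valued sum iterated twice against the $n$-fold extension $\equiv_3$ of the special relation and correctly isolating the ``hyperbolic'' degenerate sub-cases (where an intermediate sum is all of $M(G)$) from the ``regular'' sub-case where everything stays in $G$ and SG6 applies; the remaining axioms are short once one is fluent with SG1--SG5.
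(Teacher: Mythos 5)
The paper itself contains no proof of this proposition: it is imported verbatim as Proposition 3.13 of \cite{ribeiro2016functorial}, so your argument can only be measured against the standard verification carried out there, and your proposal is exactly that standard route — a case-by-case check of the axioms of Definition \ref{defn:multiring}, with SG1/SG4 giving the reversibility axiom, SG5 giving weak distributivity, and the identification of the iterated sums with ternary representation sets giving associativity — and it is essentially sound. Two points need tightening. First, in the reversibility sub-case $z=y$ your parenthetical is wrong: SG3 applied to $\langle z,w\rangle\equiv\langle x,y\rangle$ with $z=y$ forces $w=x$, not $x=y=z$; this is harmless, since the conclusion you actually use, $x\in z+(-y)=z+(-z)=M(G)$, is trivially true. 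Second, the associativity step is where all the content sits, and your sketch leaves the decisive identification implicit: the inductive definition of $\equiv_3$ splits off the \emph{first} coordinate, so it directly yields only the right-associated identity $a+(b+c)=\bigcup_{y\in D_G(b,c)}D_G(a,y)=D_G(\langle a,b,c\rangle)$, whereas the left association gives $(a+b)+c=\bigcup_{w\in D_G(a,b)}D_G(c,w)=D_G(\langle c,a,b\rangle)$; to conclude that the two sets coincide you must still prove (or quote) permutation invariance of $\equiv_3$ and use SG6 to transfer representation along the isometry $\langle c,a,b\rangle\equiv_3\langle a,b,c\rangle$, since only for \emph{special} groups do isometric ternary forms have equal value sets. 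You do name SG6 and the inductive definition as the tools, so the plan is right, but this is the step that must be written out (together with the easy remark that $a+b\neq\emptyset$, which follows from $a,b\in D_G(a,b)$ by reflexivity and SG1); with those details supplied the proof is complete.
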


\begin{cor}[3.14-3.19 of \cite{ribeiro2016functorial}]\label{cor:equiv1}
 The correspondence $G\mapsto M(G)$ extends to an equivalence of categories  $M:\mathcal{SG}\rightarrow \mathcal{SMF}$, frm the category of specail groups nto the category of \textbf{special multifields}.
\end{cor}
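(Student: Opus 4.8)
The plan is to build a quasi-inverse functor $S\colon\mathcal{SMF}\to\mathcal{SG}$ and to exhibit natural isomorphisms $S\circ M\cong\mathrm{id}_{\mathcal{SG}}$ and $M\circ S\cong\mathrm{id}_{\mathcal{SMF}}$; by Proposition \ref{sg.to.mf} we already know $M(G)$ is a hyperfield, so the work is (a) to make $M$ a functor landing in $\mathcal{SMF}$, (b) to define $S$ and check the special-group axioms for its values, and (c) to verify the two unit/counit isomorphisms. The technical glue throughout is the dictionary
$$ c\in a+b \ \text{ in } M(G) \iff c\in D_G(\langle a,b\rangle)\ \ (a,b,c\neq 0,\ a\neq -b), $$
which is immediate from the definition of $+$ in Proposition \ref{sg.to.mf}, together with the standard representation criterion: $\langle a,b\rangle\equiv_G\langle c,d\rangle$ iff $ab=cd$ and $c\in D_G(\langle a,b\rangle)$.

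First I would define $M$ on arrows: for an $\mathcal{SG}$-morphism $f\colon G\to H$, set $M(f)(0)=0$ and $M(f)(g)=f(g)$ for $g\in G$. Multiplicativity, $1\mapsto 1$, $-1\mapsto -1$ and $0\mapsto 0$ are inherited from $f$ being a group homomorphism fixing $-1$ (hence $f(-a)=-f(a)$). For the hyperaddition clause $c\in a+b\Rightarrow M(f)(c)\in M(f)(a)+M(f)(b)$, the cases in which some argument is $0$, or $a=-b$, are immediate from the shape of $+$ in $M(\cdot)$ and from $f(-1)=-1$; the remaining case reduces to $c\in D_G(\langle a,b\rangle)\Rightarrow f(c)\in D_H(\langle f(a),f(b)\rangle)$, i.e.\ preservation of representation by binary forms, which is exactly Definition \ref{defnmorph}. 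Preservation of identities and composites is clear, so $M$ is a functor; that it lands in $\mathcal{SMF}$ is the statement that $M(G)$ satisfies the defining axioms of a special multifield.

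Next, for $F\in\mathcal{SMF}$ define $S(F):=(\dot F,-1_F,\equiv_F)$, where $\dot F=F\setminus\{0\}$ carries the (exponent-$2$, abelian) multiplicative group of $F$ and $\langle a,b\rangle\equiv_F\langle c,d\rangle$ iff $ab=cd$ and $c\in a+b$. I would check SG0--SG6 for $S(F)$: SG1, SG3, SG5 and reflexivity/symmetry of $\equiv_F$ on pairs are essentially formal from the multiring axioms; SG2 is the identity $a+(-a)=F$ valid in a special hyperfield; and SG4 together with SG6 (3-transitivity) are precisely what the $\mathcal{SMF}$-axioms are designed to encode, associativity of the hypersum translating into the Witt-style transitivity of the ternary extension of $\equiv$. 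On morphisms, a hyperfield morphism $\varphi\colon F\to F'$ restricts to a group homomorphism $\dot F\to\dot F'$ fixing $-1$ and satisfies $c\in a+b\Rightarrow\varphi(c)\in\varphi(a)+\varphi(b)$, hence preserves $\equiv_F$; so $S(\varphi)$ is an $\mathcal{SG}$-morphism and $S$ is a functor.

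Finally I would assemble the two isomorphisms. For $G\in\mathcal{SG}$, the underlying group of $S(M(G))$ is $\dot M(G)=G$ with the same $-1$, and its relation is $\langle a,b\rangle\equiv\langle c,d\rangle$ iff $ab=cd$ and $c\in a+b$ in $M(G)$; by the dictionary and representation criterion above this is exactly $\langle a,b\rangle\equiv_G\langle c,d\rangle$ (the degenerate case $a=-b$ being absorbed by SG2 and SG3), so $S(M(G))=G$ on the nose, naturally in $G$. Conversely, for $F\in\mathcal{SMF}$ the underlying set of $M(S(F))$ is $\dot F\cup\{0\}$, identified with $F$; products agree, and for $a,b\neq 0$ with $a\neq -b$ the sum in $M(S(F))$ is $D_{S(F)}(\langle a,b\rangle)=\{c : ab=c(abc),\ c\in a+b\}=a+b$, with the degenerate cases matching by the special-hyperfield axioms; hence $M(S(F))\cong F$ naturally. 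These natural isomorphisms establish the equivalence. I expect the main obstacle to be Step (b) and its mirror in (c): pinning down that the axioms of $\mathcal{SMF}$ are exactly strong enough to force SG4 and SG6 for $S(F)$, and dually that the hyperfield $M(G)$ of Proposition \ref{sg.to.mf} genuinely satisfies those axioms; once the ``$c\in a+b\leftrightarrow c\in D_G(\langle a,b\rangle)$'' correspondence is in place, the rest is bookkeeping, as carried out in 3.14--3.19 of \cite{ribeiro2016functorial}.
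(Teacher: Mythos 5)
Your proposal takes essentially the same route as the paper: the paper offers no proof of this corollary, deferring entirely to 3.14--3.19 of \cite{ribeiro2016functorial}, and its Theorem \ref{psgpsmfhell} records precisely the quasi-inverse dictionary you use, namely $S(F)=(\dot F,\equiv_F,-1)$ with $\langle a,b\rangle\equiv_F\langle c,d\rangle$ iff $ab=cd$ and $a\in c+d$, while Proposition \ref{sg.to.mf} gives the forward construction $M(G)$. Your outline (functoriality of $M$ on morphisms via preservation of binary representation, the identifications $S(M(G))=G$ and $M(S(F))\cong F$ with the degenerate cases handled by hyperbolicity, SG2 and SG3) is correct, with the sole caveat that the substantive verifications --- that $M(G)$ satisfies the special-multifield axioms and that those axioms force SG4 and SG6 for $S(F)$ --- are acknowledged but deferred, exactly as the paper itself defers them to the cited reference.
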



\begin{defn}[Definition 3.2 of \cite{roberto2021quadratic}]
A \textbf{Dickmann-Miraglia multiring (or DM-multiring for short)} \footnote{The name ``Dickmann-Miraglia'' is given in honor to professors Maximo Dickmann and Francisco Miraglia, the creators of the special group theory.} is a pair $(R,T)$ such that $R$ is a multiring, $T\subseteq R$ is a multiplicative subset of $R\setminus\{0\}$, and $(R,T)$ satisfies the following properties:
\begin{description}
\item [DM0] $R/_mT$ is hyperbolic.
 \item [DM1] If $\overline{a}\ne0$ in $R/_mT$, then $\overline a^2=\overline 1$ in $R/_mT$. In other words, for all $a\in R\setminus\{0\}$, 
there are $r,s\in T$ such that $ar=s$. 
 \item [DM2] For all $a\in R$, $(\overline 1-\overline a)(\overline 1-\overline a)\subseteq(\overline 1-\overline a)$ in 
$R/_mT$.
 \item [DM3] For all $a,b,x,y,z\in R\setminus\{0\}$, if 
 $$\begin{cases}\overline a\in \overline x+\overline b \\ \overline b\in \overline y+\overline z\end{cases}\mbox{ in }R/_mT,$$
 then exist $\overline v\in\overline x+\overline z$ such that $\overline a\in\overline y+\overline 
v$ and $\overline{vb}\in\overline{xy}+\overline{az}$ in $R/_mT$.
\end{description}

If $R$ is a ring, we just say that $(R,T)$ is a DM-ring, or $R$ is a DM-ring. A Dickmann-Miraglia hyperfield (or DM-hyperfield) $F$ is a 
hyperfield such that $(F,\{1\})$ is a DM-multiring (satisfies DM0-DM3). In other words, $F$ is a DM-hyperfield if $F$ is hyperbolic and for all 
$a,b,v,x,y,z\in F^*$,
\begin{enumerate}[i -]
 \item $a^2=1$.
 \item $(1-a)(1-a)\subseteq(1-a)$.
 \item $\mbox{If }\begin{cases}a\in x+b \\ b\in y+z\end{cases}\mbox{ then there exists }v\in x+z\mbox{ such that }a\in y+v\mbox{ and }vb\in xy+az$.
\end{enumerate}
\end{defn}

\begin{teo}[Theorem 3.4 of \cite{roberto2021quadratic}]\label{teopmf}
 Let $(R,T)$ be a DM-multiring and  denote 
 $$Sm(R,T)=(R/_mT).$$ 
 Then $Sm(R)$ is a special hyperfield (thus $Sm(R,T)^\times$ is a special group).
\end{teo}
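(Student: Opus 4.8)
The plan is to show that the quotient hyperfield $Sm(R,T) = R/_m T$, which is automatically a hyperfield because $(R,T)$ is a DM-multiring, satisfies the axioms of a special hyperfield; by the equivalence in Corollary \ref{cor:equiv1} (together with Proposition \ref{sg.to.mf}) it then follows that its group of units $Sm(R,T)^\times$, equipped with $-1$ and the quaternary relation induced by $\langle \bar a,\bar b\rangle \equiv \langle \bar c,\bar d\rangle \iff \bar c \in \bar a + \bar b$ and $\bar a\bar b = \bar c\bar d$, is a special group. So the real content is: decode what "special hyperfield" means as the image under $M(-)$ of a special group, and verify each special-group axiom SG0--SG6 from Definition \ref{defn:sg} directly in $R/_m T$ using the axioms DM0--DM3.

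First I would set up notation: write $\bar x$ for the class of $x\in R$ in $R/_m T$, and note that by DM0 the quotient is hyperbolic, by DM1 every nonzero $\bar x$ satisfies $\bar x^2 = \bar 1$ so $\dot{(R/_m T)}$ is a group of exponent $2$, and the product is single-valued because $R$ is a multiring (product is a genuine operation, preserved by the quotient). Then I would check the group-of-exponent-two structure and the fact that $-\bar 1$ is a distinguished element with $-\bar a = \overline{-a} = (-\bar 1)\bar a$; this is immediate from the remark after Definition \ref{defn:multiring}. Next, for the special relation I would verify SG0 (equivalence relation): symmetry and reflexivity follow from commutativity of $+$ and from $\bar a \in \bar a + \bar 0$, while the nontrivial transitivity on $2$-forms is exactly the hyperfield sum being well-behaved, i.e. it already holds in any hyperfield where $D(\bar a,\bar b)$ is defined symmetrically. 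SG1 is commutativity of $+$; SG2, $\langle \bar a, -\bar a\rangle \equiv \langle \bar 1, -\bar 1\rangle$, holds because both $\bar a - \bar a$ and $\bar 1 - \bar 1$ equal the whole hyperfield by hyperbolicity (DM0), and the product condition $\bar a(-\bar a) = -\bar a^2 = -\bar 1 = \bar 1(-\bar 1)$ holds by DM1; SG3 is built into the definition of $\equiv$; SG5 is the compatibility $\bar c \in \bar a + \bar b \Rightarrow \bar g\bar c \in \bar g\bar a + \bar g\bar b$, which is the distributivity clause (iv) of Definition \ref{defn:multiring} passed to the quotient, together with multiplicativity of the product relation.

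The substantive steps are SG4 and SG6. For SG4 I would use DM2: the inclusion $(\bar 1 - \bar a)(\bar 1 - \bar a) \subseteq (\bar 1 - \bar a)$ for all $a$ is precisely the "linkage"/multiplicativity property that in the special-group setting yields $\langle a,b\rangle\equiv\langle c,d\rangle \Rightarrow \langle a,-c\rangle\equiv\langle -b,d\rangle$; after scaling to reduce to the case where one entry is $\bar 1$ (using SG5, already established) and using $ab=cd$ (SG3), the statement $\langle \bar 1,\bar b\rangle\equiv\langle \bar c,\bar d\rangle$ with $\bar b = \bar c\bar d$ must be transported to $\langle \bar 1, -\bar c\rangle \equiv \langle -\bar b,\bar d\rangle$; unwinding $D$ in terms of the hyperaddition, this becomes an assertion about elements of $\bar 1 - \bar c$ versus $-\bar b + \bar d$ that one extracts from DM2 applied to the appropriate element. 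For SG6, the $3$-transitivity, I would invoke DM3: axiom DM3 is visibly the "associativity of isometry on $3$-forms" condition — it says that whenever $\bar a \in \bar x + \bar b$ and $\bar b \in \bar y + \bar z$ there is a witness $\bar v \in \bar x + \bar z$ with $\bar a \in \bar y + \bar v$ and the compatibility $\overline{vb} = \overline{xy}\cdot\text{(something)}$, which is exactly the clause needed to prove that $\equiv_3$ defined via the inductive clause (iii) before Definition \ref{defn:sg} is transitive. I expect SG4 and SG6 to be the main obstacles: they require patiently translating between the inductive/witness formulation of the special relation on $n$-forms and the hyperfield sum, and checking that DM2 and DM3 are stated in exactly the form one needs (including the multiplicative side conditions $\overline{vb}\in\overline{xy}+\overline{az}$, which encode the Witt/Pfister linkage). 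Once those are in place, the final sentence — that $Sm(R,T)^\times$ is a special group — is immediate: a special hyperfield is by definition $M(G)$ for a special group $G$, and $M(G)^\times = G$, so the group of nonzero elements of $Sm(R,T)$ with its induced structure is a special group, which is what Theorem \ref{teopmf} asserts.
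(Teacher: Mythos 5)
First, a remark on provenance: the paper does not actually prove Theorem \ref{teopmf}; it quotes it (and Theorem 3.9 of \cite{roberto2021quadratic}, stated immediately after) from the cited source. Since DM0--DM3 are, by definition, conditions on the quotient $R/_mT$, the short route available here is: $R/_mT$ is a multiring, DM1 makes every nonzero class invertible ($\bar a\cdot\bar a=\bar 1$), so $R/_mT$ is a hyperfield satisfying DM0--DM3 with $T=\{1\}$, i.e.\ a DM-hyperfield, and Theorem 3.9 then says it is a special hyperfield. Your plan instead re-proves the DM $\leftrightarrow$ SG dictionary by checking SG0--SG6 directly; that is legitimate, but then the dictionary has to be assigned correctly, and this is where your proposal goes wrong.

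The genuine gap is the misallocation of DM2 and the dismissal of SG0. You claim transitivity of $\equiv$ on binary forms ``already holds in any hyperfield'' and reserve DM2 for SG4; in fact it is exactly the opposite. SG4 is the automatic axiom: from $\bar a\bar b=\bar c\bar d$ and $\bar a\in\bar c+\bar d$, multiplying by $\bar d$ gives $\bar a\bar d\in \bar 1+\bar c\bar d$, reversibility gives $\bar 1\in\bar a\bar d-\bar c\bar d$, and multiplying by $\bar a$ gives $\bar a\in\bar d-\bar b$ --- only the multigroup reversibility, clause (iv) of Definition \ref{defn:multiring}, and DM1 are used, never DM2. By contrast, transitivity of $\equiv_2$ is precisely DM2: writing $u=\bar a\bar b=\bar c\bar d=\bar e\bar f$, the hypotheses $\bar a\in\bar c+\bar d$ and $\bar c\in\bar e+\bar f$ say $\bar a\bar c,\ \bar c\bar e\in\bar 1+u$, while the conclusion $\bar a\in\bar e+\bar f$ says $\bar a\bar e=(\bar a\bar c)(\bar c\bar e)\in\bar 1+u$, i.e.\ $(\bar 1+u)(\bar 1+u)\subseteq\bar 1+u$; this fails in general hyperbolic exponent-$2$ hyperfields, which is why DM2 is an axiom. (Similarly, reflexivity and symmetry are not consequences of ``$\bar a\in\bar a+\bar 0$'' and commutativity of $+$: reflexivity needs hyperbolicity, via $\bar 1\in\bar 1+x$ for all $x$, and symmetry needs the scaling trick of multiplying $\bar a\in\bar c+\bar d$ by $\bar a\bar c$.) Finally, your treatment of SG6 from DM3 is only an assertion (``visibly the clause needed''); the whole content lies in how the witness $\bar v\in\bar x+\bar z$ together with the side condition $\overline{vb}\in\overline{xy}+\overline{az}$ yields $3$-transitivity of the inductively defined $\equiv_3$, and this equivalence is exactly the quoted Theorem 3.9 of \cite{roberto2021quadratic} --- so either invoke that theorem (together with Theorem \ref{psgpsmfhell} and Corollary \ref{cor:equiv1}) or actually carry out that verification; as written, the two ``substantive steps'' of your proposal are one non-issue (SG4) and one unproven claim (SG6), while the step that genuinely needs DM2 (SG0) is waved away.
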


\begin{teo}[Theorem 3.9 of \cite{roberto2021quadratic}]
 Let $F$ be a hyperfield satisfying DM0-DM2. Then $F$ satisfies DM3 if and only if satisfies SMF4. In other words, $F$ is a DM-hyperfield if and only if it is a special hyperfield.
\end{teo}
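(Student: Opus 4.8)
The plan is to prove the reformulation stated in the theorem: under the standing hypothesis that $F$ is a hyperfield satisfying DM0--DM2, the pair $(F,\{1\})$ also satisfies DM3 if and only if $F$ is a special hyperfield (equivalently, satisfies SMF4); I will treat the two implications separately. The implication DM3 $\Rightarrow$ SMF4 is essentially free: if $(F,\{1\})$ satisfies DM0--DM3 it is a DM-multiring, so Theorem~\ref{teopmf} applies and $Sm(F,\{1\}) = F/_m\{1\}$ is a special hyperfield; but $F/_m\{1\}$ is canonically $F$ itself, so $F$ is a special hyperfield and in particular satisfies SMF4.

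For the converse, assume $F$ is a special hyperfield. By Corollary~\ref{cor:equiv1} we may take $F = M(G)$ for the special group $G = F^{\times}$, with addition as in Proposition~\ref{sg.to.mf}. (As a consistency check one verifies that $M(G)$ does satisfy DM0--DM2 on its own: $1-1 = M(G)$ and $a^{2}=1$ are immediate, and $(1-a)(1-a)\subseteq 1-a$ holds because for $a\neq 1$ one has $1-a = D_G(1,-a)$, a subgroup of $G$ by roundness of $1$-fold Pfister forms.) The content is DM3. I would first record that for nonzero $p,q,r$ the assertion ``$p\in q+r$ in $M(G)$'' is equivalent to ``$p\in D_G(q,r)$ in $G$'': for $q\neq -r$ this is the definition of the sum, and for $q=-r$ both hold unconditionally since $D_G(q,-q) = q\,D_G(1,-1) = G$ by SG2 and SG5. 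Hence DM3 for $M(G)$ becomes the purely special-group statement: for all $a,b,v,x,y,z\in G$, if $a\in D_G(x,b)$ and $b\in D_G(y,z)$, then there is $v\in D_G(x,z)$ with $a\in D_G(y,v)$ and $vb\in D_G(xy,az)$.

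To prove this I would use $b$ as a linking element: by the very definition of $\equiv_3$ (taking $b$ in the auxiliary slot, so that the three required $2$-form isometries are $\langle x,b\rangle\equiv\langle a,xab\rangle$, $\langle y,z\rangle\equiv\langle b,yzb\rangle$ and a reflexivity) one gets $\langle x,y,z\rangle \equiv \langle a,\,xab,\,yzb\rangle$ directly from the two hypotheses. Since $G$ is special, $\equiv_3$ is an equivalence relation invariant under permutation of entries, so $\langle y,x,z\rangle \equiv \langle a,xab,yzb\rangle$; unwinding $\equiv_3$ for this isometry yields $v,t,w\in G$ with
$$\langle y,v\rangle \equiv \langle a,t\rangle,\qquad \langle x,z\rangle\equiv\langle v,w\rangle,\qquad \langle xab,yzb\rangle \equiv \langle t,w\rangle ,$$
and SG3 applied to the first two relations forces $t = ayv$ and $w = xzv$, so the third reads $\langle xab,yzb\rangle \equiv \langle ayv,xzv\rangle$. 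The first two relations are exactly $a\in D_G(y,v)$ and $v\in D_G(x,z)$. For the third requirement, scale $\langle xab,yzb\rangle \equiv \langle ayv,xzv\rangle$ by $bxz$ (axiom SG5) and collapse the squares: the left side becomes $\langle az,xy\rangle$ and the right side becomes $\langle vb,\,abvxyz\rangle$, whence $vb\in D_G(xy,az)$. This establishes DM3, hence that $(F,\{1\})$ is a DM-multiring.

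I expect the genuinely delicate point to be not the algebra but the translation layer: checking that the hyperfield sum in $M(G)$ really agrees with the special-group representation $D_G$ on nonzero arguments (including the degenerate case $q=-r$), and that the special-group facts invoked are legitimately available --- in particular that $\equiv_3$ is an equivalence relation invariant under permutations (here $3$-transitivity, i.e.\ SG6, enters, which is allowed since $G$ is special) and roundness of $1$-fold Pfister forms. The remainder --- reading off $t$ and $w$ from the $\equiv_3$-decomposition and carrying out the single scaling by $bxz$ --- is routine bookkeeping, though one must be careful with squares and signs. One should also confirm that the axioms SMF0--SMF3 of a special hyperfield in \cite{roberto2021quadratic} coincide with DM0--DM2 up to relabeling, so that the reformulation proved above is exactly the claimed equivalence DM3 $\Leftrightarrow$ SMF4.
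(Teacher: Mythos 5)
This statement is imported: the paper cites it as Theorem 3.9 of \cite{roberto2021quadratic} and reproduces no proof, so there is nothing in this text to compare your argument against line by line; judged on its own terms, your proposal is correct. The forward direction is exactly the cheap route available here: DM0--DM3 make $(F,\{1\})$ a DM-multiring, Theorem \ref{teopmf} with $T=\{1\}$ gives that $F/_m\{1\}\cong F$ is special. For the converse, your translation layer is handled properly --- for nonzero $p,q,r$ the equivalence of $p\in q+r$ in $M(G)$ with $p\in D_G(q,r)$, including the degenerate case $q=-r$ where both sides hold because $D_G(q,-q)=qD_G(1,-1)=G$ --- and the special-group computation is sound: the two hypotheses assemble, by the definition of $\equiv_3$ with $b$ in the auxiliary slot, into $\langle x,y,z\rangle\equiv_3\langle a,xab,yzb\rangle$; swapping the first two entries (legitimate, since transposition isometries follow from SG0--SG1 and transitivity of $\equiv_3$ is SG6, available because $G$ is special) and unwinding the definition produces $v,t,w$ with $t=ayv$, $w=vxz$ forced by SG3, and scaling the resulting relation $\langle xab,yzb\rangle\equiv\langle ayv,vxz\rangle$ by $bxz$ (SG5) indeed yields $\langle az,xy\rangle\equiv\langle abvxyz,vb\rangle$, i.e.\ $vb\in D_G(xy,az)$, while the first two relations give $a\in D_G(y,v)$ and $v\in D_G(x,z)$. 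The only genuine caveat is the one you flag yourself: since this paper never states the axiom SMF4, what you actually prove is the ``in other words'' formulation (under DM0--DM2, DM3 holds iff $F$ is special), which is the substantive content; matching it literally against SMF4 requires consulting the definitions in \cite{roberto2021quadratic}, just as the standard facts you invoke (permutation invariance of $\equiv_3$, value sets of $\langle 1,b\rangle$ being subgroups in a special group) must be pulled from \cite{dickmann2000special}. Whether the source paper argues the same way (via the special-group dictionary of Theorem \ref{psgpsmfhell} and Corollary \ref{cor:equiv1}) or manipulates the hyperfield axioms directly cannot be settled from this text, but your reduction of DM3 to a linkage statement about $D_G$ proved from the $\equiv_3$-decomposition is a complete and natural proof.
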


In this sense, we define the following category:

\begin{defn}
A \textbf{pre-special hyperfield} is a hyperfield satisfying DM0, DM1 and DM2. In other words, a pre-special hyperfield is a hyperbolic hyperfield $F$ such that for all $a\in\dot F$, $a^2=1$ and $(1-a)(1-a)\subseteq1-a$.

The category of pre-special hyperfields will be denoted by $PSMF$.
\end{defn}

\begin{teo}\label{psgpsmfhell}
 Let $G$ be a pre-special group and consider $(M(G),+,-,0,1)$, with operations defined by
 \begin{multicols}{2}
 \begin{itemize}
   \item $a\cdot b=\begin{cases}0\,\mbox{if }a=0\mbox{ or }b=0 \\ a\cdot b\,\mbox{otherwise}\end{cases}$
   \item $-(a)=(-1)\cdot a$
   \item $a+b=\begin{cases}\{b\}\,\mbox{if }a=0 \\ \{a\}\,\mbox{if }b=0\\ M(G)\,\mbox{if }a=-b,\,\mbox{and }a\ne0 \\ D_G(a,b)\,\mbox{otherwise}\end{cases}$
  \end{itemize} 
 \end{multicols}
 Then $M(G)$ is a pre-special multifield. Conversely, if $F$ is a pre-special multifield then $(\dot F,\equiv_F,-1)$ is a pre-special group, where
 $$\langle a,b\rangle\equiv_F\langle c,d\rangle\mbox{ iff }ab=cd\mbox{ and }a\in c+d.$$
\end{teo}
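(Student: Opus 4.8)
The plan is to prove the two halves by quite different routes: the first reuses the known special-group calculation, while the second is a direct chase through the axioms DM0--DM2. For the direction $G\mapsto M(G)$ I would first check that $(M(G),+,\cdot,-,0,1)$ is a commutative hyperfield by the very same verification that underlies Proposition~\ref{sg.to.mf} (Theorem 3.3 of \cite{ribeiro2016functorial}), observing that this verification never uses SG6: commutativity of $+$ and the multigroup inversion law $c\in a+b\Rightarrow a\in c+(-b)$ follow from SG1 and SG4; $a\cdot 0=0$ and the distributive \emph{equality} (every multifield is automatically a hyperfield, since multiplication by a unit is a bijection) follow from SG5; and the one non-formal point, associativity of the hypersum, unwinds to $D_G(D_G(a,b),c)=D_G(a,D_G(b,c))$, the linkage property of binary representation valid in every pre-special group (\cite{dickmann2000special}). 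Granting that $M(G)$ is a hyperfield with set of nonzero elements $G$, the DM-axioms are short: DM0 because $-(-1)=1$ in $G$ forces $1+(-1)=M(G)$ by the defining formula for $+$; DM1 because $G$ has exponent $2$; and DM2 because for $a=1$ one has $1-a=M(G)$, while for $a\ne 1$ one has $1-a=D_G(1,-a)$ and, given $u,v\in D_G(1,-a)$, SG3 yields $\langle u,-au\rangle\equiv\langle 1,-a\rangle$ and $\langle v,-av\rangle\equiv\langle 1,-a\rangle$, so multiplying the second by $u$ (SG5) and using transitivity of $\equiv$ on pairs (SG0) gives $\langle uv,-auv\rangle\equiv\langle 1,-a\rangle$, i.e.\ $uv\in 1-a$. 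Hence $M(G)$ is a pre-special hyperfield.

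For the converse, let $F$ be a pre-special hyperfield and put $G:=\dot F$. By DM1 each element of $G$ is its own inverse, so $G$ is automatically an abelian group of exponent $2$; and $-1\ne 0$ (else $1=0$ in $F$), so $-1\in G$. With $\equiv_F$ as in the statement, SG3 is the definition. I would lean on two facts throughout: (a) hyperbolicity gives $a-a=a(1-1)=F$ for every $a\in G$, so $b\in a+(-a)$ always, hence $a\in a+b$ by the multigroup inversion law --- this delivers reflexivity, SG1, and (together with $a^2=1$) SG2; and (b) a membership $x\in c+d$ may be multiplied through by any $g\in G$ thanks to the distributive equality, which gives SG5 at once. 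For SG4, from $a\in c+d$ one gets $1\in ac+ad$, hence $ad\in 1-ac$, hence $a\in d(1-ac)=d+(-b)$ since $b=acd$; symmetry of $\equiv_F$ follows from multiplying $a\in c+d$ by $c$, which yields $ac\in 1+cd$ and therefore $c\in a(1+cd)=a+b$. The remaining clause of SG0 --- transitivity of $\equiv_F$ on pairs --- is precisely where DM2 is indispensable: from $a\in c+d$, $c\in e+f$ and $ab=cd=ef$, I would rewrite $c+d=c(1+ef)$ and $e+f=e(1+ef)$ to obtain $ac\in 1+ef$ and $ec\in 1+ef$, and then DM2 (with $-ef$ in place of its free variable) gives $ae=(ac)(ec)\in(1+ef)(1+ef)\subseteq 1+ef$, that is $a\in e(1+ef)=e+f$; together with $ab=ef$ this means $\langle a,b\rangle\equiv_F\langle e,f\rangle$. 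Hence $(\dot F,\equiv_F,-1)$ is a pre-special group.

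The only genuinely delicate point in the whole statement is the associativity clause in the first half; everything else is a routine axiom-chase, and in the converse DM2 does exactly the transitivity job for which it was created. Once both halves are established it is routine to verify --- and I would record it as a corollary --- that $G\mapsto M(G)$ and $F\mapsto(\dot F,\equiv_F,-1)$ are mutually inverse on objects, refining the equivalence of Corollary~\ref{cor:equiv1} to the pre-special setting.
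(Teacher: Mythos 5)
Your converse direction is essentially correct and complete: hyperbolicity plus the full distributivity available in a multifield give reflexivity, SG1, SG2 and SG5; the computation $a\in c+d\Rightarrow 1\in ac+ad\Rightarrow ad\in 1-ac\Rightarrow a\in d-acd$ gives SG4; and DM2 applied to $1+ef$ is exactly what yields transitivity of $\equiv_F$ on pairs. Since the paper states this theorem in its preliminaries without proof, that half can only be judged on its own merits, and it passes.

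The forward direction, however, has a genuine gap at exactly the point you yourself single out. The identity $D_G(D_G(a,b),c)=D_G(a,D_G(b,c))$ is \emph{not} available for pre-special groups: in \cite{dickmann2000special} reassociation and permutation-invariance of value sets are established for special groups, and the arguments use $3$-transitivity (SG6), the one axiom you must avoid here; for mere pre-special groups the property actually fails. To see this, recall that giving a pre-special group structure on an exponent-$2$ group $G$ with distinguished $-1$ amounts to giving subgroups $V(x):=D_G(1,x)$ with $x\in V(x)$, $V(-1)=G$ and the reciprocity $y\in V(x)\Rightarrow -x\in V(-y)$, the relation being $\langle a,b\rangle\equiv\langle c,d\rangle$ iff $ab=cd$ and $ac\in V(ab)$ (SG0--SG5 translate exactly into these conditions). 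Now take $G$ of order $32$ generated by $-1,a,b,c,d$ and put $V(-1)=G$, $V(a)=\{1,a,b,ab\}$, $V(-b)=\{1,-a,-b,ab\}$, $V(-ab)=\{1,-a,b,-ab\}$, $V(bc)=\{1,bc,d,bcd\}$, $V(-d)=\{1,-bc,-d,bcd\}$, $V(-bcd)=\{1,-bc,d,-bcd\}$, and $V(x)=\{1,x\}$ for all remaining $x$; every instance of reciprocity is checked directly, so this is a pre-special group. In $M(G)$ one has $b\in D_G(1,a)$ and $d\in D_G(1,bc)$, hence $bd\in D_G(b,c)\subseteq(1+a)+c$; but $D_G(a,c)=\{a,c\}$, so $1+(a+c)=D_G(1,a)\cup D_G(1,c)=\{1,a,b,ab,c\}$, which does not contain $bd$. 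So the hypersum of $M(G)$ is not associative for this pre-special group, and your claim that the verification behind Proposition \ref{sg.to.mf} ``never uses SG6'' cannot be right at the associativity step: that step is precisely where SG6 (or some further hypothesis beyond SG0--SG5) must enter, and your proposal supplies nothing in its place. Your treatment of DM0, DM1, DM2, reversibility and distributivity is fine, but without associativity $M(G)$ is not even a multifield, so the forward half of the statement is not reached by this route.
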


We finish this section stating the following result established in \cite{roberto2021hauptsatz}

\begin{teo}[Arason-Pfister Hauptsatz]\label{haup}
 Let $F$ be a special hyperfield, then  it holds $AP_F(n)$, for all $n \geq 0$. In more details: for each  $n \geq 0$ and For each $\varphi = \langle a_1,\cdots, a_k \rangle$, a  non-empty ($k\geq 1$), regular ($a_i \in \dot{F}$) and anisotropic form, if  $\varphi\in I^n(F)$, then $\dim(\varphi)\ge2^n$  $\varphi\in I^n(F) $, if $\varphi \neq \emptyset$ is anisotropic, then $\dim_{W,F}(\varphi)\ge2^n$.
\end{teo}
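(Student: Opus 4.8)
\section*{Proof proposal}

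The plan is to deduce $AP_F(n)$ from the Arason--Pfister Hauptsatz in the category of special groups and then transport it back along the equivalence recorded above. Given a special hyperfield $F$, Theorem~\ref{psgpsmfhell} (together with Theorem~\ref{teopmf} and Corollary~\ref{cor:equiv1}) associates to $F$ a special group $G$ with $\dot F = G$ and $F \cong M(G)$, in such a way that, for $a,b,c \in \dot F$ with $b \neq -c$, the relation $a \in b+c$ in $F$ is exactly $a \in D_G(\langle b,c\rangle)$. So the first task is to set up a careful dictionary between the quadratic-form data on $F$ and on $G$: (i) a regular $k$-form $\varphi = \langle a_1,\dots,a_k\rangle$ over $F$ is literally a $k$-form over $G$; (ii) isometry of forms over $F$ — built from the multivalued sum, i.e.\ from $D_F$ — coincides with the special-group isometry $\equiv_n$ over $G$; (iii) the fundamental ideal $I(F)$, generated in $W(F)$ by the binary forms $\langle 1,a\rangle$ with $a \in \dot F$ (equivalently, the even-dimensional classes), corresponds to $I(G)$, and hence $I^n(F)$ corresponds to $I^n(G)$ for every $n$; (iv) anisotropy is preserved in both directions, and the Witt dimension $\dim_{W,F}(\varphi)$, the dimension of the anisotropic part of $\varphi$, matches the corresponding invariant over $G$ (and equals $\dim(\varphi)$ when $\varphi$ is anisotropic).

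Once the dictionary is in place, the statement is precisely the Arason--Pfister Hauptsatz for special groups, proved in \cite{dickmann2000special}: a non-empty anisotropic form whose Witt class lies in $I^n(G)$ has dimension at least $2^n$. Reading this conclusion back through $M$ yields $AP_F(n)$ for all $n \geq 0$, which is the assertion. If one prefers a self-contained argument inside $\mathcal{SMF}$ rather than invoking the special-group version, the proof runs by induction on $n$. The cases $n=0$ and $n=1$ are immediate: a non-empty anisotropic form has dimension $\geq 1 = 2^0$, and a non-empty anisotropic form with class in $I^1(F)$ has even dimension, hence dimension $\geq 2 = 2^1$. For the inductive step one uses that the subgroup $I^n(F)$ of $W(F)$ is additively generated by the $n$-fold Pfister forms $\langle\langle a_1,\dots,a_n\rangle\rangle$ over $F$, so an anisotropic $\varphi \in I^n(F)$ is Witt-equivalent to a sum $\tau_1 \perp \dots \perp \tau_m$ (up to signs) of such Pfister forms with $m$ minimal; one then exploits (a) roundness/multiplicativity of Pfister forms ($D_F(\tau)$ is a subgroup of $\dot F$ and $\tau \cong d\tau$ for $d \in D_F(\tau)$), (b) the dichotomy that a Pfister form over $F$ is either anisotropic or hyperbolic, and (c) the linkage (``common slot'') lemma: two anisotropic $n$-fold Pfister forms whose difference is isotropic share a common $(n-1)$-fold Pfister divisor. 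Combining (a)--(c) one either collapses the sum to a single Pfister form, of dimension exactly $2^n$, or produces a non-empty anisotropic form of degree $< n$ to which the induction hypothesis applies, forcing $\dim_{W,F}(\varphi) \geq 2^n$.

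The main obstacle is item (c), the linkage lemma for Pfister forms over a special hyperfield, together with the roundness and ``isotropic implies hyperbolic'' facts (a)--(b): these are exactly the points where the full strength of the special-group axioms — in particular $3$-transitivity, which in hyperfield language is the content of axiom DM3 / property SMF4 (Theorem~\ref{psgpsmfhell} and the theorems around Theorem~\ref{teopmf}) — is required, and one has to check that the classical field-theoretic proofs of these lemmas go through verbatim using only the multigroup arithmetic of $F$. If one instead follows the reduction route, the main (but routine) labour is the verification of items (i)--(iv) of the dictionary, especially the correct matching of the Witt-dimension invariant and the fact that ``non-empty anisotropic form in $I^n$'' transports correctly along the equivalence in both directions.
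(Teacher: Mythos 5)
There is a genuine gap, and it sits exactly at the point where all the work is. The paper does not prove Theorem~\ref{haup} here at all: it is quoted as a result \emph{established in} \cite{roberto2021hauptsatz}, and the surrounding text makes clear why this is nontrivial --- the Hauptsatz is being asserted for \emph{every} special hyperfield (equivalently, every special group, not just reduced ones or those of the form $G(F)$ for a field $F$). Your first route transfers the statement along the equivalence $\mathcal{SG}\simeq\mathcal{SMF}$ and then invokes ``the Arason--Pfister Hauptsatz for special groups, proved in \cite{dickmann2000special}''. No such theorem for arbitrary special groups is available there: the classical Arason--Pfister argument is field-theoretic (function fields of quadrics, transcendental extensions) and has no counterpart for abstract special groups/hyperfields, which is precisely why $AP(n)$ for all special groups is the content of the companion paper the present paper cites. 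The dictionary (i)--(iv) you describe is fine and essentially built into the definitions (forms, isometry, $D_F$, $I^n$ and anisotropy over $F$ are literally the special-group notions for $\dot F$), but once it is in place your argument is circular: you have reduced the theorem to itself.

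Your second, ``self-contained'' route does not close this gap. Its key step (c) is stated as: two anisotropic $n$-fold Pfister forms whose difference is isotropic share a common $(n-1)$-fold Pfister divisor. But $\tau_1\perp-\tau_2$ is \emph{always} isotropic (both Pfister forms represent $1$, so the orthogonal sum contains $\langle 1,-1\rangle$), so the hypothesis is vacuous and the assertion becomes ``any two $n$-fold Pfister forms are linked'' --- which is false already over fields (non-linked fields, e.g.\ fields carrying anisotropic biquaternion forms, exist), hence certainly not a lemma one can prove for all special hyperfields. Even with the correct Elman--Lam linkage criterion, a linkage-based induction only yields the Hauptsatz for \emph{linked} structures, and the claim that ``the classical field-theoretic proofs go through verbatim using only the multigroup arithmetic of $F$'' is exactly the assertion that needs proof; the argument of \cite{roberto2021hauptsatz} has to avoid the transcendental tools of the classical proof, and nothing in your sketch substitutes for them. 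To repair the proposal you would either have to reproduce the actual argument of \cite{roberto2021hauptsatz} (or an equivalent purely combinatorial induction inside $\mathcal{SMF}$ using the special hyperfield axioms), or restrict the claim to the cases where AP is classically known (fields, reduced special groups), which is weaker than Theorem~\ref{haup}.
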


 \subsection{The K-theory for Multifields/Hyperfields}

In this section we describe the notion of K-theory of a hyperfield, introduced in \cite{roberto2021ktheory} by essentially repeating the construction in \cite{milnor1970algebraick}  replacing the word ``field'' by ``hyperfield'' and explore some of this basic properties. Apart from the obvious resemblance, more technical aspects of this new theory can be developed (but with other proofs) in multistructure setting in parallel with classical K-theory.

    \begin{defn}[The K-theory of a Hyperfield]
  For a hyperfield $F$, $K_*F$ is the graded ring
$$K_*F=(K_0F,K_1F,K_2F,...)$$
  defined by the following rules: $K_0F:=\mathbb Z$. $K_1F$ is the multiplicative group $\dot F$ written additively. 
  With this purpose, we fix the canonical ``logarithm'' isomorphism
$$\rho:\dot F\rightarrow K_1F,$$
  where $\rho(ab)=\rho(a)+\rho(b)$. Then $K_nF$ is defined to be the quotient of the tensor algebra
  $$K_1F\otimes K_1F\otimes...\otimes K_1F\,(n \mbox{ times})$$
  by the (homogeneous) ideal generated by all $\rho(a)\otimes \rho(b)$, with $a, b\ne 0$ and $b\in1-a$. 
  \end{defn}
  
  In other words, for each $n\ge2$, 
$$K_nF:=T^n(K_1F)/Q^n(K_1(F)),$$
where
$$T^n(K_1F):=K_1F\otimes_{\mathbb Z} K_1F\otimes_{\mathbb Z}...\otimes_{\mathbb Z} K_1F$$ 
and $Q^n(K_1(F))$ is the subgroup generated by all expressions of type $\rho(a_1)\otimes\rho(a_2)\otimes...\otimes\rho(a_n)$ such that $a_{i+1}\in1-a_i$ for some $i$ with $1\le i \le n-1$.
  
  To avoid carrying the overline symbol, we will adopt all the conventions used in Dickmann-Miraglia's K-theory (\cite{dickmann2006algebraic}). Just as it happens with the previous K-theories, a generic element $\eta\in K_nF$ has the pattern
  $$\eta=\rho(a_1)\otimes\rho(a_2)\otimes...\otimes\rho(a_n)$$
  for some $a_1,...,a_n\in\dot F$, with $a_{i+1}\in1-a_i$ for some $1\le i<  n$. Note that if $F$ is a field, then ``$b\in1-a$'' just means 
$b=1-a$, and the hyperfield and Milnor's K-theory for $F$ coincide.
  
  The very first task, is to extend the basic properties valid in Milnor's and Dickmann-Miraglia's K-theory to ours. Here we already need to 
restrict our attention to {\bf hyperbolic hyperfields} ($\mathcal{HMF}$):

  \begin{lem}[Basic Properties I]\label{bp1}
 Let $F$ be an hyperbolic hyperfield. Then
 \begin{enumerate}[a -]
 \item $\rho(1)=0$.
 \item For all $a\in\dot F$, $\rho(a)\rho(-a)=0$ in $K_2F$.
 \item For all $a,b\in\dot F$, $\rho(a)\rho(b)=-\rho(b)\rho(a)$ in $K_2F$.
 \item  For every $a_1,...,a_n\in\dot F$ and every permutation $\sigma\in S_n$,
 $$\rho(a_{\sigma 1})...\rho(a_{\sigma i})...\rho(  a_{\sigma n})=\mbox{sgn}(\sigma)\rho(a_1)...\rho(a_n)\mbox{ in }K_nF.$$
  \item For every $\xi\in K_mF$ and $\eta\in K_nF$, $\eta\xi=(-1)^{mn}\xi\eta$ in $K_{m+n}F$.
 \item For all $a\in\dot F$, $\rho(a)^2=-\rho(a)\rho(-1)$.
 \end{enumerate}
\end{lem}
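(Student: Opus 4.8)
The plan is to mimic Milnor's original argument for fields, checking at each step that only the hyperbolicity hypothesis (not a ground-field multiplication) is needed. The logical dependencies are: (a) is immediate, (b) is the key new input, (c) follows from (b), (d) is (c) promoted to arbitrary permutations, (e) is a bookkeeping consequence of (d), and (f) is a direct manipulation of (b)--(c).

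First I would prove (a): since $1\cdot 1=1$, the logarithm identity gives $\rho(1)=\rho(1\cdot 1)=\rho(1)+\rho(1)$, hence $\rho(1)=0$ in the abelian group $K_1F$. For (b) --- the crucial step --- I would exploit hyperbolicity of $F$. For any $a\in\dot F$ we have $-a\in 1-a$: indeed $1-1=F$ gives $a-a=F$ after multiplying by $a$ (using axiom (iv)/(v) of Definition \ref{defn:multiring}), in particular $1\in a-a$, equivalently $-a\in 1-a$ (reversibility in the multigroup). Wait --- I should be slightly more careful: I want $b\in 1-a$ with $b=-a$. From $1\in a+(-a)$ and reversibility ($z\in x+y \Rightarrow x\in z-y$) one gets $-a\in 1+(-a)\cdot(-1)=1-a$. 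Hence $\rho(a)\otimes\rho(-a)$ is one of the defining generators of $Q^2(K_1F)$, so $\rho(a)\rho(-a)=0$ in $K_2F$. For (c), apply (b) to the element $ab$: expand $0=\rho(ab)\rho(-ab)=(\rho(a)+\rho(b))(\rho(-1)+\rho(a)+\rho(b))$ and use (b) again on the diagonal terms $\rho(a)\rho(-a)$, $\rho(b)\rho(-b)$ — more precisely the standard trick is $0=\rho(a\cdot(-1))\rho$-type expansions; I would organize it exactly as in Milnor, writing $0=\rho(a)\rho(-a)$ and $0=\rho(b)\rho(-b)$ and $0=\rho(ab)\rho(-ab)$, subtracting to isolate $\rho(a)\rho(b)+\rho(b)\rho(a)=0$.

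For (d) I would argue that $S_n$ is generated by adjacent transpositions and that each adjacent transposition acts by $-1$ on the relevant tensor slot by (c), while leaving the other slots untouched; composing gives the sign of $\sigma$, and $\mathrm{sgn}$ is a homomorphism so the assignment is well defined. For (e), write $\xi=\rho(a_1)\cdots\rho(a_m)$ and $\eta=\rho(b_1)\cdots\rho(b_n)$ on generators (by bilinearity it suffices), and move each $\rho(b_j)$ past each $\rho(a_i)$ using (c), accumulating $(-1)$ exactly $mn$ times; then extend $\mathbb{Z}$-linearly. Finally (f): from (c) with $b=-a$ one has $\rho(a)\rho(-a)=-\rho(-a)\rho(a)$, but also $\rho(-a)=\rho(-1)+\rho(a)$, so expanding $0=\rho(a)\rho(-a)=\rho(a)\rho(-1)+\rho(a)^2$ (using (b)) yields $\rho(a)^2=-\rho(a)\rho(-1)$.

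The main obstacle is (b): everything else is formal once (b) is in hand, so the real content is verifying $-a\in 1-a$ purely from the hyperbolic multigroup axioms — i.e. that $1\in a-a$ and reversibility together force $\rho(a)\otimes\rho(-a)$ to be a Steinberg-type generator. I should double-check that $1-1=F$ (hyperbolicity) indeed propagates to $a-a=F$ for every $a\in\dot F$ via the distributivity clause, since in a mere multiring one only has the inclusion $(x+y)d\subseteq xd+yd$; but multiplying $1-1=F$ by the invertible $a$ and also by $a^{-1}$ gives containments in both directions, forcing equality $a-a=F$, which is all that is needed.
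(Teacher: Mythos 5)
Your proposal is correct and follows essentially the same route as the paper: (a) from $\rho$ being a homomorphism, the key step (b) obtained by using hyperbolicity ($1-1=F$) together with reversibility and multiplication by the invertible $a$ to show $-a\in 1-a$, hence $\rho(a)\otimes\rho(-a)$ is a defining generator of the quotient, then (c) by expanding $\rho(ab)\rho(-ab)=0$ bilinearly, and (d)--(f) as formal consequences on generators. The only (immaterial) difference is the order of operations in (b) --- you scale $1-1=F$ by $a$ and then apply reversibility, while the paper applies reversibility inside $1-1$ first and then multiplies by $a$.
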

\begin{proof}
 $ $
 \begin{enumerate}[a -]
  \item Is an immediate consequence of the fact that $\rho$ is an isomorphism.
  \item Since $F$ hyperbolic, $1-1=F$. Then $-a^{-1}\in1-1$ for all $a\in\dot F$, and hence, $-1\in-1+a^{-1}$. Multiplying this by 
$a$, we get $-a\in1-a$. By definition, this imply $\rho(a)\rho(-a)=0$.
  
  \item By item (b), $\rho(ab)\rho(-ab)=0$ in $K_2F$. But
  \begin{align*}
   \rho(ab)\rho(-ab)&=\rho(a)\rho((-a)b)+\rho(b)\rho((-b)a) \\
   &=\rho(a)\rho(-a)+\rho(a)\rho(b)+\rho(b)\rho(-b)+\rho(b)\rho(a) \\
   &=\rho(a)\rho(b)+\rho(b)\rho(a).
  \end{align*}
  From $\rho(a)\rho(b)+\rho(b)\rho(a)=\rho(ab)\rho(-ab)=0$, we get the desired result $\rho(a)\rho(b)=-\rho(b)\rho(a)$ in $K_2F$.
  
  \item This is a consequence of item (c) and an inductive argument.
  
  \item This is a consequence of item (d) and an inductive argument, using the fact that an element in $K_nF$ has a pattern
  $$\eta=\rho(a_1)\otimes \rho(a_2)\otimes...\otimes \rho(a_n)$$
  for some $a_1,...,a_n\in\dot F$, with $a_{i+1} \in 1-a_i$ for some $1\le i< n$.
 
 \item Direct consequence of item (a). 
 \end{enumerate}
\end{proof} 

An element $a\in\dot F$ induces a morphism of graded rings $\omega^a=\{\omega^a_n\}_{n\ge1}:K_*F\rightarrow K_*F$ of degree 1, where $\omega^a_n:K_nF\rightarrow K_{n+1}F$ is the multiplication by $\rho(a)$. When $a=-1$, we write
$$\omega=\{\omega_n\}_{n\ge1}=\{\omega^{-1}_n\}_{n\ge1}=\omega^{-1}.$$

 \begin{prop}[Adapted from 3.3 of \cite{dickmann2006algebraic}]\label{3.3ktmultiadap}
 Let $F,K$ be hyperbolic hyperfields and $\varphi:F\rightarrow L$ be a morphism. Then $\varphi$ induces a morphism of graded rings
 $$\varphi_*=\{\varphi_n:n\ge0\}:K_*F\rightarrow K_*L,$$
 where $\varphi_0=Id_{\mathbb Z}$ and for all $n\ge1$, $\varphi_n$ is given by the following rule on generators
$$\varphi_n(\rho(a_1)...\rho(a_n))=\rho(\varphi(a_1))...\rho(\varphi(a_n)).$$
Moreover if $\varphi$ is surjective then $\varphi_*$ is also surjective, and if $\psi:L\rightarrow M$ is another morphism then
\begin{enumerate}[a -]
 \item $(\psi\circ\varphi)_*=\psi_*\circ\varphi_*$ and $Id_*=Id$.
 \item For all $a\in \dot F$ the following diagram commute:
 $$\xymatrix@!=5pc{K_nF\ar[d]_{\varphi_n}\ar[r]^{\omega^a_n} & K_{n+1}F\ar[d]^{\varphi_{n+1}} \\ K_nL\ar[r]_{\omega^{\varphi(a)}_n} & K_{n+1}L}$$
 \item 
 For all $n\ge1$ the following diagram commute:
 $$\xymatrix@!=5pc{K_nF\ar[d]_{\varphi_n}\ar[r]^{\omega^{-1}_n} & K_{n+1}F\ar[d]^{\varphi_{n+1}} \\ K_nL\ar[r]_{\omega^{-1}_n} & K_{n+1}L}$$
\end{enumerate}
\end{prop}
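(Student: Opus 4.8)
The plan is to build $\varphi_n$ first on the tensor algebra and then check it descends to the quotient. I would start by recalling that $\varphi\colon F\to L$ restricts to a group homomorphism $\dot\varphi\colon \dot F\to \dot L$ on the multiplicative groups (since $\varphi$ is a hyperfield morphism, $\varphi(1)=1$, $\varphi$ is multiplicative, and $\varphi$ sends nonzero elements to nonzero elements — this needs a one-line justification, e.g. $\varphi(a)\varphi(a^{-1})=\varphi(1)=1$). Composing with the logarithm isomorphisms $\rho_F,\rho_L$, we obtain a group homomorphism $K_1F\to K_1L$, which I will still call $\varphi_1$, characterized by $\varphi_1(\rho(a))=\rho(\varphi(a))$. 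Taking $n$-fold tensor powers over $\mathbb Z$ gives a well-defined homomorphism of abelian groups $T^n(\varphi_1)\colon T^n(K_1F)\to T^n(K_1L)$ on generators by $\rho(a_1)\otimes\cdots\otimes\rho(a_n)\mapsto \rho(\varphi(a_1))\otimes\cdots\otimes\rho(\varphi(a_n))$; set $\varphi_0=\mathrm{Id}_{\mathbb Z}$.

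Next I would verify that $T^n(\varphi_1)$ carries the relation subgroup $Q^n(K_1F)$ into $Q^n(K_1L)$, so that it induces a well-defined map $\varphi_n\colon K_nF\to K_nL$. This is the key step, and it is where the hyperfield morphism axiom (i) from Definition \ref{defn:multiring}'s morphism notion is used: a generator of $Q^n(K_1F)$ has the form $\rho(a_1)\otimes\cdots\otimes\rho(a_n)$ with $a_{i+1}\in 1-a_i$ for some $i$; applying $\varphi$ and using that $c\in a+b$ implies $\varphi(c)\in\varphi(a)+\varphi(b)$ (together with $\varphi(-a)=-\varphi(a)$ so that $a_{i+1}\in 1-a_i$ gives $\varphi(a_{i+1})\in 1-\varphi(a_i)$), the image lands in $Q^n(K_1L)$. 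Since $T^n(\varphi_1)$ is additive and $Q^n$ is generated by such elements, the whole subgroup maps into $Q^n(K_1L)$. Compatibility with the graded ring multiplication (i.e. that $\varphi_*=\{\varphi_n\}$ is a graded ring homomorphism, not just a degreewise group map) follows because multiplication in $K_*$ is induced by concatenation of tensors and $T^{\bullet}(\varphi_1)$ is visibly multiplicative for concatenation; $\varphi_0=\mathrm{Id}$ handles the unit. Surjectivity when $\varphi$ is surjective is then immediate: $\varphi$ surjective forces $\dot\varphi$ surjective (a nonzero element of $L$ is $\varphi(a)$ for some $a$, necessarily nonzero), hence $\varphi_1$ is onto, hence each $\varphi_n$ is onto on generators and therefore onto.

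For the functoriality clause (a), I would observe that $(\psi\circ\varphi)_1 = \psi_1\circ\varphi_1$ on $K_1$ directly from the definition $\varphi_1(\rho(a))=\rho(\varphi(a))$, and this propagates to all $n$ because $T^n(-)$ is a functor on abelian groups and passing to the quotient $K_n$ is compatible; $\mathrm{Id}_* = \mathrm{Id}$ is clear on generators. For (b), both composites $K_nF\to K_{n+1}L$ send a generator $\rho(a_1)\cdots\rho(a_n)$ to $\rho(\varphi(a))\rho(\varphi(a_1))\cdots\rho(\varphi(a_n))$: going right-then-down gives $\varphi_{n+1}(\rho(a)\rho(a_1)\cdots\rho(a_n))$, going down-then-right gives $\omega^{\varphi(a)}_n(\rho(\varphi(a_1))\cdots\rho(\varphi(a_n)))=\rho(\varphi(a))\rho(\varphi(a_1))\cdots\rho(\varphi(a_n))$, and these agree since $\varphi_{n+1}$ is defined on generators by applying $\varphi$ entrywise. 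Statement (c) is the special case $a=-1$ of (b), using $\varphi(-1)=-1$. The only genuine obstacle is the well-definedness step — confirming $Q^n$ maps into $Q^n$ — and that reduces to the single observation recorded above about $\varphi$ preserving the relations $b\in 1-a$; everything else is a routine unwinding of definitions.
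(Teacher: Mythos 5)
Your proposal is correct and follows essentially the same route as the paper: both construct the map at the level of the tensor algebra from $\varphi_1(\rho(a))=\rho(\varphi(a))$ and descend to $K_nF$ by checking that the relation subgroup $Q^n$ is respected, which is exactly where the morphism axiom $c\in a+b\Rightarrow\varphi(c)\in\varphi(a)+\varphi(b)$ (with $\varphi(-a)=-\varphi(a)$) is used; the paper phrases this via a multilinear map $\prod^n K_1F\to K_nL$ and the universal property of the tensor product rather than via $T^n(\varphi_1)$ followed by the projection, but this is only a cosmetic difference. The functoriality, surjectivity, and diagram claims (a)--(c) are handled by the same generator-level computations in both arguments.
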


  We also have the reduced K-theory graded ring  $k_*F=(k_0F,k_1F,...,k_nF,...)$ in the hyperfield context, which is defined by the rule $k_nF:=K_nF/2K_nF$ for all $n\ge0$. Of course
  for all $n\ge0$ we have an epimorphism $q:K_nF\rightarrow k_nF$ simply denoted by $q(a):=[a]$, $a\in K_nF$. It is immediate that $k_nF$ is additively generated by $\{[\rho(a_1)]..[\rho(a_n)]:a_1,...,a_n\in\dot F\}$. We simply denote such a generator by $\tilde\rho(a_1)...\tilde\rho(a_n)$ or even $\rho(a_1)...\rho(a_n)$ whenever the context allows it.
  
  We also have some basic properties of the reduced K-theory, which proof is just a translation of 2.1 of \cite{dickmann2006algebraic}:
  
  \begin{lem}[Adapted from 2.1 \cite{dickmann2006algebraic}]\label{2.1ktmulti}
 Let $F$ be a hyperbolic hyperfield, $x,y,a_1,...,a_n\in\dot F$ and $\sigma$ be a permutation on $n$ elements.
 \begin{enumerate}[a -]
  \item In $k_2F$, $\rho(a)^2=\rho(a)\rho(-1)$. Hence in $k_mF$, 
$\rho(a)^m=\rho(a)\rho(-1)^{m-1}$, $m\ge2$;
\item In $k_2F$, $\rho(a)\rho(b)=\rho(b)\rho(a)$;
\item In $k_nF$, 
$\rho(a_1)\rho(a_2)...\rho(a_n)=\rho(a_{\sigma 1})\rho(a_{\sigma 
2})...\rho(a_{\sigma n})$;
\item For $n\ge1$ and $\xi\in k_nF$, $\xi^2=\rho(-1)^n\xi$;
\item If $F$ is a real reduced hyperfield, then $x\in1+y$ and $\rho(y)\rho(a_1)...\rho(a_n)=0$ implies
$$\rho(x)\rho(a_1)\rho(a_2)...\rho(a_n)=0.$$
 \end{enumerate}
\end{lem}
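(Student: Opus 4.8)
The plan is to prove each item by transporting the corresponding statement from the (already established) Basic Properties of the full K-theory, Lemma \ref{bp1}, across the quotient epimorphism $q:K_nF\to k_nF=K_nF/2K_nF$, together with the observation from Lemma \ref{bp1}(f) that the sign ambiguity $\rho(a)\rho(b)=-\rho(b)\rho(a)$ collapses to equality modulo $2$. Since $q$ is a graded ring epimorphism, each generator $\tilde\rho(a_1)\cdots\tilde\rho(a_n)$ of $k_nF$ is the image of $\rho(a_1)\cdots\rho(a_n)\in K_nF$, so any identity among such products in $K_*F$ that holds up to a factor $\pm1$ or up to an even multiple descends to an honest identity in $k_*F$.

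Concretely: for (a), start from Lemma \ref{bp1}(f), $\rho(a)^2=-\rho(a)\rho(-1)$ in $K_2F$; apply $q$ and use $-[\rho(a)\rho(-1)]=[\rho(a)\rho(-1)]$ in $k_2F$ (because $2[\rho(a)\rho(-1)]=0$) to get $\rho(a)^2=\rho(a)\rho(-1)$; the formula $\rho(a)^m=\rho(a)\rho(-1)^{m-1}$ for $m\ge 2$ then follows by an easy induction, multiplying the base case by $\rho(a)$ and reapplying it. For (b), apply $q$ to Lemma \ref{bp1}(c), $\rho(a)\rho(b)=-\rho(b)\rho(a)$, and again kill the sign mod $2$. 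Item (c) is the $n$-variable version: it follows from (b) by the same inductive/transposition argument used for Lemma \ref{bp1}(d) (every permutation is a product of adjacent transpositions, each of which now acts trivially), or more directly by applying $q$ to Lemma \ref{bp1}(d) and discarding $\mathrm{sgn}(\sigma)$. For (d), write $\xi\in k_nF$ as a sum of generators $\tilde\rho(a_1)\cdots\tilde\rho(a_n)$; using graded-commutativity (c) to reorder freely and part (a) to contract each repeated $\rho(a_i)$, one checks $(\tilde\rho(a_1)\cdots\tilde\rho(a_n))^2=\rho(-1)^n\,\tilde\rho(a_1)\cdots\tilde\rho(a_n)$ on generators, and then extend bilinearly — here one must be slightly careful that cross terms in $\xi^2=\sum_{i,j}g_ig_j$ pair up, but since $k_*F$ is commutative the terms $g_ig_j$ and $g_jg_i$ coincide and $2g_ig_j=0$, so only the diagonal survives, giving $\xi^2=\sum_i g_i^2=\rho(-1)^n\sum_i g_i=\rho(-1)^n\xi$.

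The only genuinely new content is item (e), which is not a formal consequence of Lemma \ref{bp1} because it uses the hypothesis that $F$ is real reduced. The argument (following 2.1(e) of \cite{dickmann2006algebraic}) should go: from $x\in 1+y$ we have $-y\in 1-x$ so, by the defining relations of K-theory, $\rho(x)\rho(-y)=0$, hence $\rho(x)\rho(y)=\rho(x)\rho(-1)+\rho(x)\rho(y)\cdot 0\ldots$ — more precisely $\rho(-y)=\rho(-1)+\rho(y)$ gives $\rho(x)\rho(y)=-\rho(x)\rho(-1)=\rho(x)\rho(-1)$ in $k_2F$; multiplying by $\rho(a_1)\cdots\rho(a_n)$ yields $\rho(x)\rho(y)\rho(a_1)\cdots\rho(a_n)=\rho(x)\rho(-1)\rho(a_1)\cdots\rho(a_n)$. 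On the other hand the hypothesis $\rho(y)\rho(a_1)\cdots\rho(a_n)=0$ makes the left side vanish, so $\rho(x)\rho(-1)\rho(a_1)\cdots\rho(a_n)=0$; then one invokes the real-reducedness of $F$ — equivalently, the behavior of $\rho(-1)$ as a non-zero-divisor-type element in this setting, via the [SMC]/Arason–Pfister input (Theorem \ref{haup}) — to cancel $\rho(-1)$ and conclude $\rho(x)\rho(a_1)\cdots\rho(a_n)=0$. The main obstacle is exactly this last cancellation step: it is the one place where a structural property of real reduced hyperfields (rather than pure K-theoretic formalism) is needed, and making it precise requires either reproducing the relevant part of the Dickmann–Miraglia argument or citing that $\rho(-1)$-multiplication is injective on $k_*F$ for real reduced objects. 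Everything else is routine diagram-chasing through $q$.
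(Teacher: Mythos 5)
Your treatment of items (a)--(d) is correct and is exactly the route the paper intends: the paper gives no written proof of Lemma \ref{2.1ktmulti}, saying only that it is ``a translation of 2.1 of \cite{dickmann2006algebraic}'', and the translation amounts precisely to pushing the identities of Lemma \ref{bp1} through the quotient $q:K_nF\to k_nF=K_nF/2K_nF$, where the signs die; your handling of the cross terms in (d) (using commutativity and $2g_ig_j=0$, then contracting $\rho(a_i)^2=\rho(a_i)\rho(-1)$ on the diagonal) is the standard argument and is fine.

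Item (e), however, has a genuine gap at the final step. Up to the identity $\rho(x)\rho(-1)\,\rho(a_1)\cdots\rho(a_n)=0$ your computation is sound (from $x\in 1+y$ one gets $-y\in 1-x$, hence $\rho(x)\rho(-y)=0$ and so $\rho(x)\rho(y)=\rho(x)\rho(-1)$ in $k_2F$; note that real-reducedness has not been used yet). But the cancellation of $\rho(-1)$ that you then invoke is exactly the statement that multiplication by $\rho(-1)$ is injective on $k_{n+1}F$, i.e.\ the property [SMC] of Definition \ref{2.4kt}. That is not a basic fact about real reduced hyperfields: in \cite{dickmann2006algebraic} it is a distinguished property that some reduced special groups enjoy (Pythagorean field groups, and classes closed under products and inductive limits), and Lemma 2.1 there is a prerequisite for that analysis, so appealing to [SMC] here is both unavailable and circular with respect to the intended development. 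The Arason--Pfister Hauptsatz (Theorem \ref{haup}) does not help either: it bounds dimensions of anisotropic forms in $I^n$ and gives no injectivity statement in $k$-theory. A correct proof of (e) must use the reduced structure directly on the defining relations rather than dividing by $\rho(-1)$: for instance, in a real reduced hyperfield $x\in 1+y$ forces $1-y\subseteq 1-x$ (in special-group language, $x\in D(1,y)$ implies $D(1,-y)\subseteq D(1,-x)$, a standard fact of the reduced theory via the signature description of binary value sets), so degree-two generating relations $\rho(y)\otimes\rho(b)$ with $b\in 1-y$ convert into relations $\rho(x)\otimes\rho(b)$; one then has to carry out the bookkeeping showing that a general witness for $\rho(y)\rho(a_1)\cdots\rho(a_n)=0$ can be transformed into one for $\rho(x)\rho(a_1)\cdots\rho(a_n)=0$, which is the content of the argument in 2.1(e) of \cite{dickmann2006algebraic} that your proposal leaves open.
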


  Moreover the results in Proposition \ref{3.3ktmultiadap} continue to hold if we took $\varphi_*=\{\varphi_n:n\ge0\}:k_*F\rightarrow k_*L$.
  
  \begin{prop}\label{ktmarshall1}
  Let $F$ be a (hyperbolic) hyperfield and $T\subseteq F$ be a multiplicative subset such that $F^2\subseteq T$. Then, for each $n \geq 1$ 
  $$K_n(F/_m T^*)\cong k_n(F/_mT^*).$$
  \end{prop}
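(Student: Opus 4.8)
The plan is to show that the canonical projection $q\colon K_n(F/_mT^*)\to k_n(F/_mT^*)=K_n(F/_mT^*)/2K_n(F/_mT^*)$ is an isomorphism, which amounts to proving that $2K_n(F/_mT^*)=0$; equivalently, every generator $\rho(a_1)\cdots\rho(a_n)$ of $K_n(F/_mT^*)$ is $2$-torsion, i.e. $2\rho(a_1)\cdots\rho(a_n)=0$. Write $\overline F:=F/_mT^*$. First I would unpack the hypothesis $F^2\subseteq T$: it forces $\overline a{}^2=\overline 1$ for every $\overline a\in\dot{\overline F}$, so $\overline F$ is a hyperfield of exponent $2$ in its multiplicative group, and moreover (since $F^2\subseteq T$ means $F$ is hyperbolic after the quotient, or is so assumed) $\overline F$ is a hyperbolic hyperfield, so Lemma~\ref{bp1} applies to $K_*\overline F$.

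The key computation is in $K_1\overline F$: since $\overline a{}^2=\overline 1$, applying $\rho$ (which is the logarithm isomorphism $\dot{\overline F}\to K_1\overline F$ turning product into sum) gives $2\rho(\overline a)=\rho(\overline a{}^2)=\rho(\overline 1)=0$ by Lemma~\ref{bp1}(a). Hence $K_1\overline F$ is a group of exponent $2$, i.e. $2K_1\overline F=0$, so already $K_1\overline F\cong k_1\overline F$. For $n\ge 2$, every element of $K_n\overline F$ is a $\mathbb Z$-linear combination of generators $\rho(a_1)\otimes\cdots\otimes\rho(a_n)$ with $a_i\in\dot{\overline F}$; multiplying such a generator by $2$ and pushing the scalar onto the first tensor factor gives $2\bigl(\rho(a_1)\otimes\rho(a_2)\otimes\cdots\otimes\rho(a_n)\bigr)=(2\rho(a_1))\otimes\rho(a_2)\otimes\cdots\otimes\rho(a_n)=0\otimes\rho(a_2)\otimes\cdots\otimes\rho(a_n)=0$, since $2\rho(a_1)=0$ in $K_1\overline F$ and the tensor product is over $\mathbb Z$. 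Therefore $2K_n\overline F=0$ for all $n\ge 1$, so $q\colon K_n\overline F\to K_n\overline F/2K_n\overline F=k_n\overline F$ is an isomorphism (it is always a surjection with kernel $2K_n\overline F$).

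The only genuinely delicate point is the passage from ``$2$ kills $K_1\overline F$'' to ``$2$ kills $K_n\overline F$'': it relies on the fact that $K_n\overline F$ is a quotient of the $n$-fold tensor product over $\mathbb Z$ of $K_1\overline F$ with itself, so a scalar $2$ acting on a pure tensor can be absorbed into any single factor, and $K_n\overline F$ is additively generated by (the images of) pure tensors. This is immediate from the definition of $K_n$ given in the excerpt, so there is no real obstacle; one should just be careful to note that the relation subgroup $Q^n(K_1\overline F)$ is respected (it is, since $q$ is a ring map and we are only asserting that a quotient of a $2$-torsion group is $2$-torsion). I would close by remarking that the isomorphism is natural in $F$ and commutes with the graded-ring structure, as it is induced by the identity-on-generators map, but the statement as given only claims the degreewise group isomorphism, so the short argument above suffices.
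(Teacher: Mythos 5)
Your proposal is correct and follows essentially the same route as the paper: the paper's proof is exactly the observation that $F^2\subseteq T$ gives $0=\rho(1)=\rho(a^2)=\rho(a)+\rho(a)$ in $K_1(F/_mT^*)$, whence $2K_n(F/_mT^*)=0$ and $K_n(F/_mT^*)\cong k_n(F/_mT^*)$. Your write-up merely fills in the (correct, and implicit in the paper) step that $2$-torsion in degree $1$ propagates to all $K_n$ because each $K_n$ is a quotient of the $n$-fold tensor power of $K_1$ over $\mathbb Z$.
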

  \begin{proof}
  Since $F^2\subseteq T$, for all $a\in (F/_mT^*)^\times$ we have
  $$0= \rho(1) = \rho(a^2)=\rho(a)+\rho(a).$$
  Then, for each $n \geq 1$, $2K_n(F/_mT^*)=0$ and we get $K_n(F/_m T^*)\cong k_n(F/_mT^*)$.
  \end{proof} 
  
  \begin{teo}\label{ktmarshall2}
  Let $F$ be a hyperbolic hyperfield and $T\subseteq F$ be a multiplicative subset such that $F^2\subseteq T$. Then there is an induced surjective morphism
  $$k(F)\rightarrow k(F/_mT^*).$$
  Moreover, if $T = F^2$, then
  $$k(F)
  \overset\cong\to k(F/_m\dot F^2).$$
  \end{teo}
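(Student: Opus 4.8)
The plan is to build the map $k_\ast(F) \to k_\ast(F/_m T^\ast)$ in the obvious way from functoriality and then isolate the case $T = \dot F^2$. First I would apply Proposition \ref{3.3ktmultiadap} (in its reduced-theory version, noted immediately after Lemma \ref{2.1ktmulti}) to the quotient morphism $\pi\colon F \to F/_m T^\ast$. Since $\pi$ is surjective, the induced graded-ring morphism $\pi_\ast = \{\pi_n\}_{n\ge 0}\colon k_\ast(F) \to k_\ast(F/_m T^\ast)$ is surjective in each degree; this gives the first assertion with essentially no work. Note one has to check that $\pi$ really is a morphism of hyperfields and that $F/_m T^\ast$ is again a hyperbolic hyperfield, so that $k_\ast$ is defined on it — the hyperbolicity of $F/_m T^\ast$ follows since the image of $1-1=F$ is all of $F/_m T^\ast$, and the surjectivity of $\pi$ is immediate from the construction of the quotient multiring.

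For the isomorphism when $T = \dot F^2$, the strategy is to produce an inverse to $\pi_\ast$ in each degree by a direct generators-and-relations argument. In degree $0$ both rings are $\mathbb Z$ and $\pi_0 = \mathrm{Id}$. In degree $1$, $k_1 F = \dot F/2K_1 F$, i.e. $K_1 F$ modulo squares in additive notation, and $k_1(F/_m\dot F^2)$ is the group $\dot F/\dot F^2$ written additively modulo $2$-torsion — but $\dot F/\dot F^2$ is already $2$-torsion, so $k_1(F/_m\dot F^2) = \dot F/\dot F^2 = k_1 F$, and $\pi_1$ is visibly the identity under this identification. For $n \ge 2$ one observes that $\pi_n$ sends the generator $\tilde\rho(a_1)\cdots\tilde\rho(a_n)$ to $\tilde\rho(\bar a_1)\cdots\tilde\rho(\bar a_n)$; since $k_1$ is unchanged, the only thing that could differ between $k_n F$ and $k_n(F/_m\dot F^2)$ is the defining relation, namely which products $\rho(a)\rho(b)$ with $b \in 1-a$ are killed. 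So the heart of the matter is to show that $b \in 1 - a$ in $F$ and $\bar b \in \bar 1 - \bar a$ in $F/_m\dot F^2$ generate the same subgroup $Q^n$ after passing to $k_n$.

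The key step — and the main obstacle — is precisely this comparison of relators. Passing from $F$ to $F/_m\dot F^2$ can only \emph{add} relations of the form ``$\bar b \in \bar 1 - \bar a$'': if $b \in 1-a$ in $F$ then certainly $\bar b \in \bar 1 - \bar a$, so $Q^n(K_1 F)$ maps into the relator subgroup upstairs modulo the square identification. The nontrivial direction is that the new relations are already consequences of the old ones modulo $2$. Concretely, $\bar b \in \bar 1 - \bar a$ means $b s \in 1 - a t$ for some $s, t \in \dot F^2$, say $s = u^2$, $t = w^2$; one must show $\rho(a)\rho(b) \equiv \rho(at)\rho(bs) \pmod{2}$ in $k_2 F$, which reduces via bilinearity to $\rho(u^2)$ and $\rho(w^2)$ lying in $2K_1 F$ — i.e. $\rho(u^2) = 2\rho(u)$ — which is immediate. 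Combining: $\rho(a)\rho(b) = \rho(at)\rho(bs)$ in $k_2 F$ and the latter is a genuine relator since $bs \in 1 - at$. Thus the relator subgroups coincide in $k_\ast$, giving a well-defined inverse morphism $k_\ast(F/_m\dot F^2) \to k_\ast(F)$ on generators, and one checks it is mutually inverse to $\pi_\ast$. I would also invoke Proposition \ref{ktmarshall1} with $T = \dot F^2$ to note $K_n(F/_m\dot F^2) \cong k_n(F/_m\dot F^2)$, so the statement can equally be read as $k_\ast(F) \cong K_\ast(F/_m\dot F^2)$, matching the classical picture where reduced K-theory of $F$ is the (unreduced) K-theory of its "reduced" quotient.
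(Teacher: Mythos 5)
Your argument is correct: surjectivity follows from the (reduced) functoriality of Proposition \ref{3.3ktmultiadap} applied to the surjection $F\to F/_mT^*$ (with the hyperbolicity check you note), and for $T=F^2$ your relator comparison modulo squares --- the same observation $\rho(a^2)=2\rho(a)$ that drives Proposition \ref{ktmarshall1} --- shows that the preimage of $Q^n(K_1(F/_m\dot F^2))$ lies in $Q^n(K_1F)+2T^n(K_1F)$, so both $k_nF$ and $k_n(F/_m\dot F^2)$ are quotients of $T^n(K_1F)$ by the same subgroup and $\pi_n$ is an isomorphism. The paper states this theorem without giving a proof (it is imported from \cite{roberto2021ktheory}), and your route is the natural one suggested by the surrounding results; the only steps worth making explicit are the definition of the multivalued sum in the Marshall quotient, namely that $\bar b\in\bar 1-\bar a$ means $bs\in t-au$ for some $s,t,u\in\dot F^2$, which after multiplying by $t^{-1}$ gives exactly the normalization $bs'\in 1-at'$ you use, and that $\bar a=\bar 1$ iff $a\in\dot F^2$, which is what identifies $k_1F$ with $K_1(F/_m\dot F^2)$ in degree one.
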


  \section{Inductive Graded Rings: An Abstract Approach}

  After the three K-theories defined in the above sections, it is desirable (or, at least, suggestive) the rise of an abstract environment that encapsule all them, and of course, provide an axiomatic approach to guide new extensions of the concept of K-theory in the context of the algebraic and abstract theories of quadratic forms. The inductive graded rings fits this purpose. Here we will present three versions. The first one is:
  
\begin{defn}[Inductive Graded Rings First Version (adapted from Definition 9.7 of \cite{dickmann2000special})]\label{igr1}
 An \textbf{inductive graded ring} (or \textbf{Igr} for short) is a structure $R=((R_n)_{n\ge0},(h_n)_{n\ge0},\ast_{nm})$ where
\begin{enumerate}[i -]
    \item $R_0\cong\mathbb F_2$.
    \item $R_n$ has a group structure $(R_n,+,0,\top_n)$ of exponent 2 with a distinguished element $\top_n$.
    \item $h_n:R_n\rightarrow R_{n+1}$ is a group homomorphism such that $h_n(\top_n)=\top_{n+1}$.
    \item For all $n\ge1$, $h_n=\ast_{1n}(\top_1,\_)$.
    \item The binary operations $\ast_{nm} : R_n \times R_m \to R_{n+m}$, $n, m \in \mathbb{N}$ induces a commutative  ring structure on the abelian group
    $$R=\bigoplus_{n\ge0}R_n$$
    with $1=\top_0$.
    \item For $0\le s\le t$ define
    $$h^t_s=\begin{cases}Id_{R_s}\mbox{ if }s=t\\
    h_{t-1}\circ...\circ h_{s+1}\circ h_s\mbox{ if }s<t.\end{cases}$$
    Then if $p\ge n$ and $q\ge m$, for all $x\in R_n$ and $y\in R_m$,
    $$h^p_n(x)\ast h^q_m(y)=h^{p+q}_{n+m}(x\ast y).$$
\end{enumerate}
 A \textbf{morphism} between Igr's $R$ and $S$ is a pair $f=(f,(f_n)_{n\ge0})$ where $f_n:R_n\rightarrow S_n$ is a morphism of pointed groups and 
$$f=\bigoplus\limits_{n\ge0}f_n:R\rightarrow S$$
is a morphism of commutative rings with unity. The category of inductive graded rings (in first version) and their morphisms will be denoted by $\mbox{Igr}$.
\end{defn}

A first consequence of these definitions is that: if 
$$f:((R_n)_{n\ge0},(h_n)_{n\ge0},\ast_{nm})\rightarrow ((S_n)_{n\ge0},(l_n)_{n\ge0},\ast_{nm})$$ 
is a morphism of Igr's then $f_{n+1}\circ h_n=l_n\circ f_n$.
$$\xymatrix@!=2.5pc{R_0\ar[r]^{h_0}\ar[d]_{f_0} & R_1\ar[r]^{h_1}\ar[d]_{f_1} & R_2\ar[r]^{h_2}\ar[d]_{f_2} & 
...\ar[r]^{h_{n-1}} & R_n\ar[r]^{h_n}\ar[d]_{f_n} & R_{n+1}\ar[r]^{h_{n+1}}\ar[d]_{f_{n+1}} & ... \\  
S_0\ar[r]^{l_0} & S_1\ar[r]^{l_1} & S_2\ar[r]^{l_2} & ...\ar[r]^{l_{n-1}} & S_n\ar[r]^{l_n} & S_{n+1}\ar[r]^{l_{n+1}} & 
...}$$
In fact, since $R_0 \cong \mathbb{F}_2 \cong S_0$ and $f(1) =1$,  then $f_0:R_0\rightarrow S_0$ is the unique abelian group isomorphism and $f_1\circ h_0=l_0\circ f_0$. If $n\ge1$, for all $a_n\in R_n$ holds
\begin{align*}
 f_{n+1}\circ h_n(a_n)&=f_{n+1}\circ(\ast_{1n}(\top_1,a_n))=f_1(\top_1)\ast_{1n}f_n(a_n) \\
 &=\top_1\ast_{1n}f_n(a_n)=l_n(f_n(a_n))=l_n\circ f_n(a_n).
\end{align*}

\begin{ex}\label{ex1}
$ $
 \begin{enumerate}[a -]
  \item  Let $F$ be a field of characteristic not 2. The main actors here are $WF$, the Witt ring of $F$ and $IF$, the fundamental ideal of $WF$. Is well know that $I^nF$, the $n$-th power of $IF$ is additively generated by $n$-fold Pfister forms over $F$. Now, let $R_0=WF/IF\cong\mathbb F_2$ 
and $R_n=I^nF/I^{n+1}F$. Finally, let $h_n=\_\otimes\langle1,1\rangle$. With these prescriptions we have an inductive graded ring $R$ associated to $F$.
  
  \item The previous example still works if we change the Witt ring of a field $F$ for the Witt ring of a (formally real) special group $G$.
 \end{enumerate}
\end{ex}

Concerning k-theories, we register the followings:

\begin{teo}\label{km1}
$ $
 \begin{enumerate}[a -]
  \item Let $F$ be a field. Then $k^{mil}_*F$ (the reduced Milnor K-theory) is an inductive graded ring.
  \item Let $G$ be a special group. Then $k^{dm}_*G$ (the Dickmann-Miraglia K-theory of $G$) is an inductive graded ring.
  \item Let $F$ be a hyperbolic hyperfield. Then $k^{mult}_*F$ (our reduced K-theory) is an inductive graded ring.
 \end{enumerate}
\end{teo}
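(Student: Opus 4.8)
The plan is to verify, for each of the three K-theories listed, that the structure $(k_*(\cdot), (h_n), \ast_{nm})$ satisfies the seven axioms (i)--(vi) of Definition~\ref{igr1} (plus axiom (iv), the compatibility $h_n = \ast_{1n}(\top_1, \_)$). Since parts (a) and (b) are established in Milnor's work and in \cite{dickmann2006algebraic} respectively (they are the motivating examples), the real content is part (c), and parts (a),(b) will either be cited directly or deduced as special cases of (c) — indeed, if $F$ is a field then $k^{mult}_*F = k^{mil}_*F$ by the remark following the definition of $K_nF$, and if $G$ is a special group then $M(G)$ is a special (hence hyperbolic) hyperfield with $k^{mult}_*M(G) \cong k^{dm}_*G$. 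So I would organize the proof around (c) and then dispatch (a),(b) in one line each.

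For part (c), fix a hyperbolic hyperfield $F$ and set $R_n := k_nF$, with distinguished element $\top_n := \rho(-1)^n$ (so $\top_0 = 1 \in k_0F = \mathbb{F}_2$), $h_n := \omega_n$ the multiplication by $\rho(-1)$ map $k_nF \to k_{n+1}F$, and $\ast_{nm}$ the multiplication in the graded ring $k_*F$. Then I would check the axioms in order. Axiom (i): $k_0F = K_0F/2K_0F = \mathbb{Z}/2\mathbb{Z} \cong \mathbb{F}_2$. Axiom (ii): each $k_nF$ is an abelian group of exponent~$2$ by construction (it is a $\mathbb{Z}/2$-module), with distinguished element $\top_n = \rho(-1)^n$. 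Axiom (iii): $h_n = \omega_n$ is a group homomorphism since it is multiplication by a fixed element in a ring, and $h_n(\top_n) = \rho(-1)\cdot\rho(-1)^n = \rho(-1)^{n+1} = \top_{n+1}$. Axiom (iv): by definition $\omega_n$ is multiplication by $\rho(-1) = \rho(\top_1$'s preimage$)$, i.e. $h_n(x) = \rho(-1)\ast_{1n} x = \top_1 \ast_{1n} x$, which is exactly $\ast_{1n}(\top_1, \_)$ once we identify $\top_1 \in k_1F$ with $\rho(-1)$. Axiom (v): $k_*F = \bigoplus_{n\ge 0} k_nF$ is by construction a commutative graded ring with unit $1 \in k_0F = \top_0$; commutativity on homogeneous elements is Lemma~\ref{2.1ktmulti}(b)--(c) together with the fact that in $k_*F$ the sign $(-1)^{mn}$ of Lemma~\ref{bp1}(e) is trivial. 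Axiom (vi): writing $h^p_n$ for the iterated composite, $h^p_n(x) = \rho(-1)^{p-n}\ast x$ and $h^q_m(y) = \rho(-1)^{q-m}\ast y$, so $h^p_n(x)\ast h^q_m(y) = \rho(-1)^{(p-n)+(q-m)}\ast(x\ast y) = h^{p+q}_{n+m}(x\ast y)$ by associativity and commutativity of the ring multiplication — this is a pure bookkeeping identity.

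I expect the only genuinely delicate point to be a careful reconciliation of the distinguished elements and of axiom~(iv): one must check that the map $h_n$ prescribed as ``multiplication by $\rho(-1)$'' genuinely agrees with $\ast_{1n}(\top_1,\_)$, which forces the identification $\top_1 = \rho(-1) \in k_1F$, and then that this choice of $\top_n = \rho(-1)^n$ is consistent with $h_n(\top_n) = \top_{n+1}$ and with $\top_0 = 1$. Once that identification is pinned down, everything else is a direct translation of Lemma~\ref{bp1} and Lemma~\ref{2.1ktmulti} into the axioms, with axiom~(vi) being nothing more than associativity of ring multiplication. Finally, for (a) I would note that a field $F$ of characteristic $\ne 2$ is a hyperbolic hyperfield only after passing to the relevant setting — more precisely, $k^{mil}_*F$ coincides with the classical reduced Milnor K-theory, which is the prototype and is known to be an Igr (Definition 9.7 of \cite{dickmann2000special}); and for (b), $k^{dm}_*G$ is likewise an Igr by the construction in \cite{dickmann2006algebraic}, or alternatively by applying (c) to the special hyperfield $M(G)$ via Corollary~\ref{cor:equiv1} and the compatibility of the two K-theory functors. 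Thus all three cases reduce to the single verification above.
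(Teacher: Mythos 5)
Your proposal is correct, and it matches what the paper intends: the paper states Theorem~\ref{km1} without any proof, treating (a) and (b) as known from Milnor and \cite{dickmann2006algebraic} and (c) as the routine verification of the axioms of Definition~\ref{igr1} using exactly the ingredients you invoke (Lemma~\ref{bp1}, Lemma~\ref{2.1ktmulti}, the identification $\top_n=\rho(-1)^n$, $h_n=\omega_n$, and mod-$2$ commutativity making axiom (vi) pure bookkeeping). Your handling of the one subtlety --- that a field is not a hyperbolic hyperfield, so (a) is not literally a special case of (c) and must instead rest on the classical Steinberg-relation identities or a citation --- is also sound.
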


 \begin{teo}[Theorem 2.5 in \cite{dickmann2003lam}]\label{km4}
   Let $F$ be a field. The functor $G:Field_2\rightarrow SG$ provides a functor $k'^{dm}_*:Field_2\rightarrow \mbox{Igr}$ (the special group 
K-theory functor) given on the objects by $k'^{dm}_*(F):=k^{dm}_*(G(F))$ and on the morphisms $f:F\rightarrow K$ by $k'^{dm}_*(f):=G(f)_*$ (in the sense of Lemma 3.3 of \cite{dickmann2006algebraic}). Moreover, this functor commutes with the functors $G$ and $k$, i.e, for all $F\in Field$, 
$k'^{dm}_*(F) = k^{dm}_*(G(F))\cong k^{mil}_*(F)$.
  \end{teo}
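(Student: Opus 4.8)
The plan is to split the statement into three parts: (a) that $k'^{dm}_\ast := k^{dm}_\ast\circ G$ is a well-defined functor $Field_2\to\mathrm{Igr}$; (b) that for every $F\in Field_2$ there is an isomorphism of graded rings $k^{dm}_\ast(G(F))\cong k^{mil}_\ast(F)$; and (c) that this isomorphism is natural in $F$, which is the precise content of the ``commutes with $G$ and $k$'' assertion. For (a), on objects: for every field $F$ of characteristic $\neq 2$, $G(F)$ is a special group, so by Theorem \ref{km1}(b) $k^{dm}_\ast(G(F))$ is an inductive graded ring. On arrows: $G\colon Field_2\to SG$ is a functor, and $k^{dm}_\ast$ acts functorially on $SG$-morphisms by Lemma 3.3 of \cite{dickmann2006algebraic} (the hyperfield analogue being Proposition \ref{3.3ktmultiadap}). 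Moreover, item (c) of that statement shows the induced map commutes with multiplication by $\rho(-1)$, that is, with the structure maps $h_n$, so $G(f)_\ast$ is a morphism of Igr's and not merely of graded rings. Preservation of identities and of composites is then inherited from $G$ and from $k^{dm}_\ast$.

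Part (b) carries the weight. Both $k^{mil}_\ast(F)$ and $k^{dm}_\ast(G(F))$ are quotients of the tensor $\mathbb F_2$-algebra on the group $\dot F/\dot F^2$ (written additively via $\rho$): the former by the homogeneous ideal $R^{mil}$ generated by the Steinberg elements $\rho(a)\otimes\rho(1-a)$ (for $a\neq 0,1$), the latter by the homogeneous ideal $R^{dm}$ generated by the $\rho(a)\otimes\rho(b)$ with $b\in D_{G(F)}(\langle 1,-a\rangle)$; by Proposition \ref{sg.to.mf} together with the definition of the special group of a field, that last condition says exactly that $b$ is, modulo squares, a value of the form $X^2-aY^2$ over $F$. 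Since $1-a=1^2-a\cdot 1^2$, every Steinberg generator is a DM-generator, so $R^{mil}\subseteq R^{dm}$ and the identity on $\dot F/\dot F^2$ induces a surjection $k^{mil}_\ast(F)\twoheadrightarrow k^{dm}_\ast(G(F))$. For the reverse inclusion I would check that $\rho(a)\otimes\rho(b)\in R^{mil}$ whenever $b=x^2-ay^2$ is nonzero: if $y=0$ then $b$ is a square, so $\rho(b)=0$; if $x=0$ then $b\equiv -a\pmod{\dot F^2}$ and $\rho(a)\rho(-a)=0$ (a standard consequence of the Steinberg relation); and if $x,y\neq 0$ then, with $c=x/y$, one has $b\equiv 1-ac^{-2}\pmod{\dot F^2}$ with $ac^{-2}\equiv a$ and $ac^{-2}\neq 0,1$, whence $\rho(a)\rho(b)=\rho(ac^{-2})\rho(1-ac^{-2})=0$ (the degenerate case $a\in\dot F^2$ merely giving $\rho(a)=0$). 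Hence $R^{dm}=R^{mil}$, the surjection above is an isomorphism of graded rings, and since it fixes $\rho(-1)$ it intertwines the two families $h_n=\rho(-1)\cdot(-)$, hence is an isomorphism in $\mathrm{Igr}$.

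For (c): given $f\colon F\to K$ in $Field_2$, both $f_\ast\colon k^{mil}_\ast(F)\to k^{mil}_\ast(K)$ and $G(f)_\ast\colon k^{dm}_\ast(G(F))\to k^{dm}_\ast(G(K))$ are given on generators by $\rho(a)\mapsto\rho(f(a))$, while the comparison isomorphisms of (b) are the identity on generators; hence the naturality square commutes on generators and therefore everywhere. This exhibits $k'^{dm}_\ast\cong k^{mil}_\ast$ as functors $Field_2\to\mathrm{Igr}$, which is the stated commutation with $G$ and $k$.

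The step I expect to be the main obstacle is the second inclusion in part (b): one has to dispose of the degenerate cases with some care and, more importantly, be sure that the quadratic-form description of $D_{G(F)}(\langle 1,-a\rangle)$ invoked there --- namely, the values of $X^2-aY^2$ modulo squares --- is precisely the one built into the definition of the special group $G(F)$. Everything else reduces to routine use of the universal properties of tensor algebras and of the functoriality already recorded in this section.
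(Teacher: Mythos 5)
The paper itself contains no proof of Theorem \ref{km4}: it is quoted as Theorem 2.5 of \cite{dickmann2003lam} (see also \cite{dickmann2006algebraic}), so there is no internal argument to measure yours against; what you have written is a reconstruction of the standard proof, and it is essentially correct. Part (a) is indeed just the functoriality already recorded around Theorem \ref{km1} and Proposition \ref{3.3ktmultiadap}, with item (c) of the latter giving compatibility with the maps $h_n=\rho(-1)\cdot(-)$, so the composite lands in $\mbox{Igr}$ and not merely in graded rings. The weight is, as you say, in part (b), and your two-ideal comparison inside the tensor algebra is the right mechanism: the inclusion $R^{mil}\subseteq R^{dm}$ is trivial since $1-a\in D_{G(F)}\langle 1,-a\rangle$, and your case analysis for the reverse inclusion ($y=0$ giving a square, $x=0$ reducing to $\rho(a)\rho(-a)=0$, and the generic case reducing to a genuine Steinberg element after scaling by a square) is complete; note that only this one direction of the symbol computation is needed, never the converse ``$\rho(a)\rho(b)=0$ implies $b\in D\langle 1,-a\rangle$'', which would be a much deeper statement. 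Two small points deserve explicit mention if you write this up: first, you tacitly use the base-change identification of $k^{mil}_*(F)$ with the quotient of the $\mathbb{F}_2$-tensor algebra on $\dot F/\dot F^2$ by the image of the Steinberg ideal (routine, via $T_{\mathbb{Z}}(\dot F)\otimes\mathbb{F}_2\cong T_{\mathbb{F}_2}(\dot F/\dot F^2)$, but it should be said); second, the description of the Dickmann--Miraglia relation as $b\in D_{G(F)}\langle 1,-a\rangle$ is convention-sensitive --- within this paper it is exactly what Theorems \ref{km5} and \ref{res0} encode through $b\in 1-a$ in $M(G(F))$ (Proposition \ref{sg.to.mf}), so your usage is consistent with the framework here, but if you appeal directly to \cite{dickmann2006algebraic} you must check that their sign convention for the generating relation agrees. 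Part (c) is immediate as you state, since all comparison maps are the identity on degree-one generators.
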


  \begin{teo}\label{km5}
   Let $G$ be a special group. The equivalence of categories $M:SG\rightarrow SMF$ induces a functor $k'^{mult}_*:SG\rightarrow 
\mbox{Igr}$ given on the objects by $k'^{mult}_*(G):=k^{mult}_*(M(G))$ and on the morphisms $f:G\rightarrow H$ by 
$k'^{mult}_*(f):=k^{mult}_*(M(f))$. Moreover, this functor commutes with $M$ and $k^{dm}$, i.e, for all $G\in SG$, $k'^{mult}_*(G)\ = k^{mult}_*(M(G))\cong k^{dm}_*(G)$.
  \end{teo}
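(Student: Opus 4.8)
The plan is to exhibit the functor directly and then verify that the two claimed isomorphisms hold objectwise by invoking the equivalence $M:\mathcal{SG}\to\mathcal{SMF}$ together with the already-established properties of the multifield K-theory and the Dickmann--Miraglia K-theory. First I would observe that by Theorem \ref{km1}(c) the assignment $G\mapsto k^{mult}_*(M(G))$ lands in $\mbox{Igr}$, since $M(G)$ is a special hyperfield (Corollary \ref{cor:equiv1}), hence in particular a hyperbolic hyperfield, so its reduced K-theory is an inductive graded ring. On morphisms, $f:G\to H$ is sent to $M(f):M(G)\to M(H)$, a morphism of hyperfields by Corollary \ref{cor:equiv1}, and then to $k^{mult}_*(M(f))$, which by Proposition \ref{3.3ktmultiadap} (in its reduced form, valid for $k_*$ as remarked after Lemma \ref{2.1ktmulti}) is a morphism of graded rings; one checks it preserves the top elements $\top_n=\rho(-1)^n$ and the maps $h_n=\omega_n^{-1}$ by the commuting-diagram part (b)--(c) of that proposition, so it is an Igr-morphism. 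Functoriality ($k'^{mult}_*(g\circ f)=k'^{mult}_*(g)\circ k'^{mult}_*(f)$ and $k'^{mult}_*(\mathrm{id})=\mathrm{id}$) follows from functoriality of $M$ (it is part of an equivalence) and part (a) of Proposition \ref{3.3ktmultiadap}.

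Next I would prove the commutation statement. The first half, $k'^{mult}_*(G)=k^{mult}_*(M(G))$, is true by definition. The content is the isomorphism $k^{mult}_*(M(G))\cong k^{dm}_*(G)$, natural in $G$. The strategy is to build a natural transformation degreewise: in degree $0$ both rings are $\mathbb{F}_2$; in degree $1$ both are $\dot F/\dot F^2$ written additively where $\dot F=\dot{M(G)}=G$ by construction of $M(G)$, so $k_1^{mult}(M(G))=G/G^2$ and $k_1^{dm}(G)=G/G^2$ agree (recall $G$ has exponent $2$, so $G^2=\{1\}$ and both are just $G$ additively), via $\rho$. In higher degrees, $k_n^{dm}(G)$ is the quotient of the $n$-fold tensor power of $k_1^{dm}(G)$ by the subgroup generated by tensors $\rho(a_1)\otimes\cdots\otimes\rho(a_n)$ with $a_{i+1}\in D_G\langle 1,-a_i\rangle$ for some $i$, while $k_n^{mult}(M(G))$ is the analogous quotient with the relation $a_{i+1}\in 1-a_i$ in $M(G)$. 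By the definition of the multivalued sum in Proposition \ref{sg.to.mf}, for $a,b\in G$ one has $b\in 1-a=1+(-a)$ in $M(G)$ precisely when $b\in D_G(\langle 1,-a\rangle)$ (the ``otherwise'' clause), so the two defining ideals coincide under the identification of the degree-$1$ parts. Hence the identity on generators induces a graded ring isomorphism $k_*^{dm}(G)\xrightarrow{\ \cong\ }k_*^{mult}(M(G))$, and it visibly respects $h_n$, $\top_n$, so it is an Igr-isomorphism. Naturality in $G$ is immediate since both K-theory functors act the same way on generators $\rho(a)\mapsto\rho(f(a))$ and $M(f)$ restricts to $f$ on $\dot{M(G)}=G$.

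The main obstacle, and the step deserving the most care, is the precise matching of the relation ideals: one must check that the degenerate relation ``$a_{i+1}\in 1-a_i$ in $M(G)$'' in the hyperfield sense coincides exactly with the Dickmann--Miraglia relation ``$a_{i+1}\in D_G\langle 1,-a_i\rangle$,'' including the boundary cases where $a_i=1$ (so $1-a_i$ is all of $M(G)$, matching $D_G\langle 1,-1\rangle=G$) and where $-a_i$ could equal $a_{i+1}$ or force $0$ into the sum — but since generators only involve $\dot F$ and $\rho(0)$ is undefined, the $0$ case does not arise, and the case $a_{i+1}=-a_i$ is handled because $\rho(a)\rho(-a)=0$ holds in both theories (Lemma \ref{bp1}(b) on the multifield side, and the corresponding Dickmann--Miraglia identity). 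Once this dictionary between the two presentations is in place, the isomorphism of graded rings — and its compatibility with the inductive structure — is a routine verification on generators, and naturality follows formally from the fact that $M$ is (part of) an equivalence of categories that is the identity on underlying pointed sets of the distinguished part $\dot{M(G)}=G$.
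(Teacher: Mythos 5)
Your proposal is correct: the paper states Theorem \ref{km5} without giving a proof, and your argument supplies exactly the verification that is implicitly intended --- functoriality via Proposition \ref{3.3ktmultiadap} (in its reduced form) together with the equivalence $M$ of Corollary \ref{cor:equiv1}, and the isomorphism $k^{mult}_*(M(G))\cong k^{dm}_*(G)$ obtained by matching the Steinberg-type relations, since by Proposition \ref{sg.to.mf} one has $b\in 1-a$ in $M(G)$ precisely when $b\in D_G(\langle 1,-a\rangle)$ (with the boundary case $1-1=M(G)$ corresponding to $D_G(\langle 1,-1\rangle)=G$). The only point worth making explicit is that $2K_n(M(G))=0$ because $G$ has exponent $2$ (as in Proposition \ref{ktmarshall1}), so $k_n^{mult}(M(G))=K_n(M(G))$ really is the quotient of the $n$-fold tensor power of $G$ by the subgroup you describe, after which the degreewise identification with $k_n^{dm}(G)$, its compatibility with $\top_n$ and $h_n$, and naturality in $G$ all follow as you indicate.
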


   \begin{teo}[Interchanging K-theories Formulas]\label{res0}
   Let $F\in Field_2$. Then
   $$k^{mil}(F)\cong k^{dm}(G(F))\cong k^{mult}(M(G(F))).$$
   If $F$ is formally real and $T$ is a preordering of $F$, then
   $$k^{dm}(G_T(F))\cong k^{mult}(M(G_T(F))).$$
   Moreover, since $M(G(F))\cong F/_m\dot F^2$ and $M(G_T(F))\cong F/_mT^*$, we get
   \begin{align*}
       k^{mil}(F)&\cong k^{dm}(G(F))\cong k^{mult}(F/_m\dot F^2)\mbox{ and } \\
       k^{dm}(G_T(F))&\cong k^{mult}(F/_mT^*).
   \end{align*}
  \end{teo}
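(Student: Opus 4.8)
The plan is to assemble Theorem \ref{res0} purely from the results already established earlier in the excerpt, so the proof is essentially a matter of chaining isomorphisms and citing the right identifications. First I would recall the three relevant ``interchanging'' statements: Theorem \ref{km4} gives $k^{dm}_*(G(F)) \cong k^{mil}_*(F)$ for every $F \in Field_2$; Theorem \ref{km5} gives $k^{mult}_*(M(G)) \cong k^{dm}_*(G)$ for every special group $G$; and Theorem \ref{ktmarshall2} (together with Proposition \ref{ktmarshall1}, to pass from $K$ to $k$) gives, for a hyperbolic hyperfield $F$ and a multiplicative subset $T$ with $F^2 \subseteq T$, a canonical isomorphism $k(F) \overset{\cong}{\to} k(F/_m \dot F^2)$ in the case $T = \dot F^2$. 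These are the only inputs needed.

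For the first chain, I would take $F \in Field_2$ and put $G := G(F)$, the special group of $F$. Theorem \ref{km4} yields $k^{mil}(F) \cong k^{dm}(G(F))$, and then Theorem \ref{km5}, applied with this $G$, yields $k^{dm}(G(F)) \cong k^{mult}(M(G(F)))$; composing gives the displayed triple isomorphism. For the formally-real case with a preordering $T$, I would set $G_T := G_T(F)$ (the reduced special group attached to the preordering $T$) and apply Theorem \ref{km5} directly to $G_T$ — this is legitimate because Theorem \ref{km5} holds for an arbitrary special group, and $G_T(F)$ is one — giving $k^{dm}(G_T(F)) \cong k^{mult}(M(G_T(F)))$.

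The last part of the statement requires the identifications $M(G(F)) \cong F/_m \dot F^2$ and $M(G_T(F)) \cong F/_m T^*$ as hyperfields; I would invoke these as known facts about the special-hyperfield functor $M$ and the associated special groups of fields and preorderings (the reader can check $M(G(F))$ is the special hyperfield whose group of units is $\dot F / \dot F^2$ with the quadratic-form value sets as sums, which is exactly $F/_m \dot F^2$; similarly for $T$). Substituting these into the isomorphisms just obtained, and noting that $k^{mult}$ is a functor hence respects the hyperfield isomorphisms $M(G(F)) \cong F/_m \dot F^2$ and $M(G_T(F)) \cong F/_m T^*$, converts $k^{mult}(M(G(F)))$ into $k^{mult}(F/_m \dot F^2)$ and $k^{mult}(M(G_T(F)))$ into $k^{mult}(F/_m T^*)$, which is exactly the final pair of displayed formulas.

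I do not expect a serious obstacle here: the theorem is a corollary-style synthesis, and the only point demanding a little care is making sure the hyperfield isomorphisms $M(G(F)) \cong F/_m \dot F^2$ and $M(G_T(F)) \cong F/_m T^*$ are genuinely available and that $k^{mult}$ being functorial lets us transport along them — this is the step I would write out most explicitly, the rest being a citation-and-compose argument. One should also double-check that $F/_m T^*$ satisfies the hyperbolicity and squares-trivial hypotheses needed for the reduced $k$-theory to behave well, but since $T$ is a preordering we have $\dot F^2 \subseteq T$ and $-1 \notin T$ is not needed for hyperbolicity of the quotient in the relevant sense; in any case $F/_m T^*$ is a (pre-)special hyperfield by Theorem \ref{teopmf}, so the machinery applies.
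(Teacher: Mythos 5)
Your proposal is correct and matches the paper's intent: the theorem is stated there as a corollary-style synthesis with no separate proof, and the intended argument is exactly your chain — Theorem \ref{km4} for $k^{mil}(F)\cong k^{dm}(G(F))$, Theorem \ref{km5} applied to $G(F)$ and to $G_T(F)$, and then the identifications $M(G(F))\cong F/_m\dot F^2$, $M(G_T(F))\cong F/_mT^*$ transported along the functor $k^{mult}$. (Your citation of Theorem \ref{ktmarshall2}/Proposition \ref{ktmarshall1} is not actually needed for this chain, and the hedging about hyperbolicity of $F/_mT^*$ resolves as you say, since it is isomorphic to the special hyperfield $M(G_T(F))$.)
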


There is an alternative definition for $\mbox{Igr}$ with a first-order theoretic flavor. It is a technical framework that allows achieving some model-theoretic results.

Before define it, we need some preparation. First of all, we set up the language. Here, we will work with the poli-sorted framework (as established in chapter 5 of \cite{adamek1994locally}), which means the following:

Let $S$ be a set (of sorts). For each $s\in S$ assume a countable set $\mbox{Var}_s$ of  \textbf{variables of sort $s$} (with the convention if $s\ne t$ then $\mbox{Var}_s\cap\mbox{Var}_t=\emptyset$). For each sort  $s\in S$ an equality symbol $=_s$ (or just $=$); the connectives $\neg, \wedge, \vee, \to$ (not, and, or, implies); the quantifiers $\forall, \exists$ (for all, there exists).

A \textbf{finitary $S$-sorted language (or signature)} is a set $\mathcal L=(\mathcal C,\mathcal F,\mathcal R)$ where:
\begin{enumerate}[i -]
 \item $\mathcal C$ is the set of constant symbols. For each $c\in\mathcal C$ we assign an element $s\in S$, the sort of $c$;
 \item $\mathcal F$ is the set of functional symbols. For each $f\in\mathcal F$ we assign elements $s,s_1,...,s_n\in S$,  we say that $f$ has arity $s_1\times...\times s_n$ and $s$ is the value sort of $f$; and we use the notation $f:s_1\times...\times s_n\rightarrow s$.
 \item $\mathcal R$ is the set of relation symbols. $c\in\mathcal C$ we assign elements $s_1,...,s_n\in S$, the arity of $R$; and we say that $R$ has arity $s_1\times...\times s_n$.
\end{enumerate}

A \textbf{$\mathcal L$-structure} $\mathcal M$ is, in this sense, prescribed by the following data:
\begin{enumerate}[i-]
 \item The \textbf{domain or universe} of $\mathcal M$, which is an $S$-sorted set $|\mathcal M|:=(M_s)_{s\in S}$.
 \item For each constant symbol $c\in\mathcal C$ of arity $s$, an element $c^{\mathcal M}\in M_s$.
 \item For each functional symbol $f\in\mathcal F$, $f:s_1\times...\times s_n\rightarrow s$, a function
 $f^{\mathcal M}:M_{s_1}\times...\times M_{s_n}\rightarrow M_s$.
 \item For each relation symbol $R\in\mathcal R$ of arity $s_1\times...\times s_n$ a relation, i.e. a subset $R^{\mathcal M}\subseteq M_{s_1}\times...\times M_{s_n}$.
\end{enumerate}
 A \textbf{$\mathcal L$-morphism} $\varphi:\mathcal M\rightarrow\mathcal N$ is a sequence of functions $\varphi = (\varphi_s)_s :|\mathcal M|\rightarrow|\mathcal N|$ such that
 \begin{enumerate}[i -]
  \item for all $c\in\mathcal C$ of arity $s$, $\varphi_s(c^{\mathcal M})=c^{\mathcal N}$;
  \item for all $f:s_1\times...\times s_n\rightarrow s$, if  $(a_1,...,a_n)\in :M_{s_1}\times...\times M_{s_n}$, then $\varphi_s(f^{\mathcal M}(a_1,...,a_n))=f^{\mathcal N}(\varphi_{s_1}(a_1),...,\varphi_{s_n}(a_n))$;
  \item for all $R$ of arity $s_1\times...\times s_n$, if $(a_1,...,a_n)\in R^{\mathcal M}$ then $(\varphi(a_1),...,\varphi(a_n))\in R^{\mathcal N}$.
 \end{enumerate}
The category of $\mathcal L$-structures and $\mathcal L$-morphism in the poli-sorted language $\mathcal L$ will be denoted by $\mbox{Str}_s(\mathcal L)$.

The terms, formulas, occurrence and free variables definitions for the poli-sorted case are similar to the usual (single-sorted) first order ones. For example, the terms are defined as follows:
\begin{enumerate}[i -]
 \item variables $x\in\mbox{Var}_s$ and constants $c\in C_s$ are terms of value sort $s$;
 \item if $\vec s=\langle s_1,...,s_n,s\rangle\in S^{n+1}$, $f\in\mathcal F$ with $f:s_1\times...\times s_n\rightarrow s$, and $\tau_1,...,\tau_n$ are terms of value sorts $s_1,...,s_n$ respectively, then $f(\tau_1,...,\tau_n)$ is a term of sort  $s$.
\end{enumerate}
As usual, we may write $\tau : s$ to indicate that the term $\tau$ has value sort $s$.

For the formulas:
\begin{enumerate}[i -]
 \item if $x,y\in\mbox{Var}_s$ then $x=y$ is a formula; if $\vec s=\langle s_1,...,s_n\rangle\in S^n$,  $R\in\mathcal R$ of arity $s_1\times...\times s_n$ and $\tau_1,...,\tau_n$ are terms of sort $s_1,...,s_n$ respectively, then $R(\tau_1,...,\tau_n)$ is a formula. These are the \textbf{atomic formulas}.
 \item If $\varphi_1,\varphi_2$ are formulas, then $\neg\varphi_1$, $\varphi_1\wedge\varphi_2$, $\varphi_1\vee\varphi_2$ and $\varphi_1\to\varphi_2$ are formulas.
 \item If $\varphi$ is a formula and $x\in\mbox{Var}_s$ ($s\in S$), then $\forall x \varphi$ and $\exists x \varphi$ are formulas.
\end{enumerate}

In our particular case, the set of sorts will be just $\mathbb N$. Then, for each $n,m\ge0$, we set the following data:
\begin{enumerate}[i -]
 \item $0_n,\top_n$ are constant symbols of arity $n$. We use $0_0=0$ and $\top_0=1$.
 \item $+_n:n\times n\rightarrow n$ is a binary operation symbol.
 \item  $h_n:n\rightarrow(n+1)$ and $\ast_{n,m}:n\times m\rightarrow(n+m)$ are functional symbols.
\end{enumerate}

The \textbf{(first order) language of inductive graded rings} $\mathcal L_{igr}$ is just the following language (in the poli-sorted sense):
$$\mathcal L_{igr}:=
\{0_n,\top_n,+_n,h_n,\ast_{nm}:n,m\ge0\}.$$

The \textbf{(first order) theory of inductive graded rings} $T(\mathcal L_{igr})$ is the $\mathcal L_{igr}$-theory axiomatized by the following $\mathcal L_{igr}$-sentences, where we use  $\cdot_n:0\times n\rightarrow n$ as an abbreviation for $\ast_{0n}$:
\begin{enumerate}[i -]
 \item For $n\ge0$, sentences saying that ``$+_n,0_n,\top_n$ induces a pointed left $\mathbb F_2$-module'':
 \begin{align*}  &\forall\,x:n\forall\,y:n\forall\,z:n((x+_ny)+_nz=x+_n(y+_nz)) \\
  &\forall\,x:n(x+_n0_n=x) \\
  &\forall\,x:n\forall\,y:n(x+_ny=y+_nx) \\
  &\forall\,x:n(x+_nx=0_n) \\
  &\forall\,x:n(1\cdot_n x=x) \\  &\forall\,x:n\forall\,y:n\forall\,a:0(a\cdot_n(x+_ny)=a\cdot_n x+_na\cdot_n y) \\
  &\forall\,x:n\forall\,a:0\forall\,b:0((a+_0b)\cdot_n x=a\cdot_n x+_nb\cdot_n x)
 \end{align*}
 
 \item For $n\ge0$, sentences saying that ``$h_n$ is a pointed $\mathbb F_2$-morphism'':
 \begin{align*}
  &\forall\,x:n\forall\,y:n(h_n(x+_ny)=h_n(x)+_{n+1}h_n(y)) \\
  &\forall\,x:n\forall\,a:0(h_n(a\cdot_n x)=a\cdot_n h_n(x)) \\
  &h_n(\top_n)=\top_{n+1}
 \end{align*}
 
 \item Sentences saying that ``$R_0\cong\mathbb F_2$'':
 \begin{align*}
  &0_0 \neq \top_0\\ 
  &\forall\,x:n(x=0_0\vee x=\top_0)
 \end{align*}
 
 \item Using the abbreviation  $\ast_{n,m}(x,y)=x\ast_{n,m}y$, we write for $n,m\ge0$ sentences saying that ``$\ast_{n,m}$ is a biadditive function compatible with $h_n$'': 
 \begin{align*}
  &\forall\,x:n\forall\,y:n\forall\,z:m(((x+_ny)\ast_{nm}z)=(x\ast_{mn}z+_{n+m}y\ast_{nm}z)) \\
  &\forall\,x:n\forall\,y:m\forall\,z:m((x\ast_{mn}(y+_mz))=(x\ast_{nm}y+_{n+m}x\ast_{nm}z)) \\
  &\forall\,x:n\forall\,y:m (h_{n+m}(x\ast_{nm}y) = h_n(x)\ast_{nm} h_m(y))
 \end{align*}
 
 \item Sentences describing ``the induced ring with product induced by $\ast_{n,m}$, $n,m\ge0$'':
 \begin{align*}
  &\forall\, x:n\forall\, y:m\forall\, z:p((x\ast_{n,m}y)\ast_{(m+n),p}z=x\ast_{n,(m+p)}(y\ast_{m,p} z)) \\
  &\forall\, x:n\forall\, y:m(x\ast_{n,m}y=y\ast_{m,n}x)
 \end{align*}
 
 \item For $n\ge1$, sentences saying that ``$h_n=\top_1\ast_{1n}\_$'':
 \begin{align*}
  &\forall\,x:n(h_n(x)=\top_1\ast_{1n}x)
 \end{align*}
\end{enumerate}

Now we are in a position to  define another  version of Igr:

\begin{defn}[Inductive Graded Rings Second Version]\label{igr3}
An \textbf{inductive graded ring} (or \textbf{(Igr)} for short) is a model for $T(\mathcal L_{igr})$, or in other words, a $\mathcal L_{igr}$-structure $\mathcal R$ such that $\mathcal R\models_{\mathcal L_{igr}}T(\mathcal L_{igr})$. We denote by $\mbox{Igr}_2$ the category of $\mathcal L_{igr}$-structures and $\mathcal L_{igr}$-morphisms.
\end{defn}

Again, after some straightforward calculations we can check:
\begin{teo}
 The categories $\mbox{Igr}$, $\mbox{Igr}_2$ are equivalent.
\end{teo}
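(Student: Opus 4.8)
The plan is to exhibit a pair of functors $\Phi:\mbox{Igr}\rightarrow\mbox{Igr}_2$ and $\Psi:\mbox{Igr}_2\rightarrow\mbox{Igr}$ and show they are mutually quasi-inverse; in fact I expect both composites to be (isomorphic to, indeed literally equal to) the respective identity functors, so this is really an isomorphism of categories rather than a mere equivalence. The main point is that the two definitions package exactly the same information in two different notational guises: Definition \ref{igr1} describes a family $((R_n)_{n\ge0},(h_n)_{n\ge0},\ast_{nm})$ together with the derived global ring $R=\bigoplus_{n\ge0}R_n$, while Definition \ref{igr3} describes an $\mathbb N$-sorted structure whose sort-$n$ carrier is $M_n$, equipped with interpretations of $0_n,\top_n,+_n,h_n,\ast_{nm}$. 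So first I would define $\Phi$ on objects by sending $R=((R_n),(h_n),\ast_{nm})$ to the $\mathcal L_{igr}$-structure $\mathcal R$ with $M_n:=R_n$, $0_n^{\mathcal R}:=0$, $\top_n^{\mathcal R}:=\top_n$, $+_n^{\mathcal R}:=+$ on $R_n$, $h_n^{\mathcal R}:=h_n$, $\ast_{nm}^{\mathcal R}:=\ast_{nm}$, and dually define $\Psi$ by reading a model $\mathcal R$ of $T(\mathcal L_{igr})$ as the family $((M_n),(h_n^{\mathcal R}),\ast_{nm}^{\mathcal R})$, recovering the global ring as $\bigoplus_n M_n$ with the product assembled from the $\ast_{nm}^{\mathcal R}$.

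Second, I would check that these assignments land in the correct categories. For $\Phi$: given an Igr $R$ in the sense of \ref{igr1}, one verifies that $\Phi(R)$ satisfies each axiom group of $T(\mathcal L_{igr})$. The module axioms (i) follow from each $(R_n,+,0,\top_n)$ being a group of exponent $2$ together with the $\mathbb F_2$-action being scalar multiplication by $R_0\cong\mathbb F_2$; axioms (ii) translate clause (iii) of \ref{igr1} plus $\mathbb F_2$-linearity of $h_n$, which in turn is forced by clause (iv) ($h_n=\ast_{1n}(\top_1,\_)$) and biadditivity of $\ast$; axiom group (iii) is $R_0\cong\mathbb F_2$; axioms (iv) are biadditivity of $\ast_{nm}$ plus the compatibility $h_{n+m}(x\ast y)=h_n(x)\ast h_m(y)$, which is the $p=n+1,q=m+1$ (or rather the one-step) instance of clause (vi) of \ref{igr1}; axioms (v) are associativity and commutativity of the global ring structure from clause (v); and axiom (vi) is precisely clause (iv). Conversely, for $\Psi$: a model of $T(\mathcal L_{igr})$ yields exactly the data of \ref{igr1}, where clause (vi) of \ref{igr1} (the full $h^p_n(x)\ast h^q_m(y)=h^{p+q}_{n+m}(x\ast y)$) is recovered from the one-step compatibility axiom in group (iv) by an easy induction on $(p-n)+(q-m)$, using associativity/commutativity to move the $\top_1$ factors around. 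This little induction is the only non-immediate verification.

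Third, I would treat morphisms. A morphism of Igr's in the sense of \ref{igr1} is a pair $f=(f,(f_n))$ with $f_n:R_n\rightarrow S_n$ pointed-group morphisms and $f=\bigoplus f_n$ a unital ring morphism; I would send it under $\Phi$ to the $\mathcal L_{igr}$-morphism $(f_n)_n$, checking that being a ring morphism on the direct sum is equivalent, component-wise, to commuting with each $\ast_{nm}$, and that commuting with $h_n$ — already noted in the excerpt just after \ref{igr1} as the identity $f_{n+1}\circ h_n=l_n\circ f_n$ — is automatic, so the correspondence on morphisms is a bijection. Functoriality (preservation of identities and composition) is then immediate since $\Phi$ and $\Psi$ act as the identity on underlying families of maps. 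Finally I would observe $\Psi\circ\Phi=\mathrm{Id}_{\mbox{Igr}}$ and $\Phi\circ\Psi=\mathrm{Id}_{\mbox{Igr}_2}$ on the nose (the global ring $R=\bigoplus_n R_n$ is definitionally reconstructed from its graded pieces and the $\ast_{nm}$), which gives the equivalence — indeed an isomorphism — of categories.

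The genuinely routine part is the axiom-by-axiom matching; the one place requiring a small argument is deriving the full multi-step compatibility (vi) of Definition \ref{igr1} from the single-step compatibility $h_{n+m}(x\ast_{nm}y)=h_n(x)\ast_{nm}h_m(y)$ in the first-order theory, and symmetrically checking that nothing in $T(\mathcal L_{igr})$ is strictly stronger than what \ref{igr1} demands. I do not expect any real obstacle: the content is that the first-order axiomatization was designed precisely to capture \ref{igr1}, so the proof is essentially a dictionary together with one induction.
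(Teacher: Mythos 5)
Your proposal is correct and takes essentially the same route the paper intends: the paper offers no written argument beyond ``after some straightforward calculations,'' and your explicit dictionary of functors between the presentation of Definition \ref{igr1} and models of $T(\mathcal L_{igr})$, together with the small induction recovering the multi-step compatibility (vi) from the one-step axiom using $h_n=\top_1\ast_{1n}\_$ and associativity/commutativity, is exactly that calculation. The only caveat is that the paper's compatibility sentence in axiom group (iv) is mis-sorted as literally written (the right-hand side $h_n(x)\ast h_m(y)$ lives in sort $n+m+2$, the left in $n+m+1$), and your ``one-step'' reading, i.e. $h_{n+m}(x\ast_{nm}y)=h_n(x)\ast y=x\ast h_m(y)$, is the correct repair and is what your induction actually uses.
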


\begin{rem} \label{igr-re}
Following a well-known procedure, it is possible to correspond theories on poly-sorted first-order languages with theories on traditional (single-sorted)  first-order languages in such a way that the corresponding categories of models are equivalent. This allows a useful interchanging between model-theoretic results, in both directions. In particular, in the following, we will freely interchange the three notions of Igr indicated in this section.
\end{rem}




Theorem \ref{res0} gives a hint that the category of Igr is a good abstract environment for studying questions of "quadratic flavour". So a better understanding of categories of Igr's  and its applications to  quadratic forms theories is the main purpose of the next sections in this work.

\section{The First Properties of Igr}

In this section we discuss the theory of Igr's. Constructions like products, limits, colimits, ideals, quotients, kernel and image are not new and are obtained in a very straightforward manner (basically, putting those structures available for rings in a "coordinatewise" fashion), then in order to gain speed, we will present these facts leaving more detailed proofs to the reader.

Denote: $p\mathbb{F}_2-mod$ the category of pointed $\mathbb{F}_2$-modules, $\mbox{Ring}$ the category of commutative rings with unity and morphism that preserves these units and $\mbox{Ring}_2$ the full subcategory of the associative $\mathbb F_2$-algebras. We have a functorial correspondence   $\mbox{Ring}_2 \to \mbox{Igr}$, given by the following diagram:
$$\xymatrix@!=2pc{\ar@{} [dr] | {\mapsto}
A\ar[d]_{f} & \mathbb F_2\ar[r]^{!}\ar[d]_{id} & 
A\ar[r]^{id}\ar[d]_{f} & A\ar[r]^{id}\ar[d]_{f} & ...\ar[r]^{id} & A\ar[r]^{id}\ar[d]_{f} &  ... \\ 
B & \mathbb F_2\ar[r]^{!} & B\ar[r]^{id} & B\ar[r]^{id} & ...\ar[r]^{id} & B\ar[r]^{id} & ...}$$

Here $A$ is a $p\mathbb{F}_2-mod$ where $\top_n =1, n \geq 1$ and $\top_0 =1 \in \mathbb{F}_2$.

\begin{defn}\label{trivialigr}
The \textbf{trivial graded ring functor} $\mathbb T:\mbox{Ring}_2\rightarrow\mbox{Igr}$ is the functor defined for $f:A\rightarrow B$ by $T(A)_0:=\mathbb F_2$, $T(f)_0:=id_{\mathbb F_2}$ and for all $n\ge1$ we set $T(A)_n=A$ and $T(f)_n:=f$. 
\end{defn}

\begin{defn}\label{f2alg}
 We define the \textbf{associated $\mathbb F_2$-algebra functor} $\mathbb A:\mbox{Igr}\rightarrow\mbox{Ring}_2$ is the functor defined for $f:R\rightarrow S$ by 
 $$\mathbb A(R):=R_{\mathbb A}=\varinjlim\limits_{n\ge0}R_n\mbox{ and }\mathbb A(f)=f_{\mathbb A}:=\varinjlim\limits_{n\ge0}f_n.$$
\end{defn}

More explicitly, $\mathbb A(R)=(R_{\mathbb A},0,1,+_{\mathbb A},\cdot)$, where
\begin{enumerate}[i -]
 \item $R_{\mathbb A}=\varinjlim\limits_{n\ge0}R_n$,
 \item $0=[(0,0)]$ and $1=[(1,0)]$,
 \item given $[(a_n,n)],[(b_m,m)]\in R_{\mathbb A}$ and setting $d\ge m,n$ we have
 $$[(a_n,n)]+[(b_m,m)]=[(h_{nd}(a_n)+h_{md}(b_m),d)]$$
 \item given $[(a_n,n)],[(b_m,m)]\in R_{\mathbb A}$, we have
 $$[(a_n,n)]\cdot[(b_m,m)]=[(a_n\ast_{nm}b_m,n+m)].$$
\end{enumerate}

\begin{prop}\label{propadj1}
 $ $
 \begin{enumerate}[i -]
  \item The functor $\mathbb A$ is the left adjunct to $\mathbb T$.
 \item The functor $\mathbb T$ is full and faithful.
  \item The composite functor $\mathbb A\circ\mathbb T$ is naturally isomorphic to the functor $1_{\mbox{Ring}_2}$.
 \end{enumerate}
\end{prop}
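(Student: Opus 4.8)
The plan is to establish (i) first and then obtain (ii) and (iii) from it, although the latter two can equally well be checked by hand. Throughout I use the explicit description from Definition~\ref{f2alg}: $\mathbb{A}(R)$ is the filtered colimit $\varinjlim_{n\ge 0}R_n$ along the homomorphisms $h_n$, formed in the category of pointed exponent-$2$ groups and then equipped with the product spelled out after Definition~\ref{f2alg}; so every element is of the form $[(a_n,n)]$ and $[(a_n,n)]=[(b_m,m)]$ iff $h^d_n(a_n)=h^d_m(b_m)$ for some $d\ge n,m$, with $[(a_n,n)]\cdot[(b_m,m)]=[(a_n\ast_{nm}b_m,n+m)]$.

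For (i): fix an Igr $R$ and an object $A$ of $\mbox{Ring}_2$, and unwind a morphism $(g,(g_n)_{n\ge0})\colon R\to\mathbb{T}(A)$. Since $\mathbb{T}(A)_0=\mathbb{F}_2\cong R_0$, the component $g_0$ is forced to be the unique pointed isomorphism; and since the structure maps of $\mathbb{T}(A)$ are $\mathrm{id}_A$ in every positive degree while $l_0\colon\mathbb{F}_2\to A$ is the unit inclusion, the identity $g_{n+1}\circ h_n=l_n\circ g_n$ (established right after Definition~\ref{igr1}) says exactly that $g_{n+1}\circ h_n=g_n$ for $n\ge1$ and $g_1\circ h_0=(\mathbb{F}_2\hookrightarrow A)$. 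In other words, the family $(g_n)_{n\ge0}$ is precisely a cocone under $R_0\xrightarrow{h_0}R_1\xrightarrow{h_1}\cdots$ with vertex $A$, the stage-$0$ leg being forced; by the universal property of the colimit such cocones correspond naturally to pointed group homomorphisms $\bar g\colon\mathbb{A}(R)\to A$. It then remains to check that, under this correspondence, ``$g=\bigoplus_n g_n$ is a morphism of unital rings'' matches ``$\bar g$ is a morphism of $\mathbb{F}_2$-algebras''; this is where one uses $[(a_n,n)]\cdot[(b_m,m)]=[(a_n\ast_{nm}b_m,n+m)]$ together with the facts that the product of $\mathbb{T}(A)$ in positive degrees is that of $A$ and that $\ast_{0n}$ on $\mathbb{T}(A)$ encodes the $\mathbb{F}_2$-action, so multiplicativity and unitality of $g$ pass term by term to $\bar g$ and back. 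A routine diagram chase gives naturality of the bijection $\mathrm{Hom}_{\mbox{Igr}}(R,\mathbb{T}(A))\cong\mathrm{Hom}_{\mbox{Ring}_2}(\mathbb{A}(R),A)$ in $R$ and in $A$, whence $\mathbb{A}\dashv\mathbb{T}$.

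For (iii): specialize the colimit to $R=\mathbb{T}(A)$. The defining diagram of $\mathbb{A}(\mathbb{T}(A))$ is $\mathbb{F}_2\xrightarrow{!}A\xrightarrow{\mathrm{id}}A\xrightarrow{\mathrm{id}}\cdots$, whose colimit is $A$ because its restriction to the positive indices is constant with identity transition maps, hence cofinal; tracing the product formula shows the induced ring structure on the colimit is precisely the given $\mathbb{F}_2$-algebra structure on $A$. The resulting isomorphism $\mathbb{A}(\mathbb{T}(A))\xrightarrow{\sim}A$ is natural in $A$ --- it is in fact the counit of the adjunction from (i) --- so $\mathbb{A}\circ\mathbb{T}\cong 1_{\mbox{Ring}_2}$. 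For (ii): this now follows from (i) and (iii) by the standard fact that a right adjoint is full and faithful exactly when the counit is a natural isomorphism; alternatively one reads it off directly, since in a morphism $\mathbb{T}(A)\to\mathbb{T}(B)$ the compatibility with the identity structure maps forces all positive-degree components to coincide with a single map $f\colon A\to B$, which the ring-morphism condition forces to be a morphism of $\mathbb{F}_2$-algebras, giving $\mathrm{Hom}_{\mbox{Igr}}(\mathbb{T}A,\mathbb{T}B)\cong\mathrm{Hom}_{\mbox{Ring}_2}(A,B)$ naturally.

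I expect the only real obstacle to be bookkeeping rather than anything conceptual: in step (i) one must carefully confirm that the clause ``$\bigoplus_n g_n$ is a unital ring homomorphism'' --- including the edge cases involving the sort $0$ and the operations $\ast_{0n}$ --- matches clause by clause the statement that $\bar g$ is an $\mathbb{F}_2$-algebra morphism, and then that the comparison maps are natural in both variables. None of this is difficult, but it is where slips are most likely.
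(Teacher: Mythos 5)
Your proposal is correct and follows essentially the same route as the paper: item (i) rests on the universal property of the colimit $\mathbb{A}(R)=\varinjlim_n R_n$ (you phrase it as a hom-set bijection, the paper as the unit/reflection property, which are equivalent formulations), item (iii) is the same direct computation of $\mathbb{A}(\mathbb{T}(A))$ as the colimit of an eventually-constant chain, and item (ii) is deduced, as in the paper, from (i) and (iii) via the standard criterion that a right adjoint is full and faithful iff the counit is an isomorphism.
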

\begin{proof}
Let $R\in Igr$. We have
$$\mathbb T(\mathbb A(R))=\mathbb T\left(\varinjlim\limits_{m\ge0}R_m\right).$$
In other words, for all $n\ge1$
$$\mathbb T\left(\varinjlim\limits_{m\ge0}R_m\right)_n:= \varinjlim\limits_{m\ge0}R_m.$$
Then, for all $n\ge1$ we have a canonical embedding
$$\eta(R)_n:R_n\rightarrow\varinjlim\limits_{m\ge0}R_m=\mathbb T\left(\varinjlim\limits_{m\ge0}R_m\right)_n,$$
providing a morphism
$$\eta(R):R\rightarrow\varinjlim\limits_{m\ge0}R_m=\mathbb T\left(\varinjlim\limits_{m\ge0}R_m\right).$$
For $f\in\mbox{Igr}(R,S)$, taking $n\ge1$ we have a commutative diagram
$$\xymatrix@!=5pc{R_n\ar[r]^{f_n}\ar[d]_{\eta(R)_n} & S_n\ar[d]^{\eta(S)_n} \\
**[l]\varinjlim\limits_{m\ge0}R_m\ar[r]_{\varinjlim\limits_{m\ge0}f_m} & **[r]\varinjlim\limits_{m\ge0}S_m}$$
with the convention that $\eta(R)_0=id_{\mathbb F_2}$. Then it is legitimate the definition of a natural transformation $\eta:1_{\mbox{Igr}}\rightarrow\mathbb T\circ\mathbb A$ given by the rule $R\mapsto\eta(R)$.

Now let $A\in Ring_2$ and $g\in Ring_2(R,\mathbb T(A))$. Then for each $n\ge0$, there is a morphism $g_n:R_n\rightarrow\mathbb T(A)_n=A$ and by the universal property of inductive limit we get a morphism
$$\varinjlim\limits_{m\ge0}g_n:\varinjlim\limits_{m\ge0}R_m\rightarrow A.$$
In fact, $\varinjlim\limits_{m\ge0}g_n=\mathbb A(g)$.

Now, using the fact that $\eta(R)_n$ is the morphism induced by the inductive limit we have for all $n\ge0$ the following commutative diagram
$$\xymatrix@!=5pc{
R_n\ar[dr]_{g_n}\ar[r]^{\eta(B)_n} & **[r]\varinjlim\limits_{m\ge0}R_m\ar[d]^{\varinjlim\limits_{m\ge0}g_n} \\
& A}$$
In other words, $\eta(B)_n$ is the canonical morphism commuting the diagram
$$\xymatrix@!=5pc{
R_n\ar[dr]_{g_n}\ar[r]^{\eta(B)_n} & **[r]\mathbb T(\mathbb A(R))\ar[d]^{\mathbb T(\mathbb A(g_n))} \\
& \mathbb T(A)}$$
and hence, $\mathbb A$ is the left adjoint of $\mathbb T$, proving item (i). By the very definition of $\mathbb A$ and $\mathbb T$ we get item (iii), and using Proposition \ref{3.2.2borceux} we get item (ii).
\end{proof}

Using Proposition \ref{3.2.2borceux} (and its dual version) we get the following Corollary.
\begin{cor}
 $ $
 \begin{enumerate}[i -]
  \item $\mathbb T:\mbox{Ring}_2\rightarrow\mbox{Igr}$ preserves all projective limits. 
  
  \item If $I$ is such that $\mbox{Igr}$ is $I$-inductively complete then for $\{A_i\}_{i\in I}$ in $\mbox{Igr}$ we have
$$\varinjlim\limits_{i\in I}A_i\cong{\mathbb A}\left(\varinjlim\limits_{i\in I}\mathbb T(A_i)\right).$$

  \item $\mathbb F_2\in\mbox{Ring}_2$ is the initial object in $\mbox{Ring}_2$.
  
  \item $0\in\mbox{Ring}_2$ is the terminal object in $\mbox{Ring}_2$.

  \item $\mathbb T(\mathbb F_2)$ is the initial object in $\mbox{Igr}$.
  
  \item $\mathbb T(0)$ is the terminal object in $\mbox{Igr}$.
 \end{enumerate}
\end{cor}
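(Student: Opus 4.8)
The plan is to read all six items off the adjunction $\mathbb{A}\dashv\mathbb{T}$ of Proposition~\ref{propadj1} together with the standard behaviour of adjoint functors on (co)limits (Proposition~\ref{3.2.2borceux} and its dual). For item~(i): $\mathbb{T}$ is a right adjoint, hence it preserves every projective limit that exists in $\mbox{Ring}_2$. For item~(ii): $\mathbb{A}$ is a left adjoint, hence it preserves every inductive limit that exists in $\mbox{Igr}$; so, assuming $\mbox{Igr}$ is $I$-inductively complete, the colimit $\varinjlim_{i\in I}\mathbb{T}(A_i)$ exists for any family $\{A_i\}_{i\in I}$ of objects of $\mbox{Ring}_2$, and
$$\mathbb{A}\Big(\varinjlim_{i\in I}\mathbb{T}(A_i)\Big)\cong\varinjlim_{i\in I}\mathbb{A}\big(\mathbb{T}(A_i)\big)\cong\varinjlim_{i\in I}A_i,$$
the last step being the natural isomorphism $\mathbb{A}\circ\mathbb{T}\cong 1_{\mbox{Ring}_2}$ of Proposition~\ref{propadj1}(iii). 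The role of the hypothesis is exactly to guarantee that $\varinjlim_{i\in I}\mathbb{T}(A_i)$ is defined, so that the preservation statement applies.

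Items~(iii), (iv) and (vi) are then quick. In $\mbox{Ring}_2$ every morphism sends $1$ to $1$, and since $\mathbb{F}_2=\{0,1\}$ this forces a unique (and well defined) morphism $\mathbb{F}_2\to A$ for each $A$, so $\mathbb{F}_2$ is initial, which is~(iii). The zero ring $0$ belongs to $\mbox{Ring}_2$ and admits a unique morphism from every object, so it is terminal, which is~(iv). For~(vi): $\mathbb{T}$, being a right adjoint, preserves the terminal object, and by~(iv) that object is $0$, so $\mathbb{T}(0)$ is terminal in $\mbox{Igr}$.

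Item~(v) is the only one that is not pure abstract nonsense, since a right adjoint need not preserve initial objects, so I would argue it by hand. Given an arbitrary Igr $R=((R_n)_{n\ge0},(h_n^R)_{n\ge0},\ast)$, a morphism $f\colon\mathbb{T}(\mathbb{F}_2)\to R$ must have $f_0\colon\mathbb{F}_2\to R_0$ equal to the unique ring isomorphism (recall $R_0\cong\mathbb{F}_2$), and the relation $f_{n+1}\circ h_n=h_n^R\circ f_n$ recorded just after Definition~\ref{igr1}, applied with the structure maps of $\mathbb{T}(\mathbb{F}_2)$ being identities, forces $f_n=(h^n_0)_R\circ f_0$ for every $n$. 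Conversely, this prescription does define an Igr-morphism: one checks that $\bigoplus_n f_n$ preserves $1=\top_0$ and the products $\ast_{nm}$, using $\top_n=h^n_0(\top_0)$ and axiom~(vi) of Definition~\ref{igr1} ($h^p_n(x)\ast h^q_m(y)=h^{p+q}_{n+m}(x\ast y)$). Hence $\mbox{Igr}(\mathbb{T}(\mathbb{F}_2),R)$ is a singleton for every $R$, that is, $\mathbb{T}(\mathbb{F}_2)$ is initial. The only genuine (though routine) computation in the whole corollary is this last compatibility check for~(v); everything else is formal.
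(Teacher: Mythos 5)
Your proposal is correct, and for items (i)--(iv) and (vi) it follows exactly the route the paper intends: the paper offers no written proof beyond the remark that the corollary follows from Proposition \ref{propadj1} together with Proposition \ref{3.2.2borceux} and its dual, which is precisely your use of ``right adjoints preserve projective limits'' for (i) and (vi), ``left adjoints preserve inductive limits'' plus $\mathbb A\circ\mathbb T\cong 1_{\mbox{Ring}_2}$ for (ii), and the elementary identifications of $\mathbb F_2$ and $0$ as initial and terminal in $\mbox{Ring}_2$ for (iii) and (iv). (You also silently read the family $\{A_i\}_{i\in I}$ in (ii) as living in $\mbox{Ring}_2$ rather than $\mbox{Igr}$ as misprinted in the statement, which is the only reading under which $\mathbb T(A_i)$ makes sense.) Where you genuinely diverge is item (v), and your divergence is an improvement: as you note, a right adjoint need not preserve initial objects, and the adjunction $\mathbb A\dashv\mathbb T$ only computes hom-sets \emph{into} objects of the form $\mathbb T(A)$, so the paper's blanket citation does not actually yield initiality of $\mathbb T(\mathbb F_2)$. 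Your direct verification fills this in correctly: for any $R$, each component $f_n\colon\mathbb F_2\to R_n$ of a morphism $\mathbb T(\mathbb F_2)\to R$ is forced (it must send $\top_n\mapsto\top_n$, equivalently $f_n=h^n_0\circ f_0$ via the compatibility relation after Definition \ref{igr1}), and the prescription is indeed a morphism because $\top_n\ast_{nm}\top_m=h^n_0(\top_0)\ast h^m_0(\top_0)=h^{n+m}_0(\top_0)=\top_{n+m}$ by axiom (vi) of Definition \ref{igr1}, the additivity check being trivial in exponent-2 groups. So the formal argument buys brevity for five items, while your hand computation supplies the one step the formal argument cannot deliver.
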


Now we discuss (essentially) the limits and colimits in $Igr$. Fix a non-empty set $I$ and let $\{(R_i,\top_i,h_i)\}_{i\in I}$ be a family of Igr's. We start with the construction of the Igr-product 
$$R=\prod_{i\in I}R_i.$$ 
For this, we define $R_0\cong\mathbb F_2$ and for all $n\ge1$, we define
$$R_n:=\prod_{i\in I}(R_i)_n\mbox{ and }\top_n:=\prod_{i\in I}(\top_i)_n.$$
In the sequel, we define $h_0:\mathbb F_2\rightarrow R_1$ as the only possible morphism and for $n\ge1$, we define $h_n:R_n\rightarrow R_{n+1}$ by
$$h_n:=\prod_{i\in I}(h_i)_n.$$

\begin{defn} \label{boohull-df}

\begin{enumerate}[i- ]
$ $
    \item 
    The {\bf space of orderings}, $X_R$, of the Igr $R$, is the set of Igr-morphisms $Igr(R, \mathbb{T}(\mathbb{F}_2)$. By the Proposition \ref{propadj1}.(i), we have a natural bijection $Igr(R, \mathbb{T}(\mathbb{F}_2) \cong Ring_2(\mathbb{A}(R), \mathbb{F}_2)$, thus considering the discrete topologies on the $\mathbb{F}_2$-algebras $\mathbb{A}(R), \mathbb{F}_2)$ and transporting the boolean topology in  $Ring_2(\mathbb{A}(R), \mathbb{F}_2)$, we obtain a boolean topology on the space of orderings $X_R = Igr(R, \mathbb{T}(\mathbb{F}_2))$.
    
\item The {\bf boolean hull}, $B(R)$, of the Igr $R$, is the boolean ring canonically associated to the space of orderings of $R$ by Stone duality: $B(R) := {\mathcal C}(X_R, \mathbb{F}_2)$.

\item A Igr $R$ is called {\bf formally real} if $X_R \neq \emptyset$ (or, equivalently, if $B(R) \neq 0$).
    \end{enumerate}
\end{defn}

\begin{prop}\label{fixigr1}
 Let $I$ be a non-empty set and $\{(R_i,h_i)\}_{i\in I}$ be a family of Igr's. Then
 $$R=\prod_{i\in I}R_i$$
 with the above rules is an Igr. Moreover it is the product in the category $\mbox{Igr}$.
\end{prop}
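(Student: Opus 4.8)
The plan is to prove everything coordinatewise, exploiting that the category $p\mathbb{F}_2-mod$ has products computed componentwise and that every piece of structure in an Igr is given by degreewise operations. First I would check that $R=\prod_{i\in I}R_i$ satisfies the clauses of Definition \ref{igr1}. Clause (i) holds by construction. For (ii), each $R_n=\prod_{i\in I}(R_i)_n$ is a product of exponent-$2$ pointed groups, hence an exponent-$2$ pointed group with distinguished element $\top_n=((\top_i)_n)_i$. For (iii), $h_n=\prod_i(h_i)_n$ is a group homomorphism sending $\top_n$ to $\top_{n+1}$. I would then define $\ast_{nm}\colon R_n\times R_m\to R_{n+m}$ coordinatewise by $((x_i)_i)\ast_{nm}((y_i)_i):=(x_i\ast_{nm}y_i)_i$, and observe that clauses (iv) and (vi) ($h_n(x)=\top_1\ast_{1n}x$, and $h^p_n(x)\ast h^q_m(y)=h^{p+q}_{n+m}(x\ast y)$, where $h^p_n$ is a composite of coordinatewise maps hence coordinatewise) each amount to an identity that holds in $R$ precisely because it holds in every coordinate $R_i$.

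The one point I would flag concerns clause (v): the ambient ring of $R$ is $\bigoplus_{n\ge0}R_n=\bigoplus_{n\ge0}\prod_{i\in I}(R_i)_n$, which is \emph{not} the product ring $\prod_{i\in I}R_i=\prod_{i\in I}\bigoplus_{n\ge0}(R_i)_n$; so one cannot simply invoke the existence of products in $\mbox{Ring}$, and the ring axioms must be checked directly. Fortunately biadditivity of each $\ast_{nm}$, associativity, commutativity, and the unit law $1=\top_0$ all reduce to finitely-supported, degreewise, coordinatewise computations, and therefore follow from the corresponding identities in each $R_i$.

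For the universal property I would first define the projections $\pi_i\colon R\to R_i$ degreewise via the component maps $\prod_j(R_j)_n\to(R_i)_n$; these are Igr-morphisms by the coordinatewise definitions above. Given an Igr $S$ with a family of Igr-morphisms $f_i\colon S\to R_i$, I would use the universal property of the product in $p\mathbb{F}_2-mod$ in each degree $n\ge1$ to produce the unique pointed-group map $f_n\colon S_n\to R_n$ with $(\pi_i)_n\circ f_n=(f_i)_n$ for all $i$, namely $f_n(x)=((f_i)_n(x))_i$, and set $f_0$ to be the unique isomorphism $\mathbb{F}_2\to\mathbb{F}_2$. The induced additive map $\bigoplus_n f_n$ is a ring homomorphism, since on a homogeneous product $f_{n+m}(x\ast_{nm}y)=((f_i)_{n+m}(x\ast y))_i=((f_i)_n(x)\ast(f_i)_m(y))_i=f_n(x)\ast_{nm}f_m(y)$ and it preserves $\top_0$; hence $f=(f,(f_n)_n)$ is an Igr-morphism with $\pi_i\circ f=f_i$. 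Uniqueness is immediate: any Igr-morphism $g\colon S\to R$ with $\pi_i\circ g=f_i$ satisfies $(\pi_i)_n\circ g_n=(f_i)_n$ in every degree, forcing $g_n=f_n$ by the degreewise universal property.

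The argument is essentially bookkeeping; the only step that is not purely formal is the observation above that a product of Igr's carries a \emph{direct sum of products} (rather than a product of direct sums) as its underlying ring, so clause (v) and the ring-morphism condition cannot be imported from $\mbox{Ring}$ and have to be verified from the coordinatewise definitions --- but since all the operations in sight are degreewise and componentwise, this poses no genuine obstacle.
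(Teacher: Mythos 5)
Your proposal is correct and follows essentially the same route as the paper's proof: verify the Igr axioms of Definition \ref{igr1} coordinatewise and degreewise, define the projections $\pi_i$ componentwise, and obtain the mediating morphism degree by degree from the universal property of products in $p\mathbb{F}_2$-mod, with uniqueness read off degreewise. The paper dismisses the axiom verification as ``straightforward,'' whereas you spell it out and explicitly flag that the underlying ring is a direct sum of products rather than a product of direct sums; this is a useful clarification but not a different argument.
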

\begin{proof}
Using Definition \ref{igr1} is straightforward to verify that $(R,\top_n,h_n)$ is an Igr. Note that for each $i\in I$, we have an epimorphism $\pi_i:R\rightarrow R_i$ given by the following rules: for each $n\ge0$ and each $(x_i)_{i\in I}\in R_n$, we define
$$(\pi_i)_n((x_i)_{i\in I}):=x_i.$$
Now, let $(Q,\{q_i\}_{i\in I})$ be another pair with $Q$ being an Igr and $q_i:Q\rightarrow R_i$ being a morphism for each $i\in I$. Given $i\in I$ and $n\ge0$, since $R_n:=\prod_{i\in I}(R_i)_n$ is the product in the category of pointed $\mathbb F_2$-modules, we have an unique morphism $(q)_n:(Q)_n\rightarrow(R)_n$ such that $(\pi_i)_n\circ(q)_n=(q_i)_n$. Set $q_n:= ((q_i)_{i \in I})_n$. By construction, $q$ is the unique Igr-morphism such that $\pi_i\circ q=q_i$, completing the proof that $R$ is in fact the product in the category $\mbox{Igr}$.
\end{proof}

\begin{prop}
$ $
\begin{enumerate}[i- ]
\item    Let ${R}$ be an $Igr$ and let $X \subseteq R =
\underset{n\in\mathbb{N}}\bigoplus R_{n}$. Then  there exists the {\bf  inductive graded subring generated by $X$} (notation : $[X]\overset{i_{{X}}}\hookrightarrow R$): this is the least inductive graded subring of ${R}$ such that $\forall n \in \mathbb{N}$, $X\cap {R}_{n}\subseteq [ {X} ]_{n}$.
\item Let $\mathcal{I}$ be a small category and $\mathcal{R} : \mathcal{R} \to Igr$ be a diagram. Then there exists $\varprojlim_{i\in \mathcal{I}}{\mathcal{R}_i}$ in the category $Igr$.
\end{enumerate}
\end{prop}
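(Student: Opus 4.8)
The plan for item (i) is the standard ``closure under intersections'' argument. First I would fix the evident notion: an \emph{inductive graded subring of $R$} is a family $S = (S_n)_{n \geq 0}$ of subgroups $S_n \leq R_n$ with $S_0 = R_0$ and $h_n(S_n) \subseteq S_{n+1}$, $\ast_{nm}(S_n \times S_m) \subseteq S_{n+m}$ for all $n,m$. One checks that such an $S$ automatically contains every $\top_n$ (since $\top_0 = 1 \in R_0 = S_0$ and $\top_{n+1} = h_n(\top_n)$) and that the identities (iv) and (vi) of Definition \ref{igr1} hold in $\bigoplus_n S_n$ because they already hold in $\bigoplus_n R_n$; hence $\bigoplus_n S_n$ is itself an Igr and the inclusion is an Igr-morphism. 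Now the collection $\Sigma_X$ of those $S$ with $X \cap R_n \subseteq S_n$ for every $n$ is a set, nonempty since it contains $R$, and it is closed under degreewise intersection: intersections of subgroups are subgroups, $S_0 = R_0$ for each $S \in \Sigma_X$, and the two closure conditions pass to intersections. Setting $[X]_n := \bigcap_{S \in \Sigma_X} S_n$ therefore produces an inductive graded subring, which is by construction contained in every member of $\Sigma_X$, i.e. the least one; this is $[X]\overset{i_{X}}\hookrightarrow R$. Since all the structure is finitary, one could alternatively build $[X]$ from below, generating in degree $n$ from $(X \cap R_n) \cup \{\top_n\}$ and iterating the group operations together with the $h$'s and $\ast$'s over $\omega$ stages.

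For item (ii) I would proceed via products and equalizers. Proposition \ref{fixigr1} already gives all small products in $Igr$, so by the standard fact that a category with small products and equalizers is small-complete it is enough to build equalizers. Given Igr-morphisms $f, g : R \rightrightarrows S$, I would set $E_n := \{x \in R_n : f_n(x) = g_n(x)\}$: each $E_n$ is a subgroup of $R_n$ because $f_n,g_n$ are homomorphisms, $E_0 = R_0 \cong \mathbb{F}_2$ because $f_0 = g_0$ is the unique isomorphism $R_0 \to S_0$, every $\top_n$ lies in $E_n$, and $E$ is stable under $h_n$ and $\ast_{nm}$ (if $f_n x = g_n x$ then $f_{n+1}(h_n x) = h_n(f_n x) = h_n(g_n x) = g_{n+1}(h_n x)$, and $\ast_{nm}$ is handled the same way using that $f,g$ are ring morphisms). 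So $E$ is an inductive graded subring in the sense of item (i), and since $E \hookrightarrow R$ is a monomorphism which is the identity in degree $0$ it is readily checked to be the equalizer of $f$ and $g$.

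Equivalently --- and this is the description I would actually record --- $\varprojlim_{i \in \mathcal{I}} \mathcal{R}_i$ can be built directly as the inductive graded subring $L$ of the Igr-product $\prod_{i \in \mathrm{Ob}(\mathcal{I})} \mathcal{R}_i$ whose degree-$n$ part is the set of compatible families $\{(x_i)_i : \mathcal{R}(\alpha)_n(x_i) = x_j \text{ for every } \alpha : i \to j \text{ in } \mathcal{I}\}$; the product projections restrict to a cone on $\mathcal{R}$, and any cone $(q_i : Q \to \mathcal{R}_i)_i$ factors through $L$ by the universal property of the product, uniquely because $L \hookrightarrow \prod_i \mathcal{R}_i$ is monic. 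I expect the only genuine point of care to be the degree-$0$ component, because the axiom ``$R_0 \cong \mathbb{F}_2$'' is not equational: the $Igr$-product is \emph{not} coordinatewise in degree $0$ but is $\mathbb{F}_2$ itself (see the construction preceding Proposition \ref{fixigr1}), and one must observe that, since every $\mathcal{R}(\alpha)_0$ is the identity of $\mathbb{F}_2$, the compatibility condition is vacuous there, so $L_0 = \mathbb{F}_2$ as required. Everything else is routine degreewise bookkeeping.
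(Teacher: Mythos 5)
Your proposal is correct and essentially follows the paper's own route: for (i) you characterize $[X]$ as the least degreewise-closed subring (by intersection, with the from-below generation noted as an alternative), while the paper generates it from below as the $\mathbb{F}_2$-subalgebra of $(\bigoplus_{n}R_n,\ast)$ generated by $X\cup\{\top_1\}$, the inclusion of $\top_1$ yielding closure under the $h_n$ for free since $h_n=\top_1\ast_{1n}(\_)$ --- two descriptions of the same object; and for (ii) the construction you say you would record, namely the inductive graded subring of the Igr-product whose degree-$n$ part consists of the compatible families, is exactly the paper's definition of $\varprojlim_{i\in\mathcal I}\mathcal{R}_i$, your remark about the degree-$0$ component being a detail the paper leaves implicit. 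The extra products-plus-equalizers argument is correct but not needed for the comparison.
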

\begin{proof}
$ $
\begin{enumerate}[i- ]
    \item  It is enough consider $S_X$ , the $\mathbb{F}_2$-subalgebra of $(\bigoplus_{n \in \mathbb{N}}R_n,\ast)$ generated by $X\cup\{ \top_{1} \} \subseteq \bigoplus_{n \in \mathbb{N}}R_n$ and set $\forall n \in \mathbb{N}$, $[{X}]_{n} := s_x\cap{R}_{n}$.
\item Just define $\varprojlim_{i\in \mathcal{I}}  \mathcal{R}_i$ as the inductive graded subring of $\prod_{i \in obj({\mathcal I})} \mathcal{R}_i$ generated by $X_D = \bigoplus_{n \in \mathbb{N} }  X_n$ and  $X_n := \varprojlim_{i \in \mathcal{I}} (\mathcal{R}_i)_n$ (projective limit of pointed $\mathbb{F}_2$-algebras).
\end{enumerate}
\end{proof}

Now we construct the Igr-tensor product of a \underline{finite family} of Igr's, $\{R_i : i \in I\}$
$$R=\bigotimes\limits_{i\in I}R_i.$$ 
For this, we define $R_0\cong\mathbb F_2$ and for all $n\ge1$, we define
$$R_n:=\bigotimes\limits_{i\in I}(R_i)_n,$$
$$(\otimes_{i \in I} a_i) \ast_{n,k} (\otimes_{i \in I} b_i) := \otimes_{i \in I} (a_i \ast^i_{n,k} b_i)$$
$$\mbox{ and }\top_n:=\otimes_{i\in I}(\top_i)_n.$$
In particular, if $I = \emptyset$, then $R_n = \{0\}, n \geq 1$. In the sequel, we define $h_0:\mathbb F_2\rightarrow R_1$ as the only possible morphism and for $n\ge1$, we define $h_n:R_n\rightarrow R_{n+1}$ by
$$h_n:=\bigotimes\limits_{i\in I}(h_i)_n.$$
In other words, for a generator $\bigotimes_{i\in I}x_i\in R_n$, we have
$$h_n\left(\otimes_{i\in I}x_i\right):=\bigotimes_{i\in I}(h_i)_n(x_i).$$

\begin{prop}\label{fixigr2}
 Let $I$ be a finite set and $\{(R_i,h_i)\}_{i\in I}$ be a family of Igr's. Then
 $$R=\bigotimes_{i\in I}R_i$$
 with the above rules is an Igr. Moreover it is the coproduct in the category $\mbox{Igr}$.
\end{prop}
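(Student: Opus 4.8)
The plan is to verify that the structure $R=\bigotimes_{i\in I}R_i$ satisfies the axioms of Definition \ref{igr1}, and then to establish the universal property of the coproduct. For the first task, I would proceed axiom by axiom. Axioms (i)--(iii) are immediate from the corresponding properties of each $R_i$: each $(R_i)_n$ is a pointed $\mathbb F_2$-module, finite tensor products of such over $\mathbb F_2$ are again pointed $\mathbb F_2$-modules (of exponent 2), the distinguished element is $\top_n=\otimes_{i\in I}(\top_i)_n$, and $h_0$ is forced since $R_0\cong\mathbb F_2$. For axiom (v), the key observation is that $\bigoplus_{n\ge0}R_n=\bigoplus_{n\ge0}\bigotimes_{i\in I}(R_i)_n$ is, as a graded $\mathbb F_2$-algebra, naturally identified with the graded tensor product $\bigotimes_{i\in I}\left(\bigoplus_{n\ge0}(R_i)_n\right)$ over $\mathbb F_2$ (using that $I$ is finite); since each $\bigoplus_n (R_i)_n$ is a commutative $\mathbb F_2$-algebra by hypothesis, so is their tensor product, with unit $\top_0$. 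Axiom (iv), $h_n=\ast_{1n}(\top_1,\_)$, follows by checking on generators: $\ast_{1n}(\top_1,\otimes_i x_i)=\otimes_i\bigl((\top_i)_1\ast^i_{1n}x_i\bigr)=\otimes_i (h_i)_n(x_i)=h_n(\otimes_i x_i)$, using axiom (iv) in each $R_i$. Axiom (vi), the compatibility $h^p_n(x)\ast h^q_m(y)=h^{p+q}_{n+m}(x\ast y)$, again reduces to generators and to the fact that $h^t_s$ on $R$ is the tensor product of the $h^t_s$ of the factors, so the identity propagates coordinatewise from the factors.

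For the universal property, I would first identify the coprojections $\iota_j\colon R_j\to R$. On the degree-$n$ component, $(\iota_j)_n\colon (R_j)_n\to\bigotimes_{i\in I}(R_i)_n$ should send $x\mapsto\bigotimes_{i\in I}y_i$ where $y_j=x$ and $y_i=(\top_i)_n$ for $i\ne j$; one checks this is a pointed $\mathbb F_2$-module morphism and that the induced map on $\bigoplus_n R_n$ is the canonical $\mathbb F_2$-algebra inclusion into the tensor product, hence an Igr-morphism. Given an Igr $Q$ together with morphisms $g_i\colon R_i\to Q$, the induced $\mathbb F_2$-algebra maps $\mathbb A(g_i)\colon\mathbb A(R_i)\to\mathbb A(Q)$ into the commutative $\mathbb F_2$-algebra $\mathbb A(Q)$ yield, by the universal property of the tensor product of commutative $\mathbb F_2$-algebras, a unique $\mathbb F_2$-algebra morphism $\bigotimes_i\mathbb A(R_i)\to\mathbb A(Q)$. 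The content is that this map respects the gradings: I would argue that it sends the degree-$n$ component $R_n=\bigotimes_i(R_i)_n$ into $Q_n$, because each $g_i$ is degree-preserving and the generating tensors $\otimes_i x_i$ with $x_i\in(R_i)_n$ map to $\prod_i g_i(x_i)$, a product of $n$-homogeneous elements lying in $Q_n$ by axiom (vi) for $Q$ (more precisely, using $\ast_{n,0},\ast_{n,n}$ and the structure of the ring $\bigoplus Q_n$). This gives the desired Igr-morphism $R\to Q$ commuting with the $\iota_j$, and uniqueness follows since its underlying $\mathbb F_2$-algebra morphism is forced and the $R_n$ are determined inside $\mathbb A(R)$.

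The main obstacle I anticipate is the bookkeeping in axiom (vi) and, relatedly, in showing the induced map $R\to Q$ is grading-preserving: one has to be careful that the "tensor product of homogeneous-degree-$n$ pieces" really is the degree-$n$ piece of the graded tensor product, and that multiplication $\ast_{n,m}$ on $R$ is genuinely the restriction of the tensor-product algebra multiplication — this is where finiteness of $I$ is essential (an infinite tensor product would not give a well-defined graded object with these finite-degree components). Once the identification $\bigoplus_n R_n\cong\bigotimes_{i\in I}\bigl(\bigoplus_n(R_i)_n\bigr)$ as graded $\mathbb F_2$-algebras is cleanly set up, every axiom and the universal property become essentially formal consequences of the analogous statement for commutative $\mathbb F_2$-algebras, so I would invest the bulk of the write-up in making that identification precise and leave the rest to the reader, in keeping with the stated policy of the section. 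The case $I=\emptyset$ is handled separately but trivially: $R_n=\{0\}$ for $n\ge1$, $R\cong\mathbb F_2=\mathbb T(\mathbb F_2)$, which is indeed the initial object of $\mbox{Igr}$ by the earlier Corollary.
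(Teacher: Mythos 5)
The part of your plan that verifies the axioms of Definition \ref{igr1} can be repaired, but the identification it leans on is false, and that falsity is fatal for the part that matters, the universal property. The paper's $R_n:=\bigotimes_{i\in I}(R_i)_n$ is a \emph{degreewise} tensor product, not the graded tensor product of the total algebras: the degree-$n$ component of $\bigotimes_{i\in I}\bigl(\bigoplus_{m}(R_i)_m\bigr)$ is $\bigoplus_{n_1+\cdots+n_r=n}\bigotimes_{i}(R_i)_{n_i}$, which strictly contains the ``diagonal'' piece $\bigotimes_i (R_i)_n$ as soon as the factors are nontrivial (compare graded dimensions already for two copies of $\mathbb{T}(\mathbb{F}_2)$). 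The checks of axioms (i)--(iv) and (vi) survive because you do them on generators, coordinatewise, and (v) can be saved by embedding $\bigoplus_n\bigotimes_i(R_i)_n$ as the diagonal subring of the algebra tensor product; but the coproduct argument does not typecheck. Given morphisms $g_i:R_i\to Q$, the mediating map you describe would send the degree-$n$ generator $\otimes_i x_i$ to $\prod_i g_i(x_i)$, which is homogeneous of degree $n\cdot |I|$, not $n$, so it defines no morphism $R_n\to Q_n$; and routing the construction through $\mathbb{A}$ does not help, since $\mathbb{A}(Q)=\varinjlim_n Q_n$ forgets the grading and the product $\prod_i g_i(x_i)$ need neither lie in, nor lift uniquely from, the image of $Q_n$ in $\mathbb{A}(Q)$.

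The defect is not mere bookkeeping. With the coprojections you propose, $(\iota_j)_n(x)=\bigl(\otimes_{i\ne j}(\top_i)_n\bigr)\otimes x$, the images do not generate $R$: in degree $1$ they span only $\sum_j \top_1\otimes\cdots\otimes(R_j)_1\otimes\cdots\otimes\top_1$, a proper subgroup of $\bigotimes_i(R_i)_1$ in general, so uniqueness of a mediating morphism is not automatic, and in fact it can fail. Take two factors $\mathbb{T}(A)$, $\mathbb{T}(B)$ (so $R=\mathbb{T}(A\otimes_{\mathbb{F}_2}B)$) and test against the Igr $Q$ with $Q_0=\mathbb{F}_2$, $Q_1\ne 0$, and $Q_n=0$, $\top_n=0_n$ for $n\ge 2$ (all axioms of Definition \ref{igr1} hold, with $\ast_{n,m}=0$ for $n,m\ge1$): an Igr-morphism $R\to Q$ is then just an additive map $A\otimes B\to Q_1$ sending $1\otimes1$ to $\top_1$, and compatibility with $\iota_1,\iota_2$ prescribes it only on the span of $A\otimes 1$ and $1\otimes B$; for $A=B=\mathbb{F}_2\times\mathbb{F}_2$ and $Q_1=\mathbb{F}_2$ there are several distinct extensions. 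So the coproduct property cannot be a formal consequence of the coproduct property of tensor products of commutative $\mathbb{F}_2$-algebras; one needs either a genuinely different argument, different structure maps, or a restriction to a subcategory in which degree-one data generates (such as $\mbox{Igr}_{\mathbbm1}$ or $\mbox{Igr}_+$), and the paper, which states this proposition without proof, gives no guidance on this point. A last small remark: your treatment of $I=\emptyset$ is internally inconsistent --- if $R_n=\{0\}$ for $n\ge1$ then $R$ is not $\mathbb{T}(\mathbb{F}_2)$ and is not initial (its distinguished elements vanish); the empty tensor product should be read as $\mathbb{F}_2$ in every degree, giving $\mathbb{T}(\mathbb{F}_2)$ itself.
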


Now suppose that $(I, \leq)$ is an upward directed poset and that $((R_i,h_i),\varphi_{ij})_{i\le j\in I}$ is an inductive system of Igr's. We define the inductive limit 
$$R=\varinjlim_{i\in I}R_i$$
by the following: for all $n\ge0$ define
$$R_n:=\varinjlim_{i\in I}(R_i)_n.$$
Note that
$$R_0:=\varinjlim_{i\in I}(R_i)_0\cong\varinjlim_{i\in I}\mathbb F_2\cong\mathbb F_2.$$
In the sequel, for $n\ge1$ we define $h_n:R_n\rightarrow R_{n+1}$ by
$$h_n:=\varinjlim_{i\in I}(h_i)_n.$$

\begin{prop}\label{fixigr4}
 Let $(I, \leq)$ is an upward directed poset and $((R_i,h_i),\varphi_{ij})_{i\in I}$ be a directed family of Igr's. Then
 $$R=\varinjlim_{i\in I}R_i$$
 with the above rules is an Igr. Moreover, it is the inductive limit in the category $\mbox{Igr}$.
\end{prop}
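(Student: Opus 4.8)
The plan is to equip $R=((R_n)_{n\ge 0},(h_n)_{n\ge 0},\ast_{nm})$ with all of its structure by transporting, along the directed colimits $R_n=\varinjlim_{i\in I}(R_i)_n$, the corresponding structure of the $R_i$, and then to verify the axioms of Definition \ref{igr1}; after that, to check the universal property. Recall that in the category $p\mathbb{F}_2\text{-}mod$ of pointed $\mathbb{F}_2$-modules filtered colimits are computed on underlying sets, so each $R_n$ carries a pointed exponent-$2$ group structure with distinguished element $\top_n$ defined as the common class of the $(\top_i)_n$; moreover every element of $R_n$ is of the form $\lambda^i_n(x)$ for the colimit coprojection $\lambda^i_n:(R_i)_n\to R_n$ and some $i\in I$, $x\in(R_i)_n$, with $\lambda^i_n(x)=\lambda^j_n(y)$ iff $\varphi_{ik}(x)=\varphi_{jk}(y)$ for some $k\ge i,j$. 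The maps $h_n$ and the pairings $\ast_{nm}$ are obtained from the compatible families $\{(h_i)_n\}$ and $\{(\ast_i)_{nm}\}$ by the universal property of the colimit, concretely $h_n(\lambda^i_n(x))=\lambda^i_{n+1}((h_i)_n(x))$ and $\lambda^i_n(x)\ast_{nm}\lambda^j_m(y)=\lambda^k_{n+m}(x'\ast^k_{nm}y')$ with $k\ge i,j$ and $x',y'$ the images of $x,y$ at stage $k$; well-definedness is the usual "pass to a common upper bound" argument, using that $I$ is directed.

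Now I would check the Igr axioms. Item (i) holds because $R_0=\varinjlim_i(R_i)_0\cong\varinjlim_i\mathbb F_2\cong\mathbb F_2$ (a filtered colimit of the constant diagram). Items (ii) and (iii) hold by the remarks above. Items (iv), (vi), (vii) are, for fixed elements, equalities between terms built from finitely many elements of finitely many of the $R_n$; each such element is $\lambda^i_n(\cdot)$ for some stage $i$, and by directedness all the stages involved admit a common upper bound $i_0$, at which the desired identity holds because $R_{i_0}$ is an Igr — hence it holds in $R$. The only non-elementwise point is item (v): one uses that $\bigoplus_{n\ge0}$ commutes with filtered colimits in $\mathbb F_2$-modules, so $R=\bigoplus_{n\ge 0}R_n=\bigoplus_{n\ge 0}\varinjlim_i(R_i)_n\cong\varinjlim_i\bigl(\bigoplus_{n\ge 0}(R_i)_n\bigr)$, a filtered colimit of commutative rings with unit, hence itself a commutative ring with unit; one then checks that the product so obtained agrees with the one induced by the $\ast_{nm}$, again by evaluating on homogeneous elements $\lambda^i_n(x)$. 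Thus $R$ is an Igr, and the coprojections assemble into Igr-morphisms $\lambda^i:R_i\to R$ forming a cocone over the diagram.

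For the universal property, let $Q$ be an Igr with a compatible family of Igr-morphisms $q_i:R_i\to Q$ (so $q_j\circ\varphi_{ij}=q_i$ whenever $i\le j$). For each $n$ the family $\{(q_i)_n:(R_i)_n\to Q_n\}$ is compatible, so by the universal property of $R_n=\varinjlim_i(R_i)_n$ in $p\mathbb F_2\text{-}mod$ there is a unique morphism of pointed groups $q_n:R_n\to Q_n$ with $q_n\circ\lambda^i_n=(q_i)_n$. That $q=(q_n)_{n}$ commutes with the $h_n$ and the $\ast_{nm}$, and that $\bigoplus_n q_n$ is a ring morphism, is checked on generators $\lambda^i_n(x)$, using that each $q_i$ already has these properties and passing to a common stage for any finite family of elements; and uniqueness of $q$ is forced sortwise by uniqueness in $p\mathbb F_2\text{-}mod$. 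Hence $R=\varinjlim_{i\in I}R_i$ in $\mathrm{Igr}$.

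The step I expect to demand the most care is the bookkeeping around items (v) and (vii): one must genuinely use that $\bigoplus_{n}$ commutes with filtered colimits (so that the global ring structure is the colimit of the stagewise ring structures), and then verify that associativity and commutativity of $\ast$ and the compatibility $h^p_n(x)\ast h^q_m(y)=h^{p+q}_{n+m}(x\ast y)$ transfer; all of this is finitary and reduces to a single stage, but one must be disciplined about first moving all of the finitely many elements involved into a common $(R_{i_0})_\bullet$ before invoking the axioms there. Everything else is a routine "compute filtered colimits on underlying sets" verification.
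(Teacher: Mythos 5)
Your argument is correct and follows exactly the route the paper intends: the paper defines $R_n=\varinjlim_{i\in I}(R_i)_n$ coordinatewise and leaves the verification to the reader, and your proof supplies precisely that verification — computing filtered colimits of pointed $\mathbb{F}_2$-modules elementwise, checking the axioms of Definition \ref{igr1} by passing to a common stage via directedness (using that $\bigoplus_n$ commutes with filtered colimits for axiom (v)), and establishing the universal property sortwise. Nothing is missing.
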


\begin{prop} The general coproduct (general tensor product) of a family $\{R_i :i \in I\}$ in the category $Igr$ is given by the combination of constructions: 
$$ \bigotimes_{i\in I}R_i := \varinjlim_{I' \in P_{fin}(I)} \bigotimes_{i\in I'}R_i. $$    
\end{prop}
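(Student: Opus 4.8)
The plan is to establish the claimed formula by the standard ``finite-to-infinite'' colimit argument, exploiting the two constructions already available: the finite coproduct (finite tensor product) from Proposition \ref{fixigr2} and the filtered colimit from Proposition \ref{fixigr4}. Let $\{R_i : i \in I\}$ be an arbitrary family of Igr's, and let $P_{fin}(I)$ denote the set of finite subsets of $I$, ordered by inclusion; this is an upward directed poset since the union of two finite sets is finite. For $I' \subseteq I''$ in $P_{fin}(I)$ there is a canonical Igr-morphism $\bigotimes_{i \in I'} R_i \to \bigotimes_{i \in I''} R_i$ induced by the universal property of the finite coproduct (compose the coproduct injections for $I''$ with the identities on the factors indexed by $I'$ and the structure morphism $\mathbb{T}(\mathbb{F}_2) \to R_i$ from the initial object for $i \in I'' \setminus I'$); one checks these morphisms are compatible, so $(\bigotimes_{i \in I'} R_i)_{I' \in P_{fin}(I)}$ is a directed system of Igr's. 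Then Proposition \ref{fixigr4} produces $\varinjlim_{I' \in P_{fin}(I)} \bigotimes_{i \in I'} R_i$ as an Igr.

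The core of the argument is to verify that this Igr, together with the obvious injections, satisfies the universal property of the coproduct of $\{R_i : i \in I\}$ in $\mathrm{Igr}$. First I would exhibit the coproduct injections $u_j : R_j \to \varinjlim_{I' \in P_{fin}(I)} \bigotimes_{i \in I'} R_i$: for $j \in I$, compose the coproduct injection $R_j \to \bigotimes_{i \in \{j\}} R_i$ (an isomorphism, or close to one, by Proposition \ref{fixigr2}) with the colimit coprojection at $\{j\} \in P_{fin}(I)$. Then, given any Igr $S$ and any family of Igr-morphisms $f_j : R_j \to S$, for each finite $I' \in P_{fin}(I)$ the universal property of the finite coproduct (Proposition \ref{fixigr2}) yields a unique $g_{I'} : \bigotimes_{i \in I'} R_i \to S$ with $g_{I'} \circ (\text{injection}_j) = f_j$ for $j \in I'$. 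These $g_{I'}$ are compatible with the transition maps of the directed system — this compatibility reduces, by uniqueness in Proposition \ref{fixigr2}, to checking agreement after precomposing with each coproduct injection, which is immediate — so by the universal property of the filtered colimit (Proposition \ref{fixigr4}) they glue to a unique Igr-morphism $g : \varinjlim_{I' \in P_{fin}(I)} \bigotimes_{i \in I'} R_i \to S$ with $g \circ u_j = f_j$ for all $j$. Uniqueness of $g$ follows because any such morphism is determined on each finite stage by uniqueness in Proposition \ref{fixigr2} and then globally by uniqueness in Proposition \ref{fixigr4}.

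The main obstacle, or at least the only point requiring genuine care rather than diagram-chasing, is checking that the transition morphisms $\bigotimes_{i \in I'} R_i \to \bigotimes_{i \in I''} R_i$ are well defined as \emph{Igr}-morphisms and form a genuine directed system (functoriality on $P_{fin}(I)$, i.e. that the square for $I' \subseteq I'' \subseteq I'''$ commutes and that the identity inclusion induces the identity). This is where one must invoke that $\mathbb{T}(\mathbb{F}_2)$ is initial in $\mathrm{Igr}$ (from the Corollary following Proposition \ref{propadj1}), so that the ``padding'' of a $\bigotimes_{i \in I'} R_i$-element by $\top$'s in the new coordinates is canonical; functoriality then follows from the uniqueness clauses in Propositions \ref{fixigr2} and from associativity/symmetry of the finite tensor product, which in turn rest on axioms (v)--(vi) of Definition \ref{igr1} being respected coordinatewise. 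Everything else is a formal consequence of the two universal properties already proved, so I would keep the write-up brief, citing Propositions \ref{fixigr2} and \ref{fixigr4} for the hard structural content and leaving the routine compatibility checks to the reader, in the spirit of the rest of this section.
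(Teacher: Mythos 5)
Your argument is correct and is exactly the route the paper intends: the statement itself is phrased as the ``combination of constructions'' of Propositions \ref{fixigr2} and \ref{fixigr4}, and the paper leaves the verification (directed system over $P_{fin}(I)$ via the padding-by-$\top$ transition maps, then the standard universal-property check that a filtered colimit of finite coproducts is a coproduct) to the reader, which is precisely what you have written out. No gaps; your uniqueness argument via the finite stages and the jointly epic colimit coprojections is the standard and correct one.
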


After discussing  directed inductive colimits and coproducts, we will deal with ideals, quotients, and coequalizers.

\begin{defn}
Given $R\in\mbox{Igr}$ and $(J_n)_{n\ge0}$ where $J_n\subseteq R_n$ for all $n\ge0$. We say that $J$ is a \textbf{graded ideal} of $R$ where 
$$J:=\bigoplus_{n\ge0}J_n\subseteq\bigoplus_{n\ge0}R_n$$ 
is an ideal of $(R,\ast)$.
\end{defn}

In particular, for all $n\ge0$, $J_n\subseteq R_n$ is a graded $\mathbb F_2$-submodule of $(R_n,+_n,0_n)$. For each $X\subseteq R$, there exists the ideal generated by $X$, denoted by $\langle X\rangle$. It is the smaller graded ideal of $R$ such that for all $n\ge0$, $(X\cap R_n)\subseteq[X]_n$. For this, just consider $\langle X\rangle$, the ideal of $(R,\ast)$ generated by $X\subseteq R$ and define $\langle X\rangle_n:=\langle X\rangle\cap R_n$.

\begin{defn}
Let $R,S$ be Igr's and $f:R\rightarrow S$ be a morphism. We define the \textbf{kernel} of $f$, notation $\mbox{Ker}(f)$ by
$$\mbox{Ker}(f)_n:=\left\lbrace x\in R_n:f_n(x)=0\right\rbrace$$
and \textbf{image} of $f$, notation $\mbox{Im}(f)$ by
$$\mbox{Im}(f)_n:=\left\lbrace f_n(x)\in S_n:x\in R_n\right\rbrace.$$
\end{defn}

Of course, $\mbox{Ker}(f)\subseteq R$ is an ideal  and $\mbox{Im}(f)\subseteq S$ is an Igr.

Given $R\in\mbox{Igr}$ and $J=(J_n)_{n\ge0}$ a graded ideal of $R$, we define $R/J\in\mbox{Igr}$, the {\bf quotient inductive graded ring of $R$ by $J$}: for all $n\ge0$, $(R/J)_n:=R_n/J_n$, where the distinguished element is $\top_n+_nJ_n$. We have a canonical projection $q_J:R\rightarrow R/J$, ``coordinatewise surjective'' and therefore, an $\mbox{Igr}$-epimorphism.  

\begin{prop}[Homomorphism Theorem]
 Let $R,S$ be Igr's and $f:R\rightarrow S$ be a morphism. Then there exist an unique monomorphism $\overline f:R/\mbox{Ker}(f)\rightarrow S$ commuting the following diagram:
 $$\xymatrix@!=5pc{R\ar[r]^f\ar[d]_{q} & S \\ R/\mbox{Ker}(f)\ar[ur]_{\overline f} & }$$
 where $q$ is the canonical projection. In particular $R/\mbox{Ker}(f)\cong\mbox{Im}(f)$.
\end{prop}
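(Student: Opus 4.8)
The plan is to mimic the classical proof of the homomorphism (first isomorphism) theorem for commutative rings, but to carry it out \emph{coordinatewise} so as to produce an honest $\mathrm{Igr}$-morphism, and then to verify the universal/uniqueness clause. First I would fix the notation $K := \mathrm{Ker}(f)$, which by the remark immediately preceding the statement is a graded ideal of $R$, so that $R/K$ is a well-defined $\mathrm{Igr}$ with $(R/K)_n = R_n/K_n$ and canonical projection $q = q_K : R \to R/K$. For each $n \ge 0$ I would define $\overline f_n : R_n/K_n \to S_n$ by $\overline f_n(x +_n K_n) := f_n(x)$. Well-definedness is the usual check: if $x +_n K_n = y +_n K_n$ then $x +_n y \in K_n$ (using that the group has exponent $2$, so cosets differ by a sum), hence $f_n(x) +_n f_n(y) = f_n(x +_n y) = 0$, i.e. $f_n(x) = f_n(y)$. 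Each $\overline f_n$ is a morphism of pointed $\mathbb{F}_2$-modules because $f_n$ is and because $q_n$ is surjective; in particular $\overline f_n(\top_n +_n K_n) = f_n(\top_n) = \top_n$.

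Next I would assemble the $\overline f_n$ into a single map $\overline f = \bigoplus_{n \ge 0} \overline f_n : R/K \to S$ and check it is an $\mathrm{Igr}$-morphism. This has two parts: compatibility with the structure maps $h_n$, and the fact that $\overline f : \bigoplus_n (R/K)_n \to \bigoplus_n S_n$ is a ring homomorphism. For the first, one uses that $f$ is an $\mathrm{Igr}$-morphism, so $f_{n+1}\circ h_n = l_n \circ f_n$ (where $l_n$ are the structure maps of $S$), together with the fact that the structure maps of $R/K$ are the ones induced by $h_n$ on quotients; chasing an element $x +_n K_n$ around the square gives the identity $\overline f_{n+1}\circ \overline h_n = l_n \circ \overline f_n$. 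For the ring-homomorphism part, I would note that $q : R \to R/K$ is a ring epimorphism and $f : R \to S$ is a ring homomorphism with $\mathrm{Ker}(f) = K = \mathrm{Ker}(q)$ as ideals of the associated rings $\bigoplus_n R_n$; hence the ordinary homomorphism theorem for commutative rings produces a (unique) ring homomorphism $R/K \to S$ through $q$, and one checks it agrees with $\overline f$ on each homogeneous component, so $\overline f$ respects $\ast$ and sends $1 = \top_0 + K_0$ to $1 = \top_0$. This also gives $f = \overline f \circ q$, i.e. the triangle commutes.

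Then I would verify that $\overline f$ is a monomorphism in $\mathrm{Igr}$. It suffices to show each $\overline f_n$ is injective: if $\overline f_n(x +_n K_n) = 0$ then $f_n(x) = 0$, so $x \in K_n$ and $x +_n K_n = 0$; an $\mathrm{Igr}$-morphism that is componentwise injective is monic (for instance because it is injective as a map of the underlying graded sets, or by the standard argument with a pair of parallel maps). For uniqueness of $\overline f$: if $g : R/K \to S$ also satisfies $g \circ q = f$, then for every $n$ and every $x \in R_n$ we have $g_n(x +_n K_n) = g_n(q_n(x)) = f_n(x) = \overline f_n(x +_n K_n)$, and since $q_n$ is surjective this forces $g_n = \overline f_n$; hence $g = \overline f$. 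Finally, the last sentence $R/K \cong \mathrm{Im}(f)$ follows by observing that $\overline f$ factors through the $\mathrm{Igr}$ $\mathrm{Im}(f) \subseteq S$ (componentwise $\mathrm{Im}(\overline f_n) = \mathrm{Im}(f_n)$), and the corestriction $R/K \to \mathrm{Im}(f)$ is then componentwise bijective, hence an isomorphism of $\mathrm{Igr}$'s.

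None of the steps is genuinely hard; the only mild subtlety — and the place I would be most careful — is keeping the two layers straight, namely checking simultaneously that $\overline f$ behaves well with respect to the \emph{vertical} structure maps $h_n$ and with respect to the \emph{horizontal} graded multiplication $\ast_{nm}$, since an $\mathrm{Igr}$-morphism must respect both. Everything else is the verbatim coordinatewise transcription of the classical first isomorphism theorem, so I would present it briskly and leave the routine coset computations to the reader.
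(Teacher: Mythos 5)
Your proposal is correct, and it is exactly the argument the paper has in mind: the paper gives no proof of this proposition, having announced at the start of the section that kernels, images, quotients and the related facts are obtained ``coordinatewise'' from the classical ring-theoretic versions and that detailed verifications are left to the reader. Your coordinatewise construction of $\overline f_n$, the compatibility checks with the $h_n$ and $\ast_{nm}$, and the componentwise injectivity/uniqueness argument supply precisely those omitted routine details, so there is nothing to correct.
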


\begin{prop} Let $R \underset{g}{\overset{f}{\rightrightarrows}} S$ be $Igr$-morphisms and consider $q_J : S \to S/J$ the quotient morphism where $J := \langle X \rangle$ is the graded ideal generated by $X_n := \{ f_n(a) - g_n(a): a \in R_n\}, n zin \mathbb{N}$. Then $q_J$ is the coequalizer of $f, g$. 
\end{prop}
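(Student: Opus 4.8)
The plan is to verify the universal property of the coequalizer directly, using the Homomorphism Theorem and the description of graded ideals already established. First I would check that $q_J \circ f = q_J \circ g$: for each $n$ and each $a \in R_n$, the element $f_n(a) - g_n(a)$ lies in $X_n \subseteq J_n$, hence $q_{J,n}(f_n(a)) = q_{J,n}(g_n(a))$ in $(S/J)_n = S_n/J_n$; since this holds sortwise, $q_J f = q_J g$ as $\mathrm{Igr}$-morphisms.

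Next, given any $\mathrm{Igr}$-morphism $h : S \to T$ with $h f = h g$, I would produce the unique factorization $\bar h : S/J \to T$ with $\bar h \circ q_J = h$. The key point is that $\mathrm{Ker}(h)$ is a graded ideal of $S$ (noted just before the Homomorphism Theorem) containing every generator: for $a \in R_n$ we have $h_n(f_n(a) - g_n(a)) = h_n f_n(a) - h_n g_n(a) = 0$, so $X_n \subseteq \mathrm{Ker}(h)_n$, and therefore $J = \langle X \rangle \subseteq \mathrm{Ker}(h)$ since $\mathrm{Ker}(h)$ is an ideal and $\langle X \rangle$ is the smallest graded ideal containing $X$. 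Consequently $h$ factors through the canonical projection $q_J : S \to S/J$: sortwise, $J_n \subseteq \mathrm{Ker}(h)_n$ gives a well-defined $\bar h_n : S_n/J_n \to T_n$ of pointed $\mathbb{F}_2$-modules with $\bar h_n \circ q_{J,n} = h_n$, and assembling these yields a morphism of the underlying $\mathbb{F}_2$-algebras $\bigoplus_n S_n/J_n \to \bigoplus_n T_n$, hence an $\mathrm{Igr}$-morphism $\bar h : S/J \to T$. Uniqueness of $\bar h$ is immediate because $q_J$ is coordinatewise surjective, hence an epimorphism in $\mathrm{Igr}$.

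Finally I would remark that one may also derive this more abstractly: $q_J$ is the coequalizer of $f,g$ iff it is the coequalizer of the single pair $(f - g, 0)$ (working in the additive setting sortwise), and the coequalizer of a morphism with the zero morphism is exactly the quotient by the ideal generated by its image, which is precisely $\langle X \rangle$; this is the graded analogue of the corresponding fact for $\mathbb{F}_2$-algebras, transported through the ``coordinatewise'' constructions. I do not expect any genuine obstacle here: the only mild point of care is checking that the sortwise factorizations $\bar h_n$ are mutually compatible with the structure maps $h_n$ and the multiplications $\ast_{n,m}$ so that they assemble into a bona fide $\mathrm{Igr}$-morphism, but this is forced by the analogous compatibilities for $q_J$ and $h$ and is exactly the type of routine coordinatewise verification the authors have agreed to leave to the reader.
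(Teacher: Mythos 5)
Your proof is correct and is exactly the routine coordinatewise verification of the universal property that the paper omits (it relies, as you do, on $\mathrm{Ker}(h)$ being a graded ideal, on $\langle X\rangle$ being the smallest graded ideal containing $X$, and on the quotient projection being coordinatewise surjective, hence epi). Only your closing ``more abstract'' aside is shaky: in $\mbox{Igr}$ there is no zero morphism into an arbitrary target and $f-g$ does not preserve $\top_n$ or the unit, so the pair $(f-g,0)$ does not live in $\mbox{Igr}$; but since that remark is not used in the main argument, the proof stands.
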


\begin{prop} 
Given $R,S\in\mbox{Igr}$ and $f\in\mbox{Igr}(R,S)$.
\begin{enumerate}[i -]
    \item $f$ is a $\mbox{Igr}$-monomorphism whenever for all $n\ge0$ $f_n:R_n\rightarrow S_n$ is a monomorphism of pointed $\mathbb F_2$-modules iff for all $n\ge0$, $f_n:R_n\rightarrow S_n$ is an injective homomorphism of pointed $\mathbb F_2$-modules. 
    
    \item $f$ is a $\mbox{Igr}$-epimorphism whenever for all $n\ge0$ $f_n:R_n\rightarrow S_n$ is a epimorphism of pointed $\mathbb F_2$-modules iff for all $n\ge0$, $f_n:R_n\rightarrow S_n$ is a surjective homomorphism of pointed $\mathbb F_2$-modules. 
    
    \item $f$ is a $\mbox{Igr}$-isomorphism iff for all $n\ge0$ $f_n:R_n\rightarrow S_n$ is a isomorphism of pointed $\mathbb F_2$-modules iff for all $n\ge0$, $f_n:R_n\rightarrow S_n$ is a bijective homomorphism of pointed $\mathbb F_2$-modules. 
\end{enumerate}
\end{prop}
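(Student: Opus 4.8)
In each of the three items the second ``iff'' is the classical description of monomorphisms, epimorphisms and isomorphisms in the concrete category of pointed $\mathbb{F}_2$-modules: such a map is a monomorphism iff injective (test against $\mathbb{F}_2$ pointed at $0$), an epimorphism iff surjective (the same test on the cokernel), and an isomorphism iff a bijective morphism (the set-theoretic inverse of a bijective linear map is linear and preserves the base point). I would dispatch this in one line. So the content lies in the first ``iff'', connecting the $\mbox{Igr}$-categorical notions with the degreewise ones; note that degree $0$ is never at issue, since $R_0\cong\mathbb{F}_2\cong S_0$ and $f_0$ is a unital $\mathbb{F}_2$-algebra endomorphism of $\mathbb{F}_2$, hence the identity.

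The ``$\Leftarrow$'' implications and all of (iii) are immediate. Composition and identities in $\mbox{Igr}$ are computed degreewise, so $(gf)_n=g_nf_n$; hence if every $f_n$ is injective then $f$ is left-cancellable, and if every $f_n$ is surjective then $f$ is right-cancellable. If every $f_n$ is a bijective pointed morphism, each $f_n^{-1}$ is again a pointed morphism and, because $f$ preserves all $\ast_{nm}$ and the unit $\top_0$, so does $\bigoplus_n f_n^{-1}$; thus the latter is an $\mbox{Igr}$-morphism and a two-sided inverse of $f$. Conversely an $\mbox{Igr}$-isomorphism has an $\mbox{Igr}$-inverse whose degree-$n$ component is inverse to $f_n$, which gives (iii)$\Rightarrow$.

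For (i)$\Rightarrow$ I would use kernel pairs. Since $\mbox{Igr}$ has all small limits, $f$ admits a kernel pair $(P,\pi_1,\pi_2)$, and I claim it is the degreewise one, $P_n:=\{(a,a')\in R_n\times R_n:f_n(a)=f_n(a')\}$. Indeed $P:=\bigoplus_n P_n$ is a sub-$\mbox{Igr}$ of $R\times R$ — closed under $+_n$, under the $\ast_{nm}$ and under the $h_n$ (because $f$ is an $\mbox{Igr}$-morphism) and containing each $\top_n$ (because $f_n(\top_n)=\top_n$) — and with the two projections it clearly has the universal property of the kernel pair. If $f$ is an $\mbox{Igr}$-monomorphism, then from $f\pi_1=f\pi_2$ we get $\pi_1=\pi_2$, i.e.\ $a=a'$ for all $(a,a')\in P_n$; unwinding $P_n$, this says precisely that each $f_n$ is injective.

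The main obstacle is (ii)$\Rightarrow$: an $\mbox{Igr}$-epimorphism is degreewise surjective. The naive approach — doubling $S$ along $M:=\mbox{Im}(f)$ — fails, because $M$ is only a sub-$\mbox{Igr}$ of $S$ and not an ideal, so multiplication by general elements of $S$ need not preserve $M$ and the doubling is incompatible with the $\ast_{nm}$. Instead I would prove by induction on $n$ that $f_n$ is surjective, the case $n=0$ being automatic. Assuming $S_k=M_k$ for all $k<n$, build a truncated test $\mbox{Igr}$ $T=T^{(n)}$ by $T_k:=S_k$ for $k<n$, $T_n:=S_n/M_n$, $T_k:=0$ for $k>n$, with structure maps inherited from $S$ and post-composed with the projection $S_n\twoheadrightarrow S_n/M_n$ (or set to $0$) in the evident way; checking that $T$ is an $\mbox{Igr}$ is routine, the only mildly delicate part — associativity and the unit law of the induced ring $\bigoplus_k T_k$ — causing no trouble because all the collapsing occurs in the single top degree $n$. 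There is an obvious morphism $p:S\to T$ (the identity in degrees $<n$, the projection in degree $n$, zero above). Suppose for contradiction that $M_n\subsetneq S_n$, so $S_n/M_n\neq 0$. By the inductive hypothesis the degree-$n$ decomposable part $D_n^S:=\sum_{i+j=n,\ i,j\ge 1}S_i\ast_{ij}S_j=\sum_{i+j=n}M_i\ast_{ij}M_j$, together with $h_{n-1}(S_{n-1})$ and $\top_n^S$, is contained in $M_n$; using this one checks that for every $\mathbb{F}_2$-linear endomorphism $\alpha$ of $S_n/M_n$ the family $p^{\alpha}$ — equal to the identity in degrees $<n$, to $\alpha$ composed with the projection in degree $n$, and to $0$ above — is again an $\mbox{Igr}$-morphism $S\to T$, since all the compatibility conditions constraining its degree-$n$ component only see $D_n^S$, $h_{n-1}(S_{n-1})$ and $\top_n^S$, on which the projection already vanishes. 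As $(p^{\alpha}f)_n=\alpha\circ(\text{projection})\circ f_n=0$ (because $\mbox{Im}(f_n)=M_n$), all the $p^{\alpha}$ agree after precomposition with $f$; taking $\alpha=\mathrm{id}$ and $\alpha=0$ yields two distinct $\mbox{Igr}$-morphisms equalized by $f$, contradicting that $f$ is an epimorphism. Hence $M_n=S_n$, i.e.\ $f_n$ is surjective, and the induction closes. The crux is exactly this last step: isolating the right truncated test object and recognizing that the induction on the degree is forced, precisely because $\mbox{Im}(f)$ is a subring but not an ideal of $S$.
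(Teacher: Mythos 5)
Your argument is correct, but there is essentially nothing in the paper to compare it with: this proposition sits in the section whose proofs are explicitly left to the reader as ``straightforward, coordinatewise'' facts, and the literal statement is in any case weaker than what you prove --- items (i) and (ii) say ``whenever'', i.e.\ they only claim that degreewise monos (resp.\ epis) yield $\mbox{Igr}$-monos (resp.\ epis), together with the standard identification mono/epi/iso $=$ injective/surjective/bijective in pointed $\mathbb{F}_2$-modules; only (iii) is an honest ``iff''. Your easy directions and item (iii) are exactly the routine degreewise arguments the authors have in mind, and your kernel-pair proof that $\mbox{Igr}$-monomorphisms are degreewise injective is fine. The genuinely non-coordinatewise point is your converse in (ii): that $\mbox{Igr}$-epimorphisms are degreewise surjective is not inherited from the ring level (localizations are non-surjective epimorphisms in $\mathrm{Ring}_2$, and applying $\mathbb{T}$ to one shows why a truncation in a single degree is the right test), and your induction with the truncated object --- $S_k$ below degree $n$, $S_n/M_n$ in degree $n$ (with distinguished element $0$, which is legitimate, cf.\ $\mathbb{T}(0)$), zero above --- is the correct argument, the graded analogue of the fact that epimorphisms of connected graded algebras are surjective. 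I checked the two points you declare routine: the truncation does satisfy all the Igr axioms, and each $p^{\alpha}$ is an $\mbox{Igr}$-morphism precisely because, by the inductive hypothesis, $\sum_{i+j=n,\,i,j\ge1}S_i\ast_{ij}S_j$, $h_{n-1}(S_{n-1})$ and $\top_n^{S}=f_n(\top_n^{R})$ all lie in $M_n$, where the projection vanishes, while all products out of degree $n$ are killed by the truncation. One small slip in a parenthesis: in pointed $\mathbb{F}_2$-modules you cannot detect monomorphisms by testing against $\mathbb{F}_2$ pointed at $0$, since a pointed map out of it forces the base point of the target to be $0$; test instead with the free pointed module $\mathbb{F}_2x\oplus\mathbb{F}_2t$ pointed at $t$, sending $x$ into the kernel and $t$ to the base point. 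This does not affect the rest of your proof.
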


\begin{defn}
 We denote $\mbox{Igr}_{fin}$ the full subcategory of $\mbox{Igr}$ such that
 $$\mbox{Obj}(\mbox{Igr}_{fin})=\left\lbrace R\in\mbox{Obj}(\mbox{Igr}):\left| R_n\right|<\omega\mbox{ for all }n\ge1\right\rbrace.$$
\end{defn}

\begin{rem}
    Of course,
$$\left\lbrace R\in\mbox{Obj}(\mbox{Igr}):\left|\bigoplus\limits_{n\ge1}R_n\right|<\omega\right\rbrace\ne\mbox{Obj}(\mbox{Igr}_{fin}),$$
for example, in \ref{ex1}(a), if $F$ is a Euclidian field (for instance, any real closed field), then $\underset{{n \in \mathbb{N}}}{\bigoplus}I^nF/I^{n+1}F$ $  \cong \mathbb F_2[x]$, thus the graded Witt ring of $F$ (see definition \ref{graded Witt ring functor}) $W_*(F)\in\mbox{Obj}(\mbox{Igr}_{fin})$ but $\mathbb F_2[x]$ is not finite.
\end{rem}

\section{Relevant subcategories of $\mbox{Igr}$}

The aim of this section is to define  subcategories of Igr that mimetize the following two central aspects of K-theories:
\begin{enumerate}
    \item The K-theory graded ring is "generated" by $K_1$;
    \item The K-theory graded ring is defined by some convenient quotient of a graded tensor algebra.
\end{enumerate}
Our desired category will be the intersection of two subcategories. The first one is obtained after we define the \textbf{graded subring generated by the level 1} functor
 $$\mathbbm1:\mbox{Igr}\rightarrow\mbox{Igr}.$$ 
 We define it as follow: for an object $R=((R_n)_{n\ge0},(h_n)_{n\ge0},\ast_{nm})$,
\begin{enumerate}[i -]
 \item $\mathbbm1(R)_0:=R_0\cong\mathbb F_2$,
 \item $\mathbbm1(R)_1:=R_1$,
 \item for $n\ge2$,
 \begin{align*}
  \mathbbm1(R)_n&:=\{x\in R_n:x=\sum\limits^{r}_{j=1}a_{1j}\ast_{11}...\ast_{11}a_{nj}, \\
  &\mbox{with }a_{ij}\in R_1,\,1\le i\le n,\,1\le j\le r\mbox{ for some }r\ge1\}.
 \end{align*}
 Note that for all $n\ge2$, $R_n$ is generated by the expressions of type 
 $$d_1\ast_{11}d_2\ast_{11}...\ast_{11}d_n,\, d_i\in R_1,\, i=1,...,n.$$
\end{enumerate}

 Of course, $\mathbbm1(R)$ provides an inclusion $\iota_{\mathbbm1(R)}:\mathbbm1(R)\rightarrow R$ in the obvious way.

On the morphisms, for $f\in\mbox{Igr}(R,S)$, we define $\mathbbm1(f)\in\mbox{Igr}(\mathbbm1(R),\mathbbm1(S))$ by the restriction $\mathbbm1(f)=f\upharpoonleft_{\mathbbm1(R)}$. In other words, $\mathbbm1(f)$ is the only Igr-morphisms that makes the following diagram commute:
$$\xymatrix@!=6pc{\mathbbm1(R)\ar[r]^{\iota_{\mathbbm1(R)}}\ar[d]_{\mathbbm1(f)} & R\ar[d]^{f} \\ 
\mathbbm1(S)\ar[r]_{\iota_{\mathbbm1(S)}} & S}$$

\begin{defn}\label{level1}
We denote $\mbox{Igr}_{\mathbbm1}$ the full subcategory of $\mbox{Igr}$ such that 
$$\mbox{Obj}(\mbox{Igr}_{\mathbbm1})=\{R\in\mbox{Igr}:\iota_{\mathbbm1(R)}:\mathbbm1(R)\rightarrow R\mbox{ is an isomorphism}\}.$$
\end{defn}

\begin{ex} 
$ $
\begin{enumerate}[i -]
    \item If $A$ is a $\mathbb{F}_2$-algebra, then $\mathbb{T}(A) \in obj(\mbox{Igr}_{\mathbbm1})$.

\item If $F$ is an hyperbolic hyperfield, then $k_*(F)  \in obj(\mbox{Igr}_{\mathbbm1})$.

\item If $F$ is a special hyperfield (equivalently, $G = F \setminus \{0\}$ is a special group), then the graduate Witt ring of $F$ (definition \ref{graded Witt ring functor}) $W_*(F)  \in obj(\mbox{Igr}_{\mathbbm1})$.

\item If $F$ is a field with $char(F) \neq 2$, then, by a known result of Vladimir Voevodski, 
$$\mathcal{H}^*(Gal(F^s|F), \{\pm 1\})  \in obj(\mbox{Igr}_{\mathbbm1}).$$
    \end{enumerate}
\end{ex}

\begin{prop}
 $ $
 \begin{enumerate}[i -]
  \item For each $R\in\mbox{Igr}$ we have that $\iota_{\mathbbm1(\mathbbm1(R))}:\mathbbm1(\mathbbm1(R))\rightarrow\mathbbm1(R)$ is the identity arrow.
  
  \item $\mathbbm1\circ\mathbbm1=\mathbbm1$.
  
  \item The functor $\mathbbm1:\mbox{Igr}\rightarrow\mbox{Igr}_{\mathbbm1}$ is the right adjoint of the inclusion functor $j_{\mathbbm1}:\mbox{Igr}_{\mathbbm1}\rightarrow\mbox{Igr}$.
  \item  $j_{\mathbbm1}:\mbox{Igr}_{\mathbbm1}\rightarrow\mbox{Igr}$ creates inductive limits and to obtain the projective limits in $\mbox{Igr}_{\mathbbm1}$ is sufficient restrict the projective limits obtained in $\mbox{Igr}$:
$$\varprojlim\limits_{i\in I}R_i\cong\left(\varprojlim\limits_{i\in I}j_{\mathbbm1}(R_i)\right)_{\mathbbm1}\xrightarrow{\varprojlim\limits_{i\in I}j_{\mathbbm1}(R_i)}\varprojlim\limits_{i\in I}j_{\mathbbm1}(R_i).$$
 \end{enumerate}
\end{prop}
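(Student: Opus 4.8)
The plan is to settle (i) and (ii) by a direct inspection of the definition of $\mathbbm1$, and to obtain (iii) and (iv) formally from them. For (i), recall that $\mathbbm1(R)$ is an inductive graded \emph{subring} of $R$, so the operations $\ast_{nm}$ of $\mathbbm1(R)$ are literally the restrictions of those of $R$, and by construction $\mathbbm1(R)_0=R_0$ and $\mathbbm1(R)_1=R_1$. Hence forming $\mathbbm1(\mathbbm1(R))_n$ for $n\ge2$ --- the $\mathbb F_2$-span of the products $a_1\ast_{11}\cdots\ast_{11}a_n$ with every $a_i\in\mathbbm1(R)_1$ --- is the same as forming the $\mathbb F_2$-span of the $\ast_{11}$-products of elements of $R_1$, which is exactly $\mathbbm1(R)_n$. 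So $\mathbbm1(\mathbbm1(R))=\mathbbm1(R)$ as graded subrings of $R$, and $\iota_{\mathbbm1(\mathbbm1(R))}$ is the identity. Part (ii) is then immediate: on objects it is (i); on a morphism $f\colon R\to S$, the arrow $\mathbbm1(f)$ is the (co)restriction of $f$ to the sub-Igr's $\mathbbm1(R)$, $\mathbbm1(S)$, and applying $\mathbbm1$ again (co)restricts the \emph{same} map to the \emph{same} sub-Igr's $\mathbbm1(\mathbbm1(R))=\mathbbm1(R)$ and $\mathbbm1(\mathbbm1(S))=\mathbbm1(S)$, whence $\mathbbm1(\mathbbm1(f))=\mathbbm1(f)$.

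For (iii) I would show that, for each $R\in\mbox{Igr}$, the inclusion $\iota_{\mathbbm1(R)}\colon j_{\mathbbm1}(\mathbbm1(R))\to R$ is a universal arrow from $j_{\mathbbm1}$ to $R$ --- i.e. the counit of the desired adjunction. So let $S\in\mbox{Igr}_{\mathbbm1}$ and $g\in\mbox{Igr}(j_{\mathbbm1}(S),R)$. Since $S=\mathbbm1(S)$, for each $n\ge2$ every element of $S_n$ is an $\mathbb F_2$-combination of $\ast_{11}$-products of elements of $S_1$; being a morphism of graded rings, $g$ sends such an element to the corresponding $\mathbb F_2$-combination of $\ast_{11}$-products of elements of $g[S_1]\subseteq R_1$, hence into $\mathbbm1(R)_n$. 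Thus $g$ corestricts to $\overline g\colon S\to\mathbbm1(R)$ in $\mbox{Igr}_{\mathbbm1}$ with $\iota_{\mathbbm1(R)}\circ\overline g=g$, and $\overline g$ is the unique such factorization because $\iota_{\mathbbm1(R)}$ is levelwise injective, hence an $\mbox{Igr}$-monomorphism. Naturality in $R$ is automatic since $\mathbbm1$ and $R\mapsto\iota_{\mathbbm1(R)}$ are built out of (co)restrictions, and the triangle identities collapse to identity arrows using (i) and $\mathbbm1(S)=S$; alternatively one applies Proposition~\ref{3.2.2borceux} directly, since $\mathbbm1$ is idempotent with essential image $\mbox{Igr}_{\mathbbm1}$, which is precisely the datum of a coreflective subcategory. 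This yields $j_{\mathbbm1}\dashv\mathbbm1$.

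For (iv) I separate colimits and limits. As $j_{\mathbbm1}$ is a left adjoint by (iii) it preserves all colimits; to see that it \emph{creates} inductive limits it is enough to check that $\mbox{Igr}_{\mathbbm1}$ is closed, inside $\mbox{Igr}$, under the colimit constructions already available --- directed colimits (Proposition~\ref{fixigr4}), tensor products/coproducts (Proposition~\ref{fixigr2}), and coequalizers. For a directed colimit with all $R_i\in\mbox{Igr}_{\mathbbm1}$, each element of $(\varinjlim_iR_i)_n=\varinjlim_i(R_i)_n$ comes from some $(R_i)_n$, hence is a combination of $\ast_{11}$-products of elements of $(R_i)_1$, whose images lie in $(\varinjlim_iR_i)_1$, so the colimit is level-$1$-generated; for the tensor product one uses $(\otimes_ia_i)\ast_{n,k}(\otimes_ib_i)=\otimes_i(a_i\ast^i_{n,k}b_i)$ to rewrite a generator $\otimes_ix_i$ as a combination of $\ast_{11}$-products of elements of $\bigotimes_i(R_i)_1$; and a coequalizer is an epimorphic image, hence stays level-$1$-generated. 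So the $\mbox{Igr}$-colimit of a diagram in $\mbox{Igr}_{\mathbbm1}$ already lies in $\mbox{Igr}_{\mathbbm1}$ and is the colimit there. For projective limits, $\mathbbm1$ is a right adjoint by (iii), hence preserves every limit existing in $\mbox{Igr}$; applying this to a diagram $(R_i)$ in $\mbox{Igr}_{\mathbbm1}$ and using $\mathbbm1\circ j_{\mathbbm1}=\mathrm{id}$ gives
$$\varprojlim_i R_i=\varprojlim_i\mathbbm1\bigl(j_{\mathbbm1}(R_i)\bigr)\cong\mathbbm1\Bigl(\varprojlim_i j_{\mathbbm1}(R_i)\Bigr),$$
the right-hand limit existing in $\mbox{Igr}$ by the earlier results --- this is the displayed formula.

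The hard part is not computational but bookkeeping: one must resist treating (iv) by pure abstract nonsense, since colimits require genuine closure of $\mbox{Igr}_{\mathbbm1}$ in $\mbox{Igr}$ (so that $j_{\mathbbm1}$ \emph{creates} them), whereas for limits closure fails --- a sub-Igr of a product of level-$1$-generated Igr's need not be level-$1$-generated --- so one is forced to re-coreflect via $\mathbbm1$. Recognizing this asymmetry, and carefully verifying that ``the $\mathbb F_2$-span of $\ast_{11}$-products of level-$1$ elements'' is preserved by each colimit construction (the tensor product being the least transparent case), is where the attention should go.
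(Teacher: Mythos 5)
Your proof is correct and follows essentially the route the paper intends: the paper's own proof of this proposition is just ``Similar to Proposition \ref{propadj1}'', i.e., establish the coreflection via the couniversal arrow $\iota_{\mathbbm1(R)}\colon j_{\mathbbm1}(\mathbbm1(R))\to R$ (your corestriction--factorization argument) and then read off the creation of colimits and the re-coreflected limits, which is exactly what you carry out in detail, including the correct observation that $\mbox{Igr}_{\mathbbm1}$ is closed in $\mbox{Igr}$ under colimit constructions but not under limits. The only quibble is your ``alternatively'' aside invoking Proposition \ref{3.2.2borceux} to obtain the adjunction --- that statement concerns preservation of limits by functors having a left adjoint, not a criterion for coreflectivity --- but your main argument does not rely on it.
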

\begin{proof}
Similar to Proposition \ref{propadj1}.
\end{proof}

 Now we define the second subcategory. We define the \textbf{quotient graded ring functor} 
 $$\mathcal Q:\mbox{Igr}\rightarrow\mbox{Igr}$$ 
 as follow: for a object $R=((R_n)_{n\ge0},(h_n)_{n\ge0},\ast_{nm})$, $\mathcal Q(R):=R/T$, where $T=(T_n)_{n\ge0}$ is the ideal generated by $\{(\top_1+_1a)\ast_{11}a\in R_2:a\in R_1\}$. More explicit,
\begin{enumerate}[i -]
 \item $T_0:=\{0_0\}\subseteq R_0$,
 \item $T_1:=\{0_1\}\subseteq R_1$,
 \item for $n\ge2$, $T_n\subseteq R_n$ is the pointed $\mathbb F_2$-submodule generated by
 \begin{align*}
  \{x\in R_n:x&=y_l\ast_{l1}(\top_1+_1a_1)\ast_{11}a_1\ast_{1r}z_r, \\
  &\mbox{with }a_1\in R_1,\,y_l\in R_l,\,z_r\in R_r,\,l+r=n-2\}.
 \end{align*}
 Of course, $\mathcal Q(R)$ provides a projection $\pi_R:R\rightarrow\mathcal Q(R)$ in the obvious way.
\end{enumerate}

On the morphisms, for $f\in\mbox{Igr}(R,S)$, we define $\mathcal Q(f)\in\mbox{Igr}(\mathcal Q(R),\mathcal Q(S))$ by the only Igr-morphisms that makes the following diagram commute:
$$\xymatrix@!=6pc{R\ar[r]^{\pi_R}\ar[d]_{f} & \mathcal Q(R)\ar[d]^{\mathcal Q(f)} \\ 
S\ar[r]_{\pi_{S}} & \mathcal Q(S)}$$

\begin{defn}\label{quotop}
We denote $\mbox{Igr}_h$ the full subcategory of $\mbox{Igr}$ such that 
$$\mbox{Obj}(\mbox{Igr}_h)=\{R\in\mbox{Igr}:\pi_R:R\rightarrow\mathcal Q(R)\mbox{ is an isomorphism}\}.$$
\end{defn}

\begin{rem}
    Note that $R \in obj(Igr_h)$ iff for each $a \in R_1$, $a \ast_{11} \top_1 = a \ast_{11} a \in R_2$. Each $R$ satisfying this condition is, in some sense, ``hyperbolic'' (see Proposition \ref{prespechf}): this is the motivation of the index ``h''.
\end{rem}

\begin{ex} 
\begin{enumerate}[i- ]
    \item Let $A$ be  a $\mathbb{F}_2$-algebra. Then $\mathbb{T}(A) \in obj(\mbox{Igr}_{h})$ iff $A$ is a boolean ring (i.e., $\forall a \in A, a^{2} = a$).

\item If $F$ is an hyperbolic hyperfield, then $k_*(F)  \in obj(\mbox{Igr}_{h})$.

\item If $F$ is a special hyperfield (equivalently, $G = F \setminus \{0\}$ is a special group), then $W_*(F)  \in obj(\mbox{Igr}_{h})$.

\item If $F$ is a field with $char(F) \neq 2$, then $\mathcal{H}^*(Gal(F^s|F), \{\pm 1\})  \in obj(\mbox{Igr}_{h})$.
\end{enumerate}
\end{ex}

\begin{prop}
 $ $
 \begin{enumerate}[i -]
  \item For each $R\in\mbox{Igr}$ we have that $\pi_{\mathcal Q(R)}:\mathcal Q(R)\rightarrow\mathcal Q(\mathcal Q(R))$ is an isomorphism.
  
  \item $\mathcal Q\circ\mathcal Q=\mathcal Q$.
  
  \item The functor $\mathcal Q:\mbox{Igr}\rightarrow\mbox{Igr}_h$ is the left adjoint of the inclusion functor $j_q:\mbox{Igr}_{\mathcal Q}\rightarrow\mbox{Igr}$.
  \item $j_q:\mbox{Igr}_h\rightarrow\mbox{Igr}$ creates projective limits and to obtain the inductive limits in $\mbox{Igr}_h$ is sufficient restrict the inductive limits obtained in $\mbox{Igr}$:
$$\varinjlim\limits_{i\in I}j_q(R_i)\xrightarrow{\varinjlim\limits_{i\in I}j_q(R_i)}\left(\varinjlim\limits_{i\in I}j_q(R_i)\right)_{\mathcal Q}\cong\varinjlim\limits_{i\in I}R_i.$$
  Moreover, $j_q:\mbox{Igr}_h\rightarrow\mbox{Igr}$ creates filtered inductive limits and quotients by graded ideals.
 \end{enumerate}
\end{prop}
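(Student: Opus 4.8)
First note that $\mathcal Q(R)=R/T$ is a quotient of $R$ by a \emph{graded} ideal, so it is automatically an $\mbox{Igr}$ by the quotient construction already established, and $\pi_R$ is coordinatewise surjective, hence an $\mbox{Igr}$-epimorphism; I will use both facts freely. The whole statement is the exact dual of the situation for $\mathbbm1$ (which exhibits $\mbox{Igr}_{\mathbbm1}$ as a coreflective subcategory): the plan is to show that $\mathcal Q$ realizes $\mbox{Igr}_h$ as a full \emph{reflective} subcategory of $\mbox{Igr}$, after which (i)--(iv) are the standard consequences, and the argument runs parallel to Proposition~\ref{propadj1} and its dual. For (i): since $T_0=\{0_0\}$ and $T_1=\{0_1\}$ we have $(\mathcal Q(R))_0\cong\mathbb F_2$ and $(\mathcal Q(R))_1=R_1$, so for $\bar a\in(\mathcal Q(R))_1$ with representative $a\in R_1$ the generator $(\top_1+_1\bar a)\ast_{11}\bar a$ of the ideal defining $\mathcal Q(\mathcal Q(R))$ equals $\pi_R\big((\top_1+_1a)\ast_{11}a\big)$, and the latter lies in $T_2$; hence all such generators vanish in $\mathcal Q(R)$, the corresponding graded ideal is zero, and $\pi_{\mathcal Q(R)}$ is an isomorphism. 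In particular $\mathcal Q$ takes values in $\mbox{Igr}_h$, and (ii) is (i) read over all objects together with the naturality square defining $\mathcal Q$ on arrows.

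For (iii) I would take $\pi_R\colon R\to j_q(\mathcal Q(R))$ as the unit. The key preliminary observation is that $S\in\mbox{Obj}(\mbox{Igr}_h)$ is equivalent to the single-degree identity $(\top_1+_1b)\ast_{11}b=0$ in $S_2$ for all $b\in S_1$: indeed $\pi_S$ is an isomorphism iff $T^S=0$, and $T^S_2$ is generated by these elements while, for $n\ge3$, $T^S_n$ is generated by products $y_l\ast(\top_1+_1b)\ast_{11}b\ast z_r$, so the whole ideal collapses once the degree-$2$ elements do. Given $f\in\mbox{Igr}(R,j_q(S))$, because $f$ is a ring morphism with $f_1(\top_1)=\top_1$ one gets $f\big((\top_1+_1a)\ast_{11}a\big)=(\top_1+_1f_1(a))\ast_{11}f_1(a)=0$, and similarly $f$ annihilates every generator of every $T_n$; thus $T\subseteq\mbox{Ker}(f)$, so by the universal property of the projection $\pi_R\colon R\to R/T=\mathcal Q(R)$ there is a unique $\bar f\colon\mathcal Q(R)\to S$ with $f=\bar f\circ\pi_R$ ($\pi_R$ being an epimorphism forces uniqueness). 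Checking naturality of the resulting bijection $\mbox{Igr}_h(\mathcal Q(R),S)\cong\mbox{Igr}(R,j_q(S))$ in $R$ and $S$ is routine, giving $\mathcal Q\dashv j_q$.

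For (iv) I would just make explicit the standard behaviour of a full reflective subcategory. As a right adjoint $j_q$ preserves projective limits; and a projective limit $R=\varprojlim_i j_q(R_i)$ computed in $\mbox{Igr}$ again lies in $\mbox{Igr}_h$, since $R_2$ embeds into $\prod_i(R_i)_2$ and for $a\in R_1$ the element $(\top_1+a)\ast a$ has every projection $(\top_1+a_i)\ast a_i=0$ — this yields the ``creates projective limits'' clause. For inductive limits one applies the reflector to the colimit formed in $\mbox{Igr}$, which is exactly the displayed isomorphism $\varinjlim_i R_i\cong\big(\varinjlim_i j_q(R_i)\big)_{\mathcal Q}$. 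The last two clauses are finiteness refinements where the specific form of the defining condition matters: in a filtered colimit any $a\in R_1$ comes from some stage $R_i\in\mbox{Igr}_h$, where $(\top_1+a)\ast a=0$, so the relation persists and no reflector is needed; and for a quotient $R/J$ with $R\in\mbox{Igr}_h$ the class $\bar a$ of $a\in R_1$ satisfies $(\top_1+\bar a)\ast\bar a=\overline{(\top_1+a)\ast a}=0_2$. I expect the only genuine (and minor) obstacle, as in Proposition~\ref{propadj1}, to be the bookkeeping that pins down the generating set of $T$ in degrees $\ge2$ precisely enough that ``$S\in\mbox{Igr}_h$'' reduces to that one degree-$2$ identity; the rest is the quotient-by-graded-ideal machinery already in place.
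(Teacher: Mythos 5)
Your proof is correct and is essentially the argument the paper intends: the paper states this proposition without a written proof, implicitly dualizing Proposition \ref{propadj1} and the corresponding result for $\mathbbm1$, and your key reduction --- that $S\in\mbox{Igr}_h$ iff $(\top_1+_1 b)\ast_{11}b=0$ in $S_2$ for every $b\in S_1$ --- is exactly the paper's Remark following Definition \ref{quotop}. Your verification that every Igr-morphism into an object of $\mbox{Igr}_h$ kills the graded ideal $T$ (giving the reflection with unit $\pi_R$), together with the direct closure arguments for projective limits, filtered inductive limits and quotients by graded ideals, fills in precisely the routine details the paper leaves to the reader.
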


Are examples of inductive graded rings in $Igr_{+}$: (i) $\mathbb{T}(A)$, where $A$ is a boolean ring; (ii) $k_*(F)$, where $F$ is an hyperbolic hyperfield; (iii) $W_*(F)$, where $F$ is an special hyperfield; (iv)  $\mathcal{H}^*(Gal(F^s|F), \{\pm 1\}) $, where  $F$ is a field with $char(F) \neq 2$.

\begin{defn}[The Category $\mbox{Igr}_+$]\label{igr+}
 We denote by $\mbox{Igr}_+$ the full subcategory of $\mbox{Igr}$ such that 
 $$\mbox{Obj}(\mbox{Igr}_+)=\mbox{Obj}(\mbox{Igr}_{\mathbbm1})\cap\mbox{Obj}(\mbox{Igr}_h).$$ 
 We denote by $j_+:\mbox{Igr}_+\rightarrow\mbox{Igr}$ the inclusion functor.
\end{defn}

\begin{rem} 
\begin{enumerate} [i- ]
    \item  Note that the notion of an Igr, $R$,  be in the subcategory $Igr_h$ can be axiomatized by a first-order (finitary) sentence in   $L$,  the polysorted language for Igr's described in the previous Chapter: ($\forall a:1 , a\ast_{11}a = \top_1 \ast_{11} a$). On the other hand, the concepts $R \in \mbox{Igr}_{\mathbbm1}$ and $R \in $ $Igr_+$ are axiomatized by  $L_{\omega_1,\omega}$-sentences.
    \item Note that the subcategory $Igr_+ \hookrightarrow Igr$ is closed by filtered inductive limits.
\end{enumerate}
    
\end{rem}

In order to think of an object in $\mbox{Igr}_+$ as a graded ring of "K-theoretic type", we make the following convention.

\begin{defn}[Exponential and Logarithm of an Igr]\label{igrlog}
Let $R\in\mbox{Igr}_+$ and write $R_1$ \textbf{multiplicatively} by $(\Gamma(R),\cdot,1,-1)$, i.e, fix an isomorphism $e_R:R_1\rightarrow\Gamma(R)$ in order that $e_R(\top)=-1$ and $e_R(a+b)=a\cdot b$. Such isomorphism $e_R$ is called \textbf{exponential} of $R$ and $l_R=e_R^{-1}$ is called \textbf{logarithm} of $R$. In this sense, we can write $R_1=\{l(a):a\in \Gamma(R)\}$. We also denote $l(a)\ast_{11}l(b)$ simply by $l(a)l(b)$, $a,b\in \Gamma(R)$. We drop the superscript and write just $e,l$ when the context allows it.
\end{defn}

Using Definitions \ref{igr+}, \ref{igrlog} (and of course, Definitions \ref{level1} and \ref{quotop} with an argument similar to the used in Lemma \ref{bp1}) we have the following properties.

\begin{lem}[First Properties]\label{igr_+first}
Let $R\in\mbox{Igr}_+$.
\begin{enumerate}[i -]
    \item $l(1)=0$.
    \item For all $n\ge1$, $\eta\in R_n$ is generated by $l(a_1)...l(a_n)$ with $a_1,...,a_n\in\Gamma(R)$.
    \item $l(a)l(-a)=0$ and $l(a)l(a)=l(-1)l(a)$ for all $a\in\Gamma(R)$.
    \item $l(a)l(b)=l(b)l(a)$ for all $a,b\in\Gamma(R)$.
    \item For every $a_1,...,a_n\in\Gamma(R)$ and every permutation $\sigma\in S_n$,
 $$l(a_1)...l(a_i)...l(a_n)=\mbox{sgn}(\sigma)l(a_{\sigma 1})...l(a_{\sigma n})\mbox{ in }R_n.$$
    \item For all $\xi\in R_n$, $\eta\in R_n$,
    $$\xi\eta=\eta\xi.$$
    \item For all $n\ge1$, $$h_n(l(a_1)...l(a_n))=l(-1)l(a_1)...l(a_n).$$
\end{enumerate}
\end{lem}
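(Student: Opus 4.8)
The plan is to work throughout in the multiplicative coordinates supplied by the exponential/logarithm isomorphism of Definition~\ref{igrlog}: write $R_1=\{l(a):a\in\Gamma(R)\}$, recall that $l\colon(\Gamma(R),\cdot)\to(R_1,+)$ is a group isomorphism with $l(-1)=\top_1$, and keep in mind two structural facts, namely that by Definition~\ref{igr1}(v) the ring $\bigoplus_{n}R_n$ is \emph{commutative} and each $R_n$ has exponent $2$. Item (i) is then immediate: since $1\cdot1=1$ in $\Gamma(R)$ and $l(xy)=l(x)+l(y)$, we get $l(1)=l(1)+l(1)$, hence $l(1)=0$. Item (ii) is just Definition~\ref{level1} rewritten in these coordinates: $R\in\mbox{Igr}_{\mathbbm1}$ says that $\iota_{\mathbbm1(R)}$ is an isomorphism, i.e.\ every element of $R_n$ is a finite sum of $n$-fold $\ast_{11}$-products of elements of $R_1$; replacing each factor $a_{ij}$ by $l(b_{ij})$ yields the claim (for $n=1$ this is just the definition of $l$).

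For item (iii) I would first extract the basic relation from $R\in\mbox{Igr}_h$. By the Remark following Definition~\ref{quotop}, membership in $\mbox{Igr}_h$ is equivalent to $a\ast_{11}\top_1=a\ast_{11}a$ for all $a\in R_1$; written multiplicatively and using $\top_1=l(-1)$ this is precisely $l(a)l(a)=l(-1)l(a)$, the second half of (iii) (this is the $\mbox{Igr}$-analogue of Lemma~\ref{bp1}(f)). For the first half, expand using $-a=(-1)\cdot a$, hence $l(-a)=l(-1)+l(a)$, together with biadditivity of $\ast_{11}$:
\[
l(a)l(-a)=l(a)l(-1)+l(a)l(a)=l(a)l(-1)+l(-1)l(a)=0,
\]
the last equality because $R_2$ has exponent $2$ and $\ast_{11}$ is commutative. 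This is the analogue of Lemma~\ref{bp1}(b).

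Items (iv) and (vi) are nothing but commutativity of the ring $\bigoplus_nR_n$ from Definition~\ref{igr1}(v) (for (vi) one additionally uses that in exponent $2$ the graded sign of the general commutation rule disappears). For (v), observe that since $R_n$ has exponent $2$ we have $(-1)\cdot x=x$ for every $x\in R_n$, so the factor $\mbox{sgn}(\sigma)$ is harmless, and the statement reduces to reordering the factors of $l(a_1)\cdots l(a_n)$ in a commutative ring — proved by writing $\sigma$ as a product of transpositions and inducting. Finally, for (vii) I would use Definition~\ref{igr1}(iv), which says $h_n=\ast_{1n}(\top_1,\_)=l(-1)\ast_{1n}(\_)$; applying this to a generator $l(a_1)\cdots l(a_n)$ — and extending to arbitrary $\eta\in R_n$ by additivity of $h_n$ together with (ii) — gives $h_n(l(a_1)\cdots l(a_n))=l(-1)\ast_{1n}(l(a_1)\cdots l(a_n))=l(-1)l(a_1)\cdots l(a_n)$.

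The arguments are all short; the only real care needed — exactly as hinted by the pointer to Lemma~\ref{bp1} — is the bookkeeping that turns level-$2$ identities such as (iii) into statements valid inside degree-$n$ products, and that reduces assertions about arbitrary $\eta\in R_n$ to assertions about the generators $l(a_1)\cdots l(a_n)$. Concretely, one must invoke Definition~\ref{igr1}(v),(vii) to know that iterated $\ast_{11}$-products are unambiguous and land in the expected $R_n$, that the maps in play ($h_n$, and multiplication by a fixed level-$1$ element) are additive so that it suffices to check them on generators, and that a relation proved in $R_2$ may be multiplied on either side by further level-$1$ elements and re-associated by Definition~\ref{igr1}(vii). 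I do not expect any genuine obstacle beyond this routine — if slightly tedious — propagation of identities through the grading.
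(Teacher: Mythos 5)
Your proof is correct and follows exactly the route the paper intends: the paper gives no written proof, only the remark that the lemma follows from Definitions \ref{igr+}, \ref{igrlog}, \ref{level1} and \ref{quotop} by an argument similar to Lemma \ref{bp1}, and your argument is precisely that unfolding (the $\mbox{Igr}_{\mathbbm1}$ condition for (ii), the $\mbox{Igr}_h$ relation $a\ast_{11}a=\top_1\ast_{11}a$ for (iii), commutativity and exponent $2$ for (iv)--(vi), and $h_n=\top_1\ast_{1n}\_$ for (vii)).
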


\begin{prop} \label{igr_+prop} Let $R\in\mbox{Igr}_+$
\begin{enumerate}[i- ]
    \item For each $n \in \mathbb{N}$ and each $x\in R_n$, $x\ast_{n,n} x = \top_n \ast_{n,n} x \in R_{2n}$.
    \item $\mathbb{A}(R) = \varinjlim_{n \in \mathbb{N}} R_n$ is a boolean ring (or, equivalently, $\mathbb{T}(\mathbb{A}(R)) \in \mbox{Igr}_+)$.
\end{enumerate}
\end{prop}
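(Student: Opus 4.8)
The plan is to establish (i) first — reducing the level-$n$ identity to the level-$1$ hypothesis via the ``generated by level $1$'' description of $\mbox{Igr}_{\mathbbm 1}$ together with the relation $l(a)l(a)=l(-1)l(a)$ coming from $\mbox{Igr}_h$ — and then to deduce (ii) almost formally, by recognizing that the colimit transition map $h^{2n}_n\colon R_n\to R_{2n}$ is precisely multiplication by $\top_n$.

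First I would record two preliminary identities. Iterating Definition \ref{igr1}(iii),(iv) and using associativity of $\ast$ (Definition \ref{igr1}(v)), one gets by induction $\top_n=\top_1\ast_{11}\cdots\ast_{11}\top_1$ ($n$ factors), i.e. $\top_n=l(-1)^n$ under the exponential; and, more generally, for $a_n\in R_n$ and $k\ge 0$, applying $h_m=\ast_{1m}(\top_1,\_)$ repeatedly and reassociating gives $h^{n+k}_n(a_n)=\top_k\ast_{k,n}a_n$, so in particular $h^{2n}_n(a_n)=\top_n\ast_{n,n}a_n$. For $n=0$ the maps $h^{t}_0$ and $\top_0\ast_{0,0}(\_)$ are the identity/multiplication by $1$ on $R_0\cong\mathbb F_2$, which is already boolean, so both claims are trivial there and I may assume $n\ge 1$.

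For (i): since $R\in\mbox{Igr}_{\mathbbm 1}$, Lemma \ref{igr_+first}(ii) lets me write any $x\in R_n$ as $x=\sum_{j=1}^{r}m_j$ with $m_j=l(a_{1j})\cdots l(a_{nj})$, $a_{ij}\in\Gamma(R)$. Expanding and using commutativity of the ring $R=\bigoplus_m R_m$ (Lemma \ref{igr_+first}(vi)), $x\ast_{n,n}x=\sum_j m_j\ast_{n,n}m_j + 2\sum_{j<k}m_j\ast_{n,n}m_k$, and the second sum vanishes because $R_{2n}$ has exponent $2$. For each diagonal term, reassociating the $2n$ level-$1$ factors, commuting them into the order $l(a_{1j})^2\cdots l(a_{nj})^2$, and applying $l(a)l(a)=l(-1)l(a)$ (Lemma \ref{igr_+first}(iii)) $n$ times yields $m_j\ast_{n,n}m_j=l(-1)^n m_j=\top_n m_j$. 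Summing and using biadditivity of $\ast_{n,n}$ in the second argument gives $x\ast_{n,n}x=\top_n\ast_{n,n}\sum_j m_j=\top_n\ast_{n,n}x$, which is (i). For (ii): an element of $\mathbb A(R)=\varinjlim_n R_n$ has the form $y=[(a_n,n)]$ with $a_n\in R_n$ (the description following Definition \ref{f2alg}), so $y^2=[(a_n\ast_{n,n}a_n,2n)]$; by (i) and the preliminary identity, $a_n\ast_{n,n}a_n=\top_n\ast_{n,n}a_n=h^{2n}_n(a_n)$, hence $[(a_n\ast_{n,n}a_n,2n)]=[(h^{2n}_n(a_n),2n)]=[(a_n,n)]=y$, so $\mathbb A(R)$ is boolean. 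For the parenthetical equivalence, $\mathbb T(\mathbb A(R))\in\mbox{Igr}_{\mathbbm 1}$ always, while $\mathbb T(A)\in\mbox{Igr}_h$ iff $A$ is boolean (Example after Definition \ref{quotop}); hence $\mathbb T(\mathbb A(R))\in\mbox{Igr}_+$ iff $\mathbb A(R)$ is boolean.

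The only genuinely non-formal point — the main obstacle — is the passage in (i) from generators to an arbitrary $x\in R_n$: the map $x\mapsto x\ast_{n,n}x$ is quadratic, so it cannot be extended by linearity, and one must instead exploit that the off-diagonal cross terms are annihilated by the exponent-$2$ condition on $R_{2n}$ while the diagonal terms are controlled by the generator computation together with the additivity of $\top_n\ast_{n,n}(\_)$. Everything else is routine bookkeeping with the Igr axioms and Lemma \ref{igr_+first}.
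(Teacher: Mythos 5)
Your proposal is correct and follows essentially the same route as the paper: reduce to sums of level-$1$ monomials (available since $R\in\mbox{Igr}_{\mathbbm 1}$), square a monomial using $a\ast_{11}a=\top_1\ast_{11}a$ to get $\top_n$ times the monomial, kill the cross terms via commutativity and exponent $2$ (the paper organizes this as an induction on the number of generators rather than a direct expansion, which is only a cosmetic difference), and deduce (ii) from the colimit ring structure, where the transition map $h^{2n}_n$ is multiplication by $\top_n$. Your explicit verification of (ii) and of the parenthetical equivalence just spells out what the paper states as immediate.
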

\begin{proof}
$ $
\begin{enumerate}[i- ]
\item  The property is clear if $n =0$. If $n \geq 1$, then the property can be verified by induction on the number of generators $k \geq 1, x = \sum_{i =1}^k  a_{1,i} \ast_{1,1} a_{2,i} \ast_{1,1} \cdots \ast_{1,1}a_{n,i} \in R_n$: if $k =1$, then   note that 
\begin{align*}
 x\ast_{n, n} x &= (a_{1} \ast a_{2} \ast \cdots \ast a_{n}) \ast (a_{1} \ast a_{2} \ast \cdots \ast a_{n}) \\
 &= (a_{1} \ast a_{1}) \ast (a_2 \ast a_2) \ast \cdots (a_n \ast a_{n})=(\top_1 \ast a_1) \ast (\top_1 \ast a_2) \ast \cdots \ast (\top_1 \ast a_n)\\
 &= (\top_n) \ast (a_{1} \ast a_{2} \ast \cdots \ast a_{n});
\end{align*}
if $k > 1$, write $x = y + z$, where $y, z \in R_n$ are have $<k$ generator and then, by induction,
\begin{align*}
    x \ast_{n, n} x &= (y+z) \ast_{n,n}(y+z) = y\ast_{n,n} y   + y\ast_{n,n} z + z\ast_{n,n} y   +  z\ast_{n,n} z  \\
    &= y\ast_{n,n} y + z\ast_{n,n} z= \top_n \ast_{n,n} y + \top_n \ast_{n,n} z \\
    &= \top_n \ast_{n,n}(y+z) = \top_n \ast_{n,n} x
\end{align*}
\item This follows directly from item (i) and the definition of the ring structure in $\mathbb{A}(R) = \varinjlim_{n \in \mathbb{N}} R_n$. 
    \end{enumerate}
\end{proof}

By the previous Proposition and the universal property of the boolean hull of an Igr (Definition \ref{boohull-df}.(ii)), we obtain:

\begin{cor}\label{igr+co} Let $R\in\mbox{Igr}_+$. Then:
\begin{enumerate}[i- ]
\item $X_{\mathbb{T}(\mathbb{A}(R))} \approx X_R$.
\item $\mathbb{A}(R) \cong B(R)$.
\end{enumerate}
\end{cor}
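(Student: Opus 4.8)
The plan is to reduce both statements to Stone duality, using the adjunction $\mathbb{A}\dashv\mathbb{T}$ of Proposition \ref{propadj1} together with Proposition \ref{igr_+prop}, which already tells us that $\mathbb{A}(R)$ is a boolean ring when $R\in\mbox{Igr}_+$.

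First I would handle item (i). By Definition \ref{boohull-df}(i), the boolean topology on $X_R=\mbox{Igr}(R,\mathbb{T}(\mathbb{F}_2))$ is the one transported, along the adjunction bijection $\mbox{Igr}(R,\mathbb{T}(\mathbb{F}_2))\cong\mbox{Ring}_2(\mathbb{A}(R),\mathbb{F}_2)$ of Proposition \ref{propadj1}(i), from the boolean topology on $\mbox{Ring}_2(\mathbb{A}(R),\mathbb{F}_2)$. Applying the same definition to the Igr $\mathbb{T}(\mathbb{A}(R))$, its space of orderings $X_{\mathbb{T}(\mathbb{A}(R))}=\mbox{Igr}(\mathbb{T}(\mathbb{A}(R)),\mathbb{T}(\mathbb{F}_2))$ carries the topology transported from $\mbox{Ring}_2(\mathbb{A}(\mathbb{T}(\mathbb{A}(R))),\mathbb{F}_2)$; since $\mathbb{A}\circ\mathbb{T}\cong 1_{\mbox{Ring}_2}$ by Proposition \ref{propadj1}(iii), this is canonically $\mbox{Ring}_2(\mathbb{A}(R),\mathbb{F}_2)$ with its boolean topology. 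Concretely, the unit $\eta_R\colon R\to\mathbb{T}(\mathbb{A}(R))$ of the adjunction induces a map $\eta_R^{*}\colon X_{\mathbb{T}(\mathbb{A}(R))}\to X_R$ by precomposition, and the triangle identities together with the full faithfulness of $\mathbb{T}$ (Proposition \ref{propadj1}(ii)) show that $\eta_R^{*}$ is a bijection identifying both spaces with the same boolean space $\mbox{Ring}_2(\mathbb{A}(R),\mathbb{F}_2)$; in particular it is a homeomorphism, giving $X_{\mathbb{T}(\mathbb{A}(R))}\approx X_R$.

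Next I would prove item (ii). Put $A:=\mathbb{A}(R)$, which is a boolean ring by Proposition \ref{igr_+prop}(ii). Since homomorphisms of boolean rings into $\mathbb{F}_2$ are exactly the points of the Stone space $\mbox{Spec}(A)$, the bijection $X_R\cong\mbox{Ring}_2(A,\mathbb{F}_2)$ of (i) identifies $X_R$, as a boolean space, with $\mbox{Spec}(A)$ in its usual topology (recall that the boolean topology on $\mbox{Ring}_2(A,\mathbb{F}_2)$ is the subspace topology from $\mathbb{F}_2^{A}$ with $\mathbb{F}_2$ discrete, which is precisely the standard presentation of the Stone space of $A$). Stone duality for boolean rings states that the canonical evaluation map $A\to\mathcal{C}(\mbox{Spec}(A),\mathbb{F}_2)$, $a\mapsto(\chi\mapsto\chi(a))$, is an isomorphism of boolean rings. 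Combining this with $B(R)=\mathcal{C}(X_R,\mathbb{F}_2)\cong\mathcal{C}(\mbox{Spec}(A),\mathbb{F}_2)$ yields $B(R)\cong A=\mathbb{A}(R)$, as desired.

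The only genuinely delicate point is the bookkeeping of topologies: one must check that the transported boolean topology on $X_R$ from Definition \ref{boohull-df}(i) coincides on the nose with the Stone-space topology on $\mbox{Spec}(\mathbb{A}(R))$ invoked in Stone duality, and likewise that the canonical bijections in play are continuous in both directions. Everything else is a formal consequence of Proposition \ref{propadj1} (the adjunction, the isomorphism $\mathbb{A}\circ\mathbb{T}\cong 1_{\mbox{Ring}_2}$, and full faithfulness of $\mathbb{T}$) and of Proposition \ref{igr_+prop}.
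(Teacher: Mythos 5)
Your argument is correct and follows essentially the same route as the paper, which derives the corollary directly from Proposition \ref{igr_+prop} (showing $\mathbb{A}(R)$ is boolean), the adjunction data of Proposition \ref{propadj1}, and Stone duality as built into Definition \ref{boohull-df}; you have simply spelled out the identifications $X_R\cong\mbox{Ring}_2(\mathbb{A}(R),\mathbb{F}_2)\cong X_{\mathbb{T}(\mathbb{A}(R))}$ and the evaluation isomorphism $\mathbb{A}(R)\cong\mathcal{C}(X_R,\mathbb{F}_2)=B(R)$ that the paper leaves implicit.
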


\begin{lem}
 $ $
 \begin{enumerate}[i -]
  \item Given $R\in\mbox{Igr}_{\mathbbm1}$, $S\in\mbox{Igr}$ and $f:S\rightarrow j_{\mathbbm1}(R)$, we have:
  $f$ is coordinatewise surjective iff $f_1:S_1\rightarrow R_1$ is a surjective morphism of pointed $\mathbb{F}_2$-modules.
  
  \item Given $R\in\mbox{Igr}_{\mathbbm1}$, $S\in\mbox{Igr}$ and $f,h\in\mbox{Igr}(j_{\mathbbm1}(R),S)$, we have $f=h$ if and only if
  $f_1=h_1$.
 \end{enumerate}
\end{lem}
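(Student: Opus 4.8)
The plan is to reduce both items to a computation at level $1$, using the defining property of $\mbox{Igr}_{\mathbbm1}$: if $T\in\mbox{Igr}_{\mathbbm1}$, then $T_0\cong\mathbb F_2$, the group $T_1$ is unconstrained, and for $n\ge2$ the pointed $\mathbb F_2$-module $T_n$ is generated by the $n$-fold products $a_1\ast_{11}\cdots\ast_{11}a_n$ with all $a_i\in T_1$. The only preliminary fact I would record is that for any Igr-morphism $g=\bigoplus_n g_n$ the ring-morphism condition $g(x\ast y)=g(x)\ast g(y)$ unwinds, by an easy induction on $n$ using the biadditivity of the $\ast_{nm}$ from Definition \ref{igr1}, to
$$g_n(a_1\ast_{11}\cdots\ast_{11}a_n)=g_1(a_1)\ast_{11}\cdots\ast_{11}g_1(a_n)$$
whenever the $a_i$ lie in level $1$ of the domain of $g$. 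Moreover, since $R_0$ and $S_0$ are both $\cong\mathbb F_2$ and Igr-morphisms fix $0$ and $1$, the degree-$0$ component of any Igr-morphism is the unique isomorphism $\mathbb F_2\to\mathbb F_2$; hence in all cases below level $n=0$ is automatic.

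For item (i), the forward implication is immediate, since coordinatewise surjectivity in particular gives surjectivity of $f_1$. For the converse I would assume $f_1:S_1\to R_1$ surjective and prove each $f_n:S_n\to R_n$ surjective by using that the codomain $R=j_{\mathbbm1}(R)$ lies in $\mbox{Igr}_{\mathbbm1}$: level $n=1$ is the hypothesis, and for $n\ge2$ I pick a generator $a_1\ast_{11}\cdots\ast_{11}a_n$ of $R_n$ together with preimages $b_i\in S_1$, $f_1(b_i)=a_i$; the displayed identity then gives $f_n(b_1\ast_{11}\cdots\ast_{11}b_n)=a_1\ast_{11}\cdots\ast_{11}a_n$. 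Thus $\mbox{Im}(f_n)$ is a subgroup of $R_n$ containing a generating set, so $f_n$ is onto.

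For item (ii), one implication is again trivial. For the converse I would assume $f_1=h_1$, where now $f,h:j_{\mathbbm1}(R)\to S$, so it is the \emph{domain} that is generated at level $1$. For $n\ge2$ and a generator $a_1\ast_{11}\cdots\ast_{11}a_n$ of $R_n$, applying the preliminary identity to both $f$ and $h$ gives
\begin{align*}
 f_n(a_1\ast_{11}\cdots\ast_{11}a_n) &= f_1(a_1)\ast_{11}\cdots\ast_{11}f_1(a_n) \\
 &= h_1(a_1)\ast_{11}\cdots\ast_{11}h_1(a_n) = h_n(a_1\ast_{11}\cdots\ast_{11}a_n),
\end{align*}
so $f_n$ and $h_n$ agree on a generating set of the group $R_n$ and therefore coincide; hence $f=h$.

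I do not expect a real obstacle here: once $\mbox{Igr}_{\mathbbm1}$ is invoked, each item is a standard ``verify on generators'' argument. The only step that deserves a little care is the inductive passage from the biadditivity axioms to the $n$-fold multiplicativity identity used above; if desired this can be isolated as a one-line remark preceding the lemma, after which both proofs become two lines each.
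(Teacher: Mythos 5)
The paper states this lemma without proof (it is among the facts left to the reader), and your argument is precisely the intended one: since an object of $\mbox{Igr}_{\mathbbm1}$ is additively generated in each level $n\ge2$ by $n$-fold $\ast_{11}$-products of level-$1$ elements, both coordinatewise surjectivity and equality of Igr-morphisms reduce to level $1$ via the identity $g_n(a_1\ast_{11}\cdots\ast_{11}a_n)=g_1(a_1)\ast_{11}\cdots\ast_{11}g_1(a_n)$, with level $0$ automatic because $R_0\cong S_0\cong\mathbb F_2$. Your proof is correct; the only cosmetic remark is that this identity follows from the multiplicativity of the graded ring morphism (together with the grading), so the appeal to biadditivity is unnecessary.
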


Let $R,S\in\mbox{Igr}$. The inclusion function $\iota_R:\mathbbm 1(R)\rightarrow R$ and projection function $\pi_R:R\rightarrow\mathcal Q(R)$ induces respective natural transformations $\iota:\mathbbm1\Rightarrow1_{Igr}$ and $\pi:1_{Igr}\Rightarrow\mathcal Q$. Moreover, we have a natural transformation $can:\mathcal Q\mathbbm1\Rightarrow\mathbbm1\mathcal Q$ given by the rule $can_n(l(a_1)...l(a_n)):=l(a_1)...l(a_n)$, $n\ge1$. ($can_n$ is well defined and is an isomorphism basically because both $\mathcal Q\mathbbm1(R)$ and $\mathbbm1\mathcal Q(R)$ are generated in level 1 by $R_1$ and both graded rings satisfies the relation $l(a)l(-a)=0$).

We have another immediate consequence of the previous results (and adjunctions):
\begin{lem}
$ $
 \begin{enumerate}[i -]
  \item For all $R\in\mbox{Igr}_h$, $\mathbbm1(R)\in\mbox{Igr}_+$ and $\mbox{can}_R$ is an isomorphism.
  
  \item For all $R\in\mbox{Igr}_{\mathbbm1}$, $\mathcal Q(R)\in\mbox{Igr}_+$ and $\mbox{can}_R$ is an isomorphism.
  
  \item To get projective limits in $\mbox{Igr}_+$ is enough to restrict the projective limits obtained in $\mbox{Igr}$:
  $$\varprojlim\limits_{i\in I}R_i\cong\mathbbm1\left(\varprojlim\limits_{i\in I}j_+(R_i)\right).$$
  
  \item To get inductive limits in $\mbox{Igr}_+$ is enough to restrict the inductive limits obtained in $\mbox{Igr}$:
  $$\varinjlim\limits_{i\in I}R_i\cong\mathcal Q\left(\varinjlim\limits_{i\in I}j_+(R_i)\right).$$
 \end{enumerate}
\end{lem}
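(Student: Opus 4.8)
I would deduce all four items from the idempotency relations $\mathbbm1\circ\mathbbm1=\mathbbm1$ and $\mathcal Q\circ\mathcal Q=\mathcal Q$, the adjunctions $j_{\mathbbm1}\dashv\mathbbm1$ and $\mathcal Q\dashv j_q$, and the two creation-of-limits facts already recorded ($j_{\mathbbm1}$ creates inductive limits, $j_q$ creates projective limits). Items (i)--(ii) carry the only new content; items (iii)--(iv) will then follow formally.

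\textbf{Items (i) and (ii).} For (i), let $R\in\mbox{Igr}_h$. First I would check $\mathbbm1(R)\in\mbox{Igr}_+$: membership in $\mbox{Igr}_{\mathbbm1}$ is immediate from $\mathbbm1\circ\mathbbm1=\mathbbm1$, and membership in $\mbox{Igr}_h$ follows because $\mathbbm1(R)_1=R_1$ and the operations of $\mathbbm1(R)$ are restrictions of those of $R$, so the defining identity $a\ast_{11}a=\top_1\ast_{11}a$ of $\mbox{Igr}_h$ (valid in $R$ for all $a\in R_1$) persists in $\mathbbm1(R)$. For the claim that $\mbox{can}_R\colon\mathcal Q\mathbbm1(R)\to\mathbbm1\mathcal Q(R)$ is an isomorphism, I would note that $\pi_R$ is an isomorphism (as $R\in\mbox{Igr}_h$), hence so is $\mathbbm1(\pi_R)\colon\mathbbm1(R)\to\mathbbm1\mathcal Q(R)$, and that $\pi_{\mathbbm1(R)}\colon\mathbbm1(R)\to\mathcal Q\mathbbm1(R)$ is an isomorphism (as $\mathbbm1(R)\in\mbox{Igr}_h$ by the previous step). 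Since both act as the identity on the level-$1$ generators $l(a)$ and $\mbox{can}_R$ is the graded-ring map determined by $l(a_1)\cdots l(a_n)\mapsto l(a_1)\cdots l(a_n)$, one obtains $\mbox{can}_R=\mathbbm1(\pi_R)\circ\pi_{\mathbbm1(R)}^{-1}$. Item (ii) is the order-dual: for $R\in\mbox{Igr}_{\mathbbm1}$ one has $\mathcal Q(R)\in\mbox{Igr}_h$ from $\mathcal Q\circ\mathcal Q=\mathcal Q$, and $\mathcal Q(R)\in\mbox{Igr}_{\mathbbm1}$ because $\mathcal Q(R)$ is generated in level $1$ by $\mathcal Q(R)_1=R_1$ (a quotient of a level-$1$-generated Igr is level-$1$-generated); then the isomorphisms $\iota_{\mathbbm1(R)}$ and $\iota_{\mathbbm1\mathcal Q(R)}$ (available since $R,\mathcal Q(R)\in\mbox{Igr}_{\mathbbm1}$) exhibit $\mbox{can}_R=\iota_{\mathbbm1\mathcal Q(R)}^{-1}\circ\mathcal Q(\iota_{\mathbbm1(R)})$, an isomorphism.

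\textbf{Item (iii).} Given a diagram $(R_i)_{i\in I}$ in $\mbox{Igr}_+$, I would form its projective limit $P:=\varprojlim_i j_+(R_i)$ in $\mbox{Igr}$. Since the diagram lies in $\mbox{Igr}_h$ and $j_q$ creates projective limits, $P\in\mbox{Igr}_h$; by item (i), $\mathbbm1(P)\in\mbox{Igr}_+$, and I claim $\mathbbm1(P)$, with cone $\mathbbm1(\pi_i)\colon\mathbbm1(P)\to\mathbbm1(R_i)=R_i$, is the projective limit in $\mbox{Igr}_+$. For the universal property: any $Q\in\mbox{Igr}_+$ with a compatible cone $q_i\colon Q\to R_i$ factors uniquely through $P$ in $\mbox{Igr}$, say via $u$; because $Q\in\mbox{Igr}_{\mathbbm1}$, the coreflection $j_{\mathbbm1}\dashv\mathbbm1$ converts $u$ into a unique $\mbox{Igr}_{\mathbbm1}$-morphism $\tilde u\colon Q\to\mathbbm1(P)$ with $\iota_{\mathbbm1(P)}\circ\tilde u=u$, which is the required mediating morphism (uniqueness inherited from that of $u$). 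This yields $\varprojlim_i R_i\cong\mathbbm1\bigl(\varprojlim_i j_+(R_i)\bigr)$.

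\textbf{Item (iv) and the main obstacle.} Dually, I would take $L:=\varinjlim_i j_+(R_i)$ in $\mbox{Igr}$; since the diagram lies in $\mbox{Igr}_{\mathbbm1}$ and $j_{\mathbbm1}$ creates inductive limits, $L\in\mbox{Igr}_{\mathbbm1}$, so by item (ii) $\mathcal Q(L)\in\mbox{Igr}_+$, and the reflection $\mathcal Q\dashv j_q$ lets the same bookkeeping (all arrows reversed) identify $\mathcal Q(L)$, with cocone $\mathcal Q(\iota_i)$, as the inductive limit of the diagram in $\mbox{Igr}_+$; hence $\varinjlim_i R_i\cong\mathcal Q\bigl(\varinjlim_i j_+(R_i)\bigr)$. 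The one point requiring genuine care --- and the reason (iii)--(iv) are not completely automatic --- is that the two-step recipe lands inside $\mbox{Igr}_+$, i.e. that the coreflector $\mathbbm1$ maps $\mbox{Igr}_h$ into itself and the reflector $\mathcal Q$ maps $\mbox{Igr}_{\mathbbm1}$ into itself; this is exactly the content of items (i)--(ii). Everything else is routine manipulation of the adjunctions and the creation-of-limits statements.
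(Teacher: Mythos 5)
Your proposal is correct, and it follows essentially the route the paper intends: the paper states this lemma without a written proof, calling it an ``immediate consequence of the previous results (and adjunctions)'', and your argument is exactly that consequence spelled out --- idempotency of $\mathbbm1$ and $\mathcal Q$, the remark characterizing $\mbox{Igr}_h$ by $a\ast_{11}a=\top_1\ast_{11}a$, level-$1$ generation to identify $\mbox{can}_R$ with the composite of the two isomorphisms, and the two adjunctions together with the creation-of-limits statements for items (iii)--(iv). Nothing in your write-up deviates from or adds a gap to the paper's sketched reasoning.
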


\section{Examples and Constructions of Quadratic Interest}

\begin{defn}
 A \textbf{filtered ring} is a tuple $A=(A,(J_n)_{n\ge0},+,\cdot,0,1)$ where:
 \begin{enumerate}[i -]
  \item $(A,+,\cdot,0,1)$ is a commutative ring with unit.
  \item $J_0=A$ and for all $n\ge1$, $J_n\subseteq A$ is an ideal.
  \item For all $n,m\ge0$, $n\le m\Rightarrow J_n\supseteq J_m$.
  \item For all $n,m\ge0$, $J_n\cdot J_m\subseteq J_{n+m}$.
  \item $J_0/J_1\cong\mathbb F_2$ (then $2=1+1\in J_1$).
  \item For all $n\ge0$, $J_n/J_{n+1}$ is a group of exponent 2 (then $2\cdot J_n\subseteq J_{n+1}$ and $2^n\in J_n$).
 \end{enumerate}
 A \textbf{morphism} $f:A\rightarrow A'$ of filtered rings is a ring homomorphism such that $f(J_n)\subseteq J'_n$. The category of filtered rings will be denoted by $\mbox{FRing}$.
\end{defn}

\begin{defn}\label{gradfilt}
 We define the \textbf{inductive graded ring associated functor} 
 $$Grad:\mbox{FRing}\rightarrow\mbox{Igr}$$ 
 for $f:\mbox{FRing}(A,B)$ as follow: $Grad(A):=((Grad(A)_n)_{n\ge0},(t_n)_{n\ge0},\ast)\in\mbox{Igr}$ is the igr where
\begin{enumerate}[i -]
 \item For all $n\ge0$, $Grad(A)_n:=(J_n/J_{n+1},+_n,0_n,\top_n)$ is the exponent 2 group with distinguished element $\top_n:=2^n+J_{n+1}$.

 \item For all $n\ge0$, $t_n:Grad(A)_n\rightarrow Grad(A)_{n+1}$ is defined by $t_n:=2\cdot\_$, i.e, 
$$\mbox{For all }a+J_{n+1}\in J_n/J_{n+1},\,t_n(a+J_{n+1}):=2\cdot a+J_{n+2}\in J_{n+1}/J_{n+2}.$$
Observe that $t_n(\top_n)=\top_{n+1}$, i.e, $t_n$ is a 
morphism of pointed $\mathbb F_2$-modules.

 \item For all $n,m\ge0$ the biadditive function $\ast_{nm}:Grad(A)_n\times Grad(A)_m\rightarrow Grad(A)_{n+m}$ is defined by the rule
 $$(a_n+J_{n+1})\ast_{mn}(b_m+J_{m+1})=a_n\cdot b_m+J_{n+m+1}\in J_{n+m}/J_{n+m+1}.$$
 The group $A_g:=\bigoplus_{n\ge0}Grad(A)_n$ of exponent 2 and the induced application $\ast:A_g\times A_g\rightarrow A_g$ are such that $(A_g,\ast)$ is a commutative ring with unit $\top_1=(2+J_2)\in J_1/J_2$.

 \item For all $n\ge1$, $t_n=\top_1\ast_{1n}\_$.
\end{enumerate}

The morphism $Grad(f)\in\mbox{Igr}(Grad(A),Grad(A'))$ is defined by the following rules: for all $n\ge0$, $f_n:Grad(A)_n\rightarrow Grad(A')_n$ is given by
$$f_n(a+J_{n+1}):=f_n(a)+J'_{n+1}.$$
Note that $f_n$ a homomorphism of $\mathbb F_2$-pointed modules and $\bigoplus_{n\ge0}f_n:(A_g,\ast)\rightarrow(A'_g,\ast)$ is a homomorphism of graded rings with unit.
\end{defn}

\begin{defn}\label{igrcont}
 The \textbf{functor of graded ring of continuous functions} over a space $X$
 $$\mathcal C(X,\_):\mbox{Igr}\rightarrow\mbox{Igr}$$ 
 is the functor defined for $f:R\rightarrow S$ by
\begin{enumerate}[i -]
 \item $\mathcal C(X,R)_0:=R_0\cong\mathbb F_2$,
 \item for all $n\ge1$, $\mathcal C(X,R)_n:=\mathcal C(X,R_n)$ as a pointed $\mathbb{F}_2$-module,
 \item for all $n,m\ge0$, $\ast^X_{nm}:\mathcal C(X,R_n)\times\mathcal C(X,R_m)\rightarrow\mathcal C(X,R_{n+m})$ is given by $(\alpha_n,\beta_m)\mapsto\alpha_n\ast^X_{nm}\beta_m$, where for $x\in X$,
$$\alpha_n\ast^X_{nm}\beta_m(x)=\alpha_n(x)\ast_{nm}\beta_m(x)\in R_{n+m}.$$
  \item $\mathcal C(X,f)_0:=f_0$ as an homomorphism of pointed $\mathbb{F}_2$-modules $R_0 \to S_0$.
 \item for all $n\ge1$, $\mathcal C(X,f)_n:=\mathcal C(X,f_n):=f_n\circ\_$  $\in p\mathbb{F}_2-mod( \mathcal C(X,R_n), \mathcal C(X,S_n))$.
\end{enumerate}
\end{defn}

\begin{rem} Let $X$ be a topological space and  let $R\in\mbox{Igr}_{\mathbbm1}$. Note that if $X$ is compact or $R \in Igr_{fin}$, then $ \mathcal C(X,R)\in\mbox{Igr}_{\mathbbm1}$.
\end{rem}

\begin{defn}\label{sgfilt}
 We define the \textbf{continuous function filtered ring functor} 
 $$\mathcal C:SG\rightarrow\mbox{FRing}$$ 
 as follow: first, consider the functor $\mathcal C(X_{\_},\mathbb Z):SG\rightarrow\mbox{Ring}$, composition of the (contravariant) functors ``associated ordering space'' $X_{\_}:SG\rightarrow\mbox{Top}^{op}$ and ``continuous functions in $\mathbb Z$ ring'' $\mathcal C(\_,\mathbb Z):\mbox{Top}^{op}\rightarrow\mbox{Ring}$ (here $\mathbb Z$ is endowed with the discrete topology).

Now we define the functor $\mathcal C:SG\rightarrow\mbox{FRing}$: given a special group $G\in SG$, we define 
$$\mathcal C(G):=(R(G),(J_n(G))_{n\ge0},+,\cdot,0,1)$$ 
where 
\begin{enumerate}[i -]
 \item $(R(G),+,\cdot,0,1)$ is the subring of $\mathcal C(X_G,\mathbb Z)$ of continuous functions of constant parity, i.e, $R(G):=J_0(G)\xrightarrow{i_0(G)}\mathcal C(X_G,\mathbb Z)$ is the image of the monomorphism of rings with unit
$$j_0(G):\mathcal C(X_G,2\mathbb Z)\cup\mathcal C(X_G,2\mathbb Z+1)\rightarrow\mathcal C(X_G,\mathbb Z).$$

 \item For all $n\ge1$, $J_n(G)\xrightarrow{i_n(G)}J_0(G)$ is the ideal of $R(G)$ (and also of $\mathcal C(X_G,\mathbb Z)$) that is the image of the monomorphism of abelian groups
$$j_n(G):\mathcal C(X_G,2^n\mathbb Z)\rightarrow \mathcal C(X_G,2\mathbb Z)\cup\mathcal C(X_G,2\mathbb Z+1).$$
\end{enumerate}

We also have $J_0(G)/J_1(G)\cong\mathbb F_2$ and for all $n,m\ge0$:
\begin{enumerate}[a -]
 \item If $n\ge m$ then $J_n(G)\supseteq J_m(G)$;
 \item $J_n(G)\cdot J_m(G)\subseteq J_{n+m}(G)$;
 \item $2J_n(G)=J_{n+1}(G)\Rightarrow J_n(G)/J_{n+1}(G)$ is an exponent 2 group.
\end{enumerate}

On the morphisms, for $f\in SG(G,G')$, we define $\mathcal C(f)\in\mbox{FRing}(\mathcal C(G),\mathcal C(G'))$ by
$$\mathcal C(f)(h)=\mathcal C(X_f,\mathbb Z)(h)$$
for $h\in\mathcal C(G)$. $\mathcal C(f)$ is well-defined because $\mathcal C(f)\in\mbox{Ring}(\mathcal C(G),\mathcal C(G'))$ and for all $n\ge0$, 
$$\mathcal C(f)(J_n(G))\subseteq J_n(G').$$
\end{defn}

\begin{defn}
We define the \textbf{continuous function graded ring functor} by 
 $$Grad\circ\mathcal C:SG\rightarrow\mbox{Igr}.$$ 
\end{defn}

 For convenience, we describe this functor now: given $G\in\mbox{SG}$,
$$Grad(\mathcal C(G)):=((Grad(\mathcal C(G))_n)_{n\ge0},(t_n)_{n\ge0},\cdot)$$
where:
\begin{enumerate}[i -]
 \item $Grad(\mathcal C(G))_n:=(J_n(G)/J_{n+1}(G),\cdot,0\cdot J_{n+1}(G),2^nJ_{n+1}(G))$, where $2\in\mathcal C(X_G,\mathbb Z)$ is the constant function of value $2\in2\mathbb Z\subseteq\mathbb Z$.
 
 \item For all $n\ge0$, $J_n(G)/J_{n+1}(G)\xrightarrow{t_2=2\cdot\_}J_{n+1}(G)/J_{n+2}(G)$.
 
 \item For all $n,m\ge0$, $\ast_{nm}:J_n(G)/J_{n+1}(G)\times J_m(G)/J_{m+1}(G)\rightarrow J_{n+m}(G)/J_{n+m+1}(G)$ is given by
 $$(h_n+J_{n+1}(G))\ast_{nm}(k_m+J_{m+1}(G))=h_nk_m+J_{n+m+1}(G).$$
\end{enumerate}

On the morphisms, given $f\in SG(G,G')$, we have that 
$$Grad(\mathcal C(f))=(Grad(\mathcal C(f))_n)_{n\ge0}\in\mbox{Igr}(Grad(\mathcal C(G), Grad(\mathcal C(G')),$$ 
where for all $n\ge0$, $Grad(\mathcal C(f))_n:Grad(\mathcal C(G))_n\rightarrow Grad(\mathcal C(G'))_n$ is such that
$$Grad(\mathcal C(f))_n(h+J_{n+1}(G))=\mathcal C(f)(h)+J'_{n+1}(G').$$

\begin{prop}
 $ $
 \begin{enumerate}[a -]
  \item There is a natural isomorphism $\theta:Grad\circ\mathcal C\xrightarrow{\cong}\mathbb T\circ\mathcal C(X_{\_},\mathbb F_2)$. In particular, for all $G\in SG$, $Grad(\mathcal C(G))\in\mbox{Igr}_+$.
  
  \item For all $0< n\le m<\omega$, $2^{m-n}\cdot\_:J_n(G)/J_{n+1}(G)\rightarrow J_m/J_{m+1}(G)$ is an isomorphism of groups of exponent 2.
  
  \item For all $n\ge1$, there is an isomorphism of groups of exponent 2 
$$\theta_n(G):J_n(G)/J_{n+1}(G)\xrightarrow{\cong}\mathcal 
C(X_G,\mathbb F_2),$$
given by the rule
  $$\theta_n(h+J_n(G))(\sigma):=h_n(\sigma)/2^n\in\mathcal C(X_G,\mathbb Z/2\mathbb Z).$$
  
  \item For all $0< n\le m<\omega$ the following diagram commute:
  $$\xymatrix@!=4pc{J_n(G)/J_{n+1}(G)\ar[rr]^{2^{m-n}\cdot\_}\ar[dr]_{\theta_n(G)} & & J_m(G)/J_{m+1}(G)\ar[dl]^{\theta_m(G)} \\ & 
\mathcal C(X_G,\mathbb F_2) & }$$
 \end{enumerate}
\end{prop}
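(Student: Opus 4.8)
The plan is to verify the four assertions (a)--(d) in sequence, noting that (a) is the structural heart of the matter and (b)--(d) are essentially explicit computations that organize the information encoded in (a).

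For item (a), the key observation is that the functor $\mathcal{C}(X_{\_},\mathbb{Z})$ lands in rings of continuous $\mathbb{Z}$-valued functions on the Boolean space $X_G$; since such functions are locally constant and $X_G$ is compact, every $h \in J_n(G)$ takes finitely many values, all divisible by $2^n$. I would first construct, for each $n \geq 1$, the group homomorphism $\theta_n(G) : J_n(G)/J_{n+1}(G) \to \mathcal{C}(X_G,\mathbb{F}_2)$ by $h + J_{n+1}(G) \mapsto (\sigma \mapsto h(\sigma)/2^n \bmod 2)$, check it is well-defined (if $h \in J_{n+1}(G)$ then $h/2^n$ is everywhere even) and injective (if $h/2^n$ is everywhere even then $h \in J_{n+1}(G)$), and surjective (given a continuous $\mathbb{F}_2$-valued function, lift it pointwise to a $\{0,1\}$-valued continuous function, multiply by $2^n$, and check it lies in $J_n(G)$ — this uses that $X_G$ is Boolean so clopen sets suffice). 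This proves item (c) along the way. To upgrade the family $\{\theta_n(G)\}$ to a natural isomorphism $\theta : Grad \circ \mathcal{C} \xrightarrow{\cong} \mathbb{T} \circ \mathcal{C}(X_{\_},\mathbb{F}_2)$ of Igr's, I must check compatibility with: the distinguished elements ($2^n + J_{n+1}(G) \mapsto$ the constant function $1$, since $2^n/2^n = 1$); the transition maps $t_n = 2\cdot\_$ (multiplication by $2$ sends $h/2^n$ to $2h/2^{n+1} = h/2^n$, so $\theta$ intertwines $t_n$ with the identity map on $\mathcal{C}(X_G,\mathbb{F}_2)$, matching the definition of $\mathbb{T}$); the products $\ast_{nm}$ ($h_n k_m / 2^{n+m} = (h_n/2^n)(k_m/2^m)$); and the SG-morphisms $f : G \to G'$ (which reduce to functoriality of $\mathcal{C}(X_{\_},\mathbb{F}_2)$ and the pullback description of $\mathcal{C}(f)$). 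Since $\mathcal{C}(X_G,\mathbb{F}_2)$ is a Boolean ring, $\mathbb{T}(\mathcal{C}(X_G,\mathbb{F}_2)) \in \mbox{Igr}_h$, and it is generated in degree $1$, hence lies in $\mbox{Igr}_+$; transporting along $\theta$ gives $Grad(\mathcal{C}(G)) \in \mbox{Igr}_+$.

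For item (b), I would factor the map $2^{m-n}\cdot\_ : J_n(G)/J_{n+1}(G) \to J_m(G)/J_{m+1}(G)$ through the isomorphisms of (c): concretely, for $0 < n \leq m$ the diagram in (d) that I am asked to prove in (d) itself shows $\theta_m(G) \circ (2^{m-n}\cdot\_) = \theta_n(G)$, because $(2^{m-n}h)/2^m = h/2^n$; since both $\theta_n(G)$ and $\theta_m(G)$ are isomorphisms by (c), so is $2^{m-n}\cdot\_$. Thus (d) and (b) are proved together and both follow formally from (c). I would present (c) and (d) as the computational core and then deduce (b) as a one-line corollary and (a) as the packaging of (c), (d), and the functoriality checks.

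The main obstacle is the naturality and Igr-morphism bookkeeping in item (a): one must be careful that the isomorphisms $\theta_n(G)$ assemble coherently over all $n$ (i.e.\ that they respect the inductive-graded-ring structure, not merely the underlying graded $\mathbb{F}_2$-module structure) and that the square expressing naturality in $G$ commutes on the nose. The genuinely delicate point is surjectivity of $\theta_n(G)$: it rests on the fact that $X_G$, being the Stone space of the Boolean algebra of clopen subsets, admits enough clopen sets that every continuous $\mathbb{F}_2$-valued function is a finite $\mathbb{F}_2$-combination of indicator functions of clopens, each of which lifts to an element of $\mathcal{C}(X_G, 2^n\mathbb{Z}) \subseteq J_n(G)$ — here one should also confirm that such a lift, viewed inside $\mathcal{C}(X_G,\mathbb{Z})$, indeed lies in the ideal $J_n(G)$ of functions of the appropriate $2$-adic divisibility and constant parity, which is immediate from the definition of $j_n(G)$. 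Once surjectivity and well-definedness of $\theta_n(G)$ are in hand, everything else is routine verification of commuting diagrams.
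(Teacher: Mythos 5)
Your proposal is correct: the maps $\theta_n(G)(h+J_{n+1}(G))(\sigma)=h(\sigma)/2^n \bmod 2$ are well defined, bijective (the lift of an $\mathbb{F}_2$-valued continuous function to a $\{0,1\}$-valued one uses only that $X_G$ is Boolean), compatible with $\top_n$, $t_n=2\cdot\_$ and $\ast_{nm}$, natural in $G$ via the pullback description of $\mathcal{C}(f)$, and the identity $(2^{m-n}h)/2^m=h/2^n$ gives (d) and hence (b); membership in $\mbox{Igr}_+$ then transports from $\mathbb{T}(\mathcal{C}(X_G,\mathbb{F}_2))$, which lies in $\mbox{Igr}_{\mathbbm1}\cap\mbox{Igr}_h$ since $\mathcal{C}(X_G,\mathbb{F}_2)$ is a Boolean ring. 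The paper states this proposition without proof, and your verification is exactly the routine argument it leaves implicit, so there is nothing further to reconcile.
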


\begin{defn}\label{filtered Witt ring functor} 
 We define the \textbf{filtered Witt ring functor} 
 $$\mathcal W:SG\rightarrow\mbox{FRing}$$ 
 for $f\in\mbox{SG}(G,H)$ as follow: given a special group $G\in SG$, we define 
$$\mathcal W(G):=(W(G),I^n(G)_{n\ge0},\oplus,\otimes,\langle\rangle,\langle1\rangle)$$ 
where for all $n\ge0$, $I^n(G)$ is the $n$-th power of the fundamental ideal 
$$I(G):=\{\varphi\in W(G):\dim_2(\varphi)=0\}.$$
 
 We define $\mathcal W(f)\in\mbox{FRing}(\mathcal W(G),\mathcal W(H))$ by the rule $\mathcal W(f)(\varphi):=f\star\varphi$.
\end{defn}

$\mathcal W(G)$ is a filtered commutative ring with unit because:

\begin{enumerate}[i -]
 \item $(W(G),\oplus,\otimes,\langle\rangle,\langle1\rangle)\in\mbox{Ring}$.
 \item For all $n\ge0$, $I^n(G)\subseteq W(G)$ is an ideal.
 \item For all $n,m\ge0$, $n\le m\Rightarrow I^n(G)\supseteq I^m(G)$.
 \item For all $n,m\ge0$, $I^n(G)\otimes I(G)\subseteq I^{n+m}(G)$.
 \item $I^0(G):=W(G)$.
 \item $I^0(G)/I^1(G)\cong\mathbb F_2$.
 \item For all $n\ge0$, $(I^n(G)/I^{n+1}(G),\oplus,\langle\rangle)$ is a group of exponent 2 with distinguished element $2^n + I^{n+1}(G)$, where $2^n = \otimes_{i<n} \langle 1, 1 \rangle$.
 \end{enumerate}

\begin{defn} \label{graded Witt ring functor}
 We define the \textbf{graded Witt ring} functor
 $$\mbox{Grad}\circ\mathcal W:SG\rightarrow\mbox{Igr}.$$
%
%
%
\end{defn}

We register, again, the following result:
\begin{prop}
 For each $G\in SG$ we have $\mbox{Grad}(\mathcal W(G))\in\mbox{Igr}_+$.
\end{prop}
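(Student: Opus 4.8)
The plan is to show that for each special group $G$, the graded Witt ring $\mbox{Grad}(\mathcal W(G))$ lies in both $\mbox{Igr}_{\mathbbm1}$ and $\mbox{Igr}_h$, since $\mbox{Igr}_+ = \mbox{Igr}_{\mathbbm1}\cap\mbox{Igr}_h$ by Definition \ref{igr+}. Recall that $\mbox{Grad}(\mathcal W(G))_n = I^n(G)/I^{n+1}(G)$ with distinguished element $\top_n = 2^n + I^{n+1}(G) = \langle 1,1\rangle^{\otimes n} + I^{n+1}(G)$, and the product is induced by $\otimes$. So I must verify: (1) the graded ring is generated in level $1$, i.e.\ $\iota_{\mathbbm1(R)}$ is an isomorphism for $R = \mbox{Grad}(\mathcal W(G))$; and (2) for each $a \in R_1$, $a\ast_{11} a = \top_1 \ast_{11} a$ in $R_2$, which by the Remark after Definition \ref{quotop} characterizes membership in $\mbox{Igr}_h$.

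For (1): a general element of $I^n(G)$ is, by the theory of special groups, an integer combination of $n$-fold Pfister forms $\langle\langle a_1,\dots,a_n\rangle\rangle$ over $G$ (this is the analogue of the classical fact quoted in Example \ref{ex1}(a); it is part of the structure theory in \cite{dickmann2000special}). Passing to the quotient $I^n(G)/I^{n+1}(G)$, the class of such a Pfister form is exactly the image of $\langle\langle a_1\rangle\rangle \otimes \cdots \otimes \langle\langle a_n\rangle\rangle$, i.e.\ a product $b_1 \ast_{11}\cdots\ast_{11} b_n$ with each $b_i = \langle\langle a_i\rangle\rangle + I^2(G) \in R_1$. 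Hence $R_n = \mathbbm1(R)_n$ for all $n$, so $\iota_{\mathbbm1(R)}$ is a coordinatewise isomorphism, which by the last Proposition of Section 3 on monos/isos suffices; thus $R \in \mbox{Obj}(\mbox{Igr}_{\mathbbm1})$.

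For (2): fix $a \in R_1 = I(G)/I^2(G)$, and lift it to a $1$-fold Pfister form $\langle\langle c\rangle\rangle = \langle 1, c\rangle$ (again every element of $I(G)$ is, modulo $I^2(G)$, represented this way, since $I(G)$ is generated by the $\langle 1,c\rangle$). Then $\top_1 = \langle 1,1\rangle + I^2(G)$, and $\top_1 \ast_{11} a$ is the class of $\langle 1,1\rangle\otimes\langle 1,c\rangle = \langle\langle 1, c\rangle\rangle$, while $a\ast_{11} a$ is the class of $\langle 1,c\rangle\otimes\langle 1,c\rangle = \langle\langle c,c\rangle\rangle$. The identity $\langle\langle c,c\rangle\rangle \equiv \langle\langle -1, c\rangle\rangle$ modulo $I^3(G)$ — which follows from $\langle\langle c, c\rangle\rangle = \langle\langle c, -1\rangle\rangle$, a standard Pfister-form identity valid in special groups (it comes from $\langle c, c\rangle \equiv \langle c, -1\cdot\text{(something)}\rangle$, more precisely from $\langle\langle a,a\rangle\rangle\cong\langle\langle a,-1\rangle\rangle$, cf.\ the Witt-ring relation $\langle\langle a,a\rangle\rangle = \langle\langle a,-1\rangle\rangle$) — is not quite what is needed; instead, I want $\langle\langle c, c\rangle\rangle \equiv \langle\langle 1, c\rangle\rangle \pmod{I^2(G)\cdot I(G)}$. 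In fact the cleanest route is to invoke Proposition \ref{igr_+prop}(i) in the form already proved: once we know $R\in\mbox{Igr}_{\mathbbm1}$, the relation $x\ast_{n,n}x = \top_n \ast_{n,n} x$ for $x\in R_n$ is \emph{equivalent} to the single level-$1$ relation $a\ast_{11}a = \top_1\ast_{11}a$; and this level-$1$ relation holds because in the Witt ring, for any form $\psi$ of even dimension, $\psi\otimes\langle 1,1\rangle$ and $\psi\otimes\psi$ differ by something in $I^{3}(G)$ when $\psi$ is itself a $1$-fold Pfister difference — more directly, $\langle\langle c\rangle\rangle\otimes\langle\langle c\rangle\rangle$ and $\langle\langle 1\rangle\rangle\otimes\langle\langle c\rangle\rangle = \langle\langle c\rangle\rangle^{\otimes 1}\otimes\langle 1,1\rangle$ agree mod $I^3$ because $\langle\langle c,c\rangle\rangle = \langle\langle c,-1\rangle\rangle$ and $\langle\langle c,-1\rangle\rangle\equiv \langle\langle c,1\rangle\rangle = 0 \pmod{I^3}$? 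No: $\langle\langle -1 \rangle\rangle = \langle 1,1\rangle = \top_1$ lifted, so $\langle\langle c,-1\rangle\rangle$ is precisely $\top_1\ast_{11}(\text{class of }\langle\langle c\rangle\rangle)$. So the identity $\langle\langle c,c\rangle\rangle \equiv \langle\langle c,-1\rangle\rangle \pmod{I^3(G)}$ is exactly $a\ast_{11}a = a\ast_{11}\top_1 = \top_1\ast_{11}a$, as required. Hence $R\in\mbox{Obj}(\mbox{Igr}_h)$.

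Combining, $\mbox{Grad}(\mathcal W(G)) \in \mbox{Obj}(\mbox{Igr}_{\mathbbm1})\cap\mbox{Obj}(\mbox{Igr}_h) = \mbox{Obj}(\mbox{Igr}_+)$, and functoriality is inherited from $\mbox{Grad}\circ\mathcal W$. The main obstacle I anticipate is purely bookkeeping: making precise the "generated by Pfister forms" statement and the Pfister-form identity $\langle\langle c,c\rangle\rangle \cong \langle\langle c,-1\rangle\rangle$ in the special-group language (as opposed to the field case), i.e.\ checking that $\langle 1,c\rangle\otimes\langle 1,c\rangle$ and $\langle 1,c\rangle\otimes\langle 1,1\rangle$ are Witt-equivalent modulo $I^3(G)$ using only the axioms SG0–SG6 and the definition of $I^n(G)$; everything else is formal and follows the template of Example \ref{ex1}(a) and Proposition \ref{igr_+prop}.
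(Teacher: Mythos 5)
Your proposal is correct, and it supplies exactly the verification the paper leaves implicit: the paper gives no argument for this proposition (it only ``registers'' it, having already listed $W_*(F)$ among the examples of both $\mbox{Igr}_{\mathbbm1}$ and $\mbox{Igr}_h$), and your decomposition into the two membership checks --- generation in level 1 via the fact that $I^n(G)$ is additively generated by $n$-fold Pfister forms, and the level-1 hyperbolic relation $a\ast_{11}a=\top_1\ast_{11}a$ coming from $\langle 1,c\rangle\otimes\langle 1,c\rangle=\langle 1,c,c,c^2\rangle=\langle 1,1\rangle\otimes\langle 1,c\rangle$ since $c^2=1$ --- is the intended route. One small repair: the claim that every class in $I(G)/I^2(G)$ is represented by a \emph{single} $1$-fold Pfister form does not follow merely from ``$I(G)$ is generated by the $\langle 1,c\rangle$''; either invoke the discriminant isomorphism $I(G)/I^2(G)\cong G$, or bypass it altogether by checking the relation on the Pfister generators of $R_1$ and extending to sums by biadditivity and exponent $2$ (the cross terms cancel), exactly as in the induction step of the proof of Proposition \ref{igr_+prop}; also fix the Pfister-notation convention so that $\top_1$ lifts to $\langle 1,1\rangle=\langle\langle -1\rangle\rangle$ throughout.
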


For each commutative ring with unit $A$, we have 
$$t(A)=\{a\in A:\mbox{ exists }n\ge0\mbox{ with }n\cdot a=0\}\subseteq A$$
is an ideal (the torsion ideal of $A$). The association $A\mapsto A/t(A)$ is the component on the objects of an endofunctor of $\mbox{Ring}$.

For each $G\in SG$ we have a ring homomorphism with unit $\mbox{sgn}_G:W(G)\rightarrow\mathcal C(X_G,\mathbb Z)$ given by the rule
$$\mbox{sgn}_G(\langle a_0,...,a_{n-1}\rangle)(\sigma):=\sum^{n-1}_{i=0}\sigma(a_i).$$
The Pfister's Local-Global principle says that $\mbox{sgn}_G$ induces a monomorphism 
$$\mbox{rsgn}_G:W(G)/t(W(G))\rightarrow\mathcal C(X_G,\mathbb Z).$$

For each $G\in SG$ we have $\mbox{sgn}_G(W(G))\subseteq\mathcal C(X_G,2\mathbb Z)\cup\mathcal C(X_G,2\mathbb Z+1)$ (since the signatures of classes of forms has the same parity of its dimension) and for all $n\ge1$, $\mbox{sgn}_G(I^n(G))\subseteq\mathcal C(X_G,2^n\mathbb Z)$ (since $I^n(G)$ is the abelian subgroup of $W(G)$ generated by classes of Pfister forms of dimension $2^n$).

$\mbox{sgn}:\mathcal W\rightarrow\mathcal C$ (respectively $\mbox{rsgn}:\mathcal W/t(\mathcal W)\rightarrow\mathcal C$) is the natural transformation between functors
$$\xymatrix@!=3pc{SG\ar@<+.5ex>[r]^{\mathcal W}\ar@<-.5ex>[r]_{\mathcal C} & \mbox{FRing}}$$
that provide natural transformations between functors $\xymatrix{SG\ar@<+.5ex>[r]\ar@<-.5ex>[r] & \mbox{Igr}}$:
\begin{align*}
 \mbox{Grad}\cdot\mbox{sgn}&:\mbox{Grad}\circ\mathcal W\rightarrow\mbox{Grad}\circ\mathcal C\mbox{, respectively} \\
 \mbox{Grad}\cdot\mbox{rsgn}&:\mbox{Grad}\circ(\mathcal W/t(\mathcal W))\rightarrow\mbox{Grad}\circ\mathcal C.
\end{align*}
Remember that [MC] ([LC]) and [WMC] ([WLC]) are conjectures about these natural transformations.

$\mathcal C$ is a particular case of $\mathcal W$ in the following sense: $\mathcal C:SG\rightarrow\mbox{FRing}$ is naturally isomorphic to the composition of functors $SG\xrightarrow{\gamma\circ\beta}SG\xrightarrow{\mathcal W}\mbox{FRing}$.

\section{The adjunction between $\mbox{PSG}$ and $\mbox{Igr}_h$}

By the very definition of the K-theory of hyperfields (with the notations in Theorem \ref{3.3ktmultiadap}) we define the following functor.

 \begin{defn}[K-theories Functors]
  With the notations of Theorem \ref{3.3ktmultiadap} we have a functors $k:\mathcal{HMF}\rightarrow\mbox{Igr}_+$, $k:\mathcal{PSMF}\rightarrow\mbox{Igr}_+$ induced by the reduced K-theory for hyperfields.
 \end{defn}
 
 Now, let $R\in\mbox{Igr}_h$. We define a hyperfield $(\Gamma(R),+,-.\cdot,0,1)$ by the following: firstly, fix an exponential isomorphism $e_R:(R_1,+_1,0_1,\top_1)\rightarrow (G(R),\cdot,1,-1)$ (in agreement with Definition \ref{igrlog}). This isomorphism makes, for example, an element $a\ast_{11}(\top_1+b)\in R_2$, $a,b\in R_1$ take the form $(l_R(x))\ast_{11}(l_R((-1)\cdot y))\in R_2$, $x,y\in G(R)$. By an abuse of notation, we simply write $l_R(x)l_R(-y)\in R_2$, $x,y\in G(R)$. In this sense, an element in $Q_2$ has the form $l_R(x)l_R(-x)$, $x\in \Gamma(R)$, and we can extend this terminology for all $Q_n$, $n\ge2$ (see Definition \ref{quotop}, and Lemma \ref{igr_+first}).
 
 Now, let $\Gamma(R):=G(R)\cup\{0\}$ and for $a,b\in\Gamma(R)$ we define
 \begin{align}\label{gammahyper}
    -a&:=(-1)\cdot a,\nonumber\\
    a\cdot0&=0\cdot a:=0,\nonumber\\
    a+0&=0+a=\{a\},\nonumber \\
    a+(-a)&=\Gamma(R),\nonumber \\
    \mbox{for }&a,b\ne0,a\ne-b\mbox{ define }\nonumber\\
    a+b&:=\{c\in\Gamma(R):\mbox{there exist }d\in G(R)\mbox{ such that } \nonumber\\
    &a\cdot b=c\cdot d\in G(R)\mbox{ and }l_R(a)l_R(b)=l_R(c)l_R(d)\in R_2\}.
\end{align}

\begin{prop}\label{prespechf}
With the above rules, $(\Gamma(R),+,-.\cdot,0,1)$ is a pre-special hyperfield.
\end{prop}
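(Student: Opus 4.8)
The plan is to reduce the statement to Theorem~\ref{psgpsmfhell}, by producing a pre-special group out of $R_1$ and the degree-two component $R_2$ in such a way that $\Gamma(R)$ becomes literally the hyperfield $M(G)$ attached to it. Concretely, on the exponent-$2$ group $G(R)$ (recall $\Gamma(R) = G(R)\cup\{0\}$), with distinguished element $-1 = e_R(\top_1)$, I define
$$\langle a,b\rangle \equiv \langle c,d\rangle \ :\Longleftrightarrow\ ab = cd \text{ in } G(R) \ \text{ and } \ l_R(a)\,l_R(b) = l_R(c)\,l_R(d) \text{ in } R_2 .$$
Before checking anything I would record the two facts about $R_2$ on which everything rests: $l(x)l(y) = l(y)l(x)$ for all $x,y \in G(R)$, because $\bigoplus_n R_n$ is a \emph{commutative} ring (Definition~\ref{igr1}); and, since $R \in \mathrm{Igr}_h$, the level-$1$ identity $x \ast_{11} x = \top_1 \ast_{11} x$ holds, which reads $l(a)^2 = l(-1)\,l(a)$ in $R_2$ — whence also $l(a)l(-a) = l(a)l(-1) + l(a)^2 = 2\,l(a)l(-1) = 0$, and $l(1) = 0$ since $e_R$ sends $0_1$ to $1$.

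The core of the proof is to show that $(G(R),-1,\equiv)$ is a pre-special group, i.e. that it satisfies SG0--SG5 together with SG4. Axioms SG0, SG1 (commutativity of $\ast_{11}$), SG2 (using $a(-a) = -1$ and $l(a)l(-a) = 0 = l(1)l(-1)$), SG3 (built into the definition) and SG5 (expand $l(ga)l(gb) = l(g)^2 + l(g)\,(l(a)+l(b)) + l(a)l(b)$ and use $l(a)+l(b) = l(ab) = l(cd) = l(c)+l(d)$) are immediate one-line computations in $R_2$. The only substantial case is SG4. Assuming $\langle a,b\rangle \equiv \langle c,d\rangle$, I would put $s := l(ab) = l(a)+l(b) = l(c)+l(d)$, so that $l(b) = s + l(a)$ and $l(d) = s + l(c)$, and expand $l(a)l(b) = l(c)l(d)$ using $l(x)^2 = l(-1)l(x)$ to obtain the auxiliary identity
$$\bigl(l(a)+l(c)\bigr)\bigl(l(ab)+l(-1)\bigr) = 0 \quad \text{in } R_2 .$$
Multiplying $ab = cd$ by $bc$ gives $ac = bd$, hence $a(-c) = (-b)d$. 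Expanding $l(a)l(-c) = l(a)l(-1) + l(a)l(c)$ and $l(-b)l(d) = l(-1)l(d) + l(b)l(d)$, substituting $l(b),l(d)$ in terms of $s$, using $s^2 = l(-1)s$ and then the auxiliary identity, both sides collapse to $l(a)l(-1) + l(a)l(c)$. This proves $\langle a,-c\rangle \equiv \langle -b,d\rangle$, so $(G(R),-1,\equiv)$ is a pre-special group.

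Finally I would check that the hyperfield $M(G(R))$ produced by Theorem~\ref{psgpsmfhell} has exactly the operations \eqref{gammahyper}: the constants, the product and $-a = (-1)a$ coincide verbatim, and for $a,b \in G(R)$ with $a \neq -b$ the axiom SG3 forces the second slot of a two-dimensional isometry, so $D_{G(R)}(a,b) = \{\, c \in G(R) : \langle c, abc\rangle \equiv \langle a,b\rangle \,\} = \{\, c \in G(R) : l(c)l(abc) = l(a)l(b) \,\}$; comparing with \eqref{gammahyper}, where any witness $d$ is forced to equal $abc$ and $c \neq 0$ since $ab \neq 0$, this set is precisely $a+b$. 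The cases $a = -b$, $a = 0$ and $b = 0$ agree with $M(G(R))$ by definition. Thus $\Gamma(R) = M(G(R))$ as multirings, and Theorem~\ref{psgpsmfhell} gives that this is a pre-special multifield, i.e. a pre-special hyperfield.

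The main obstacle is the verification of SG4: it is the only axiom requiring a genuine (if short) computation in $R_2$, and the key point is to isolate the auxiliary identity $(l(a)+l(c))(l(ab)+l(-1)) = 0$ and use it to annihilate the difference $l(a)l(-c) - l(-b)l(d)$. Everything else is bookkeeping that uses only commutativity of $\bigoplus_n R_n$, the relation $l(a)^2 = l(-1)l(a)$ coming from $R \in \mathrm{Igr}_h$, and $l(1) = 0$.
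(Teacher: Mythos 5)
Your proof is correct, and it reorganizes the argument in a genuinely different way from the paper. The paper verifies the multiring axioms for $\Gamma(R)$ directly by computations in $R_2$ --- commutativity and the unit law, the reversibility property ($c\in a+b$ implies $a\in c-b$), distributivity, and $(1-a)(1-a)\subseteq(1-a)$ --- and only at the very end introduces the relation $\langle a,b\rangle\equiv\langle c,d\rangle$ iff $ab=cd$ and $l_R(a)l_R(b)=l_R(c)l_R(d)$, observing that the properties already established make $(\Gamma(R)\setminus\{0\},\equiv,1,-1)$ a pre-special group, so that Theorem \ref{psgpsmfhell} supplies associativity. You invert this order: you check SG0--SG5 and SG4 directly (your auxiliary identity $(l(a)+l(c))(l(ab)+l(-1))=0$ is exactly the right pivot, and the expansion of $l(-b)l(d)$ does collapse to $l(a)l(-1)+l(a)l(c)$ once $s^2=l(-1)s$ is used), then identify $a+b$ with $D_{G(R)}(a,b)$ by noting that the witness $d$ is forced to equal $abc$, and let Theorem \ref{psgpsmfhell} deliver the entire hyperfield structure --- reversibility, distributivity, hyperbolicity and the condition $(1-a)(1-a)\subseteq(1-a)$ --- in one stroke. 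What your route buys is economy and transparency of hypotheses: a single computation in $R_2$ plus the identification $\Gamma(R)\cong M(G(R))$ replaces the paper's separate verifications of its items (ii), (v) and (vii), and it makes clear that only commutativity of the graded product, $l(1)=0$, and the $\mbox{Igr}_h$ identity $a\ast_{11}a=\top_1\ast_{11}a$ are needed, rather than the full list in Lemma \ref{igr_+first}. What the paper's route buys is an explicit display of how each hyperfield axiom is realized inside $R_2$; note that its reversibility computation is essentially your SG4 computation read through the functor $M(-)$, so the two proofs share the same key identity, differently packaged.
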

\begin{proof}
We will verify the conditions of Definition \ref{defn:multiring}. Note that by the definition of multivalued sum once we proof that $\Gamma(R)$ is an hyperfield, it will be hyperbolic. In order to prove that $(\Gamma(R),+,-.\cdot,0,1)$ is a multigroup we follow the steps below. Here we use freely the properties in Lemma \ref{igr_+first}.
 \begin{enumerate}[i -]
   \item Commutativity and $(a\in b+0)\Leftrightarrow(a=b)$ are direct consequence of the definition of multivaluated sum and the fact that $l_R(a)l_R(b)=l_R(b)l_R(a)$. 
   
  \item We will prove that if $c\in a+b$, then $a\in c-b$ and $b\in c-a$.
  
  If $a=0$ (or $b=0$) or $a=-b$, then $c\in a+b$ means $c=a$ or $c\in a-a$. In both cases we get $a\in c-b$ and $b\in c-a$.
  
  Now suppose $a,b\ne0$ with $a\ne-b$. Let $c\in a+b$. Then $a\cdot b=c\cdot d$ and $l_R(a)l_R(b)=l_R(c)l_R(d)\in R_2$ for some $d\in G(R)$. Since $G(R)$ is a multiplicative group of exponent 2, we have $a\cdot d=b\cdot c$ (and hence $a\cdot(-d)=c\cdot(-b)$). Note that 
  \begin{align*}
      l_R(a)l_R(-d)&=l_R(a)l_R(-abc)=l_R(a)l_R(bc)=l_R(a)l_R(b)+l_R(a)l_R(c)\\
      &=l_R(c)l_R(d)+l_R(a)l_R(c)=l_R(c)l_R(d)+l_R(c)l_R(a)=l_R(c)l_R(ad).
  \end{align*}
  Similarly,
  \begin{align*}
      l_R(b)l_R(-c)&=l_R(b)l_R(-abd)=l_R(b)l_R(ad)=l_R(b)l_R(a)+l_R(b)l_R(d) \\
      &=l_R(a)l_R(b)+l_R(b)l_R(d)=l_R(c)l_R(d)+l_R(b)l_R(d) \\ &=l_R(bc)l_R(d)=l_R(ad)l_R(d).
  \end{align*}
  Then
  \begin{align*}
      l_R(a)l_R(-d)-l_R(b)l_R(-c)&=l_R(c)l_R(ad)-l_R(ad)l_R(d)= \\
      &=l_R(c)l_R(ad)-l_R(d)l_R(ad)=l_R(-cd)l_R(ad).
  \end{align*}
  But
  \begin{align*}
      l_R(-cd)l_R(ad)&=l_R(-cd)l_R(a)+l_R(-cd)l_R(d)= \\
      &=l_R(-cd)l_R(a)+l_R(c)l_R(d)=l_R(a)l_R(-cd)+l_R(a)l_R(b) \\
      &=l_R(a)l_R(-bcd)=l_R(a)l_R(-a)=0.
  \end{align*}
  Then 
  $$l_R(a)l_R(-d)=l_R(b)l_R(-c),$$
  proving that $a\in b-c$. Similarly we prove that $b\in -c+a$.
  
  \item Since $(G(R),\cdot,1)$ is an abelian group, we conclude that $(\Gamma(R),\cdot,1)$ is a commutative monoid. Beyond this, every nonzero element $a\in\Gamma(R)$ is such that $a^2=1$.
  
  \item $a\cdot0=0$ for all $a\in\Gamma(R)$ is direct from definition.
  
  \item For the distributive property, let $a,b,d\in\Gamma(R)$ and consider $x\in d(a+b)$. We need to prove that
  \begin{align*}
      \tag{*}x\in d\cdot a+d\cdot b.
  \end{align*}
  It is the case if $0\in\{a,b,d\}$ or if $b=-a$. Now suppose $a,b,d\ne0$ with $b\ne-a$. Then there exist $y\in G(R)$ such that $x=dy$ and $y\in a+b$. Moreover, there exist some $z\in G(R)$ such that $y\cdot z=a\cdot b$ and $l_R(y)l_R(z)=l_R(a)l_R(b)$. 
  
  If $0\in\{a,b,d\}$ or if $b=-a$ there is nothing to prove. Now suppose $a,b,d\ne0$ with $b\ne-a$. Therefore $(dy)\cdot(dz)=(da)\cdot(db)$ and 
    \begin{align*}
      l_R(dy)l_R(dz)&=l_R(d)l_R(d)+l_R(d)l_R(z)+l_R(d)l_R(y)+l_R(y)l_R(z) \\
      &=l_R(d)l_R(d)+l_R(d)[l_R(z)+l_R(y)]+l_R(y)l_R(z) \\
      &=l_R(d)l_R(d)+l_R(d)l_R(yz)+l_R(y)l_R(z) \\
      &=l_R(d)l_R(d)+l_R(d)l_R(ab)+l_R(a)l_R(b) \\
      &=l_R(d)l_R(d)+l_R(d)l_R(a)+l_R(d)l_R(b)+l_R(a)l_R(b) \\
      &=l_R(da)l_R(db),
      \end{align*}
  so $l_R(dy)l_R(dz)=l_R(da)l_R(db)$. Hence we have $x=dy\in d\cdot a+d\cdot b$.
  
  \item Using distributivity we have that for all $a,b,c,d\in\Gamma(R)$
  $$d[(a+b)+c]=(da+db)+dc\mbox{ and }d[a+(b+c)]=da+(db+dc).$$
  In fact, if $x\in (a+b)+c$, then $x\in y+c$ for $y\in a+b$. Hence
  $$dx\in dy+dc\subseteq d(a+b)+dc=(da+db)+dc.$$
  Conversely, if $z\in(da+db)+dc$, then $z=w+dc$, for some $w\in da+db=d(a+b)$. But in this case, $w=dt$ for some $t\in a+b$. Then
  $$z\in dt+dc=d[t+c]\subseteq d[(a+b)+c].$$
  Similarly we prove that $d[a+(b+c)]=da+(db+dc)$.
  
  \item Let $a\in\Gamma(R)$ and $x,y\in1-a$. If $a=0$ or $a=1$ then we automatically have $x\cdot y\in 1-a$, so let $a\ne0$ and $a\ne1$. Then $x,y\in G(R)$ and there exist $p,q\in\Gamma(R)$ such that
  \begin{align*}
      x\cdot p=1\cdot a&\mbox{ and }l_R(x)l_R(p)=l_R(1)l_R(a)=0 \\
      y\cdot q=1\cdot a&\mbox{ and }l_R(y)l_R(q)=l_R(1)l_R(a)=0.
  \end{align*}
  Then $(xy)\cdot(pqa)=1\cdot a$ and
  \begin{align*}
      l_R(xy)l_R(pqa)
      &=l_R(xy)l_R(p)+l_R(xy)l_R(q)+l_R(xy)l_R(a) \\
      &=l_R(y)l_R(p)+l_R(x)l_R(q)+l_R(x)l_R(a)+l_R(y)l_R(a) \\
      &=l_R(y)l_R(pa)+l_R(x)l_R(qa) \\
      &=l_R(y)l_R(x)+l_R(x)l_R(y)=0.
  \end{align*}
  Then $xy\in1-a$, proving that $(1-a)(1-a)\subseteq(1-a)$. In particular, since $1\in1-a$, we have $(1-a)(1-a)=(1-a)$.

  \item Finally, to prove associativity, we use Theorem \ref{psgpsmfhell}. Let $\langle a,b\rangle\equiv\langle c,d\rangle$ the relation defined for $a,b,c,d\in\Gamma(R)\setminus\{0\}$ by
  $$\langle a,b\rangle\equiv\langle c,d\rangle\mbox{ iff }ab=cd\mbox{ and }l_R(a)l_R(b)=l_R(c)l_R(d).$$
  For $0\notin\{a,b,c,d\}$, $a\ne-b$ and $ab=cd$, we have
  $$a+b=c+d\mbox{ iff }\langle a,b\rangle\equiv\langle c,d\rangle.$$
  Using items (i)-(vii) we get that $(\Gamma(R)\setminus\{0\},\equiv,1,-1)$ is a pre-special group. Then by Theorem \ref{psgpsmfhell} we have that $M(\Gamma(R)\setminus\{0\})\cong\Gamma(R)$ is a pre-special hyperfield, and in particular, $(\Gamma(R)$ is associative.
 \end{enumerate}
\end{proof}

 \begin{defn}
  With the notations of Proposition \ref{prespechf} we have a functor $\Gamma:\mbox{Igr}_+\rightarrow\mbox{PSMF}$ defined by the following rules: for $R\in\mbox{Igr}_+$, $\Gamma(R)$ is the special hyperfield obtained in Proposition \ref{prespechf} and for $f\in\mbox{Igr}_+(R,S)$, $\Gamma(f):\Gamma(R)\rightarrow\Gamma(S)$ is the unique morphism such that the following diagram commute
  $$\xymatrix@!=4.5pc{R\ar[d]_{f_1}\ar[r]^{e_R} & \Gamma(R)\ar@{.>}[d]^{\Gamma(f)} \\ S\ar[r]_{e_S} & \Gamma(S)}$$
  In other words, for $x\in R$ we have
  $$\Gamma(f)(x)=(e_S\circ f_1\circ l_R)(x)=e_S(f_1(l_R(x))).$$
 \end{defn}
 
 \begin{teo}\label{psgadj}
 The functor $k:\mathcal{PSMF}\rightarrow\mbox{Igr}_+$ is the left adjoint of $\Gamma:\mbox{Igr}_+\rightarrow\mathcal{PSMF}$. The unity of the adjoint is the natural transformation $\phi:1_{\mathcal{PSMF}}\rightarrow\Gamma\circ k$ defined for $F\in\mathcal{PSMF}$ by $\phi_F=e_{k(F)}\circ\rho_F$.
 \end{teo}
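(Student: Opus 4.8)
The plan is to prove the adjunction $k\dashv\Gamma$ by producing the unit $\phi$ explicitly and verifying the universal property: for every $F\in\mathcal{PSMF}$, every $R\in\mbox{Igr}_+$ and every $\mathcal{PSMF}$-morphism $g:F\to\Gamma(R)$, there is a unique $\mbox{Igr}_+$-morphism $\tilde g:k(F)\to R$ with $\Gamma(\tilde g)\circ\phi_F=g$. First I would check that $\phi_F=e_{k(F)}\circ\rho_F$ (with $0\mapsto 0$) is a morphism of pre-special multifields: it is multiplicative, since $\phi_F(ab)=e(\rho(a)+\rho(b))=\phi_F(a)\phi_F(b)$; it sends $-1$ to $e(\top)=-1$; and $\phi(0)=0$. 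Naturality of $\phi$ is then a short diagram chase using $k(f)_1(\rho(a))=\rho(f(a))$ (Proposition \ref{3.3ktmultiadap}) and the formula $\Gamma(g')(x)=e_S(g'_1(l_R(x)))$, so that $\Gamma(k(f))(\phi_F(a))=e_{k(F')}(k(f)_1(\rho_F(a)))=e_{k(F')}(\rho_{F'}(f(a)))=\phi_{F'}(f(a))$.

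The one nontrivial point about $\phi_F$ is preservation of the hypersum. Given $c\in a+b$ in $F$ with $a,b,c\in\dot F$ and $a\ne -b$, set $d:=abc$, so $cd=ab$; by the defining rule \eqref{gammahyper} for addition in $\Gamma(k(F))$ it suffices to prove $\rho_F(a)\rho_F(b)=\rho_F(c)\rho_F(d)$ in $k_2F$. Here I would divide the relation by $a$ to obtain $ca\in 1+ab=1-(-ab)$, whence $\rho_F(-ab)\rho_F(ca)=0$ by the defining relation of $k_2F$; expanding this identity and collapsing it with Lemma \ref{2.1ktmulti} (the rules $\rho(x)^2=\rho(x)\rho(-1)$, commutativity in $k_2$, and exponent $2$) yields exactly $\rho(a)\rho(b)=\rho(c)\rho(cab)$. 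The cases $a=0$, $b=0$, $a=-b$ are immediate.

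For the universal property, given $g:F\to\Gamma(R)$ I would build $\tilde g$ level by level: $\tilde g_0=\mathrm{id}_{\mathbb F_2}$, $\tilde g_1(\rho_F(a)):=l_R(g(a))$ (well defined and additive because $g$ is multiplicative and $l_R$ turns products into sums), and on generators $\tilde g_n(\rho(a_1)\cdots\rho(a_n)):=l_R(g(a_1))\cdots l_R(g(a_n))$, passing through the tensor algebra and the defining ideal. The crux is that this respects the defining relations: if $a_{i+1}\in 1-a_i$ in $F$ then $g(a_{i+1})\in 1-g(a_i)$ in $\Gamma(R)$, and one must show that $y\in 1-x$ in $\Gamma(R)$ forces $l_R(x)l_R(y)=0$ in $R_2$. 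When $x=1$ this holds since $l_R(1)=0$; otherwise \eqref{gammahyper} produces $d\in\dot\Gamma(R)$ with $-x=yd$ and $l_R(y)l_R(d)=0$, and expanding $l_R(y)l_R(-xy)$ via Lemma \ref{igr_+first} collapses it to $l_R(x)l_R(y)$. Then $\tilde g=\bigoplus_n\tilde g_n$ is a unital ring homomorphism commuting with the $h_n$ (since $\tilde g_1(\rho(-1))=l_R(-1)=\top$), hence an $\mbox{Igr}$-morphism, and it lies in $\mbox{Igr}_+$ because that subcategory is full. The triangle identity $\Gamma(\tilde g)\circ\phi_F=g$ follows from $e_R\circ l_R=\mathrm{id}$, and uniqueness holds because $\Gamma(\tilde g)\circ\phi_F=g$ forces $\tilde g_1(\rho_F(a))=l_R(g(a))$ and morphisms out of $k(F)\in\mbox{Igr}_{\mathbbm1}$ are determined by their level-$1$ component.

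I expect the main obstacle to be precisely the two ``collapse'' computations inside $R_2$ (equivalently $k_2$): that $\phi_F$ preserves sums, and that $y\in 1-x$ in $\Gamma(R)$ implies $l_R(x)l_R(y)=0$. Both reduce to unwinding the defining formula \eqref{gammahyper} for addition in $\Gamma(R)$ and manipulating $2$-symbols with Lemma \ref{igr_+first} (resp. Lemma \ref{2.1ktmulti}); once these are settled, the remainder of the argument is routine bookkeeping about graded rings together with the level-$1$ rigidity of objects of $\mbox{Igr}_{\mathbbm1}$.
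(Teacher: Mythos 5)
Your proposal is correct and takes essentially the same route as the paper's proof: the transpose $\tilde g=f^\sharp$ is built exactly as in the paper (identity in degree $0$, $\rho_F(a)\mapsto l_R(g(a))$ in degree $1$, extension to generators through the tensor algebra, killing the relation ideal via the defining formula for the sum in $\Gamma(R)$ and Lemma \ref{igr_+first}), the triangle identity is checked by the same computation, and uniqueness follows as in the paper from $\phi_F$ being an isomorphism together with level-$1$ generation of $k(F)$. The only difference is organizational: you verify inside the proof that $\phi_F$ preserves the hypersum (via $\rho_F(a)\rho_F(b)=\rho_F(c)\rho_F(abc)$) and that $\phi$ is natural, which the paper establishes in the paragraph immediately following the theorem rather than within the proof itself.
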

 \begin{proof}
 We show that for all $f\in\mathcal{PSMF}(F,\Gamma(R))$ there is an unique $f^\sharp:\mbox{Igr}_+(k(F),R)$ such that $\Gamma(f^\sharp)\circ\phi_F=f$. Note that $\phi_F=e_{k(F)}\circ\rho_F$ is a group isomorphism (because $e_{k(F)}$ and $\rho_F$ are group isomorphisms).
 
 Let $f^\sharp_0:1_{\mathbb F_2}:\mathbb F_2\rightarrow\mathbb F_2$ and 
 $f^\sharp_1:=l_R\circ f\circ(\phi_F)^{-1}\circ e_{k(F)}:k_1(F)\rightarrow R_1$. For $n\ge2$, define $h_n:\prod^n_{i=1}k_1(F)\rightarrow R_n$ by the rule
 $$h_n(\rho(a_1),...,\rho(a_n)):=l_R(f(a_1))\ast...\ast l_R(f(a_n)).$$
 We have that $h_n$ is multilinear and by the Universal Property of tensor products we have an induced morphism $\bigotimes^n_{i=1}k_n(F)\rightarrow R_n$ defined on the generators by
 $$h_n(\rho(a_1)\otimes...\otimes\rho(a_n)):=l_R(f(a_1))\ast...\ast l_R(f(a_n)).$$
 Now let $\eta\in Q_n(F)$. Suppose without loss of generalities that $\eta=\rho(a_1)\otimes...\otimes\rho(a_n)$ with $a_1\in1-a_2$. Then $f(a_1)\in1-f(a_2)$ which imply $l_R(f(a_1))\in1-l_R(f(a_2))$. Since $R_n\in\mbox{Igr}_+$,
 \begin{align*}
     h_n(\eta):=h_n(\rho(a_1)\otimes...\otimes\rho(a_n))
     =l_R(f(a_1))\ast...\ast l_R(f(a_n))=0\in R_n.
 \end{align*}
 Then $h_n$ factors through $Q_n$, and we have an induced morphism $\overline h_n:k_n(F)\rightarrow R_n$. We set $f^{\sharp}_n:=\overline h_n$. In other words, $f^\sharp_n$ is defined on the generators by
 $$f^\sharp_n(\rho(a_1)...\rho(a_n)):=l_R(f(a_1))\ast...\ast l_R(f(a_n).$$
 Finally, we have
 \begin{align*}
   \Gamma(f^\sharp)\circ\phi_F
   &=[e_{R}\circ(f^\sharp_1)\circ e^{-1}_{k(F)}]\circ[e_{k(F)}\circ\rho_F]
   =e_{R}\circ(f^\sharp_1)\circ\rho_F \\
   &=e_{R}\circ[l_R\circ f\circ(\phi_F)^{-1}\circ e_{k(F)}]\circ\rho_F \\
   &=f\circ(\phi_F)^{-1}\circ[e_{k(F)}\circ\rho_F] \\
   &=f\circ(\phi_F)^{-1}\circ\phi_F=f.
 \end{align*}
 
 For the unicity, let $u,v\in\mbox{Igr}_+(k(F),R)$ such that $\Gamma(u)\circ\phi_F=\Gamma(v)\circ\phi_F$. Since $\phi_F$ is an isomorphism we have $u_1=v_1$ and since $k(F)\in\mbox{Igr}_+$ we have $u=v$.
 \end{proof}
 
 As we have already seen in Theorem \ref{psgadj}, there natural transformation $\phi_F:F\rightarrow\Gamma(k(F))$ is a group isomorphism. Now let $a,c,d\in F$ with $a\in c+d$. Then $\phi_F(a)\in\phi_F(c)+\phi_F(d)$, i.e, $\phi_F$ is a morphism of hyperfields. In fact, if $0\in\{a,c,d\}$ there is nothing to prove. Let $0\notin\{a,c,d\}$. To prove that $\phi_F(a)\in\phi_F(c)+\phi_F(d)$ we need to show that $\rho_F(a)\rho_F(acd)=\rho_F(c)\rho_F(d)$. In fact, from $a\in c+d$ we get $ac\in1+ad$, and then $\rho_F(ac)\rho_F(ad)=0$. Moreover
 \begin{align*}
     \rho_F(a)\rho_F(acd)+\rho_F(c)\rho_F(d)
     &=\rho_F(a)\rho_F(acd)+\rho_F(c)\rho_F(d)+\rho_F(ac)\rho_F(ad) \\
     &=\rho_F(a)\rho_F(ac)+\rho_F(a)\rho_F(d)+\rho_F(c)\rho_F(d)+\rho_F(ac)\rho_F(ad) \\
     &=[\rho_F(a)\rho_F(ac)+\rho_F(ac)\rho_F(ad)]+[\rho_F(a)\rho_F(d)+\rho_F(c)\rho_F(d)] \\
     &=\rho_F(d)\rho_F(ac)+\rho_F(d)\rho_F(ac)=0,
 \end{align*}
proving that $\phi_F(a)\in\phi_F(c)+\phi_F(d)$. Unfortunately we do not now if or where $\phi_F$ is a strong morphism. Then we propose the following definition.

\begin{defn}[The $k$ stability]
Let $F$ be a pre-special hyperfield. We say that $F$ is \textbf{$k$-stable} if $\phi_F:F\rightarrow\Gamma(F(G))$ is a strong morphism. Alternatively, $F$ is $k$-stable if for all $a,b,c,d\in\dot F$, if $ab=cd$ then $$\rho_F(a)\rho_f(b)=\rho_F(c)\rho_F(d)\mbox{ imply }ac\in1+cd.$$
\end{defn}

\begin{prop} Every PSG $G$ has a $k$-stable hull $G_{(k)}$ that satisfies the corresponding universal property . This is just given by 
$$G_{(k)} =\varinjlim_{n \in \mathbb{N}} (\Gamma \circ k)^n (G).$$
Thus the inclusion functor $PSG_{(k)} \hookrightarrow PSG$ has a left adjoint $(k) : PSG \to PSG_{(k)}$.
\end{prop}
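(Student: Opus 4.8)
The plan is to carry out all the work inside the category $\mathcal{PSMF}$ of pre-special hyperfields, which is equivalent to $PSG$ by Theorem \ref{psgpsmfhell}, and then transport the result along that equivalence. By Theorem \ref{psgadj} we have an adjunction $k\dashv\Gamma$ with unit $\phi:1_{\mathcal{PSMF}}\Rightarrow\mathbb{M}$, where $\mathbb{M}:=\Gamma\circ k:\mathcal{PSMF}\to\mathcal{PSMF}$; so $(\mathbb{M},\phi)$ is a pointed endofunctor on $\mathcal{PSMF}$. The crucial elementary observation, already recorded in the discussion following Theorem \ref{psgadj}, is that \emph{every component $\phi_F$ is a bijective morphism of hyperfields} (a group isomorphism on $\dot F$, fixing $0$). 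Consequently, identifying underlying sets along $\phi_F$, the hyperfield $\mathbb{M}(F)$ is $F$ with the same product but a \emph{possibly larger} hyperaddition: $a+_Fb\subseteq a+_{\mathbb{M}(F)}b$, because $\phi_F$ is a morphism. In particular $F$ is $k$-stable iff $\phi_F$ is an isomorphism iff $\mathbb{M}(F)=F$ (as hyperfields on the common underlying set); thus $\mathcal{PSMF}_{(k)}$ is exactly the full subcategory of fixed points of $(\mathbb{M},\phi)$, and it suffices to produce a reflection of each $F$ into it.

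To this end I would form the $\omega$-chain
$$F=\mathbb{M}^{0}(F)\;\xrightarrow{\ \phi\ }\;\mathbb{M}^{1}(F)\;\xrightarrow{\ \phi\ }\;\mathbb{M}^{2}(F)\;\xrightarrow{\ \phi\ }\;\cdots$$
(the connecting map $\mathbb{M}^{n}(F)\to\mathbb{M}^{n+1}(F)$ being $\phi_{\mathbb{M}^{n}(F)}$, which coincides with $\mathbb{M}^{n}(\phi_F)$ since both are the identity on underlying sets), and set $F_{(k)}:=\varinjlim_{n\in\mathbb{N}}\mathbb{M}^{n}(F)$ with $\eta_F:F\to F_{(k)}$ the colimit leg at stage $0$. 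This colimit exists in $\mathcal{PSMF}$: directed colimits of pre-special multifields are computed on underlying sets and the defining conditions are preserved, since each $\mathbb{M}^{n}(F)$ is already pre-special by Proposition \ref{prespechf}. Because every connecting map is a bijection, $F_{(k)}$ is concretely the hyperfield carried by the common underlying set $\dot F\cup\{0\}$ whose hyperaddition is the increasing union $\bigcup_{n}(+_{\mathbb{M}^{n}F})$, and $\eta_F$ is an identity-on-underlying-set morphism.

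The heart of the proof is to check that $F_{(k)}$ is a fixed point of $(\mathbb{M},\phi)$, i.e.\ that $\mathbb{M}$ preserves this $\omega$-colimit. One half is free: $k$ is a left adjoint (Theorem \ref{psgadj}), hence preserves the colimit, so $k(F_{(k)})=\varinjlim_{n}k(\mathbb{M}^{n}(F))$. The real point is that $\Gamma$ also preserves this filtered colimit. This follows from the explicit formula \eqref{gammahyper}: membership $c\in a+b$ in $\Gamma(R)$ is governed entirely by the condition ``there is $d$ with $ab=cd$ in the level-$1$ component and $l(a)l(b)=l(c)l(d)$ in the level-$2$ component'', which is finitary; the level-$1$ components of the $k(\mathbb{M}^{n}(F))$ are all the constant group $\dot F$, the level-$2$ components form a directed colimit of abelian groups, and an equality in such a colimit already holds at a finite stage. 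Hence $a+_{\mathbb{M}(F_{(k)})}b=\bigcup_{n}(a+_{\mathbb{M}^{n+1}F}b)=\bigcup_{n}(a+_{\mathbb{M}^{n}F}b)=a+_{F_{(k)}}b$, so $\mathbb{M}(F_{(k)})\cong F_{(k)}$ and $F_{(k)}\in\mathcal{PSMF}_{(k)}$. This is the step I expect to require the most care; everything else collapses to bookkeeping about a fixed underlying set carrying an increasing family of hyperadditions.

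Finally, the universal property of $\eta_F$ is routine. Given $H\in\mathcal{PSMF}_{(k)}$ and a morphism $g:F\to H$, the only possible factorization $\overline g:F_{(k)}\to H$ is $g$ itself as a function (because $\eta_F$ is bijective); and $\overline g$ is indeed a morphism, since $+_{F_{(k)}}=\bigcup_{n}(+_{\mathbb{M}^{n}F})$ and, for each $n$, $\mathbb{M}^{n}(g):\mathbb{M}^{n}(F)\to\mathbb{M}^{n}(H)$ is a morphism while $\mathbb{M}^{n}(H)=H$ because $H$ is a fixed point. Uniqueness is clear. Thus $\eta_F:F\to F_{(k)}$ is a universal arrow from $F$ to the inclusion $\mathcal{PSMF}_{(k)}\hookrightarrow\mathcal{PSMF}$, which by the usual criterion promotes $F\mapsto F_{(k)}$ to a left adjoint $(k):\mathcal{PSMF}\to\mathcal{PSMF}_{(k)}$ of that inclusion. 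Transporting along the equivalence $PSG\simeq\mathcal{PSMF}$ of Theorem \ref{psgpsmfhell} yields the $k$-stable hull $G_{(k)}$ together with its universal property and the left adjoint $(k):PSG\to PSG_{(k)}$, as claimed.
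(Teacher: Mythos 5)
Your proof is correct and follows exactly the construction the paper asserts (the paper states this proposition without a written proof): you iterate the pointed endofunctor $\Gamma\circ k$, use that each unit component $\phi_F$ is a bijective morphism so the hyperadditions form an increasing chain on a fixed underlying set, and verify that the colimit is a fixed point via $k$ being a left adjoint (so $k$ preserves the colimit, computed levelwise in $\mbox{Igr}_+$) together with the finitary, level-$1$/level-$2$ description of the sum in $\Gamma(R)$. The verification of the universal property and the passage to the left adjoint of the inclusion, transported along the equivalence $PSG\simeq\mathcal{PSMF}$, is the intended argument, so nothing further is needed.
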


We emphasize that if $G$ is $AP(3)$ special group, then $G$ is $k$-stable. In particular, every reduced special group is $k$-stable, and if $F$ is a field of characteristic not 2, then $G(F)$ is also $k$-stable. 

 In the next Chapter, it is established the 
 Arason-Pfister Hauptsatz (Theorem \ref{haup}) for {\bf every special group} $G$, (i.e., $G$ satisfies $AP(n)$ for each $n \in \mathbb{N}$.)

\begin{prop}
 $ $
 \begin{enumerate}[i -]
  \item For each $G\in SG$, $\Gamma(s_G):\Gamma(\mathcal K(G))\rightarrow\Gamma(\mbox{Grad}(\mathcal W(G)))$ is a PSG-isomorphism.
  
  \item For each $G\in\mathcal{RSG}$, $\kappa_G:G\rightarrow\Gamma(\mathcal K(G))$ is a PSG-isomorphism.
  
  \item For each $G\in\mathcal{RSG}$, $\omega_G:G\rightarrow\Gamma(\mbox{Grad}(\mathcal W(G)))$ is a PSG-isomorphism.
 \end{enumerate}
\end{prop}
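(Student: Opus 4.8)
The plan is to deduce (iii) from (i) and (ii), and in each of (i) and (ii) to exploit that one direction is automatic: the maps in question are already morphisms in the relevant category, so all the work lies in showing that their inverses are morphisms as well.

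For (i) I would start from the elementary but crucial observation that, by the defining formula (\ref{gammahyper}), the pre-special hyperfield $\Gamma(R)$ depends only on the pointed group $R_1$, the pointed group $R_2$, and the biadditive pairing $\ast_{11}\colon R_1\times R_1\to R_2$. Hence, for a morphism $f\colon R\to S$ in $\mbox{Igr}_+$, the induced map $\Gamma(f)$ — which is automatically a morphism of pre-special hyperfields, $\Gamma$ being a functor — is a bijection as soon as $f_1$ is a group isomorphism, and its set-theoretic inverse again preserves the multivalued sum as soon as $f_2$ is injective on the products $l_R(a)\ast_{11}l_R(b)$; since $R$ and $S$ are generated in level $1$ and $f$ is coordinatewise surjective, both conditions together just say that $f_1$ and $f_2$ are isomorphisms. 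It then remains to verify this for $f=s_G$. In level $1$, $\mathcal K(G)_1=k_1(G)$ and $\mbox{Grad}(\mathcal W(G))_1=I(G)/I^2(G)$ are both $G$ written additively, and $(s_G)_1$ is the classical isomorphism $\rho(a)\mapsto\langle 1,-a\rangle+I^2(G)$. In level $2$, $(s_G)_2\colon k_2(G)\to I^2(G)/I^3(G)$ is surjective because the target is generated by classes of $2$-fold Pfister forms, and injective by the abstract form of Merkurjev's theorem for special groups (\cite{dickmann2006algebraic}); I expect this last input to be the principal non-formal ingredient of the whole argument. Therefore $\Gamma(s_G)$ is a PSG-isomorphism.

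For (ii) and (iii): modulo the equivalence $M\colon\mathcal{SG}\to\mathcal{SMF}$ and the identification $\mathcal K(G)\cong k(M(G))$ (Theorem \ref{km5}), the map $\kappa_G$ is the unit $\phi_{M(G)}\colon M(G)\to\Gamma(k(M(G)))$ of the adjunction of Theorem \ref{psgadj}. As recorded just after that theorem, every such $\phi$ is simultaneously a group isomorphism and a morphism of hyperfields, so $\kappa_G$ is a bijective PSG-morphism. Since $G$ is reduced it is $AP(3)$ — indeed $AP(n)$ for all $n$ by Theorem \ref{haup} — hence $k$-stable, which is exactly the statement that $\phi_{M(G)}=\kappa_G$ is a \emph{strong} morphism. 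A bijective strong morphism of pre-special hyperfields is an isomorphism (take $a'=a$, $b'=b$, $c'=c$ in the strongness clause and read it backwards), so $\kappa_G$ is a PSG-isomorphism. Finally, $\omega_G$ and $\Gamma(s_G)\circ\kappa_G$ are the same arrow $G\to\Gamma(\mbox{Grad}(\mathcal W(G)))$ — both send $a$ to the exponential of the class of $\langle 1,-a\rangle$ — so (iii) follows by composing the isomorphism of (i) (applied to the reduced $G$) with that of (ii). The only points where something beyond formal manipulation enters are thus the bijectivity of $(s_G)_2$ (abstract Merkurjev) and the $k$-stability of reduced special groups, both of which are available from earlier results.
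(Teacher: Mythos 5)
The paper states this proposition without any proof at all --- it is left as an unproved consequence of the surrounding material --- so there is no official argument to compare yours against; judged on its own terms, your reconstruction is sound and uses precisely the ingredients the paper has put in place. For (i), your key observation that formula (\ref{gammahyper}) makes $\Gamma(R)$ depend only on $(R_1,R_2,\ast_{11})$, so that $\Gamma(f)$ is an isomorphism once $f_1$ is bijective and $f_2$ is injective on the products $l(a)\ast_{11}l(b)$, is correct, and the needed input --- that $s^1_G$ and $s^2_G$ are isomorphisms --- is exactly item (iv) of the proposition immediately preceding the statement (one quibble: the identification $k_2\cong I^2/I^{3}$ is the special-group analogue of Milnor's theorem from \cite{milnor1970algebraick}, established in \cite{dickmann2006algebraic}, rather than Merkurjev's theorem, which concerns the cohomological side). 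For (ii), identifying $\kappa_G$ with the unit $\phi_{M(G)}$ of the adjunction of Theorem \ref{psgadj} is the only sensible reading (the paper never defines $\kappa_G$, $\omega_G$ or $\mathcal K(G)$, so some interpretation was forced on you, and yours matches Theorem \ref{km5}); the paper proves right after Theorem \ref{psgadj} that $\phi_F$ is a bijective hyperfield morphism, asserts that $AP(3)$ (hence every reduced) special groups are $k$-stable, and Theorem \ref{haup} covers the $AP$ input, so your ``bijective $+$ strong $\Rightarrow$ inverse is a morphism'' step closes the argument correctly. Deducing (iii) as $\omega_G=\Gamma(s_G)\circ\kappa_G$ is again the natural reading of the undefined $\omega_G$ and is consistent with the convention $a\mapsto[\langle 1,-a\rangle]$ used in the paper's description of $s$. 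In short: no gap, but be aware that your proof inherits the paper's own unproved assertions (the level-$2$ isomorphism and the $k$-stability of $AP(3)$ groups) rather than replacing them.
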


\begin{prop}
 Let $G$ be a PSG. Are equivalent:
 \begin{enumerate}[i -]
  \item $G\in\mathcal{PSG}_{fin}$.
  \item $\mathcal K(G)\in\mbox{Igr}_{fin}$.
 \end{enumerate}
\end{prop}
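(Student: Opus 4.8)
The statement splits into two implications, and both turn out to be elementary once we unwind what $\mathcal{K}(G)$ is: by definition $\mathcal{K}(G) = k(M(G))$, the reduced K-theory of the pre-special hyperfield $M(G)$ attached to $G$ by Theorem~\ref{psgpsmfhell}, and the degree-$1$ component $\mathcal{K}(G)_1 = k_1(M(G))$ is the multiplicative group $\dot{M(G)} = G$ written additively — and since $G$ has exponent $2$ the mod-$2$ reduction defining $k_1$ from $K_1$ does nothing, so $\mathcal{K}(G)_1 \cong G$ as abelian groups.

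First I would prove $(ii)\Rightarrow(i)$: if $\mathcal{K}(G) \in \mathrm{Igr}_{fin}$ then, in particular, $|\mathcal{K}(G)_1| < \omega$; via the identification $\mathcal{K}(G)_1 \cong G$ this gives $|G| < \omega$, i.e. $G \in \mathcal{PSG}_{fin}$.

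For $(i)\Rightarrow(ii)$, assume $G$ is finite. Then $\dot{M(G)} = G$ is finite, hence $\mathcal{K}(G)_1 \cong G$ is a finite abelian group. For each $n \geq 2$, the definition of the K-theory of a hyperfield exhibits $\mathcal{K}(G)_n = k_n(M(G))$ as a quotient of the $n$-fold tensor power $\mathcal{K}(G)_1 \otimes_{\mathbb{Z}} \cdots \otimes_{\mathbb{Z}} \mathcal{K}(G)_1$; since a $\mathbb{Z}$-tensor product of finitely many finite abelian groups is finite and a quotient of a finite group is finite, we get $|\mathcal{K}(G)_n| < \omega$ for every $n \geq 1$. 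By the definition of $\mathrm{Igr}_{fin}$ — which requires only that every homogeneous component $R_n$ ($n \geq 1$) be finite, not that $\bigoplus_{n} R_n$ be finite — this is exactly $\mathcal{K}(G) \in \mathrm{Igr}_{fin}$.

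There is no real obstacle: the two things worth recording explicitly are the canonical isomorphism $\mathcal{K}(G)_1 \cong G$ and the closure of the class of finite abelian groups under $\otimes_{\mathbb{Z}}$ and under passing to quotients. It is instructive to note that the level-by-level phrasing of $\mathrm{Igr}_{fin}$ is essential here: already for $G$ the (two-element) special group of a real closed field, each $\mathcal{K}(G)_n$ is finite while $\bigoplus_{n} \mathcal{K}(G)_n$ is infinite, in parallel with the remark on $W_*(F) \cong \mathbb{F}_2[x]$ for Euclidean fields recorded after the definition of $\mathrm{Igr}_{fin}$.
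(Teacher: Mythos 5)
Your argument is correct and is exactly the routine verification the paper leaves implicit (it states this proposition without proof): the identification $\mathcal{K}(G)_1\cong G$ as exponent-$2$ groups gives (ii)$\Rightarrow$(i), and finiteness of each $\mathcal{K}(G)_n$ as a quotient of the $n$-fold tensor power of the finite group $\mathcal{K}(G)_1$ gives (i)$\Rightarrow$(ii), matching the level-by-level definition of $\mbox{Igr}_{fin}$. Your closing remark on the Euclidean/real closed case is also consistent with the paper's own remark following the definition of $\mbox{Igr}_{fin}$.
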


\begin{prop}
 Let $G$ be a SG. Are equivalent:
 \begin{enumerate}[i -]
  \item $G\in SG_{fin}$.
  \item $\mathcal K(G)\in\mbox{Igr}_{fin}$.
  \item $(\mbox{Grad}\circ\mathcal W)(G)\in\mbox{Igr}_{fin}$.
 \end{enumerate}
\end{prop}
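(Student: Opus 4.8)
The plan is to obtain $(i)\Leftrightarrow(ii)$ essentially for free from the previous result, and then to close the cycle with $(i)\Rightarrow(iii)$ and $(iii)\Rightarrow(i)$.

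For $(i)\Leftrightarrow(ii)$ I would just restrict the foregoing Proposition on pre-special groups: a special group is in particular a pre-special group, and $G$ lies in $SG_{fin}$ exactly when the underlying group $G$ is finite, i.e. exactly when $G\in\mathcal{PSG}_{fin}$. So $(i)\Leftrightarrow(ii)$ is that Proposition read along the inclusion $SG\hookrightarrow PSG$.

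For $(i)\Rightarrow(iii)$, assume $G$ finite. Then for each $n\geq 1$ the pointed group $W_*(G)_n=I^n(G)/I^{n+1}(G)$ is additively generated by the (finitely many) classes of $n$-fold Pfister forms $\langle\langle a_1,\dots,a_n\rangle\rangle$ with $a_i\in G$, and since it has exponent $2$ it is therefore finite; as $W_*(G)_0\cong\mathbb F_2$ as well, $(\mathrm{Grad}\circ\mathcal W)(G)\in\mbox{Igr}_{fin}$. (Alternatively one may combine $(i)\Rightarrow(ii)$ with the fact that the canonical $s_G\colon\mathcal K(G)\to\mathrm{Grad}(\mathcal W(G))$ is coordinatewise surjective — it is onto in degree $1$ and $\mathrm{Grad}(\mathcal W(G))\in\mbox{Igr}_{\mathbbm 1}$ — so each $W_*(G)_n$ is a quotient of the finite group $\mathcal K(G)_n$.)

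For $(iii)\Rightarrow(i)$ — the crux — I would use only that $W_*(G)_1=I(G)/I^2(G)$ is finite. Consider $\delta\colon G\to I(G)/I^2(G)$, $\delta(a)=\langle 1,-a\rangle+I^2(G)$. It is a group homomorphism via the Witt-ring identity $\langle\langle a\rangle\rangle+\langle\langle b\rangle\rangle=\langle\langle ab\rangle\rangle+\langle\langle a,b\rangle\rangle$, whose last summand lies in $I^2(G)$, and it is surjective because the $1$-fold Pfister classes generate $I(G)/I^2(G)$. The one substantial point is injectivity: if $a\neq 1$, then axiom SG 3 prevents $\langle 1,-a\rangle\equiv\langle 1,-1\rangle$, so $\langle 1,-a\rangle$ is anisotropic, and the Arason–Pfister Hauptsatz (Theorem \ref{haup}, applied to the special hyperfield $M(G)$) forbids an anisotropic form of dimension $2$ from lying in $I^2(G)$; hence $\delta(a)\neq 0$. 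Thus $\delta$ embeds $G$ into the finite group $I(G)/I^2(G)$, so $G$ is finite. The expected main obstacle is exactly this injectivity step, i.e. the invocation of the Arason–Pfister Hauptsatz (available for all special groups); everything else is bookkeeping with the already-established functors and the previous Proposition.
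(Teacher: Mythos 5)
Your proof is correct; note that the paper states this proposition without giving any argument, so there is no official proof to measure it against. Your cycle is sound: (i)$\Leftrightarrow$(ii) is indeed just the preceding proposition for pre-special groups read along the inclusion $SG\hookrightarrow PSG$; (i)$\Rightarrow$(iii) works because each $I^n(G)/I^{n+1}(G)$ has exponent $2$ and is additively generated by the at most $|G|^n$ classes of $n$-fold Pfister forms (your alternative route via coordinatewise surjectivity of $s_G$ onto $\mathrm{Grad}(\mathcal W(G))\in\mbox{Igr}_{\mathbbm 1}$ is equally fine); and in (iii)$\Rightarrow$(i) the map $\delta(a)=\langle 1,-a\rangle+I^2(G)$ is, as you say, a well-defined surjective homomorphism onto the finite group $I(G)/I^2(G)$, with SG3 guaranteeing that $\langle 1,-a\rangle$ is anisotropic for $a\neq 1$. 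The one place where your route differs from the most economical one is the injectivity step: invoking the Arason--Pfister Hauptsatz (Theorem \ref{haup}, transported to the special hyperfield $M(G)$) is legitimate here, precisely because the paper insists that it holds for \emph{every} special group, but it is a heavy hammer — the classical discriminant $d\colon I(G)\to G$, $d(\langle a_1,\dots,a_{2m}\rangle)=(-1)^m\prod_i a_i$, is well defined on Witt classes, vanishes on $I^2(G)$, and satisfies $d(\langle 1,-a\rangle)=a$, so it inverts $\delta$ and yields the standard isomorphism $G\cong I(G)/I^2(G)$ using only elementary special-group bookkeeping (in effect only $AP(2)$ rather than the full Hauptsatz). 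Either way the equivalences follow, so your argument stands as written.
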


\begin{prop}
 The canonical arrow
 $$can:\varinjlim_{i\in I}\mathcal K(G_i)\rightarrow\mathcal K\left(\varinjlim_{i\in I}G_i\right)$$
 is an $\mbox{Igr}_+$-isomorphism as long as the $I$-colimits above exists.
\end{prop}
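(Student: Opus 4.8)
The plan is to recognise $\mathcal K$ as a left adjoint and then invoke the elementary fact that left adjoints preserve all colimits that happen to exist. Recall that $\mathcal K$ factors as $\mathcal K=k\circ M$, where $M\colon\mathcal{PSG}\to\mathcal{PSMF}$ is the equivalence of categories underlying Theorem \ref{psgpsmfhell} (cf. Corollary \ref{cor:equiv1}) and $k\colon\mathcal{PSMF}\to\mbox{Igr}_+$ is the reduced K-theory functor. Since colimits are unique up to a unique compatible isomorphism, it is enough to prove that $\mathcal K$ carries a colimit cocone $(\lambda_i\colon G_i\to\varinjlim_{i\in I}G_i)_{i\in I}$ in $\mathcal{PSG}$ to a colimit cocone in $\mbox{Igr}_+$, and then to check that the mediating morphism this produces is precisely the canonical arrow $can$.

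For cocontinuity of $\mathcal K$: an equivalence of categories preserves (and reflects) all limits and colimits, so $M$ does; and by Theorem \ref{psgadj} the functor $k$ is a left adjoint, with right adjoint $\Gamma$, hence $k$ preserves every colimit existing in $\mathcal{PSMF}$. Therefore the composite $\mathcal K=k\circ M$ is again a left adjoint — a right adjoint being $M'\circ\Gamma$ for any quasi-inverse $M'$ of $M$ — and in particular it is cocontinuous. Thus, as soon as $\varinjlim_{i\in I}G_i$ exists in $\mathcal{PSG}$, the family $\bigl(\mathcal K(\lambda_i)\colon\mathcal K(G_i)\to\mathcal K(\varinjlim_{i\in I}G_i)\bigr)_{i\in I}$ is a colimit cocone in $\mbox{Igr}_+$ over the diagram $(\mathcal K(G_i))_{i\in I}$; in particular $\varinjlim_{i\in I}\mathcal K(G_i)$ then exists as well, so the hypothesis of the statement really amounts to the existence of $\varinjlim_{i\in I}G_i$.

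Now let $(\mu_i\colon\mathcal K(G_i)\to\varinjlim_{i\in I}\mathcal K(G_i))_{i\in I}$ be the colimit cocone in $\mbox{Igr}_+$. By definition, $can$ is the unique $\mbox{Igr}_+$-morphism with $can\circ\mu_i=\mathcal K(\lambda_i)$ for every $i\in I$; that is, $can$ is the comparison morphism between the two colimit cocones over the same diagram exhibited in the previous paragraph. By the uniqueness clause of the universal property of colimits, such a comparison morphism between two colimits of a single diagram is an isomorphism — which is exactly the assertion. (When $I$ is filtered one may instead argue hands-on: by the remark following Definition \ref{igr+}, filtered inductive limits in $\mbox{Igr}_+$ are computed level-wise as in $\mbox{Igr}$, level $1$ being $\bigl(\varinjlim_{i\in I}\dot G_i\bigr)$ written additively on both sides, and the higher levels being generated from level $1$, so that $can$ is level-wise bijective.)

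There is no computational obstacle here; the single point requiring attention is bookkeeping of which category each colimit is formed in. The colimit on the source of $can$ must be taken in $\mbox{Igr}_+$, and one should keep in mind that $\mbox{Igr}_+\hookrightarrow\mbox{Igr}$ is closed under filtered inductive limits but not under arbitrary colimits — this is precisely why the qualifier ``as long as the $I$-colimits above exist'' cannot be dropped. Likewise, $\Gamma$ is to be used as the right adjoint of $k$ in the precise form of Theorem \ref{psgadj}, valued in $\mathcal{PSMF}$, so that the right adjoint of $\mathcal K$ indeed lands back in $\mathcal{PSG}$ after composing with a quasi-inverse of $M$.
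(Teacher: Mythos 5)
Your argument is correct, and since the paper states this proposition without any proof, the justification you supply is exactly the intended one: by Theorem \ref{psgadj} the functor $k$ is left adjoint to $\Gamma$, so $\mathcal K=k\circ M$ (an equivalence followed by a left adjoint) is cocontinuous, and the canonical comparison between the two colimit cocones over the same diagram is then an isomorphism. The only point left implicit, both in your write-up and in the paper's statement, is the category in which $\varinjlim_{i\in I}G_i$ is formed: the adjunction lives over $\mathcal{PSMF}$ (equivalently pre-special groups), so your proof applies verbatim to colimits taken there, and to colimits in $\mathcal{SG}$ only when they agree with the pre-special ones (e.g.\ filtered/directed colimits) --- with that reading your proof is complete.
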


\begin{prop}
 The canonical arrow
 $$can:\mathcal K\left(\varprojlim_{i\in I}G_i\right)\rightarrow\varprojlim_{i\in I}\mathcal K(G_i)$$
 is an $\mbox{Igr}_+$-morphism pointwise surjective, as long as the $I$-colimits above exists.
\end{prop}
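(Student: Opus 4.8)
The plan is to construct $can$ from the universal property of the limit, reduce the claim to surjectivity in degree~$1$ using that the target lies in $\mbox{Igr}_+$ (hence is generated in level~$1$), and then compute the degree-$1$ map directly. The arrow $can$ itself arises as follows: the projections $\pi_i\colon\varprojlim_j G_j\to G_i$ of the limit cone induce $\mbox{Igr}_+$-morphisms $\mathcal K(\pi_i)\colon\mathcal K(\varprojlim_j G_j)\to\mathcal K(G_i)$, compatible with the transition maps of the system $(\mathcal K(G_i))_i$, and the induced map into the limit is $can$. In particular $can=\bigoplus_{n\ge0}can_n$ is a morphism of graded rings, so on the level-$1$ generators of Lemma~\ref{igr_+first}(ii), written logarithmically as in Definition~\ref{igrlog}, one has $can_n\big(l(a_1)\cdots l(a_n)\big)=can_1(l(a_1))\cdots can_1(l(a_n))$.

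Now $\varprojlim_i\mathcal K(G_i)$ is again an object of $\mbox{Igr}_+$: projective limits there are obtained by restricting the level-wise limits computed in $\mbox{Igr}$ to their level-$1$-generated part, and $\mathbbm1$ leaves level~$1$ untouched. Hence every $(\varprojlim_i\mathcal K(G_i))_n$ is additively generated by products $b_1\cdots b_n$ with the $b_j$ in degree~$1$, and combined with the displayed formula this shows $\mbox{Im}(can_n)=(\varprojlim_i\mathcal K(G_i))_n$ as soon as $\mbox{Im}(can_1)=(\varprojlim_i\mathcal K(G_i))_1$. So it remains to prove that $can_1$ is surjective.

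For the degree-$1$ computation, note that $(\varprojlim_i\mathcal K(G_i))_1=\varprojlim_i\big(\mathcal K(G_i)_1\big)$ as pointed $\mathbb F_2$-modules, while $\mathcal K(G_i)_1$ is naturally the underlying pointed group of $G_i$ (equivalently of $M(G_i)$; here $K_1=k_1$ since the groups have exponent~$2$). On the other side $\mathcal K(\varprojlim_i G_i)_1$ is the underlying pointed group of $\varprojlim_i G_i$, and --- by the recipe already used for the product $\prod^h$ --- a limit in the source category, when it exists, has as underlying pointed group the corresponding limit of pointed groups, the latter carrying the coordinatewise special relation and satisfying the required universal property. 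Under these identifications $can_1$ is the identity, hence surjective, and we are done.

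I expect the last step to be the main obstacle: one must check that forming the limit in the source category does not shrink the underlying pointed group, i.e.\ that the forgetful functor to pointed $\mathbb F_2$-modules preserves the limits in play. It is also worth remarking why the conclusion is merely surjectivity: the Steinberg-type relation subgroups $Q^n$ defining $\mathcal K$ do not commute with inverse limits, so $can_n$ may fail to be injective for $n\ge2$ --- which is precisely where this statement is weaker than its colimit counterpart in the preceding proposition.
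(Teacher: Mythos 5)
The paper states this proposition without proof, so there is no argument of the authors to compare yours against; judged on its own terms, your proof has the right structure and is surely the intended one: $can$ is induced by the limit cone, the projective limit in $\mbox{Igr}_+$ is $\mathbbm1$ applied to the levelwise limit in $\mbox{Igr}$ (so each level $n$ of the target is additively generated by products of its degree-one elements, while level $1$ itself is the levelwise limit $\varprojlim_i k_1(G_i)$), and multiplicativity of $can$ correctly reduces pointwise surjectivity to surjectivity of $can_1$; the identifications $\mathcal K(G)_1\cong G$ are also fine.

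The only genuine issue is the step you flag yourself, and the way you phrase it ("the limit of pointed groups, carrying the coordinatewise special relation, satisfies the required universal property") is precisely what is not automatic for special groups beyond products and directed colimits: the compatible-families substructure of $\prod_i G_i$ clearly satisfies the universal axioms SG0--SG5, but $3$-transitivity (SG6) of the restricted relation is not obvious, so you cannot simply assert that the naive construction is the limit in $\mathcal{SG}$. Fortunately, surjectivity of $can_1$ does not require this. For pre-special groups the axioms are universal, so the naive construction is the limit and the forgetful functor preserves it. For special groups, argue directly: given a compatible family $(a_i)_{i\in I}$, let $T$ be the fan on the four-element group $\{1,-1,x,-x\}$, which is a special group; in a fan, $\langle a,b\rangle\equiv\langle c,d\rangle$ forces $\{c,d\}=\{a,b\}$ or $b=-a$ and $d=-c$, so every pointed group homomorphism out of $T$ is an SG-morphism. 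Hence $x\mapsto a_i$, $-1\mapsto -1$ defines a cone $t_i:T\to G_i$, and the induced morphism $T\to\varprojlim_i G_i$ yields an element of $\mathcal K\left(\varprojlim_i G_i\right)_1$ which $can_1$ sends to $(\rho(a_i))_{i\in I}$; equivalently, the forgetful functor from $\mathcal{SG}$ to pointed groups of exponent $2$ has the fan construction as a left adjoint and therefore preserves whatever limits exist. With that supplement your reduction is complete, and your closing remark is the right explanation of the asymmetry with the preceding colimit proposition: the relation subgroups $Q^n$ need not commute with inverse limits, so nothing stronger than pointwise surjectivity should be expected for $n\ge 2$.
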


\begin{rem}
    In \cite{dickmann1998quadratic} there is an interesting analysis identifying the boolean hull of a special group $G$ (or special hyperfield $F = G \cup \{0\}$) with the boolean hull of the inductive graded rings $k_*(F), W_*(F) \in Igr_+$ (see the above Corollary \ref{igr+co}). It could be interesting to compare the space of orderings of $R \in Igr_{h}$ and of $\Gamma(R) \in \mathcal{PSMF}$.
\end{rem}

\section{Igr and Marshall's Conjecture}

Using the Boolean hull functor, M. Dickmann and F. Miraglia provide an encoding of Marshall's signature conjecture ([MC]) for reduced special groups  by the condition 
$$\langle 1, 1 \rangle \otimes - : I^n(G)/I^{n+1}(G) \to I^{n+1}(G)/I^{n+2}(G)$$
to be injective, for each $n \in \mathbb{N}$. In fact they introduce the  notion of a [SMC] reduced special group:
$$l(-1) \otimes - : k_{n}(G) \to k_{n+1}(G)$$ 
is injective, for each $n \in \mathbb{N}$. They  establish that,  [SMC] imply [MC], for every reduced special group $G$. Moreover (see 5.1 and 5.4 in \cite{dickmann2006algebraic}):
\begin{itemize}

    \item The inductive limit of [SMC] groups is [SMC].

    \item  The finite product of [SMC] groups is [SMC].
    
    \item $G(F)$ is [SMC], for every Pythagorean field $F$ (with ($char(F) \neq 2$).
\end{itemize}



\begin{prop}
$ $
 \begin{enumerate}[i -]
  \item $s:k\rightarrow\mbox{Grad}\circ\mathcal W$ is a ``surjective'' natural transformation, where for each  $G\in SG$ and all $n\ge1$, $s_n(G):K_n(G)\rightarrow I^n(G)/I^{n+1}(G)$ is given by the rule
  \begin{align*}
   s_n(G)\left(\sum^{s-1}_{i=0}l(g_{1,i})\otimes...\otimes l(g_{n,i})+\mathcal Q_n(G)\right):=
   \overline\bigotimes^{s-1}_{i=0}[\langle1,-g_{1,i}\rangle]\overline\otimes...\overline\otimes[\langle1,-g_{n,i}\rangle]\overline\otimes I^{n+1}(G).
  \end{align*}

  \item $r:\mbox{Grad}\circ\mathcal W\rightarrow k$ is a natural transformation, where for each  $G\in SG$ and all $n\ge1$,
   $r^n_G:I^n(G)/I^{n+2}(G)\rightarrow k_{2n-1}(G)$ is given by the rule
  \begin{align*}
   r_n(G)\left(\overline\bigotimes^{s-1}_{i=0}[\langle1,-g_{1,i}\rangle]\overline\otimes...\overline\otimes[\langle1,-g_{n,i}\rangle]\overline\otimes I^{n+1}(G)\right):= \\
   \sum^{s-1}_{i=0}l(-1)^{2^{n-1}-n}l(g_{1,i})\otimes...\otimes l(g_{n,i})+\mathcal Q_{2n-1}(G)
  \end{align*}
  
  \item For all $n\ge1$, $r_n(G)\circ s_n(G)=l(-1)^{2^{n-1}-n}\overline\otimes\_$.
  
  \item We have an isomorphism of pointed $\mathbb F_2$-modules: $s^1_G:k_1(G)\xrightarrow{\cong}I^1(G)/I^2(G)$,
  $s^2_G:k_2(G)\xrightarrow{\cong}I^2(G)/I^3(G)$.
  
  \item If $G$ is [SMC] Then $s_G:k(G)\rightarrow\mbox{Grad}\circ\mathcal W(G)$ is an isomorphism.
 \end{enumerate}
\end{prop}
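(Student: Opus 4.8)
The plan is to build the natural transformations $s$ and $r$ explicitly on generators and check they are well-defined by descending through the quotient structure, then deduce the isomorphism statements in low degrees by a direct dimension/presentation argument, and finally use the [SMC] hypothesis to bootstrap from degrees $1,2$ to all degrees.

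First I would establish part (i). For each $G \in SG$ define $s_n(G)$ on a generator $l(g_1)\otimes\cdots\otimes l(g_n)$ of $K_n(G)$ by sending it to the Pfister class $\langle\langle g_1,\dots,g_n\rangle\rangle = [\langle 1,-g_1\rangle]\otimes\cdots\otimes[\langle 1,-g_n\rangle]$ in $I^n(G)/I^{n+1}(G)$; multilinearity of the tensor product and the fact that $I^n(G)$ is additively generated by $n$-fold Pfister forms (Example~\ref{ex1}) make this a well-defined group homomorphism out of $T^n(K_1 G)$. To see it kills $\mathcal Q_n(G)$, note that if $g_{i+1}\in 1-g_i$ (i.e.\ $\langle g_i,g_{i+1}\rangle$ represents $1$, equivalently $\langle 1,-g_i\rangle$ and $\langle 1,-g_{i+1}\rangle$ have a common slot) then the classical relation $\langle\langle g_i,g_{i+1}\rangle\rangle = 0$ in $W(G)$ gives vanishing in the graded piece. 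Compatibility with $h_n$ on both sides (multiplication by $l(-1)$ versus $\otimes\langle 1,1\rangle$) is Lemma~\ref{igr_+first}(vii) against Definition~\ref{gradfilt}/\ref{graded Witt ring functor}. Surjectivity is immediate since the Pfister classes generate. Naturality in $G$ follows because both $k$ and $\mathrm{Grad}\circ\mathcal W$ act on morphisms by push-forward on generators.

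Next, part (ii): define $r_n(G)$ on the Pfister generator of $I^n(G)/I^{n+2}(G)$ by $l(-1)^{2^{n-1}-n}\, l(g_1)\otimes\cdots\otimes l(g_n) \in k_{2n-1}(G)$. The subtlety is well-definedness: one must check this respects both the Witt relations among Pfister forms modulo $I^{n+2}$ \emph{and} the reduction mod $2$, so the exponent $2^{n-1}-n$ is chosen precisely so that the Arason–Pfister Hauptsatz (Theorem~\ref{haup}) forces the relevant ambiguities into $k_{2n-1}$. Here I would lean on the reduced-$K$-theory relations in Lemma~\ref{2.1ktmulti} (especially (a) and (e)) to show the map factors through the required quotient. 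Part (iii) is then a formal computation: composing the two generator-level formulas, $r_n \circ s_n$ sends $l(g_1)\cdots l(g_n)$ to $l(-1)^{2^{n-1}-n}\,l(g_1)\cdots l(g_n)$, i.e.\ it is multiplication by $l(-1)^{2^{n-1}-n}$, using Lemma~\ref{igr_+first}. Part (iv) is the classical low-degree fact: $s^1_G$ is the identification $\dot G/\dot G^2 \cong I/I^2$ (both are just $K_1$ written differently), and $s^2_G \colon k_2(G)\cong I^2/I^3$ is Merkurjev-type in degree $2$ — for special groups this is the known presentation of $I^2/I^3$ by the Pfister relations, already recorded in the special-group literature; I would cite the relevant statement in \cite{dickmann2006algebraic} rather than reprove it.

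Finally, part (v). Assume $G$ is [SMC], so by Definition~\ref{2.4kt} multiplication by $l(-1)$ is injective on every $k_n(G)$. I need $s_G \colon k_n(G)\to I^n(G)/I^{n+1}(G)$ to be an isomorphism for all $n$; by part (i) it is already surjective. For injectivity, argue by induction on $n$, the base cases $n=1,2$ being part (iv). For the inductive step, I would use the commuting square relating $s_n$, $s_{n+1}$, the map $h_n = l(-1)\otimes-$ on $k$, and $\otimes\langle 1,1\rangle$ on the graded Witt ring (this square commutes by the $h_n$-compatibility established in (i)), together with part (iii): if $\xi \in \ker s_n$ then $l(-1)\cdot\xi \mapsto 0$ under $s_{n+1}$, and chasing through $r_{n+1}$ and (iii) shows $l(-1)^{k}\xi = 0$ for a suitable $k$; [SMC] injectivity then cancels the $l(-1)$ factors to give $\xi = 0$. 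The main obstacle I anticipate is precisely calibrating the power $2^{n-1}-n$ and verifying well-definedness of $r_n$ — this is where the Hauptsatz is genuinely needed and where a naive approach would produce a map that is only defined up to $2$-torsion ambiguity; everything else is either classical or a formal consequence of the adjunction and the lemmas already in hand.
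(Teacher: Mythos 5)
The paper itself states this proposition without proof (it is a registration of results essentially due to Dickmann--Miraglia, cf.\ \cite{dickmann2006algebraic}), so there is no in-paper argument to compare against; judged on its own terms, your outline is fine for (i), (iii), (iv) and (v) --- indeed (v) is even more direct than your induction: by (iii), any $\xi\in\ker s_n(G)$ satisfies $l(-1)^{2^{n-1}-n}\xi=r_n(G)(s_n(G)(\xi))=0$, and [SMC] cancels the $l(-1)$ factors, with no diagram chase needed. The genuine gap is in (ii), which is the technical heart of the whole statement. Saying that the exponent $2^{n-1}-n$ is ``chosen precisely so that the Arason--Pfister Hauptsatz forces the relevant ambiguities'' into the reduced K-theory is not an argument. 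What the Hauptsatz gives you is only a first step: two $n$-fold Pfister forms congruent modulo $I^{n+1}(G)$ are isometric (their difference is represented by a form of dimension $<2^{n+1}$ lying in $I^{n+1}(G)$). After that you must still prove (a) that the assignment $\langle\langle g_1,\dots,g_n\rangle\rangle\mapsto l(-1)^{2^{n-1}-n}l(g_1)\cdots l(g_n)$ is constant on isometry classes of $n$-fold Pfister forms over an arbitrary special group, and (b) that it annihilates every additive relation among Pfister classes in $I^n(G)/I^{n+1}(G)$, i.e.\ is independent of the chosen representation of an element of $I^n(G)$ as a sum of $n$-fold Pfister forms modulo $I^{n+1}(G)$. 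Neither follows from the Hauptsatz nor from Lemma \ref{2.1ktmulti}; note in particular that item (e) of that lemma, which you invoke, is only available for real reduced hyperfields, whereas the proposition ranges over all of $SG$. In Dickmann--Miraglia's treatment this well-definedness is a substantial linkage/induction argument, and it is exactly there that the twist $l(-1)^{2^{n-1}-n}$ is actually used; the honest options are to reproduce that argument or to cite it explicitly, as you already propose to do for $s^2_G$ in (iv).

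A smaller but real point: the degree bookkeeping must be repaired before (iii) even typechecks. The element $l(-1)^{2^{n-1}-n}l(g_{1,i})\cdots l(g_{n,i})$ has degree $(2^{n-1}-n)+n=2^{n-1}$, so $r_n(G)$ lands in $k_{2^{n-1}}(G)$, not $k_{2n-1}(G)$, and its domain should be $I^n(G)/I^{n+1}(G)$ (as the displayed formula itself indicates), not $I^n(G)/I^{n+2}(G)$; these are misprints in the statement. Your write-up follows the misprinted indices verbatim, and any correct proof of (ii) and (iii) has to fix them, since otherwise the composite $r_n(G)\circ s_n(G)$ in (iii) does not even have the asserted codomain. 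Finally, in (i) the well-definedness out of $T^n(K_1G)$ rests not on ``multilinearity of the tensor product'' but on the congruence $\langle 1,-ab\rangle\equiv\langle 1,-a\rangle\oplus\langle 1,-b\rangle \bmod I^2(G)$, which is what makes each slot additive; this is standard, but it is the statement you should actually quote.
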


$ $

We finish this chapter considering a general setting for ``Marshall's conjectures'', that includes the previous case of the Igr's $W_*(F), k_*(F)$ for special hyperfields $F$. 

 Let $R \in Igr_+$. The ideal, $nil(R)$, in the ring $\underset{n\in\mathbb{N}}\bigoplus R_{n}$, formed by all of its nilpotent elements, determines $N(R)$ a $Igr$-ideal of ${R}$, where
   $(N({R}))_{n} := nil(R)\cap{R}_{n}$, $\forall n \in \mathbb{N}$. Note that, by Proposition \ref{igr_+prop}, $(nil({R}))_{n} =\{ a \in {R}_{n} : \exists k \in \mathbb{N}\setminus\{0\} ( \top_{kn}\ast_{kn,n}a = 0_{(k+1)n} ) \}$ , $\forall n \in \mathbb{N}$.

\begin{rem}
Let $\rho : \mathbb{N} \to \mathbb{N} $ be an increasing function and define $(N_{\rho}(R))_n = \{ a \in {R}_{n} : \exists k \in \mathbb{N} ( \top_{\rho(n)}\ast_{\rho(n),n}a = 0_{\rho(n)+n} ) \}$ , $\forall n \in \mathbb{N}$. Then  $(N_{\rho}(R))_n$ is a subgroup of $R_n$ and, since $\rho(n+k) \geq \rho(n)$, we have $ (N_{\rho}(R))_n \ast_{n,k} R_k \subseteq (N_{\rho}(R))_{n+k}$. Summing up, $(N_{\rho}(R))_n)_{n \in \mathbb{N}}$ is an Igr-ideal.
\end{rem}

The following result is straightforward consequence of the Definitions and \ref{propadj1}, \ref{igr+co}.

\begin{prop}
 For each $R\in\mbox{Igr}_+$ are equivalent:
 \begin{enumerate}[i -]
  \item For all $n \leq m\in\mathbb{N}$, $\mbox{ker}(h_{nm})=\{0_n\}\in R_n$.
  \item The canonical morphism $R \to \mathbb{T}(\mathbb{A}(R))$ is pointwise injective.
  \item There exists a boolean ring $B$ and a pointwise injective Igr-morphism $R \to \mathbb{T}(B)$.
  
  Moreover, if $R \in Igr_{fin}$, these are equivalent to
  \item $N(R)\cong\mathbb{T}(0)\in \mbox{Igr}$.
 \end{enumerate}
Motivated by item (i), we use the abbreviation $\mbox{MC}(R)$ to say that  $R$ satisfies one (and hence all) of the above conditions.
\end{prop}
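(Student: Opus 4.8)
The approach is to deduce every equivalence from three inputs: the explicit colimit description $\mathbb{A}(R)=\varinjlim_n R_n$ of Definition \ref{f2alg}, the adjunction $\mathbb{A}\dashv\mathbb{T}$ of Proposition \ref{propadj1}(i), and the ``hyperbolic'' identities of Proposition \ref{igr_+prop}; throughout we work in $\mbox{Igr}_+$. I would first settle (i)$\Leftrightarrow$(ii). The canonical morphism $R\to\mathbb{T}(\mathbb{A}(R))$ is the unit $\eta_R$ of $\mathbb{A}\dashv\mathbb{T}$; in degree $0$ it is the identity of $\mathbb{F}_2$, and in degree $n\ge1$ its component $(\eta_R)_n\colon R_n\to\varinjlim_m R_m$ is the structure map of the directed colimit of pointed $\mathbb{F}_2$-modules taken along the maps $h$. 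By the usual description of a filtered colimit of modules, $\ker((\eta_R)_n)=\bigcup_{m\ge n}\ker(h_{nm})$, and since $h_{nm'}=h_{mm'}\circ h_{nm}$ for $n\le m\le m'$ these kernels form an increasing chain. Hence $\eta_R$ is pointwise injective exactly when $\ker(h_{nm})=\{0_n\}$ for all $n\le m$, which is condition (i).

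Next, (ii)$\Rightarrow$(iii): by Proposition \ref{igr_+prop}(ii) the ring $\mathbb{A}(R)=\varinjlim_n R_n$ is boolean, so taking $B:=\mathbb{A}(R)$ together with the canonical morphism $R\to\mathbb{T}(\mathbb{A}(R))$, which is pointwise injective by (ii), produces (iii). For (iii)$\Rightarrow$(ii): a boolean ring $B$ automatically lies in $\mbox{Ring}_2$, since $b+b=(b+b)^2$ forces $2b=0$, so given a pointwise injective $f\colon R\to\mathbb{T}(B)$ the adjunction factors it as $f=\mathbb{T}(\bar f)\circ\eta_R$ for a unique ring morphism $\bar f\colon\mathbb{A}(R)\to B$. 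Reading this off in degree $n\ge1$ gives $f_n=\bar f\circ(\eta_R)_n$, and injectivity of $f_n$ forces injectivity of $(\eta_R)_n$; degree $0$ being trivial, (ii) follows.

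Finally I would prove (i)$\Leftrightarrow$(iv) under $R\in\mbox{Igr}_{fin}$. In $\mbox{Igr}_+$, iterating $h_n=\top_1\ast_{1,n}(-)$ together with $\top_i\ast_{i,j}\top_j=\top_{i+j}$ gives $h_{n,n+j}(a)=\top_j\ast_{j,n}a$; combining this with the identity $a\ast_{n,n}a=\top_n\ast_{n,n}a$ of Proposition \ref{igr_+prop}(i) and an induction on $k$ yields $a^{\,k+1}=\top_{kn}\ast_{kn,n}a=h_{n,(k+1)n}(a)$ for every $a\in R_n$ and $k\ge1$. Since the nilradical of the $\mathbb{N}$-graded ring $\bigoplus_n R_n$ is a graded ideal (the top homogeneous component of a nilpotent element is again nilpotent), $(N(R))_n$ is precisely the set of homogeneous nilpotents of degree $n$, that is $\bigcup_{k\ge1}\ker(h_{n,(k+1)n})$. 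As the kernels $\ker(h_{nm})$ increase with $m$ and $\{(k+1)n:k\ge1\}$ is cofinal in $\{m:m\ge n\}$ for $n\ge1$ (degree $0$ being trivial), this union coincides with $\bigcup_{m\ge n}\ker(h_{nm})$; hence $N(R)$ vanishes in all positive degrees, i.e. $N(R)\cong\mathbb{T}(0)$, precisely when (i) holds. The only genuinely delicate points are the degree bookkeeping in $a^{\,k+1}=h_{n,(k+1)n}(a)$ and the cofinality step that upgrades ``$m$ a multiple of $n$'' to ``arbitrary $m\ge n$''; everything else is a direct unwinding of the colimit $\mathbb{A}(R)=\varinjlim_n R_n$ and of the adjunction $\mathbb{A}\dashv\mathbb{T}$, so no serious obstacle is expected.
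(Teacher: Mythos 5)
Your proof is essentially the intended one: the paper records no argument (it calls the proposition a straightforward consequence of the definitions, Proposition \ref{propadj1} and Corollary \ref{igr+co}), and your write-up is precisely the natural expansion of that hint --- identify the canonical map with the unit of $\mathbb{A}\dashv\mathbb{T}$, compute $\ker((\eta_R)_n)=\bigcup_{m\ge n}\ker(h_{nm})$ from the directed-colimit description of $\mathbb{A}(R)$, use Proposition \ref{igr_+prop}(ii) for (ii)$\Rightarrow$(iii) and the adjunction factorization for (iii)$\Rightarrow$(ii), and for (iv) reproduce the identity $a^{k+1}=\top_{kn}\ast_{kn,n}a=h_{n,(k+1)n}(a)$, which is exactly the remark the paper itself makes when defining $N(R)$.

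Two remarks. First, what you actually prove is that (i) restricted to $1\le n\le m$ is equivalent to (ii), (iii) and (iv); the case $n=0$ of (i) says $\top_m\neq 0_m$ for every $m$, and this does not follow from the other conditions: the terminal object $R=\mathbb{T}(0)$ (so $R_n=0$ for $n\ge1$) lies in $\mbox{Igr}_+\cap\mbox{Igr}_{fin}$ and satisfies (ii)--(iv), while $\ker(h_{0m})=\mathbb{F}_2$. Your sentence ``pointwise injective exactly when $\ker(h_{nm})=\{0_n\}$ for all $n\le m$'' silently includes $n=0$ without justification. The gap is easy to close (and is really an imprecision of the statement itself): within $\mbox{Igr}_+$, if all $h_{nm}$ with $n\ge1$ are injective and some $\top_m=0$, then $\top_1=0$, hence $h_1=\top_1\ast_{11}(-)=0$ is injective only if $R_1=0$, and generation in level $1$ forces $R_n=0$ for all $n\ge1$; so the terminal object is the only exception, and you should either exclude it or add this one-line observation. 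Second, your argument for (i)$\Leftrightarrow$(iv) never uses the hypothesis $R\in\mbox{Igr}_{fin}$ (the cofinality of the multiples $(k+1)n$ in $[n,\infty)$ does all the work), so you in fact prove a slightly stronger statement than the one recorded; that is fine, but worth stating explicitly rather than leaving the unused hypothesis in place.
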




In the following, we fix a category of $L$-structures $\mathcal{A}$ that is closed under directed inductive limits and a functor  $F_* : \mathcal{A} \to  Igr_+$ be a functor that preserves directed inductive limits. Examples of such kind of functors are $k_* : \mathcal{HMF} \to  Igr_+$ and $W_* : \mathcal{HMF} \to  Igr_+$,  since such hyperfields can be conveniently described in the first-order relational language for multirings and it is closed under directed inductive limits. Related examples are the functors $k_* : SG \to  Igr_+$ and $W_* : SG \to  Igr_+$; note that $SG$ is a full subcategory of $L_{SG}-Str$ that is closed under directed inductive limits {\bf and} under arbitrary products.

\begin{prop}
    If $(I,\leq)$ is an upward directed poset and $\Gamma : (I, \leq) \to \mathcal{A}$ is such that: $MC(F_*(\Gamma(i)))$, for all $i \in I$, then $MC(F_*(\varinjlim_{i \in I}\Gamma(i)))$.
\end{prop}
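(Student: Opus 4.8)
The plan is to reduce the statement to the level-wise description of directed inductive limits in $\mbox{Igr}_+$ and then invoke the elementary fact that in a filtered colimit an element that becomes $0$ in the colimit already becomes $0$ at a finite stage, combined with characterization (i) of $\mbox{MC}$ from the preceding Proposition.

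First I would set $R_i := F_*(\Gamma(i))$ for $i \in I$ and $R := F_*(\varinjlim_{i\in I}\Gamma(i))$. Since $\mathcal{A}$ is closed under directed inductive limits and $F_*$ preserves them, we get $R \cong \varinjlim_{i\in I} R_i$ in $\mbox{Igr}_+$; write $\varphi_{ij}\colon R_i \to R_j$ ($i\le j$) for the transition morphisms and $\lambda_i\colon R_i \to R$ for the colimit cocone. By Proposition \ref{fixigr4} (and the remark that $\mbox{Igr}_+\hookrightarrow\mbox{Igr}$ is closed under filtered inductive limits), for every $n\ge 0$ we have $R_n = \varinjlim_{i\in I}(R_i)_n$ as pointed $\mathbb{F}_2$-modules, and the maps $h_{nm}\colon R_n\to R_m$ are the directed colimits of the corresponding maps $h_{nm}\colon (R_i)_n\to(R_i)_m$; in particular each $\varphi_{ij}$, being an $\mbox{Igr}$-morphism, commutes with the $h_{nm}$.

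Next, fix $n\le m$ and take $x\in R_n$ with $h_{nm}(x)=0_m$; by item (i) of the preceding Proposition it is enough to show $x=0_n$. Since $R_n=\varinjlim_i (R_i)_n$ is a directed colimit, there are $i\in I$ and $a\in(R_i)_n$ with $\lambda_i(a)=x$. Then $\lambda_i\big(h_{nm}(a)\big)=h_{nm}(x)=0_m$ in $R_m=\varinjlim_i(R_i)_m$, so by filteredness there is $j\ge i$ with $\varphi_{ij}\big(h_{nm}(a)\big)=0_m$, i.e. $h_{nm}\big(\varphi_{ij}(a)\big)=0_m$ in $(R_j)_m$. Now $\mbox{MC}(R_j)$ gives $\ker\big(h_{nm}\colon (R_j)_n\to(R_j)_m\big)=\{0_n\}$, hence $\varphi_{ij}(a)=0_n$ and therefore $x=\lambda_i(a)=\lambda_j\big(\varphi_{ij}(a)\big)=0_n$, as desired.

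I do not expect a genuine obstacle here: the only delicate point is that directed inductive limits in $\mbox{Igr}_+$ are computed level-wise with the cocone and transition maps being honest $\mbox{Igr}$-morphisms (so that they commute with the $h_{nm}$), and this is exactly Proposition \ref{fixigr4} together with the closure remark. As an alternative route one could use the left adjoint $\mathbb{A}$, which preserves the colimit, so that $\mathbb{A}(R)=\varinjlim_i\mathbb{A}(R_i)$ is a directed colimit of boolean rings, hence boolean; then pointwise injectivity of the canonical map $R\to\mathbb{T}(\mathbb{A}(R))$ follows from that of each $R_i\to\mathbb{T}(\mathbb{A}(R_i))$ by the same filtered-colimit argument, using characterization (ii) of $\mbox{MC}$.
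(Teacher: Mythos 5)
Your proof is correct and follows essentially the same route as the paper: both reduce to the fact that $F_*$ preserves directed inductive limits and that such limits in $\mbox{Igr}_+$ are computed levelwise, so the maps $h_{nm}$ on the colimit are directed colimits of the injective maps $h_{nm}$ at each stage, hence injective. The only difference is that you spell out the standard filtered-colimit element-chasing argument (an element vanishing in the colimit already vanishes at some finite stage) which the paper leaves implicit, and you add an optional alternative via $\mathbb{A}$ and criterion (ii); both are fine.
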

\begin{proof}
    The hypothesis on $F_*$ and  the fact that the directed inductive limits in $Igr_+$ are pointwise, give us immediately that the mappings $h_n : F_n(\varinjlim_{i \in I}\Gamma(i)) \to F_{n+1}(\varinjlim_{i \in I}\Gamma(i)) $ are isomorphic to the injective maps $\varinjlim_{i \in I} h_n^i : \varinjlim_{i \in I}F_n(\Gamma(i)) \to \varinjlim_{i \in I} F_{n+1}(\Gamma(i)) $, for each $n \in \mathbb{N}$. Therefore it holds $$MC(F_*(\varinjlim_{i \in I}\Gamma(i)))$$.  
\end{proof}

\begin{cor} Let $F \subseteq P(I)$ be a filter and let $\{M_i : i \in I\}$ be a family of (non-empty) $L$-structures in $\mathcal{A}$. Suppose that $\mathcal{A}$ is closed under products and suppose that holds $MC(F_*(\prod_{i \in J} M_i))$, for each $J \in F$. Then holds $MC(F_*(\prod_{i \in J} M_i/F))$.
\end{cor}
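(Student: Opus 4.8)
The plan is to exhibit the reduced product $\prod_{i\in I} M_i/F$ as a directed inductive limit, inside $\mathcal{A}$, of the partial products $\prod_{i\in J} M_i$ with $J\in F$, and then to feed this into the previous Proposition verbatim. First I would set up the index poset: a filter $F$ on $I$ is non-empty and upward closed, so $I\in F$, and it is closed under finite intersections; hence, ordering $F$ by reverse inclusion ($J\preceq J'$ iff $J\supseteq J'$) makes $(F,\preceq)$ an upward directed poset (with $I$ as least element). For $J\preceq J'$ the canonical projection $\pi_{J,J'}:\prod_{i\in J} M_i\to\prod_{i\in J'} M_i$ is an $L$-morphism (the product relations are interpreted coordinatewise, so restriction of tuples preserves them), and the assignment $J\mapsto\prod_{i\in J} M_i$ together with these projections forms an inductive system $\Gamma:(F,\preceq)\to\mathcal{A}$; each $\Gamma(J)=\prod_{i\in J} M_i$ lies in $\mathcal{A}$ because $\mathcal{A}$ is closed under products.

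Next I would check that $\varinjlim_{J\in F}\Gamma(J)$, formed in $\mathcal{A}$, is isomorphic to the set-theoretic reduced product $\prod_{i\in I} M_i/F$. Since $\mathcal{A}$ is closed under directed inductive limits, which for a category of relational $L$-structures are computed on underlying sets with each relation interpreted as the directed union of the images, one verifies directly that every element of the colimit is represented by some $b\in\prod_{i\in I} M_i$ (extend a partial tuple on $J$ arbitrarily, using that the $M_i$ are non-empty, and observe $b$ and its restriction to $J$ are already identified), and that two such $b,b'$ are identified in the colimit precisely when $\{i:b_i=b'_i\}\in F$; this is exactly the defining equivalence of the reduced product. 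In particular $\prod_{i\in I} M_i/F$ belongs to $\mathcal{A}$. This is the familiar presentation of a reduced product as a filtered colimit of sub-products, generalizing the ultraproduct case.

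Finally I would apply the previous Proposition to the system $\Gamma$: by hypothesis $MC(F_*(\Gamma(J)))$ holds for every $J\in F$, hence $MC\bigl(F_*(\varinjlim_{J\in F}\Gamma(J))\bigr)$ holds, using (as in the proof of that Proposition) that $F_*$ preserves directed inductive limits and that these are pointwise in $Igr_+$, so the maps $h_n$ of $F_*(\varinjlim_{J}\Gamma(J))$ are directed colimits of injections and therefore injective. Rewriting $\varinjlim_{J\in F}\Gamma(J)$ as $\prod_{i\in I} M_i/F$ yields $MC(F_*(\prod_{i\in I} M_i/F))$, which is the assertion. The only step demanding genuine care is the identification of the reduced product with the directed colimit taken in $\mathcal{A}$; beyond that there is no real obstacle, as the corollary is essentially this identification combined with the immediately preceding Proposition.
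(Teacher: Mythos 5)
Your proof is correct and takes essentially the same approach as the paper: both exhibit the reduced product as the upward directed colimit $\varinjlim_{J\in F}\prod_{i\in J}M_i$ (with $F$ ordered by reverse inclusion, using closure of $\mathcal{A}$ under products and directed limits) and then apply the preceding Proposition. The only difference is that the paper simply cites Ellerman's theorem for the identification $\varinjlim_{J\in F}\prod_{i\in J}M_i\cong\prod_{i\in I}M_i/F$, whereas you verify it directly, and your verification is sound.
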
 
\begin{proof}
   This follows from the preceding result since, by a well-known model-theoretic result due to D. Ellerman (\cite{ellerman1974sheaves}), any reduced product of a family of (non-empty) $L$-structures, $\{M_i : i \in I\}$, module a filter $F \subseteq P(I)$, is canonically isomorphic to an upward directed inductive limit, $\varinjlim_{J \in F} (\prod_{i \in J}M_i) \cong   (\prod_{i \in I}M_i)/F$.
\end{proof}

\begin{prop} Let  $F_* : \mathcal{A} \to  Igr_+$ preserves pure embeddings. More precisely, if $M, M' \in \mathcal{A}$ and  $j : M \to M'$ is a pure $L$-embedding, then $F_*(j) : F_*(M) \to F_*(M')$ is a pure morphism of Igr's (described in the first-order polysorted language for Igr's).
\end{prop}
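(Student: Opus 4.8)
The plan is to reduce the statement to a pointwise, level-by-level check, exploiting the fact that both the domain category $\mathcal{A}$ and the codomain $\mbox{Igr}_+$ live over relational first-order languages and that "purity" is a syntactic notion (preservation of existential positive — or, in the relational case, all positive primitive — formulas with parameters). First I would recall the working definition of pure embedding we are using: $j : M \to M'$ is pure iff for every positive primitive $L$-formula $\varphi(\bar x)$ and every tuple $\bar a$ from $M$, $M' \models \varphi(j(\bar a))$ implies $M \models \varphi(\bar a)$; equivalently, $M$ is existentially closed in $M'$ relative to the positive diagram. On the Igr side, by Remark \ref{igr-re} we may and do work with $\mbox{Igr}_2$, the category of models of the first-order (polysorted) theory $T(\mathcal{L}_{igr})$, so that "pure morphism of Igr's" again means preservation-and-reflection of positive primitive $\mathcal{L}_{igr}$-formulas.

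The key structural input is that $F_*$ is a functor that commutes with directed inductive limits, that $F_*$ lands in $\mbox{Igr}_+$ (so each $F_*(M)$ is generated in level $1$, by Definition \ref{level1} and Lemma \ref{igr_+first}(ii)), and that the level-$1$ component $F_1 : \mathcal{A} \to p\mathbb{F}_2\text{-mod}$ factors through a definable quotient of $M$. Concretely, in the cases of interest $F_1(M)$ is (the additively-written) group $\dot M / (\text{relations})$ — e.g. for $k_*$ it is $\dot F$ written additively via $\rho$, and for $W_*$ it is $I(F)/I^2(F) \cong \dot F/\dot F^2$ — and the higher $F_n(M)$ are a quotient of the $n$-fold tensor power of $F_1(M)$ by relations of a fixed quantifier-free shape read off from the hyperfield addition (the Steinberg-type relations "$b \in 1-a$"). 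The first step is therefore to show: if $j : M \to M'$ is pure, then $F_1(j) : F_1(M) \to F_1(M')$ is a pure embedding of pointed $\mathbb{F}_2$-modules — this is immediate since the relevant positive primitive formulas about $F_1$ pull back to positive primitive formulas about $M$, and purity of $j$ transfers them.

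The second and main step is to bootstrap from level $1$ to all levels. Here I would use that an arbitrary element of $F_n(M)$ is a finite sum of generators $l(a_1)\cdots l(a_n)$ with $a_i \in \Gamma(F_*(M))$ (Lemma \ref{igr_+first}(ii)), that $F_*(j)$ acts on generators by $l(a_1)\cdots l(a_n) \mapsto l(F_1(j)(a_1))\cdots l(F_1(j)(a_n))$, and that the defining relations among such generators in $F_n(M')$ are positive primitive statements about finitely many parameters, all of which have pre-images witnessed inside $M$ by purity. A clean way to package this: present $F_*(M)$ as a directed colimit of Igr's $F_*$ applied to the finitely generated substructures of $M$ (using that $F_*$ commutes with directed colimits and that $\mathcal{A}$ is closed under such colimits), and reduce the purity assertion — which, being expressible by $L_{\infty\omega}$-type conditions on finite tuples, is preserved under directed colimits — to the case where $M$ itself is finitely generated, indeed to tracking a single positive primitive $\mathcal{L}_{igr}$-formula $\psi(\bar y)$ of sort $\bar n = (n_1,\dots,n_k)$ with parameters $\bar\eta$ from $F_*(M)$, writing each $\eta_i$ out in terms of level-$1$ generators, and pulling the whole configuration back along purity of $j$ to produce a witness in $F_*(M)$.

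I expect the main obstacle to be exactly the passage in the second step: an Igr-theoretic positive primitive formula involves the operations $\ast_{n,m}$ and $h_n$ across many sorts, and a witness for such a formula in $F_*(M')$ need not obviously come from level $1$ of $M'$ — one must use $\mbox{Igr}_+$-membership (every level is generated from level $1$, plus the hyperbolic identity $x\ast x = \top\ast x$ of Proposition \ref{igr_+prop}) to rewrite any such witness as a polynomial expression in level-$1$ elements of $F_*(M')$, hence in elements of the form $F_1(j)(\text{something})$ together with images of parameters, and only then invoke purity of $F_1(j)$ (Step 1) together with purity of $j$ at the level of $M$. Making this rewriting uniform — i.e. bounding the "tensor depth" of the witness and showing the reflected equalities close up into a genuine positive primitive formula over $M$ — is the technical heart; everything else (functoriality, commuting with directed colimits, the reduction to finitely generated structures) is bookkeeping.
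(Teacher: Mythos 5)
Your plan diverges from the paper's argument, and the place you yourself flag as ``the technical heart'' is a genuine gap, not a routine verification. To reflect a positive primitive $\mathcal{L}_{igr}$-formula from $F_*(M')$ to $F_*(M)$ you must handle existential witnesses lying in sorts $F_n(M')$ for $n\ge 2$. Such a witness is a finite sum of products of level-one elements, but the number of summands is unbounded, and -- more seriously -- equality in $F_n(M')$ (a quotient of a tensor power of $F_1(M')$ by a subgroup generated by Steinberg-type elements, in the motivating examples) is itself an existential statement about derivations of unbounded length. So the atomic $\mathcal{L}_{igr}$-conditions on your witnesses do not translate into a single positive primitive $L$-formula about parameters from $M'$; at best you obtain an infinitary disjunction over representation lengths, and purity of $j$ gives you no reflection for such infinitary conditions. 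Nothing in your outline bounds the tensor depth or the number of summands, and there is no apparent way to do so uniformly, so the bootstrapping step does not close.

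There is also a mismatch with the hypotheses: the proposition is stated for an \emph{arbitrary} functor $F_*:\mathcal{A}\to \mbox{Igr}_+$ preserving directed inductive limits, whereas your Step 1 (purity of $F_1(j)$) and your rewriting scheme presuppose that $F_1(M)$ is a pp-definable quotient of $M$ and that the higher levels are presented by relations read off from the hyperfield addition -- features of $k_*$ and $W_*$, not consequences of the stated hypotheses. The paper avoids syntax altogether: it invokes the characterization of pure embeddings as those $j$ for which the diagonal elementary embedding $\delta_M^{(I,U)}:M\to M^I/U$ factors as $g\circ j$ through some ultrapower, writes $M^I/U\cong \varinjlim_{J\in U}M^J$, uses preservation of directed colimits together with the canonical comparison maps $F_*(M^J)\to (F_*(M))^J$ to produce $t:F_*(M^I/U)\to (F_*(M))^I/U$ with $\delta^{F_*(M)}_{(I,U)}=t\circ F_*(g)\circ F_*(j)$, and applies the characterization again to conclude that $F_*(j)$ is pure. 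Note that in your outline the hypothesis that $F_*$ preserves directed colimits appears only as bookkeeping, whereas in the correct argument it is the decisive ingredient; redirecting your plan to this ultrapower-factorization characterization would both repair the gap and yield the statement in its stated generality.
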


\begin{proof}
  This follows from the well known characterization result:
  
  {\bf Fact:} Let $L'$ be a first-order language and $f :   A \to B$ be an $L'$-homomorphism. Then are equivalent
  \begin{itemize}
      \item  $f :   A \to B$ is a pure $L'$-embedding.
      \item There exists an elementary $L'$-embedding $e : A \to C$ and a $L'$-homomorphism $h : B \to C$, such that $e = h \circ f$.
      \item There exists an ultrapower $A^I/U$ and  a $L'$-homomorphism $g : B \to A^I/U$, such that $\delta_A^{(I,U)} = g \circ f$, where $\delta_A ^{(I,U)} : A \to A^I/U$ is the diagonal (elementary) $L'$-embedding. 
  \end{itemize}

  Since the morphism $j : M \to M'$ is a pure embedding, by the Fact there exists an ultrapower ${M}^I/U$ and  a $L$-homomorphism $g : M' \to M^I/U$, such that $\delta^M_{(I,U)} = g \circ j$, where $\delta_M^{(I,U)} : M \to M^I/U$ is the diagonal (elementary) $L$-embedding.  

Since we have a canonical isomorphism $can : \varinjlim_{J \in U} M^J \overset\cong\to 
 M^I/U$, applying the functor $F_*$, we obtain
$F_*(M^I/U) \cong F_*(\varinjlim_{J \in U} M^J) \cong \varinjlim_{J \in U} F^*(M^J) \to  \varinjlim_{J \in U} (F^*(M))^J \cong (F_*(M))^I/U  $. 

Keeping track,  we obtain that the above morphism $t :  F_*(M^I/U) \to  (F_*(M))^I/U  $ establishes a comparison between
$F_*(\delta^M_{(I,U)}) : F_*(M) \to F_*(M^I/U)$ and  $\delta^{F_*(M)}_{(I,U)}) : F_*(M) \to F_*(M)^I/U$
$$ \delta^{F_*(M)}_{(I,U)}) = t \circ F_*(\delta^M_{(I,U)}) .$$

Since $ F_*(\delta^M_{(I,U)}) = F_*(g) \circ F_*(j)$, combining the equations we obtain
$$ \delta^{F_*(M)}_{(I,U)}) = t \circ F_*(g) \circ F_*(j).$$

Applying again the Fact, we conclude that $F_*(j) : F_*(M) \to F_*(M')$ is a pure morphism of Igr's.
\end{proof}

\begin{cor}
For each $n \in \mathbb{N}$, the functor  $F_n : \mathcal{A} \to  p\mathbb{F}_2-mod$ preserves pure embeddings. More precisely, if  $M, M' \in \mathcal{A}$ and  $j : M \to M'$ is a pure $L$-embedding, then $F_n(j) : F_n(M) \to k_n(M')$ is a pure morphism of pointed $\mathbb{F}_2$-modules (described in the first-order single sorted language adequate). In particular $F_n(j) : F_n(M) \to F_n(M')$ is an injective morphism of pointed $\mathbb{F}_2$-modules.
\end{cor}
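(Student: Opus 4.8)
The plan is to obtain this Corollary as an immediate consequence of the preceding Proposition by passing to the sort-$n$ component. Recall that, by construction, $F_n$ is the composite $\mathcal{A} \xrightarrow{F_*} \mbox{Igr}_+ \xrightarrow{(-)_n} p\mathbb{F}_2-mod$, where $(-)_n$ sends an Igr $R$ to its pointed $\mathbb{F}_2$-module $(R_n,+_n,0_n,\top_n)$ and an Igr-morphism $\psi$ to $\psi_n$. So it is enough to check that the functor $(-)_n$ carries a pure $\mathcal{L}_{igr}$-embedding of Igr's to a pure embedding in the (single-sorted) first-order language $L_{p\mathbb{F}_2}$ of pointed $\mathbb{F}_2$-modules, i.e.\ of abelian groups of exponent $2$ equipped with a distinguished constant.

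First I would record two routine facts about the polysorted setting. For any family $(R^{(i)})_{i \in I}$ of Igr's and any ultrafilter $U$ on $I$, the ultraproduct is computed sortwise: $\big(\prod_{i} R^{(i)}/U\big)_n \cong \prod_i R^{(i)}_n/U$ (this is the \L o\'s theorem for polysorted structures). Consequently, for a single Igr $R$, the sort-$n$ component of the diagonal embedding $\delta^R_{(I,U)}\colon R \to R^I/U$ is precisely the diagonal embedding $\delta^{R_n}_{(I,U)}\colon R_n \to (R_n)^I/U$ of the $n$-th component, and the sort-$n$ component of any Igr-morphism is an $L_{p\mathbb{F}_2}$-homomorphism of pointed $\mathbb{F}_2$-modules.

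Now apply the preceding Proposition to the pure embedding $j\colon M \to M'$. Unwinding its proof, we are given an ultrapower $M^I/U$, an $L$-homomorphism $g\colon M' \to M^I/U$ with $\delta^M_{(I,U)} = g \circ j$, and an Igr-morphism $t\colon F_*(M^I/U) \to (F_*(M))^I/U$ fitting into
$$\delta^{F_*(M)}_{(I,U)} = t \circ F_*(g) \circ F_*(j).$$
Taking sort-$n$ components and using the two facts above, this becomes
$$\delta^{F_n(M)}_{(I,U)} = \big(t_n \circ F_*(g)_n\big) \circ F_n(j),$$
where $t_n \circ F_*(g)_n\colon F_n(M') \to (F_n(M))^I/U$ is an $L_{p\mathbb{F}_2}$-homomorphism (a composite of sort-$n$ components of Igr-morphisms). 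By the ``Fact'' invoked in the proof of the preceding Proposition, now read in the single-sorted language $L_{p\mathbb{F}_2}$, the existence of such a factorization of a diagonal elementary embedding through $F_n(j)$ says exactly that $F_n(j)$ is a pure $L_{p\mathbb{F}_2}$-embedding; injectivity then follows since $\delta^{F_n(M)}_{(I,U)}$ is injective.

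I do not anticipate a genuine obstacle: the only point requiring care is the compatibility of the ``sort $n$'' functor with ultrapowers and with diagonal embeddings, so that the displayed identity really is the ultrapower characterization of purity applied to the single-sorted reduct. As a remark, one can also argue without ultrapowers: every primitive-positive $L_{p\mathbb{F}_2}$-formula in variables of sort $n$ is literally a primitive-positive $\mathcal{L}_{igr}$-formula all of whose free and bound variables have sort $n$, and since any solution of such a formula in an Igr must lie in the sort-$n$ component, purity of $F_*(j)$ restricts verbatim to purity of $F_n(j)$.
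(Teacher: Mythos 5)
Your proposal is correct and follows essentially the route the paper intends: the Corollary is left as an immediate consequence of the preceding Proposition, obtained by passing to the sort-$n$ component of the polysorted purity of $F_*(j)$ (your closing remark about primitive-positive formulas with all variables of sort $n$ is exactly this restriction, and your ultrapower version is just the same deduction run through the ``Fact''). The only care needed, which you address, is the sortwise computation of ultraproducts and diagonal embeddings, so there is no gap.
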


\begin{cor}
Let  $M, M' \in \mathcal{A}$ and  $j : M \to M'$ is a pure $L$-embedding. If $MC(F_*(M'))$, then $MC(F_*(M))$. 
\end{cor}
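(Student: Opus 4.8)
The plan is to combine the preceding Proposition (that $F_*$ preserves pure embeddings) with the characterization of $\mathrm{MC}$ stated just above, so the argument is essentially a short diagram chase.

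First I would record that, since $j : M \to M'$ is a pure $L$-embedding, the preceding Proposition yields that $F_*(j) : F_*(M) \to F_*(M')$ is a pure morphism of Igr's; in particular, by the Corollary immediately following it (or simply because a pure embedding is injective), each component $F_n(j) : F_n(M) \to F_n(M')$ is an injective morphism of pointed $\mathbb{F}_2$-modules, and the family $(F_n(j))_{n}$ assembles into an $\mathrm{Igr}$-morphism, so the naturality squares $F_{m}(j)\circ h^{M}_{nm} = h^{M'}_{nm}\circ F_{n}(j)$ hold for all $n\le m$.

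Next I would use characterization (i) of $\mathrm{MC}$. Assume $\mathrm{MC}(F_*(M'))$, i.e. $\ker(h^{M'}_{nm}) = \{0\}$ for all $n\le m$. Given $x\in F_n(M)$ with $h^{M}_{nm}(x)=0$, apply $F_m(j)$ and use the naturality square to get $h^{M'}_{nm}(F_n(j)(x)) = F_m(j)(h^{M}_{nm}(x)) = 0$; hence $F_n(j)(x)=0$ by $\mathrm{MC}(F_*(M'))$, and then $x=0$ by injectivity of $F_n(j)$. Thus $\ker(h^{M}_{nm})=\{0\}$ for all $n\le m$, which is exactly $\mathrm{MC}(F_*(M))$. (Alternatively, via characterization (iii): pick a boolean ring $B$ and a pointwise injective $\mathrm{Igr}$-morphism $g : F_*(M') \to \mathbb{T}(B)$; then $g\circ F_*(j) : F_*(M) \to \mathbb{T}(B)$ is pointwise injective as a composite of injective maps, giving $\mathrm{MC}(F_*(M))$.)

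There is no real obstacle here: all the substantive content has already been absorbed into the statement that $F_*$ preserves pure embeddings and into the equivalences describing $\mathrm{MC}$. The only points that need a line of care are that $F_*(j)$ is genuinely an $\mathrm{Igr}$-morphism (so the squares with the $h_{nm}$'s commute) and that the relevant consequence of purity we use is just pointwise injectivity.
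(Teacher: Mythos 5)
Your argument is correct and is essentially the paper's own proof: both rely on the injectivity of each component $F_n(j)$ (from the purity-preservation Proposition and its Corollary) together with the injectivity of the maps $h'_n$ coming from $MC(F_*(M'))$, and conclude by the same naturality-square diagram chase that the $h_n$ for $F_*(M)$ are injective. The only cosmetic difference is that you phrase $MC$ via the kernels of the composites $h_{nm}$ while the paper chases the consecutive maps $h_n$, which amounts to the same thing.
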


\begin{proof}
This follows directly from the previous Corollary.  Indeed, suppose that holds $MC(F_*(M'))$. Since  $h'_n : F_n(M') \to F_{n+1}(M')$ and $F_n(j) : F_n(M) \to F_n(M')$ are injective morphisms, then, by a diagram chase,   $h_n : F_n(M) \to F_{n+1}(M)$ is an injective morphism too, thus holds $MC(F_*(M))$.
$$\xymatrix@!=6pc{F_nM\ar[d]_{F_n(j)}\ar[r]^{h_n} & F_{n+1}M\ar[d]^{F_{n+1}(j)} \\ F_n(M')\ar[r]_{h'_n} & F_{n+1}(M')}$$ \end{proof}

\section*{Appendix: Some Categorical Facts}

For the reader's convenience, we provide here some categorical results concerning adjunctions. Most of them are based on \cite{borceux1994handbook1}, but the reader could also consult \cite{mac2013categories}.

\begin{defn}[3.1.1 of \cite{borceux1994handbook1}]\label{3.1.1borceux}
Let $F:\mathcal A\rightarrow\mathcal B$ be a functor and $B$ an object of $\mathcal B$. A \textbf{reflection} of $B$ along $F$ is a pair $(R_B,\eta_B)$ where
\begin{enumerate}
    \item $R_B$ is an object of $A$ and $\eta_B:B\rightarrow F(R_B)$ is a morphism of $\mathcal B$.
    \item If $A\in\mathcal A$ is another object and $b:B\rightarrow F(A)$ is a morphism of $\mathcal B$, there exists a unique morphism $a:R_B\rightarrow A$ in $\mathcal A$ such that $F(a)\circ\eta_B=b$.
\end{enumerate}
\end{defn}

\begin{prop}[3.1.2 of \cite{borceux1994handbook1}]\label{3.1.2borceux}
Let $F:\mathcal A\rightarrow\mathcal B$ be a functor and $B$ an object of $\mathcal B$. When the reflection of $B$ along $F$ exists, it is unique up to isomorphism.
\end{prop}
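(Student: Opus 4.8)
The plan is to run the standard ``uniqueness from a universal property'' argument. Suppose $(R_B,\eta_B)$ and $(R'_B,\eta'_B)$ are both reflections of $B$ along $F$ in the sense of Definition \ref{3.1.1borceux}. First I would feed the morphism $\eta'_B : B \to F(R'_B)$ into the universal property of $(R_B,\eta_B)$: this produces a unique morphism $a : R_B \to R'_B$ in $\mathcal A$ with $F(a)\circ\eta_B = \eta'_B$. Symmetrically, feeding $\eta_B : B \to F(R_B)$ into the universal property of $(R'_B,\eta'_B)$ yields a unique $a' : R'_B \to R_B$ with $F(a')\circ\eta'_B = \eta_B$.

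Next I would check that $a'$ and $a$ are mutually inverse. Composing the two identities gives $F(a'\circ a)\circ\eta_B = F(a')\circ F(a)\circ\eta_B = F(a')\circ\eta'_B = \eta_B$, so $a'\circ a : R_B \to R_B$ is a morphism whose image under $F$, precomposed with $\eta_B$, returns $\eta_B$. But $\mathrm{id}_{R_B}$ has exactly the same property. Applying the uniqueness clause of Definition \ref{3.1.1borceux} with $A := R_B$ and $b := \eta_B$, we conclude $a'\circ a = \mathrm{id}_{R_B}$. The symmetric computation yields $a\circ a' = \mathrm{id}_{R'_B}$. Hence $a$ is an isomorphism $R_B \xrightarrow{\ \sim\ } R'_B$, moreover compatible with the reflection morphisms in that $F(a)\circ\eta_B = \eta'_B$, which is precisely the assertion of Proposition \ref{3.1.2borceux}.

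There is essentially no obstacle here; the only point that requires a moment's care is that the uniqueness part of the universal property must be invoked for an \emph{endomorphism} of $R_B$ (resp.\ of $R'_B$), i.e.\ one must observe that in Definition \ref{3.1.1borceux} the object $A$ is allowed to be $R_B$ itself, so that $a'\circ a$ and $\mathrm{id}_{R_B}$ are genuinely being compared as two solutions of the same factorization problem. Once that is noticed the argument is forced. Alternatively, one could phrase the whole thing more slickly by remarking that a reflection of $B$ along $F$ is exactly an initial object of the comma category $(B\downarrow F)$ and invoking the uniqueness of initial objects up to isomorphism, but the hands-on version above is self-contained and matches the level of generality used in the sequel.
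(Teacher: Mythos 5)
Your argument is correct and is exactly the standard universal-property uniqueness argument; the paper itself gives no proof for this proposition (it simply cites 3.1.2 of \cite{borceux1994handbook1}), and your proof coincides with the one in that reference. No gaps: the key point you flag — invoking the uniqueness clause with $A := R_B$ and $b := \eta_B$ to identify $a'\circ a$ with $\mathrm{id}_{R_B}$ — is handled properly.
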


\begin{defn}[3.1.4 of \cite{borceux1994handbook1}]\label{3.1.4borceux}
A functor $R:\mathcal B\rightarrow A$ is \textbf{left adjoint} to the functor $F:\mathcal A\rightarrow\mathcal B$ when there exists a natural transformation
$$\eta:1_{\mathcal B}\Rightarrow F\circ R$$
such that for every $B\in\mathcal B$, $(R(B),\eta_B)$ is a reflection of $B$ along $F$.
\end{defn}

\begin{teo}[3.1.5 of \cite{borceux1994handbook1}]\label{3.1.5borceux}
 Consider two functors $F:\mathcal A\rightarrow\mathcal B$ and $G:\mathcal B\rightarrow\mathcal A$. The following conditions are equivalent.
 \begin{enumerate}
     \item $G$ is left adjoint of $F$.
     \item There exist a natural transformation $\eta:1_{\mathcal B}\Rightarrow F\circ G$ and $\varepsilon:  G\rightarrow F\Rightarrow1_{\mathcal A}$ such that
     $$(F\ast\varepsilon)\circ(\eta\ast F)=1_F,\,(\varepsilon\ast G)\circ(G\ast\eta)=1_G.$$
     \item There exist bijections
     $$\theta_{AB}:\mathcal A(G(B),A)\cong\mathcal B(B,F(A))$$
     for every objects $A$ and $B$, and those bijections are natural both in $A$ and $B$.
     \item $F$ is right adjoint of $G$.
 \end{enumerate}
\end{teo}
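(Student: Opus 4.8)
The plan is to establish the cycle of implications $(1)\Rightarrow(3)\Rightarrow(2)\Rightarrow(1)$, and then to obtain the equivalence with $(4)$ by observing that condition $(3)$ is symmetric in $F$ and $G$. For $(1)\Rightarrow(3)$ I would let $\eta:1_{\mathcal B}\Rightarrow F\circ G$ be the unit exhibiting $G$ as left adjoint to $F$ and, for $A\in\mathcal A$ and $B\in\mathcal B$, define $\theta_{AB}:\mathcal A(G(B),A)\to\mathcal B(B,F(A))$ by $\theta_{AB}(a):=F(a)\circ\eta_B$. The assertion that $(G(B),\eta_B)$ is a reflection of $B$ along $F$ (Definition \ref{3.1.1borceux}) says precisely that $\theta_{AB}$ is surjective (the existence clause) and injective (the uniqueness clause), hence a bijection; naturality in $A$ is immediate from functoriality of $F$, and naturality in $B$ follows from naturality of $\eta$.

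For $(3)\Rightarrow(2)$ I would set $\eta_B:=\theta_{G(B),B}(1_{G(B)})$ and $\varepsilon_A:=\theta_{A,F(A)}^{-1}(1_{F(A)})$. The naturality of $\eta$ and of $\varepsilon$ is read off from the two naturality squares for $\theta$ by chasing the identity morphisms $1_{G(B)}$ and $1_{F(A)}$, and the triangle identities $(F\ast\varepsilon)\circ(\eta\ast F)=1_F$ and $(\varepsilon\ast G)\circ(G\ast\eta)=1_G$ drop out of the same naturality diagrams applied to identities — this is the only place where one must be careful about which variable of $\theta$ is transported along. For $(2)\Rightarrow(1)$, given a unit–counit pair satisfying the triangle identities, I would check that $(G(B),\eta_B)$ is a reflection of $B$ along $F$: for $b:B\to F(A)$ the required factorization is $a:=\varepsilon_A\circ G(b)$, since $F(a)\circ\eta_B=F(\varepsilon_A)\circ\eta_{F(A)}\circ b=b$ using naturality of $\eta$ and one triangle identity, and uniqueness follows from the other triangle identity together with naturality of $\varepsilon$. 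This gives $(1)$ via Definition \ref{3.1.4borceux}.

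Finally, condition $(3)$ is visibly self-dual: a family of bijections $\mathcal A(G(B),A)\cong\mathcal B(B,F(A))$ natural in both variables witnesses simultaneously that $G$ is left adjoint to $F$ and that $F$ is right adjoint to $G$, so running the argument $(1)\Leftrightarrow(3)$ with the two functors interchanged yields $(4)\Leftrightarrow(3)$, completing the proof. I do not expect any genuine obstacle — this is a classical fact — and the only real work is bookkeeping: keeping the whiskering notation $F\ast\varepsilon$, $\eta\ast F$, etc.\ consistent in $(3)\Rightarrow(2)$, and making sure the passage $(1)\Leftrightarrow(4)$ is not circular, which it is not, precisely because it is mediated by the external symmetric condition $(3)$ rather than by the terminology of Definition \ref{3.1.4borceux}.
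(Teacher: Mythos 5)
Your proof is correct and is essentially the classical argument: the paper itself gives no proof of this statement (it is quoted in the appendix directly from Borceux, 3.1.5 of \cite{borceux1994handbook1}), and your cycle $(1)\Rightarrow(3)\Rightarrow(2)\Rightarrow(1)$ together with the self-duality of condition $(3)$ to get $(4)$ is the same standard route taken there. The only cosmetic remark is that the displayed ``$\varepsilon: G\rightarrow F\Rightarrow 1_{\mathcal A}$'' in the statement is a typo for $\varepsilon: G\circ F\Rightarrow 1_{\mathcal A}$, which is how you correctly read it.
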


\begin{prop}[3.2.2 of \cite{borceux1994handbook1}]\label{3.2.2borceux}
If the functor $F:\mathcal A\rightarrow\mathcal B$ has a left adjoint then $F$ preserves all limits which turn out to exist in $\mathcal A$.
\end{prop}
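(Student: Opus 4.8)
The plan is to invoke the hom-set form of the adjunction supplied by Theorem \ref{3.1.5borceux}: if $F:\mathcal A\to\mathcal B$ admits a left adjoint $G$, there are bijections $\theta_{AB}:\mathcal A(G(B),A)\cong\mathcal B(B,F(A))$ natural in $A$ and in $B$ (only naturality in the $\mathcal A$-variable will actually be used). Fix a diagram $D:\mathcal I\to\mathcal A$ that happens to have a limit $(L,(\pi_i)_{i\in\mathcal I})$ in $\mathcal A$; the goal is to show that $(F(L),(F(\pi_i))_{i\in\mathcal I})$ is a limit of $F\circ D$ in $\mathcal B$. That this data is a cone is immediate: applying $F$ to the identities $D(u)\circ\pi_i=\pi_j$ for $u:i\to j$ in $\mathcal I$ gives $F(D(u))\circ F(\pi_i)=F(\pi_j)$.

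For the universal property I would first treat existence of the comparison arrow. Let $(B,(\sigma_i)_{i\in\mathcal I})$ be an arbitrary cone over $F\circ D$ in $\mathcal B$. Transpose each leg, setting $\tau_i:=\theta^{-1}_{D(i),B}(\sigma_i):G(B)\to D(i)$. Naturality of $\theta$ in the first variable applied to the morphisms $D(u)$ converts $F(D(u))\circ\sigma_i=\sigma_j$ into $D(u)\circ\tau_i=\tau_j$, so $(G(B),(\tau_i)_{i\in\mathcal I})$ is a cone over $D$ in $\mathcal A$. By the universal property of $L$ there is a unique $t:G(B)\to L$ with $\pi_i\circ t=\tau_i$ for every $i$; put $b:=\theta_{L,B}(t):B\to F(L)$. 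Applying naturality of $\theta$ once more, now to the morphisms $\pi_i:L\to D(i)$, yields $F(\pi_i)\circ b=\theta_{D(i),B}(\pi_i\circ t)=\theta_{D(i),B}(\tau_i)=\sigma_i$, so $b$ factors the given cone through $F(L)$.

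For uniqueness, suppose $b':B\to F(L)$ also satisfies $F(\pi_i)\circ b'=\sigma_i$ for all $i$; set $t':=\theta^{-1}_{L,B}(b')$. Naturality of $\theta^{-1}$ in the first variable gives $\pi_i\circ t'=\theta^{-1}_{D(i),B}(F(\pi_i)\circ b')=\theta^{-1}_{D(i),B}(\sigma_i)=\tau_i$ for all $i$, so $t'=t$ by the uniqueness clause for the limit $L$, whence $b'=\theta_{L,B}(t')=\theta_{L,B}(t)=b$. This shows that $F(L)$ with the projections $F(\pi_i)$ enjoys the universal property of $\varprojlim(F\circ D)$, i.e.\ $F$ preserves this limit; since $D$ was an arbitrary diagram whose limit exists in $\mathcal A$, the claim follows.

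I do not expect a genuine obstacle: the argument is entirely formal. The only place that needs attention is keeping the two applications of naturality of $\theta$ straight (both are naturality in the object-variable of $\mathcal A$; naturality in $B$ is never invoked) and making sure the $\theta^{-1}$ used to manufacture the cone $(\tau_i)$ is literally the inverse of the $\theta_{L,B}$ used to define $b$, so that the transposition is undone consistently in the uniqueness step.
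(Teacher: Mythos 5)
Your argument is correct and is the standard proof of this fact; the paper itself gives no proof, simply citing Proposition 3.2.2 of \cite{borceux1994handbook1}, and your transposition argument via the natural bijections $\theta_{AB}$ (using naturality only in the $\mathcal A$-variable) is essentially the argument found in that reference. Both the existence and uniqueness steps are handled correctly, so nothing is missing.
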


\begin{prop}[3.4.1 of \cite{borceux1994handbook1}]\label{3.4.1borceux}
Consider two functors $F:\mathcal A\rightarrow\mathcal B$, $G:\mathcal B\rightarrow\mathcal A$ with $G$ left adjoint to $F$ with $\eta:1_{\mathcal B}\Rightarrow F\circ G$ and $\varepsilon:  G\circ F\Rightarrow1_{\mathcal A}$ the two corresponding natural transformations. The following conditions are equivalent.
\begin{enumerate}
    \item $F$ is full and faithfull.
    \item $\varepsilon$ is an isomorphism.
\end{enumerate}
Under these conditions, $\eta\ast F$ and $G\ast\eta$ are isomorphisms as well.
\end{prop}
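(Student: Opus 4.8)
The plan is to run the standard hom-set argument for adjunctions, using the data recorded in Theorem \ref{3.1.5borceux}. Recall that the adjunction bijection is $\theta_{A,B}\colon\mathcal A(G(B),A)\xrightarrow{\cong}\mathcal B(B,F(A))$, $\theta_{A,B}(h)=F(h)\circ\eta_B$, and that the triangle identities $(F\ast\varepsilon)\circ(\eta\ast F)=1_F$ and $(\varepsilon\ast G)\circ(G\ast\eta)=1_G$ amount, componentwise, to $F(\varepsilon_A)\circ\eta_{F(A)}=1_{F(A)}$ for every $A\in\mathcal A$ and $\varepsilon_{G(B)}\circ G(\eta_B)=1_{G(B)}$ for every $B\in\mathcal B$.

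The first (hypothesis-free) step is to observe that for all $A,A'\in\mathcal A$ the composite
$$\mathcal A(A,A')\xrightarrow{\,(-)\circ\varepsilon_A\,}\mathcal A(GF(A),A')\xrightarrow{\,\theta_{A',F(A)}\,}\mathcal B(F(A),F(A'))$$
equals the map $a\mapsto F(a)$; indeed $\theta_{A',F(A)}(a\circ\varepsilon_A)=F(a)\circ F(\varepsilon_A)\circ\eta_{F(A)}=F(a)$ by the first triangle identity. Since $\theta_{A',F(A)}$ is always a bijection, the map $a\mapsto F(a)$, $\mathcal A(A,A')\to\mathcal B(F(A),F(A'))$, is bijective for all $A,A'$ if and only if $(-)\circ\varepsilon_A$ is bijective for all $A,A'$.

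From this the equivalence of the two conditions is immediate. For $(2)\Rightarrow(1)$: if $\varepsilon$ is a natural isomorphism then precomposition with $\varepsilon_A$ is a bijection, hence so is $a\mapsto F(a)$, i.e.\ $F$ is full and faithful. For $(1)\Rightarrow(2)$: if $F$ is full and faithful then $(-)\circ\varepsilon_A\colon\mathcal A(A,A')\to\mathcal A(GF(A),A')$ is a bijection for every $A'$; choosing $t\colon A\to GF(A)$ with $t\circ\varepsilon_A=1_{GF(A)}$ and noting that $\varepsilon_A\circ t\circ\varepsilon_A=\varepsilon_A=1_A\circ\varepsilon_A$ forces $\varepsilon_A\circ t=1_A$ by injectivity of $(-)\circ\varepsilon_A$, we conclude that $\varepsilon_A$ is an isomorphism. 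Finally, under these conditions $\varepsilon$ is a natural isomorphism, so $F(\varepsilon_A)$ and $\varepsilon_{G(B)}$ are isomorphisms, and the two triangle identities then exhibit $\eta_{F(A)}=F(\varepsilon_A)^{-1}$ and $G(\eta_B)=\varepsilon_{G(B)}^{-1}$; hence $\eta\ast F$ and $G\ast\eta$ are natural isomorphisms.

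I do not anticipate a genuine obstacle here: the only care needed is in bookkeeping the whiskering notation $\ast$ and keeping the two triangle identities straight, and in justifying that a morphism inducing a bijection on all corepresentable hom-functors is an isomorphism (a one-line Yoneda-type argument, spelled out above). Everything else is a routine computation with hom-sets.
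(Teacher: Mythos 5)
Your proof is correct: the identification of $a\mapsto F(a)$ with $\theta_{A',F(A)}\circ((-)\circ\varepsilon_A)$ via the triangle identity, the Yoneda-type inversion of $\varepsilon_A$ when $F$ is full and faithful, and the extraction of $\eta_{F(A)}=F(\varepsilon_A)^{-1}$, $G(\eta_B)=\varepsilon_{G(B)}^{-1}$ from the triangle identities are all sound. The paper itself gives no proof --- it only cites Proposition 3.4.1 of \cite{borceux1994handbook1} in its appendix --- and your argument is essentially the standard hom-set proof found in that reference, so there is nothing to reconcile.
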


\begin{prop}[3.4.3 of \cite{borceux1994handbook1}]\label{3.4.3borceux}
Given a functor $F:\mathcal A\rightarrow\mathcal B$, the following conditions are equivalent:
\begin{enumerate}
    \item $F$ is full and faithfull and has a full and faithfull left adjoint $G$.
    \item $F$ has a left adjoint $G$ and the two canonical natural transformations of the adjunction $\eta:1_{\mathcal B}\Rightarrow F\circ G$ and $\varepsilon:  G\rightarrow F\Rightarrow1_{\mathcal A}$ are isomorphisms.
    \item There exists a functor $G:\mathcal B\rightarrow\mathcal A$ and two arbitrary natural isomorphisms $1_{\mathcal B}\cong F\circ G$, $G\circ F\cong1_{\mathcal A}$.
    \item $F$ is full and faitfull and each object $B\in\mathcal B$ is isomorphic to an object of the form $F(A)$, for some $A\in\mathcal A$.
    \item The dual condition of (1).
    \item The dual condition of (2).
\end{enumerate}
\end{prop}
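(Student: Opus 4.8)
The plan is to establish the cycle of implications $(1)\Rightarrow(2)\Rightarrow(3)\Rightarrow(4)\Rightarrow(1)$, and then to derive the equivalence of $(1)$ with its dual $(5)$, and of $(2)$ with its dual $(6)$, from the observation that under any of $(1)$--$(4)$ the adjunction between $F$ and $G$ is an adjoint equivalence, and an adjoint equivalence is simultaneously an adjunction in both directions. Throughout I will use the elementary fact that a full and faithful functor reflects isomorphisms, together with Proposition~\ref{3.4.1borceux}.

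For $(1)\Rightarrow(2)$: write $G\dashv F$ with unit $\eta:1_{\mathcal B}\Rightarrow F\circ G$ and counit $\varepsilon:G\circ F\Rightarrow 1_{\mathcal A}$. Since $F$ is full and faithful, Proposition~\ref{3.4.1borceux} gives that $\varepsilon$ is an isomorphism and that $G\ast\eta$ is an isomorphism; as $G$ is full and faithful it reflects isomorphisms, so each $\eta_B$ is an isomorphism, i.e. $\eta$ is an isomorphism, which is $(2)$. The implication $(2)\Rightarrow(3)$ is immediate, taking $\eta$ and $\varepsilon$ as the two natural isomorphisms. For $(3)\Rightarrow(4)$, suppose natural isomorphisms $\alpha:1_{\mathcal B}\xrightarrow{\cong}F\circ G$ and $\beta:G\circ F\xrightarrow{\cong}1_{\mathcal A}$ are given: essential surjectivity of $F$ is witnessed by $\alpha_B:B\xrightarrow{\cong}F(G(B))$; if $F(u)=F(v)$ for $u,v:A\to A'$, then $GF(u)=GF(v)$, and naturality of $\beta$ forces $u\circ\beta_A=v\circ\beta_A$, hence $u=v$, so $F$ (and symmetrically $G$) is faithful; and $F$ is full, since given $h:F(A)\to F(A')$ the morphism $u:=\beta_{A'}\circ G(h)\circ\beta_A^{-1}$ satisfies $F(u)=h$, as one checks by naturality of $\alpha,\beta$ together with the faithfulness of $G$.

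For $(4)\Rightarrow(1)$: using essential surjectivity, choose for each $B$ an object $G(B)$ and an isomorphism $\eta_B:B\xrightarrow{\cong}F(G(B))$; full faithfulness of $F$ then upgrades $G$ to a functor for which $\eta:1_{\mathcal B}\Rightarrow F\circ G$ is a natural isomorphism, and produces a natural transformation $\varepsilon:G\circ F\Rightarrow 1_{\mathcal A}$ (the unique $\varepsilon_A$ with $F(\varepsilon_A)$ equal to the evident composite coming from $\eta_{F(A)}$) which is an isomorphism and such that $(\eta,\varepsilon)$ satisfy the triangle identities; hence $G\dashv F$, and $G$ is full and faithful because $\varepsilon$ is invertible (Proposition~\ref{3.4.1borceux} read in the opposite categories), which is $(1)$. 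Finally, in this situation $\eta$ and $\varepsilon$ are invertible, so $(\varepsilon^{-1},\eta^{-1})$, suitably whiskered, exhibit $F$ as a left adjoint of $G$ with invertible unit and counit — the two triangle identities for this adjunction are obtained by inverting the original ones; consequently $F$ possesses a full and faithful right adjoint $G$ whose associated natural transformations are isomorphisms, which is exactly $(5)$ and $(6)$. The reverse implications hold because condition $(4)$ is self-dual and the cycle above, run in the opposite categories, yields $(5)\Rightarrow(4)$ and $(6)\Rightarrow(4)$. I expect the only genuinely delicate points to be the verification of the triangle identities in $(4)\Rightarrow(1)$ and the bookkeeping of the whiskerings when switching between the two directions of the adjunction; everything else reduces to routine naturality diagrams and the "reflects isomorphisms" fact.
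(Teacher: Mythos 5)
The paper does not prove this statement at all: it is quoted verbatim from Borceux (3.4.3 of \cite{borceux1994handbook1}) in the appendix as background, so there is no internal proof to compare against; your argument has to stand on its own, and it does. The cycle $(1)\Rightarrow(2)\Rightarrow(3)\Rightarrow(4)\Rightarrow(1)$, followed by the observation that under these conditions $(\varepsilon^{-1},\eta^{-1})$ give an adjunction $F\dashv G$ (so that $(5)$ and $(6)$ hold), and the self-duality of $(4)$ to get the converse implications, is exactly the standard textbook route, and each step you outline is sound: in $(1)\Rightarrow(2)$ the addendum of Proposition \ref{3.4.1borceux} makes $G\ast\eta$ invertible and full faithfulness of $G$ lets you reflect this to $\eta$; in $(3)\Rightarrow(4)$ the fullness argument via $u:=\beta_{A'}\circ G(h)\circ\beta_A^{-1}$ and faithfulness of $G$ is correct; and in $(4)\Rightarrow(1)$ the construction of $G$, $\eta$, $\varepsilon$ with $F(\varepsilon_A)=\eta_{F(A)}^{-1}$ does verify the triangle identities (the second one after applying the faithful $F$). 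One small misattribution: at the end of $(4)\Rightarrow(1)$ you justify full faithfulness of $G$ by the invertibility of $\varepsilon$ ``read in the opposite categories''; the dual of Proposition \ref{3.4.1borceux} says that $G$ is full and faithful iff the \emph{unit} $\eta$ is invertible ($\varepsilon$ invertible is equivalent to $F$ being full and faithful, which was already a hypothesis). Since in your situation $\eta$ is invertible by construction, the conclusion is unaffected, but the citation should point at $\eta$.
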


\begin{defn}[3.4.4 of \cite{borceux1994handbook1}]\label{3.4.4borceux}
The categories $\mathcal A,\mathcal B$ are \textbf{equivalent} if there exist a functor $F:\mathcal A\rightarrow\mathcal B$ satisfying the conditions of Proposition \ref{3.4.3borceux}.
\end{defn}

\bibliographystyle{alpha}
\bibliography{one_for_all}

\end{document}